\setlist[enumerate]{leftmargin=*}
\DeclarePairedDelimiterX\Set[1]\lbrace\rbrace{\def\given{\;\delimsize\vert\;}#1}
\newif\ifHideFoot
\newtheorem{thm}{Theorem}[section]
\newtheorem{lem}[thm]{Lemma}
\newtheorem{cor}[thm]{Corollary}
\newtheorem{prop}[thm]{Proposition}
\newtheorem{conj}[thm]{Conjecture}
\theoremstyle{definition}
\newtheorem{setup}[thm]{Set-up}
\newtheorem{rem}[thm]{Remark}
\newtheorem{defn}[thm]{Definition}
\newtheorem{prob}[thm]{Problem}
\newtheorem{defn-prop}[thm]{Definition-Proposition}
\newtheorem{defn-thm}[thm]{Definition-Theorem}
\newtheorem{ex}[thm]{Example}
\newcommand{\G}{{\mathds G}}
\newcommand{\Q}{{\mathds Q}}
\newcommand{\R}{{\mathds R}}
\newcommand{\Z}{{\mathds Z}}
\newcommand{\C}{{\mathds C}}
\DeclareMathOperator{\Dpst}{\mathit{D}_{\mathrm{pst}}}
\newcommand\End{\mathop{\mathrm{End}}\nolimits}
\newcommand\gr{\text{\rm gr}}
\newcommand\Cl{\mathop{\mathrm{Cl}}\nolimits}
\newcommand\Gal{\mathop{\mathrm{Gal}}\nolimits}
\renewcommand\Im{\mathop{\mathrm{Im}}\nolimits}
\newcommand\id{\mathop{\mathrm{id}}\nolimits}
\newcommand\GL{\mathop{\mathrm{GL}}\nolimits}
\newcommand\Gm{\mathop{\mathbb{G}_\mathrm{m}}\nolimits}
\newcommand\gl{\mathop{\mathfrak{gl}}\nolimits}
\renewcommand\ge{\mathop{\mathfrak{g}}\nolimits}
\newcommand{\fsl}{\mathop{\mathfrak{sl}}\nolimits}
\newcommand\so{\mathop{\mathfrak{so}}\nolimits}
\newcommand\Spin{\mathop{\mathrm{Spin}}\nolimits}
\newcommand\GSpin{\mathop{\mathrm{GSpin}}\nolimits}
\newcommand\SO{\mathop{\mathrm{SO}}\nolimits}
\newcommand\Sym{\mathop{\mathrm{Sym}}\nolimits}
\newcommand\Spec{\mathop{\rm Spec}}
\newcommand\cris{\text{\rm cris}}
\newcommand\KS{\mathop{\mathrm{KS}}\nolimits}
\newcommand\dR{\mathop{\mathrm{dR}}}
\newcommand{\et}{\mathrm{\acute{e}t}}
\DeclareMathOperator{\ad}{ad}
\newcommand{\hk}{hyper-Kähler~}
\newcommand\Kum{\mathop{\mathrm{Kum}}\nolimits}
\newcommand\OG{\mathop{\mathrm{OG}}\nolimits}
\newcommand{\frh}{\mathfrak{h}}
\newcommand{\frg}{\mathfrak{g}}
\newcommand{\rfrg}{\overline{\mathfrak{g}}}
\newcommand{\fg}{\mathfrak{g}}
\newcommand{\cO}{\mathcal{O}}
\newcommand{\cX}{\mathcal{X}}
\newcommand{\scrL}{\mathscr{L}}
\DeclareMathOperator{\tot}{tot}
\DeclareMathOperator{\pst}{pst}
\DeclareMathOperator{\st}{st}
\DeclareMathOperator{\MT}{MT}
\DeclareMathOperator{\ur}{ur}
\DeclareMathOperator{\tw}{tw}
\numberwithin{equation}{section}
\title[Arithmetic monodromy of hyper-Kähler varieties over $p$-adic fields]{Arithmetic monodromy of hyper-Kähler varieties over $p$-adic fields}
\author[K.~Ito]{Kazuhiro Ito}
\address{Mathematical Institute, Tohoku University, 6-3, Aoba, Aramaki, Aoba-Ku, Sendai, 980-8578, Japan}
\email{kazuhiro.ito.c3@tohoku.ac.jp}
\author[T.~Ito]{Tetsushi Ito}
\address{Department of Mathematics, Faculty of Science, Kyoto University, Kyoto 606-8502, Japan}
\email{tetsushi@math.kyoto-u.ac.jp}
\author[T.~Koshikawa]{Teruhisa Koshikawa}
\address{Research Institute for Mathematical Sciences, Kyoto University, Kyoto 606-8502, Japan}
\email{teruhisa@kurims.kyoto-u.ac.jp}
\author[T.~Takamatsu]{Teppei Takamatsu}
\address{Department of Mathematics (Hakubi center), Faculty of Science, Kyoto University, Kyoto 606-8502, Japan}
\email{teppeitakamatsu.math@gmail.com}
\author[H.~Zou]{Haitao Zou}
\address{Universität Bielefeld, Universitätsstraße 25, 33615 Bielefeld, Germany}
\email{hzou@math.uni-bielefeld.de}
\subjclass[2020]{Primary 14J42; Secondary 14G20, 14F30}
\keywords{\hk varieties, monodromy}
\begin{document}
\begin{abstract}
In this paper, we study the $p$-adic and $\ell$-adic monodromy operators associated with hyper-Kähler varieties over $p$-adic fields, in connection with Looijenga--Lunts--Verbitsky Lie algebras.
We investigate a conjectural relation between the nilpotency indices of these monodromy operators on higher-degree cohomology groups and on the second cohomology, which may be viewed as an arithmetic analogue of Nagai’s conjecture for degenerations of hyper-Kähler manifolds over a disk. We verify this arithmetic version of Nagai’s conjecture for hyper-Kähler varieties over $p$-adic fields, assuming they belong to one of the four known deformation types.
As part of our approach, we introduce a new method to analyze the $p$-adic cohomology of hyper-Kähler varieties via Sen's theory.
\end{abstract}
\maketitle
\setcounter{tocdepth}{1}
\tableofcontents

\section{Introduction} \label{Section:Introduction}

We fix a prime number $p$ throughout the paper.
Let $K$ be a complete discretely valued field of mixed characteristic $(0, p)$ with perfect residue field $k$. Let $\ell$ be a prime number.
Let $X$ be a smooth proper variety over $K$. Then we obtain Galois representations
\[
\phi_{\ell,i}\colon \Gal (\overline{K}/K) \rightarrow \GL ( H^i_{\et} (X_{\overline{K}}, \Q_{\ell}))
\]
for integers $0 \leq i \leq 2 \dim X$.
(Here $\overline{K}$ is an algebraic closure of $K$ and $X_{\overline{K}}\coloneqq X \otimes_K \overline{K}$.)
If $\ell \neq p$, Grothendieck's $\ell$-adic monodromy theorem in \cite{SGA7-I} implies that there is an open subgroup $I$ of the inertia subgroup $I_K \subset \Gal(\overline{K}/K)$ such that the restriction of $\phi_{\ell,i}$ on $I$ is \emph{unipotent}, i.e., there is an integer $\nu \geq 0$ such that 
$(\phi_{\ell,i}(g) -1 )^{\nu+1} =0$ for all $g \in I$.
We call the minimal integer $\nu$ with this property the ($\ell$-adic) monodromy nilpotency index in degree $i$. Denote it by $\nu_{\ell, i}$. If $\ell =p$, a similar monodromy nilpotency index $\nu_{p, i}$ can be defined via Fontaine's formalism on potentially semistable $p$-adic Galois representations.

\begin{prob}\label{Motivation}
How can one determine the monodromy nilpotency index $\nu_{\ell,i}$ for a prime $\ell$ (including $\ell = p$) in a degree $0 \leq i \leq 2 \dim X$?  
\end{prob}

In this paper, we will focus on \Cref{Motivation} in the case that $X$ is a \emph{hyper-Kähler variety}. In complex geometry, hyper-Kähler varieties are also known as irreducible holomorphic symplectic (IHS) varieties, which are higher-dimensional analogues of K3 surfaces and form one of the three building blocks of compact Kähler manifolds with trivial first Chern class.
As with K3 surfaces, the notion of hyper-Kähler variety extends naturally to general base fields, at least in characteristic zero. 
We will see that the rich geometric structure would lead to an ingredient for the \Cref{Motivation} for hyper-Kähler varieties over $K$.

\subsection{Nagai's conjecture and arithmetic analogues}

In studying a projective degeneration of a complex hyper-Kähler variety, Nagai proposed the following conjecture on the nilpotency indices of local monodromy operators:
\begin{conj}[{\cite[Conjecture 5.1]{Nagai}}]
\label{Nagai Conjecture over C}
Denote $\Delta = \Set{z \in \C \given \lvert z \rvert <1}$ for the open disk. Let $\pi \colon \cX \to \Delta$ be a degeneration of hyper-Kähler varieties of dimension $2n$  over $\Delta^* =\Delta\setminus\{0\}$.
Consider the (log-)monodromy operator $N_i$ on the Betti cohomology $H^i(X,\Q)$ of a smooth fiber $X= \pi^{-1}(t)$ at $t \in \Delta^*$.
The monodromy operators satisfy
\[ \nu(N_{2i}) = i \nu(N_2) \] for every $0 \leq i \leq n$. Here $\nu(N)$ is the \emph{nilpotency index} of a nilpotent linear operator $N \in \End(V)$ on a finite-dimensional vector space over a field, i.e.,
\[
\nu(N) \coloneqq \min \{m \in \Z_{\geq 0} \mid N^{m+1} = 0\}.
\]
\end{conj}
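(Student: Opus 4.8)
The plan is to prove both inequalities $\nu(N_{2i})\geq i\,\nu(N_2)$ and $\nu(N_{2i})\leq i\,\nu(N_2)$ by combining two structural inputs for hyper-Kähler cohomology: Verbitsky's description of the subring of $H^{*}(X,\Q)$ generated by $H^{2}(X,\Q)$, and the Looijenga--Lunts--Verbitsky (LLV) Lie algebra $\mathfrak g(X)$, which for a hyper-Kähler variety is isomorphic to $\so(b_{2}+2)$ and acts on $H^{*}(X,\Q)$. The starting point is that the log-monodromy operator $N\coloneqq N_i$ is a \emph{nilpotent derivation} of the cup-product ring, because the monodromy transformation is a ring automorphism of $H^{*}(X,\Q)$ (hence so is its unipotent part); in particular $N$ preserves each $H^{2i}$, the Verbitsky subring, and the LLV-isotypic decomposition. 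Throughout set $\nu\coloneqq\nu(N_2)$, and recall that $\nu\in\{0,1,2\}$ since $N_2$ lowers the Hodge weight of the weight-$2$ structure $H^{2}(X,\Q)$ by $2$.

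For the lower bound I would use Verbitsky's structure theorem: for $i\leq n$ the multiplication map $\Sym^{i}H^{2}(X,\Q)\to H^{2i}(X,\Q)$ is injective, so $\beta^{i}\neq 0$ in $H^{2i}(X,\Q)$ for every nonzero $\beta\in H^{2}(X,\Q)$. Pick $\alpha\in H^{2}(X,\Q)$ with $N_2^{\nu}\alpha\neq 0$. Since $N$ is a derivation,
\[
N^{i\nu}(\alpha^{i})=\sum_{m_{1}+\cdots+m_{i}=i\nu}\binom{i\nu}{m_{1},\dots,m_{i}}\,(N^{m_{1}}\alpha)\cdots(N^{m_{i}}\alpha),
\]
and every term with some $m_{j}>\nu$ vanishes; as $m_{1}+\cdots+m_{i}=i\nu$, the only surviving term has all $m_{j}=\nu$, so $N^{i\nu}(\alpha^{i})=\tfrac{(i\nu)!}{(\nu!)^{i}}\,(N_2^{\nu}\alpha)^{i}\neq 0$ by the previous sentence. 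Hence $N^{i\nu}\neq 0$ on $H^{2i}$, i.e. $\nu(N_{2i})\geq i\nu$. This step needs no hypothesis on the deformation type.

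For the upper bound, the same multinomial expansion shows $N^{i\nu+1}$ kills the Verbitsky subring in degree $2i$, namely $\Sym^{i}H^{2}(X,\Q)$; the issue is the rest of $H^{2i}$. Here I would bring in $\mathfrak g(X)$: the key claim to establish is that $N$ acts on $H^{*}(X,\Q)$ through a nilpotent element $\mathbf n\in\mathfrak g(X)$ whose action on the standard representation $\widetilde H=H^{2}(X,\Q)\oplus U$ ($U$ a hyperbolic plane) restricts to $N_2$ on the summand $H^{2}$. Granting this, one decomposes $H^{*}(X,\Q)=\bigoplus_{\lambda}V_{\lambda}^{\oplus m_{\lambda}}$ into $\mathfrak g(X)$-irreducibles; for each $V_{\lambda}$ the nilpotency index of $\mathbf n$ in cohomological degree $2i$ is computed from the Jacobson--Morozov $\fsl_{2}$-triple of $\mathbf n$ together with the weights occurring in $V_{\lambda}$, and one checks that it is maximized on the Verbitsky component, where it equals $i\nu$, while every other even-degree summand $V_{\lambda}$ is generated in cohomological degree $\geq 2$, which forces $\nu(\mathbf n|_{V_{\lambda}\cap H^{2i}})\leq i\nu$. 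Summing over $\lambda$ gives $N^{i\nu+1}=0$ on $H^{2i}$, and combined with the lower bound, the equality $\nu(N_{2i})=i\,\nu(N_2)$.

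The hard part will be the compatibility claim: showing that the transcendentally defined $N$ (coming from the limiting mixed Hodge structure / the $\SL_{2}$-orbit theorem) actually lies in the topological Lie algebra $\mathfrak g(X)$ and is there determined by $N_2$. One route is to prove $N\in\mathfrak g(X)$ directly from its compatibility with the Hodge--Lefschetz structure, using Verbitsky's theorem that $\mathfrak g(X)$ is generated by the $\fsl_{2}$-triples attached to classes in $H^{2}$; another is to use the explicitly known LLV decomposition of $H^{*}(X,\Q)$ for each of the four established deformation types and verify the representation-theoretic nilpotency bound summand by summand --- which is precisely why one restricts to those four types, the same restriction being needed in the arithmetic version of \Cref{Motivation} treated in this paper. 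A further technical point, in order to transport the argument to the $\ell$-adic and $p$-adic monodromy operators (including $\ell=p$, via Sen's theory and $p$-adic Hodge theory), is to show that those operators interact with $\mathfrak g(X)$ just as the complex $N$ does.
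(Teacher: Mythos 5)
You should begin by noting that \Cref{Nagai Conjecture over C} is a \emph{conjecture}, not a theorem: the paper does not prove it, and it is open in general. The paper merely records its current status --- Types~I and~III were settled by Kollár--Laza--Saccà--Voisin (and Soldatenkov for non-projective Type~I), while Type~II is known only for the four deformation types $K3^{[n]}$, $\Kum_n$, $\OG_6$, $\OG_{10}$, by Green--Kim--Laza--Robles. Your sketch is a reasonable account of the GKLR strategy, but it should not be presented as a proof of the statement; the hypotheses you fold in at the end (four known types, $N\in\frg(X)$) are load-bearing and cannot be removed with current technology, which is exactly why this remains a conjecture.

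A few concrete points on the content. The lower bound $\nu(N_{2i})\geq i\,\nu(N_2)$ is correct and unconditional, and matches \Cref{cor:lowerboundbySym}. But you run the full LLV machine uniformly, which obscures the fact that Types~I and~III have much shorter arguments: Type~III follows from combining this lower bound with the general upper bound $\nu(N_{2i})\leq 2i$, and Type~I (once $N\in\overline{\frg}(X)$ is granted) follows from the injectivity of $\overline{\frg}(X)\to\End(H^{2i})$. Only Type~II needs the representation-theoretic analysis. There, the inequality you assert for the non-Verbitsky summands --- that every other even-degree $V_\lambda$ contributes nilpotency index at most $i\nu$ --- is precisely the condition $\mu_0+\mu_1+\mu_2\leq n$ of \Cref{thm:LLVdecompositionmu} and \Cref{thm:arithmeticGreenKimetc}. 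This is \emph{not} automatic from the LLV structure: it is an a priori constraint on which irreducible $\frg(X)$-modules can appear in $H^\bullet(X)$, and GKLR verify it for the four types by writing down their LLV decompositions explicitly. No general argument is known, and its possible failure is exactly what keeps the Type~II case open. Finally, the claim $N\in\overline{\frg}(X)$ is itself a nontrivial input: in the complex and $\ell$-adic settings it is currently derived from the Mumford--Tate conjecture, which again is only known for the four types. The genuinely new contribution of this paper is that the $p$-adic analogue $N_p\in\overline{\frg}_{\pst}(X)$ is proved unconditionally via Sen's theory (\Cref{thm:universalmonodromyp-adic}); but that argument is specific to $p$-adic Hodge theory and does not carry over to $\ell\neq p$ or to the complex monodromy operator your proposal is about.
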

The local monodromy theorem implies that $\nu(N_2) \in \Set*{0, 1,2}$ (cf.~\cite[(A.2)]{Landman}), and we say that the projective degeneration $\cX/\Delta$ is of Type I, II, or III respectively. \Cref{Nagai Conjecture over C} posits a deep relation between these three degeneration types and monodromy nilpotency indices in higher degrees, which has been studied by many people.
\begin{itemize}
    \item If $\cX/\Delta$ is of Type I ($\nu(N_2)=0$) or Type III ($\nu(N_2) =2)$, then Nagai's conjecture is confirmed by Kollár--Laza--Saccà--Voisin (cf.~ \cite[Theorem 1.7, Proposition 7.14 (2)]{KollarLaza2018a}). Soldatenkov \cite[Corollary 3.6]{Soldatenkov20a} gives a proof for possibly \emph{non-projective} degenerations of Type I.
    \item Nagai's conjecture is still widely open for Type II ($\nu(N_2)=1$) degenerations in general. Recently, Green--Kim--Laza--Robles \cite{GKLR} proved Conjecture \ref{Nagai Conjecture over C} when $X$ is in one of the four known deformation classes (i.e., $K3^{[n]}$-type, $\Kum_n$-type, $\OG_6$-type, and $\OG_{10}$-type). Their method involves a detailed study on the action of the \textit{Looijenga--Lunts--Verbitsky (LLV) Lie algebra} on the cohomology.
    \item In \cite{HuybrechtsMauri2023a}, Huybrechts--Mauri provide a different viewpoint for the Type II degenerations by the perverse filtration attached to an isotropic class.
\end{itemize}

It is generally believed that projective models over the ring of integers $\cO_K$ of $K$ are arithmetic analogues of ``degenerating families over the unit disk $\Delta$''; see \cite[Table 9.1]{DeligneHodgeI} for example. Thus, it is natural to ask an arithmetic analogue of \Cref{Nagai Conjecture over C}. More precisely, we formulate the following two conjectures.

If $\ell \neq p$, Grothendieck's $\ell$-adic monodromy theorem we mentioned before actually asserts that there is a nilpotent operator $N_{\ell,i} \in \End_{\Q_{\ell}}(H^i_{\et}(X_{\overline{K}},\Q_{\ell}))$ such that
$\phi_{\ell,i}(\sigma) = \exp(t_{\ell}(\sigma)N_{\ell,i})$
for any $\sigma \in I \subset I_K$,
where $t_{\ell} \colon I_K \to \Z_{\ell}$ is the composition of $\ell$-primary tame quotient of the inertia group with a fixed trivialization $\Z_{\ell}(1) \simeq \Z_{\ell}$.
\begin{conj}[$\ell$-adic analogue of Nagai's conjecture]
\label{ell adic Nagai Conjecture}For a hyper-Kähler variety $X$ over $K$ of dimension $2n$, the monodromy operators satisfy
$
\nu(N_{\ell,2i}) =  i\nu(N_{\ell,2})
$
for every $0 \leq i \leq n$.
\end{conj}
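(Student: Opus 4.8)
The plan is to reduce \Cref{ell adic Nagai Conjecture} to a single structural fact about the $\ell$-adic monodromy operator, after which it becomes the representation-theoretic statement established (over $\C$) in \cite{GKLR}. Set $V \coloneqq H^2_{\et}(X_{\overline K},\Q_\ell)$ and let $q$ be the Beauville--Bogomolov--Fujiki (BBF) quadratic form on $V$. Since $q$ is canonically attached to $X$, it is $\Gal(\overline K/K)$-equivariant up to a Tate twist, so the unipotent operators $\exp(t_\ell(\sigma)N_{\ell,2})$ preserve $q$ and hence $N_{\ell,2} \in \so(V,q)$; the standard bound $\nu(N_{\ell,i}) \leq i$ (width of the monodromy weight filtration on $H^i$) gives $\nu(N_{\ell,2}) \in \{0,1,2\}$, the $\ell$-adic counterpart of the Type I/II/III trichotomy. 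Recall that the Looijenga--Lunts--Verbitsky Lie algebra $\frg = \frg(X_{\overline K}) \subset \End\bigl(\bigoplus_i H^i_{\et}(X_{\overline K},\Q_\ell)\bigr)$ is generated by the $\fsl_2$-triples attached to Lefschetz and dual Lefschetz operators, is isomorphic by Verbitsky's theorem to $\so(V \oplus U)$ for a hyperbolic plane $U$, and contains $\so(V,q)$ as the stabilizer of $U$; in particular $\so(V,q)$ preserves each $H^i_{\et}(X_{\overline K},\Q_\ell)$.

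\textbf{Key structural step.}
The crucial input is that, under the LLV action on $\ell$-adic cohomology, the monodromy operator $N_{\ell,2i}$ on $H^{2i}_{\et}(X_{\overline K},\Q_\ell)$ is the image of $N_{\ell,2}$ under $\so(V,q) \hookrightarrow \frg \to \End\bigl(H^{2i}_{\et}(X_{\overline K},\Q_\ell)\bigr)$; equivalently, $\nu(N_{\ell,2i})$ is the nilpotency index of the nilpotent element $N_{\ell,2} \in \so(V,q)$ acting on the $\so(V,q)$-module $H^{2i}_{\et}(X_{\overline K},\Q_\ell)$. To prove this one would spread out: after a finite base extension (harmless, as it only rescales $N_{\ell,i}$) choose a projective model, spread it to a smooth projective family over a finitely generated $\Z$-algebra, and identify the $\ell$-adic nearby-cycles monodromy with the topological monodromy of a one-parameter degeneration of complex hyper-Kähler manifolds of the same deformation type via Artin's comparison theorem. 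The complex analogue of the Key step is known: the limiting mixed Hodge structure on $H^*$, and in particular the log-monodromy on $H^*$, is the image under the LLV representation of the corresponding datum on $H^2$ (Verbitsky; Soldatenkov; \cite{GKLR}; cf.\ the discussion after \Cref{Nagai Conjecture over C}). Because the LLV Lie algebra and its representation on total cohomology are cut out by the BBF form on $V \oplus U$ together with the Lefschetz operators attached to \emph{ample} classes — which are algebraic, hence compatible with comparison isomorphisms and with specialization — this description transfers to $\ell$-adic cohomology.

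\textbf{Conclusion.}
Granting the Key step, \Cref{ell adic Nagai Conjecture} amounts to the following: if $N \coloneqq N_{\ell,2} \in \so(V,q)$ has nilpotency index $\varepsilon \in \{0,1,2\}$ on $V$, then $N$ acts on $H^{2i}_{\et}(X_{\overline K},\Q_\ell)$ with nilpotency index $i\varepsilon$ for $0 \leq i \leq n$. The case $\varepsilon = 0$ is immediate. For $\varepsilon = 2$ (so $N^3 = 0$, $N^2 \neq 0$) the Verbitsky component contains, for $i \leq n$, the primitive symmetric power $\Sym^i_0 V$ in degree $2i$, on which a nilpotent of index $2$ has index $2i$; combined with the a priori bound $\nu(N_{\ell,2i}) \leq 2i$ this forces equality. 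The delicate case is $\varepsilon = 1$, exactly as over $\C$: the lower bound $\nu(N_{\ell,2i}) \geq i$ again comes from $\Sym^i_0 V \subset \SH^{2i}(X_{\overline K})$, and for the upper bound one uses the explicit list of irreducible $\so(V,q)$-constituents occurring in $H^{2i}_{\et}(X_{\overline K},\Q_\ell)$ in each of the four deformation types ($K3^{[n]}$, $\Kum_n$, $\OG_6$, $\OG_{10}$), together with the bound on how large the nilpotency index of a square-zero element of $\so(V,q)$ can be on a given irreducible. This is the representation-theoretic core of \cite{GKLR}, and it applies verbatim once $H^{2i}_{\et}(X_{\overline K},\Q_\ell)$ is viewed as an $\so(V,q)$-module through the LLV action.

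\textbf{Main obstacle.}
The hard part is the Key structural step: giving a meaning, independent of transcendental Hodge theory, to the assertion that the $\ell$-adic monodromy lies in the LLV algebra and is determined by its action on $H^2$. For $\ell \neq p$ this can be carried out by the spreading-out and comparison argument sketched above, the subtle points being to keep track of the LLV structure under specialization and to produce a genuine complex degeneration of the correct deformation type. For the $p$-adic analogue of \Cref{ell adic Nagai Conjecture} (that is, \Cref{Motivation} with $\ell = p$) the same structural input must instead be set up purely $p$-adically, and this is precisely where a new analysis of the $p$-adic cohomology of hyper-Kähler varieties via Sen's theory is required.
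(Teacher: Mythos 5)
Your proposal correctly identifies the architecture that the paper uses: reduce \Cref{ell adic Nagai Conjecture} to the assertion that $N_{\ell}$ acts on all of $H^\bullet_{\ell}(X)$ as $\rho(N_{\ell,2})$ through the LLV representation, after which Type~III follows from $\nu(N_{\ell,2i})\le 2i$ together with the Verbitsky component (\Cref{cor:lower bound of monodromy operators}), Type~I follows from injectivity of $\rho_{2i}$ on $\overline{\frg}_\ell(X)$, and Type~II reduces to the representation-theoretic condition $\mu_0+\mu_1+\mu_2\le n$ on the LLV decomposition (\cite{GKLR}). The paper proves exactly these three cases in \Cref{thm:nagai for type III}, \Cref{thm:nagai for type I}, and \Cref{thm:nagai for type II and known case}.

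However, your proposed proof of the Key structural step — that $N_\ell$ lies in $\overline{\frg}_\ell(X)$ and is determined by $N_{\ell,2}$ — is not what the paper does, and I believe it has a genuine gap. You suggest spreading out a projective model over $\mathcal{O}_K$ to a finitely generated $\Z$-algebra and then identifying the $\ell$-adic nearby-cycle monodromy with the topological monodromy of a complex one-parameter degeneration via Artin comparison. The obstacle is that the degeneration of $X$ over $K$ may be \emph{purely arithmetic}: for instance, if $X$ is defined over $\Q$ and has bad reduction at $p$, the ``parameter'' of the degeneration is the prime $p$ itself, not a geometric coordinate. Spreading out over $\Spec\Z[1/N]$ (or any finitely generated $\Z$-algebra $R$) puts the locus of bad reduction inside the special fiber $R\otimes\F_p$, which has no complex points — so there is no nearby complex disk $\Delta\hookrightarrow T'$ transversal to the bad locus, and no complex degeneration to which one could compare. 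The heuristic ``a model over $\mathcal{O}_K$ is an arithmetic analogue of a family over $\Delta$'' that the paper itself quotes from Deligne is an analogy, not a comparison theorem, and your ``Key structural step'' implicitly promotes it to one. This is precisely why \Cref{Remark:p-adic monodromy operator in LLV will be proved later in general} and the remark following it note that the authors do not know how to place $N_\ell$ inside the LLV algebra for $\ell\neq p$ in general.

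The route the paper actually takes for $\ell\neq p$ is via the Mumford--Tate conjecture for the full cohomology of \hk varieties (\Cref{Theorem:Mumford-Tate conjecture}, due to Floccari and Floccari--Fu--Zhang, valid for the four known types), combined with Verbitsky's result that the Weil operator lies in $\overline{\frg}_B(X)\otimes\C$ (\Cref{thm:MTinsideLLV}). Together these show an open subgroup of $G_K$ acts through the image of the twisted LLV representation $R\colon\GSpin(H^2_\ell(X))\to\prod_i\GL(H^i_\ell(X))$ (\Cref{Corollary:Galois in LLV for known types}), from which $N_\ell\in\overline{\frg}_\ell(X)$ follows (\Cref{Corollary:ell-adic monodromy operators in LLV for known types}). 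Notice that this is fundamentally a \emph{Tate-type} input about the image of the arithmetic Galois representation, not a specialization argument. Your ``Main obstacle'' paragraph does flag the subtlety, but the proposed resolution does not hold up; the MT-conjecture route is the actual mechanism, which is also why the result is stated only for the four known deformation types rather than for arbitrary \hk varieties, as your spreading-out sketch would have suggested.
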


If $\ell =p$, by applying Fontaine's functor $\Dpst(-)$ 
for potentially semistable representations,
we obtain a $K_0^{\ur}$-vector space 
$
\Dpst(H^i_{\et}(X_{\overline{K}},\Q_p))
$
with a nilpotent operator
$N_{p,i}$, where $K_0^{\ur}$ is the maximal unramified extension of the fraction field $K_0$ of the ring of Witt vectors $W(k)$.
See \Cref{Subsection:Potentially semistable representations and p adic monodromy operators} below.

\begin{conj}[$p$-adic analogue of Nagai's conjecture]
\label{p adic Nagai Conjecture}
For a hyper-Kähler variety $X$ over $K$ of dimension $2n$, the monodromy operators satisfy 
$
\nu(N_{p, 2i}) =  i\nu(N_{p,2})
$
for every $0 \leq i \leq n$.
\end{conj}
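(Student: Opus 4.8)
The plan is to mirror the complex-geometric approach of Green--Kim--Laza--Robles \cite{GKLR}, organizing everything around the Looijenga--Lunts--Verbitsky (LLV) Lie algebra $\frg(X)$ and using Sen's theory as the new $p$-adic input. The starting point is the observation that the monodromy operators assemble, on $\Dpst$ of the \emph{total} cohomology $\bigoplus_i H^i_{\et}(X_{\overline K},\Q_p)$, into a single nilpotent operator $N_{p,\bullet}$ which is a \emph{derivation} of the cup product: the cup product is a morphism of potentially semistable representations, $\Dpst$ is a tensor functor, and the monodromy of $\Dpst$ is compatible with tensor products, so $N_{p,\bullet}(x\cup y)=N_{p,\bullet}(x)\cup y+x\cup N_{p,\bullet}(y)$. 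In particular $N_{p,2}$ annihilates the class of $X$ and preserves the Beauville--Bogomolov--Fujiki form on $H^2$, so $N_{p,2}\in\so(H^2)$; extending by zero on the hyperbolic summand $U$ of the Mukai completion $\widetilde H^2=H^2\oplus U$ produces a nilpotent element $\widetilde N\in\so(\widetilde H^2)=\frg(X)$. Recall also that $\nu(N_{p,2})\in\{0,1,2\}$, by the same weight-monodromy estimate on $H^2$ that gives the trichotomy over $\C$.

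For the lower bound $\nu(N_{p,2i})\ge i\,\nu(N_{p,2})$ one uses the Verbitsky component $\SH(X)\subseteq H^*$, the subring generated by $H^2$: being a subring stable under the derivation $N_{p,\bullet}$ and generated by $H^2$, it satisfies $N_{p,\bullet}|_{\SH(X)}=\widetilde N|_{\SH(X)}$ \emph{automatically}. Now $\SH(X)$ is an irreducible $\frg(X)$-module, its degree-$2i$ part being a fixed weight space for the Lefschetz grading element, and a direct $\so$-representation-theoretic computation — split according to whether $\nu(N_{p,2})$ is $0$, $1$, or $2$, completely parallel to \cite{GKLR} — yields $\nu\bigl(\widetilde N|_{\SH(X)\cap H^{2i}}\bigr)=i\,\nu(N_{p,2})$ for $0\le i\le n$. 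This gives the lower bound, as well as the contribution of the Verbitsky part to the upper bound.

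For the reverse inequality one must control $N_{p,\bullet}$ on the remaining isotypic components of $H^*$, where the derivation property no longer determines it. This is where Sen's theory is used: one shows that the LLV action extends $\Q_p$-linearly to $\Dpst(H^*)$ compatibly with $N_{p,\bullet}$ — concretely, that the Sen operator of each $H^i_{\et}(X_{\overline K},\Q_p)$ is itself a derivation of the cup product realized inside $\frg(X)$, which forces the Galois representation on $H^i$, and hence $N_{p,i}$, to be governed by that on $H^2$ on \emph{every} summand, not only the Verbitsky one. Granting this, $\nu(N_{p,2i})=\nu\bigl(\rho_{H^*}(\widetilde N)|_{H^{2i}}\bigr)$ and the statement becomes the purely Lie-theoretic assertion that every irreducible constituent $V_\lambda$ of $H^*(X)$ satisfies $\nu\bigl(\rho_\lambda(\widetilde N)|_{H^{2i}\cap V_\lambda}\bigr)\le i\,\nu(N_{p,2})$. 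For each of the four deformation types ($\Kum_n$, $K3^{[n]}$, $\OG_6$, $\OG_{10}$) the LLV-module decomposition of $H^*(X)$, and the degrees in which the cohomology ring acquires generators beyond $H^2$, are explicitly known and are invariant under the relevant comparison isomorphisms, so this can be checked type by type; equivalently, once $N_{p,2d}$ is controlled in the finitely many generating degrees $2d$, the derivation identity propagates the bound to all degrees, since a monomial built from $p$ classes in $H^2$ and $q$ classes in $H^{2d}$ absorbs at most $p\,\nu(N_{p,2})+q\,d\,\nu(N_{p,2})=i\,\nu(N_{p,2})$ applications of $N_{p,\bullet}$.

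I expect the genuine obstacle to be exactly the identification of $N_{p,i}$ with $\widetilde N|_{H^i}$ away from the Verbitsky component: the derivation property leaves room for an ``exotic'' contribution to the monodromy on the extra ring generators, and ruling it out requires, in place of the weight-monodromy arguments that are easy when $\ell\ne p$, a careful analysis via Sen operators of the Galois action on $H^i_{\et}(X_{\overline K},\Q_p)\otimes\C_p$, together with a faithful transfer of the deformation-type-specific LLV computations of \cite{GKLR} from Betti to $p$-adic cohomology — this is the technical heart. By contrast I expect the degenerate cases $\nu(N_{p,2})=0$ and $\nu(N_{p,2})=2$ to be more accessible once this machinery is in place, combining the lower bound with the a priori weight-monodromy estimate $N_{p,2i}^{2i+1}=0$ on $H^{2i}$, so that the classification hypothesis is really needed only in the ``Type II'' case $\nu(N_{p,2})=1$, exactly as over $\C$.
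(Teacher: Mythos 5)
Your proposal captures the right architecture --- lower bound via the Verbitsky component, Sen's theory as the $p$-adic input, and, for Type II, reduction to a representation-theoretic inequality that is only verified for the four classified deformation types --- and you correctly observe that the classification hypothesis is only needed in Type II. But two ingredients essential to the paper's proof are missing or misdescribed.

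\textbf{The Kuga--Satake step is missing.} Sen's theory places the Sen operator $\Theta$ in $\frg_p(X)^{\tw}_0 \otimes_{\Q_p} C$ and, after a finite extension of $K$, shows the image of inertia lands inside the twisted LLV representation $R(\GSpin(H^2_p(X)))$ (\Cref{Theoremp:Sen is in g_0}). But this does not by itself give $N_{p,i}=\rho_i(N_{p,2})$: the monodromy operator lives on $\Dpst$, a $K^{\ur}_0$-linear object, not on the $\Q_p$-Galois representation, and transporting the constraint across $\Dpst$ requires an intermediary. The paper's intermediary is the Kuga--Satake abelian variety: one takes a $\GSpin(H^2_p(X))$-equivariant embedding of $H^\bullet_p(X)$ into tensor powers of $W=H^1_p(\KS(X,\scrL)^{\times 2})$, makes it $G_K$-equivariant after a finite extension using the Sen-theoretic input, applies $\Dpst$, and identifies the monodromy of $\Dpst(W)$ with $\rho^{\KS}(N_{p,2})$ (\Cref{Lemma:nilpotent operator on Kuga-Satake}). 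This is what establishes $N_p \in \overline{\frg}_{\pst}(X)$ and hence $N_{p,i}=\rho_i(N_{p,2})$ (\Cref{thm:universalmonodromyp-adic}); your proposal treats this implication as automatic, and it is not.

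\textbf{The Sen operator lies in the LLV Lie algebra because of Verbitsky's theorem, not because it is a derivation.} Every Sen (or Hodge--Tate) operator on the cohomology of a smooth proper variety over $K$ is a graded derivation of the cup product; that property carries no hyper-K\"ahler content. What the paper actually uses in the proof of \Cref{Theoremp:Sen is in g_0} is Verbitsky's theorem that the Weil operator lies in $\overline{\frg}_B \otimes_{\Q} \C$ (\Cref{thm:MTinsideLLV}), together with the identification of $\Theta$ with an affine transformation of the Weil operator via the Hodge--Tate decomposition and a choice of embedding. Without naming that geometric input, the ``realized inside $\frg(X)$'' step has no support.

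One smaller remark: for Type II you offer both the GKLR-style check on the LLV decomposition ($\mu_0+\mu_1+\mu_2\le n$) and a ring-generator propagation via the Leibniz rule. The paper uses only the former (\Cref{thm:arithmeticGreenKimetc}, \Cref{thm:LLVdecompositionmu}). The propagation route would first require independent control of $\nu(N_{p,2d})$ on the generating degrees $2d>2$ --- which is precisely the problem you are trying to solve --- so as stated it is not an independent shortcut, although it is sound for monomials once the generator bounds are known.
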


We explore these two conjectures and, in particular, confirm their validity for hyper-Kähler varieties of four known deformation types.
\begin{thm}
\label{Main Theorem for known hk}
Let $X$ be a \hk variety over $K$ of dimension $2n$.
Assume that $X$ is one of four types: $K3^{[n]}$-type, $\Kum_n$-type, $\OG_6$-type, or $\OG_{10}$-type (see \Cref{Definition:known type hyper kahler}).
Then Conjecture \ref{ell adic Nagai Conjecture}
and 
Conjecture \ref{p adic Nagai Conjecture}
hold true for $X$.
\end{thm}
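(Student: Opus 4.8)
The plan is to reduce \Cref{Main Theorem for known hk} to a representation-theoretic statement about the Looijenga--Lunts--Verbitsky Lie algebra $\mathfrak{g}(X)$, of exactly the kind established by Green--Kim--Laza--Robles over $\C$, by transporting all the relevant linear-algebra data — the cohomology ring, the LLV action, the Beauville--Bogomolov--Fujiki (BBF) form, and the monodromy operator — into the $\ell$-adic and $p$-adic settings. Concretely, one wants to show that for each degree $2i$ the monodromy operator is, up to conjugacy, the image under the LLV representation of a single nilpotent element $\mathcal N\in\mathfrak{so}(H^2,q_{\mathrm{BBF}})\subset\mathfrak{g}(X)$, so that $\nu(N_{\bullet,2i})$ becomes a quantity depending only on the nilpotent orbit of $\mathcal N$ and on the $\mathfrak{g}(X)$-module structure of total cohomology — both of which are understood for the four known deformation types.

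First I would set up the LLV structure on arithmetic cohomology. Fixing an embedding $\overline{K}\hookrightarrow\C$ and a complex model $X_\C$, which is of the same deformation type, Artin comparison gives a ring isomorphism $H^*_{\et}(X_{\overline K},\Q_\ell)\otimes_{\Q_\ell}\C\cong H^*(X_\C^{\mathrm{an}},\C)$, so the LLV algebra $\mathfrak{g}(X)\cong\mathfrak{so}(\widetilde{H^2})$ — where $\widetilde{H^2}=H^2\oplus U$ is the Mukai extension — acts on $H^*_{\et}(X_{\overline K},\Q_\ell)$ after extension of scalars, the action descends to $\overline{\Q}_\ell$, it is compatible with cup product, and the degree-zero part $\mathfrak{so}(H^2)\subset\mathfrak{g}(X)$ preserves the cohomological grading. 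For $\ell=p$ the analogous input is more delicate: one must equip $\bigoplus_i \Dpst(H^i_{\et}(X_{\overline K},\Q_p))$ — a graded $K_0^{\ur}$-algebra with commuting $\varphi$- and $N$-actions, by tensor-functoriality of $\Dpst$ — with the LLV action and check it agrees with the one coming from de Rham comparison with $H^*_{\mathrm{dR}}(X/K)$. This is where I expect the new Sen-theoretic method advertised in the abstract to enter, controlling the Hodge-theoretic refinements of the LLV decomposition $p$-adically (and possibly matching the $p$-adic and $\ell$-adic Weil--Deligne data).

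Next I would locate the monodromy operator inside $\mathfrak{g}(X)$. Since the BBF form gives a Galois-equivariant (resp.\ $(\varphi,N)$-equivariant) perfect pairing $\Sym^2 H^2\to\Q_\ell(-2)$ (resp.\ its $\Dpst$-analogue, on which $N$ acts by zero), the operator $N_{\bullet,2}$ lies in $\mathfrak{so}(H^2,q_{\mathrm{BBF}})$ and hence defines a nilpotent $\mathcal N\in\mathfrak{g}(X)$ with $\nu(\mathcal N|_{\widetilde{H^2}})=\nu(N_{\bullet,2})\in\{0,1,2\}$; moreover the Lefschetz $\mathfrak{sl}_2$-triples attached to classes in $\NS(X_{\overline K})_\Q$ commute with the monodromy. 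The key point is then that $N_{\bullet,2i}$ agrees with the LLV-action of $\mathcal N$ on $H^{2i}$: both are degree-preserving derivations of the cohomology ring coinciding on $H^2$, so they coincide on the Verbitsky component $\SH^*(X)$, and on the remaining irreducible $\mathfrak{g}(X)$-summands one uses their explicit description for the four types to rule out any further freedom. Granting this, $\nu(N_{\bullet,2i})=i\,\nu(N_{\bullet,2})$ follows from the Lie-theoretic computation which is essentially what Green--Kim--Laza--Robles prove in establishing \Cref{Nagai Conjecture over C} for these types: for a nilpotent $\mathcal N\in\mathfrak{so}(H^2)$ with $\nu(\mathcal N|_{H^2})\le 2$, the nilpotency index of $\mathcal N$ on the degree-$2i$ part of the LLV module is $i$ times that on $H^2$. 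The $\ell$-adic and $p$-adic cases are then identical, the input being insensitive to the coefficient field. (For Type~I one recovers in particular $N_{\bullet,2i}=0$, since $\mathcal N$ then vanishes on the faithful standard module $\widetilde{H^2}$ and so is zero.)

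I expect the main obstacle to be the case $\ell=p$, and within it the two linked issues of establishing that $\Dpst$ (equivalently de Rham) cohomology of a hyper-Kähler variety of known type carries the expected LLV structure, and that the $p$-adic monodromy $N_{p,\bullet}$ sits inside it compatibly across degrees. This does not seem to follow from a formal transport of the complex picture — the $\mathfrak{sl}_2$-triples generating $\mathfrak{g}(X)$ come from ample/Kähler classes that need not be $K$-rational, and $\Dpst$ does not obviously see them — so a genuinely $p$-adic mechanism is needed, presumably the Sen-theoretic analysis. A secondary difficulty, common to both cases, is the non-Verbitsky part of the identification of $N_{\bullet,2i}$ with $\mathcal N$, which relies on the known $\mathfrak{g}(X)$-module decomposition of total cohomology and is therefore exactly where the restriction to the four deformation types is forced.
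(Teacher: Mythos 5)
Your proposal is broadly in the right ballpark --- you correctly identify the LLV Lie algebra, the reduction to GKLR's representation-theoretic result, and the role of Sen theory for the $p$-adic case --- but there is a genuine gap in the step that makes everything work, and you also miss the case splitting the paper uses.

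The crucial step is showing $N_{\ell,2i}=\rho_{2i}(N_{\ell,2})$, i.e.\ the monodromy operator in degree $2i$ is the image of the degree-2 monodromy under the reduced LLV representation. You justify this by saying both are degree-preserving derivations agreeing on $H^2$ and hence agree on the Verbitsky component, with the rest handled by ``explicit description of the remaining summands.'' This is insufficient: a derivation of the cohomology ring is not determined by its restriction to $H^2$ outside the Verbitsky component, and knowing the LLV decomposition of $H^*$ as a $\mathfrak{g}(X)$-module does not, by itself, constrain an arbitrary degree-preserving nilpotent operator to lie in $\overline{\mathfrak{g}}(X)$. What the paper actually proves (and needs) is that the nilpotent $N_\ell=(N_{\ell,i})_i$ lies in $\overline{\mathfrak{g}}(X)$ a priori, and the mechanism differs by $\ell$: for $\ell\neq p$ this comes from the Mumford--Tate conjecture for the four known types (the Zariski closure of the Galois image lands in $R(\mathop{\mathrm{GSpin}}(H^2))$, so a nilpotent in its Lie algebra lands in $\overline{\mathfrak{g}}_\ell(X)$); for $\ell=p$ it comes from Sen theory (the Sen operator lies in the twisted LLV algebra, which constrains the image of an open subgroup of inertia) together with a Kuga--Satake comparison of $p$-adic monodromy. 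You mention Sen theory but not the Mumford--Tate conjecture, and without one of these inputs the identification $N_{\ell,2i}=\rho_{2i}(N_{\ell,2})$ does not follow.

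You also conflate the three reduction types. For Type III ($\nu(N_{\ell,2})=2$) the paper does not need $N_\ell\in\overline{\mathfrak{g}}(X)$ at all: the bound $\nu(N_{\ell,2i})\leq 2i$ together with the Verbitsky-component lower bound $\nu(N_{\ell,2i})\geq i\,\nu(N_{\ell,2})$ already forces equality. For Type I ($\nu(N_{\ell,2})=0$), one needs $N_\ell\in\overline{\mathfrak{g}}(X)$ plus faithfulness of $\rho_{2i}$. For Type II ($\nu(N_{\ell,2})=1$), one additionally needs the GKLR constraint $\mu_0+\mu_1+\mu_2\leq n$ on highest weights in the LLV decomposition --- the nilpotency index does \emph{not} automatically scale by $i$ for an arbitrary nilpotent in $\mathfrak{so}(H^2)$ acting on an arbitrary $\mathfrak{so}(\widetilde{H}^2)$-module; this is precisely where the restriction to known deformation types bites, via \Cref{thm:LLVdecompositionmu} and the criterion \Cref{thm:arithmeticGreenKimetc}. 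Your phrasing suggests this is a purely Lie-theoretic fact independent of the module, which it is not.
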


Furthermore, we have a geometric description for the monodromy nilpotency index $\nu(N_{\ell,2})$, which does not depend on $\ell$ (including $\ell = p$), in terms of the reduction type of \emph{Kuga--Satake abelian varieties} for any \hk variety $X$ over $K$ with $b_2(X) \geq 4$ (see \Cref{thm:l-adic monodromy}).
We thus obtain the notion of reduction type for \hk varieties $X$ over $K$; we say that $X$ has \emph{Type I, II, or III reduction} if $\nu(N_{\ell,2})= 0,1 \text{ or } 2$ respectively. %
By definition, we have $\nu_{\ell,i} = \nu(N_{\ell,i})$. We actually obtain a satisfactory answer to \Cref{Motivation} for hyper-Kähler varieties in these four types combining with \Cref{Main Theorem for known hk}.
A direct consequence of our discussion, together with Poincaré duality, is the following:

\begin{cor}
Assume $X$ satisfies conditions in \Cref{Main Theorem for known hk}. We have
$
\nu(N_{p,2i}) = \nu(N_{\ell,2i}) 
$
for any prime $\ell$ and any $0 \leq i \leq 2n$.
\end{cor}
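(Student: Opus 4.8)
The plan is to obtain the corollary formally from \Cref{Main Theorem for known hk}, from the $\ell$-independence of $\nu(N_{\ell,2})$ furnished by \Cref{thm:l-adic monodromy}, and from Poincaré duality, with no new geometric input. The first observation is that each of the four deformation types has $b_2(X) \geq 4$, so \Cref{thm:l-adic monodromy} applies and yields
\[
\nu(N_{p,2}) = \nu(N_{\ell,2})
\]
for every prime $\ell$: both sides are computed by the reduction type of a Kuga--Satake abelian variety attached to $X$, which does not involve the auxiliary prime.

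Next I would settle the range $0 \leq i \leq n$. In this range \Cref{Main Theorem for known hk}, i.e.\ \Cref{ell adic Nagai Conjecture} and \Cref{p adic Nagai Conjecture} applied to $X$, gives $\nu(N_{\ell,2i}) = i\,\nu(N_{\ell,2})$ and $\nu(N_{p,2i}) = i\,\nu(N_{p,2})$; since the right-hand sides agree by the previous step, $\nu(N_{p,2i}) = \nu(N_{\ell,2i})$.

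For the remaining range $n < i \leq 2n$ I would use Poincaré duality to reduce to the case just handled. Put $j \coloneqq 2n - i$, so $0 \leq j < n$ and $2j = 4n - 2i$. The cup-product pairing
\[
H^{2i}_{\et}(X_{\overline{K}},\Q_{\ell}) \times H^{4n-2i}_{\et}(X_{\overline{K}},\Q_{\ell}) \longrightarrow H^{4n}_{\et}(X_{\overline{K}},\Q_{\ell}) \cong \Q_{\ell}(-2n)
\]
is perfect and $\Gal(\overline{K}/K)$-equivariant, and the log-monodromy operator on the one-dimensional target vanishes; the Leibniz rule then exhibits $N_{\ell,2i}$ as, up to sign, the transpose of $N_{\ell,4n-2i}$ for this pairing, and since transposition and negation preserve the Jordan type of a nilpotent operator, $\nu(N_{\ell,2i}) = \nu(N_{\ell,4n-2i})$. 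The identical reasoning applies to $\ell = p$ after applying $\Dpst(-)$, which is compatible with duals and with Tate twists (on which $N$ is again zero), so that $\nu(N_{p,2i}) = \nu(N_{p,4n-2i})$. Combining these with the already-established case $0 \leq j \leq n$,
\[
\nu(N_{p,2i}) = \nu(N_{p,2j}) = \nu(N_{\ell,2j}) = \nu(N_{\ell,2i}),
\]
which is the assertion.

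I do not anticipate a genuine obstacle here: granted \Cref{Main Theorem for known hk} and \Cref{thm:l-adic monodromy}, the argument is purely formal. The only places where one should be careful are the precise compatibility of the $\ell$-adic and $p$-adic log-monodromy operators with Poincaré duality --- so that the pairing really does interchange them up to sign and transpose --- and the tensor-functoriality of $\Dpst(-)$ with respect to duality and Tate twisting; both are standard and can simply be cited.
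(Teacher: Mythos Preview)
Your argument is correct and matches the paper's own justification: the corollary is stated as ``a direct consequence of our discussion, together with Poincar\'e duality,'' and your three steps (the $\ell$-independence of $\nu(N_{\ell,2})$ via the Kuga--Satake reduction type, the main theorem for $0 \leq i \leq n$, and Poincar\'e duality for $n < i \leq 2n$) are exactly what is intended. A minor point: the precise reference for $\nu(N_{p,2}) = \nu(N_{\ell,2})$ is \Cref{cor:l-independence of nilpotency index} rather than \Cref{thm:l-adic monodromy} directly, though the content is of course the same.
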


\subsection{General remarks on arithmetic Nagai's conjectures}

As in \cite{GKLR}, the theory of LLV Lie algebras plays a crucial role in our proof.
Verbitsky’s description of the graded algebra generated by the second cohomology inside the full cohomology (often referred to as the Verbitsky component) implies that for any \hk variety $X$ over $K$ of dimension $2n$, 
the inequality
$\nu(N_{\ell, 2i}) \geq i \nu(N_{\ell, 2})$
holds for every $0 \leq i \leq n$ and $\ell$.
Since we have $\nu(N_{\ell, 2i}) \leq 2i$ in general, this implies \Cref{ell adic Nagai Conjecture}
and 
\Cref{p adic Nagai Conjecture} when $X$ has Type III reduction; see \Cref{thm:nagai for type III}.

A crucial step in the study of the case where $X$ has Type I or Type II reduction is to show that the $\ell$-adic and $p$-adic monodromy operators in all (even) degrees are contained in the LLV Lie algebra.
In fact, this inclusion is enough to prove the conjectures in the Type I case; see \Cref{thm:nagai for type I}.

For the four known deformation types, this can be deduced from the Mumford--Tate conjecture for the full cohomology; see \Cref{Section:Galois representations associated with hk varieties}.
On the other hand, for $p$-adic monodromy operators, we provide an alternative approach using Sen's theory as a new input.
This enables us to prove that $p$-adic monodromy operators lie in the LLV Lie algebra without using the Mumford--Tate conjecture, and even without assuming that our hyper-Kähler varieties belong to one of the four known deformation types.
See \Cref{Section:Sen theory and LLV Lie algebras} for details.

\begin{rem}\label{Remark:Griffiths transversality intro}
Along the way, we also present a Kuga--Satake type construction for higher-degree $p$-adic cohomology groups of \hk varieties.
As another application of our calculations, we will see that for any \hk variety $X$ over $K$ with $b_2(X) \geq 4$, the monodromy operator on $\Dpst(H^i_{\et}(X_{\overline{K}},\Q_p))$ satisfies \textit{Griffiths transversality} for all $i$.
See \Cref{Section: p-adic monodromy operators of hyper kahler varieties} for details.
\end{rem}

\begin{rem}
Unlike the $p$-adic version, we can prove $\ell$-adic Nagai's conjecture (\Cref{ell adic Nagai Conjecture}) of Type I only for the four known deformation types.
It is unclear how to show that the $\ell$-adic monodromy operators lie in the LLV Lie algebra in general.
\end{rem}

In the case of Type II reduction, 
we have to investigate the irreducible decomposition of the full cohomology with respect to the action of the LLV Lie algebra.
This decomposition is called the \textit{LLV decomposition}.
In \cite{GKLR}, Green--Kim--Laza--Robles gave a criterion for the validity of Nagai’s conjecture in terms of a representation-theoretic condition on the LLV decomposition.
Then they proved Nagai’s conjecture for the four known deformation types by verifying this representation-theoretic condition.
We employ the same strategy to establish the arithmetic analogues; see \Cref{thm:nagai for type II and known case}.

Finally, we remark that we can also compute nilpotency indices in odd degrees in some cases by a similar strategy; see \Cref{subsec:Nilpotency indices in odd degrees}.
For example, we can prove the following result, as observed by Soldatenkov in the topological case (cf.~\cite[Theorem 3.8]{Soldatenkov20a}).

\begin{thm}[\Cref{prop:type III odd cohomolgoy}]
\label{thm:type III odd cohomolgoy intro}
    Suppose $X$ is a hyper-Kähler variety over $K$ of dimension $2n$ with $b_2(X) \geq 4$ and $b_3(X) \neq 0$. Then we have
    $
    \nu(N_{p,2i+1}) \leq 2i-1
    $
    for any $1 \leq i \leq n-1$.
    If $X$ has Type III reduction, then the equality holds.
\end{thm}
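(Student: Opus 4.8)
The plan is to transplant Soldatenkov's topological argument (\cite[Theorem~3.8]{Soldatenkov20a}) to the $p$-adic setting, using the two new inputs of this paper. Write $N = N_{p,\bullet}$ for the monodromy operator on $\bigoplus_i \Dpst(H^i_{\et}(X_{\overline K},\Q_p))$; it preserves cohomological degree, and for brevity I write $H^\bullet$ for the corresponding $K_0^{\ur}$-vector spaces. By \Cref{Section:Sen theory and LLV Lie algebras}, $N$ is induced by a single nilpotent element $N \in \mathfrak g = \mathfrak g(X_{\overline K}) \cong \so(\widetilde V)$ of the LLV Lie algebra, where $\widetilde V = H^2 \oplus U$ with $U$ a hyperbolic plane. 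Since $N$ commutes with the grading operator $h$, it lies in the degree-zero part $\so(H^2)\oplus\Q h$ of $\mathfrak g$, and being nilpotent it lies in $\so(H^2)$. A Jacobson--Morozov triple $(E, h_N, N)$ can then be chosen inside $\so(H^2)$, so the corresponding copy of $\mathfrak{sl}_2$ commutes with $h$: thus $\mathfrak{sl}_2$-strings stay within a fixed cohomological degree, and for every $m$ the index $\nu(N_{p,m}) = \nu(N|_{H^m})$ equals the largest $\mathfrak{sl}_2$-weight occurring in degree $m$. In particular $\nu(N|_{\widetilde V}) = \nu(N|_{H^2}) = \nu(N_{p,2})$.

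\emph{The inequality.} A \hk variety carries no nonzero holomorphic forms of odd degree, so $h^{2i+1,0}(X) = h^{0,2i+1}(X) = 0$ (the second by Hodge symmetry), and hence the Hodge filtration on $H^{2i+1}_{\dR}(X/K)$ is concentrated in degrees $[1,2i]$. Since the monodromy operator is Griffiths-transversal with respect to this filtration (\Cref{Remark:Griffiths transversality intro}) — it drops the filtration by at most one step — the standard estimate bounding the nilpotency index of the monodromy of a (limit) mixed Hodge structure by the length of its Hodge filtration applies, in its $p$-adic incarnation, to the monodromy-weight filtration centred at $2i+1$: its top graded piece $\mathrm{gr}^W_{2i+1+\nu}$ must carry a Hodge filtration concentrated in $[1+\nu,\,2i]$, which forces $1+\nu \le 2i$, i.e.\ $\nu(N_{p,2i+1}) \le 2i-1$. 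This step is uniform in the reduction type and uses neither $b_3(X)\neq 0$ nor the LLV Lie algebra; the needed weight--monodromy property is available via the Kuga--Satake-type construction of \Cref{Section: p-adic monodromy operators of hyper kahler varieties}.

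\emph{Equality for Type III.} Suppose $X$ has Type III reduction, so $\nu(N|_{H^2}) = 2$ and $N$ is the nilpotent of $\so(\widetilde V)$ acting on $\widetilde V$ as $V_2 \oplus \mathbf 1^{\oplus(b_2-1)}$, with $V_2$ the $3$-dimensional irreducible $\mathfrak{sl}_2$-module. Because $b_3(X)\neq 0$, the structure of the LLV-decomposition of odd cohomology (\cite{GKLR, Soldatenkov20a}) exhibits a $\mathfrak g$-submodule of $H^{\mathrm{odd}}$ isomorphic to a specific irreducible spinorial representation $\mathcal S_\mu$ — the Cartan summand of $\Sym^{n-2}\widetilde V \otimes S$, $S$ a half-spin representation of $\so(\widetilde V)$ — whose lowest graded piece is $H^3$ and is $\so(H^2)$-irreducible. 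By the inequality the $\mathfrak{sl}_2$-weights occurring in degree $2i+1$ are all $\le 2i-1$, so it remains to produce one equal to $2i-1$, and it suffices to find it inside $\mathcal S_\mu$. The easy half of this: the $\mathfrak{sl}_2$-triple lies in $\so(H^2)$ and so kills $U$, hence the two $h$-weight $\pm2$ lines of $\widetilde V$ are $\mathfrak{sl}_2$-fixed; consequently any weight vector of $\Sym^{n-2}\widetilde V \otimes S$ of $h$-weight $c$ and $\mathfrak{sl}_2$-weight $w$ obeys $|w| \le 2n-2-|c|$, which for $c = 2i+1-2n$ reads $|w| \le 2i-1$. (For $i = 1$ this recovers $\nu(N_{p,3}) \le 1$, and since $H^3$, being the lowest piece of the spinorial module $\mathcal S_\mu$, is a nontrivial — hence faithful — $\so(H^2)$-module and $N\neq 0$, one has $N|_{H^3}\neq 0$, so $\nu(N_{p,3}) = 1$.)

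\emph{The hard half and the main obstacle.} What remains is to show, for each $1 \le i \le n-1$, that $\mathcal S_\mu$ contains a nonzero vector of $h$-weight $2i+1-2n$ and $\mathfrak{sl}_2$-weight exactly $2i-1$. Equivalently — fixing an isotropic class $\xi \in H^2$ of monodromy weight $4$ (so $N^2\xi \neq 0$) and a class $\gamma \in H^3$ of monodromy weight $4$ (available by the previous parenthesis) — one wants $\xi^{i-1}\cup\gamma \in H^{2i+1}$ to be nonzero of monodromy weight $4i$, hence of relative weight $2i-1$ by multiplicativity of the monodromy-weight filtration under cup product. This non-vanishing is the heart of the matter and the main anticipated obstacle: one must verify that the extremal weight vector genuinely lies in the Cartan summand $\mathcal S_\mu$, not merely in the ambient $\Sym^{n-2}\widetilde V \otimes S$, which I would establish by a count of $\mathfrak{sl}_2$-weight multiplicities along the $\so(H^2)$-isotypic decomposition of $\mathcal S_\mu$, following \cite{GKLR}. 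Two further points need care: pinning down which spinorial irreducible occurs in $H^{\mathrm{odd}}$ and that its lowest piece is $H^3$; and transplanting the estimate relating the monodromy-weight and Hodge filtrations from limit mixed Hodge structures to $\Dpst$, for which the Griffiths transversality of \Cref{Remark:Griffiths transversality intro} is exactly what is required.
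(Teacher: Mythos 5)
Your overall plan follows the paper's own strategy (reduce to Soldatenkov's computation via the LLV Lie algebra and a normal form for $N$), but there is a genuine gap in your proof of the inequality, and the equality case is left as an announced obstacle rather than carried out.

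\textbf{The inequality.} You want to deduce $\nu(N_{p,2i+1}) \le 2i-1$ from Griffiths transversality combined with a weight--monodromy estimate ``in its $p$-adic incarnation.'' Griffiths transversality by itself gives no nilpotency bound: with $F^1$ equal to the whole space and $F^{2i+1}=0$, the inclusion $N(F^1)\subset F^0 = F^1$ is vacuous. The Hodge-theoretic estimate you gesture at relies on the polarizability of the associated graded of the limit mixed Hodge structure — i.e.\ on the weight--monodromy property — and no $p$-adic avatar of this is established in the paper or in this proposal; pointing at the Kuga--Satake construction does not supply it. The paper's own argument is both correct and simpler: \Cref{Lemma:p adic nilpotency index upper bound in general} bounds $\nu(N_{p,m})$ by $j_1 - j_2$, where $j_1$ (resp.\ $j_2$) is the largest (resp.\ smallest) $j$ with $H^{m-j}(X,\Omega^j_X)\ne 0$, and this is proved purely from $N\varphi = p\varphi N$ and the slope decomposition, no weight--monodromy input required. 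Combined with $H^0(X,\Omega^{2i+1}_X)=H^{2i+1}(X,\mathcal O_X)=0$ (Beauville), this immediately yields $\nu(N_{p,2i+1})\le 2i-1$. You should invoke that lemma directly rather than attempt a transversality route, which as written is not sound.

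\textbf{The equality.} Your reduction of the problem to finding a vector of $h$-weight $2i+1-2n$ and $\mathfrak{sl}_2$-weight exactly $2i-1$ inside the spinorial $\so(\widetilde H^2)$-module in $H^{\mathrm{odd}}$ is exactly the right target, and you correctly identify that this is the heart of the matter. But you explicitly leave this undone, flagging it as ``the main anticipated obstacle.'' The paper discharges it by pointing out that after $\Dpst$ and base change to $\overline K$, the operator $N$ has the \emph{same} normal form on $H^2$ as in the topological Type III case (\Cref{prop:conjugacyclassofN}), so Soldatenkov's weight-multiplicity computation in \cite[Prop.\ 3.15]{Soldatenkov20a} transfers verbatim. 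You would need to actually execute that computation, or at least cite it with the normal-form input made explicit, to have a complete proof.
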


\subsection{Organization of the paper}

In \Cref{Section:Preliminaries on Hk varieties}, we recall some basic definitions to 
fix our notation.
In \Cref{Section:Looijenga--Lunts--Verbitsky algebras}, we develop the theory of LLV Lie algebras for both $p$-adic and $\ell$-adic cohomological realizations, based on results known in the topological case.
In \Cref{Section:Galois representations associated with hk varieties}, we study the relation between Galois representations associated with étale cohomology of \hk varieties and the LLV Lie algebra, and explain that $\ell$-adic monodromy operators belong to the LLV Lie algebra for the four known deformation types.
We also review the Kuga--Satake construction and its relation to the LLV Lie algebra.
In \Cref{section: Reduction Types of Kuga--Satake}, we give a geometric description for the monodromy nilpotency index $\nu(N_{\ell,2})$ in terms of Kuga--Satake abelian varieties.
In \Cref{Section:Sen theory and LLV Lie algebras},
we study the $p$-adic LLV Lie algebra using Sen's theory, and prove that $p$-adic monodromy operators belong to the LLV Lie algebras.
Finally, in \Cref{sec:ArithmeticNagaiConj}, we prove the main results.

\section{Preliminaries}\label{Section:Preliminaries on Hk varieties}

\subsection{Hyper-Kähler varieties}\label{subsection:hyperkahler varieties}

We first recall the definition and basic properties of \hk varieties.
See, for example, \cite{HuybrechtsLecture}, \cite{Bindt}, and \cite{FLTZ22} for more details.

Let $X$ be a smooth projective variety over a field $L$ of characteristic zero.
We will always assume varieties are geometrically connected throughout this paper.
We say $X$ is a \emph{hyper-Kähler variety} if
$\pi_1^{\et} (X_{\overline{L}}) =1$
and $H^0 (X, \Omega^2_{X})=L \cdot  \omega$ for some $2$-form $\omega \colon \cO_X \to \Omega^2_{X}$ which is nowhere degenerate (i.e., the adjunction
$
T_{X} = \Omega_{X}^{1,\vee} \to \Omega_{X}^1
$
is an isomorphism).
The second condition implies the dimension of $X$ is even.
If there is a field embedding
$\sigma \colon L \hookrightarrow \C$,
then $X$ is a hyper-Kähler variety if and only if
the complex manifold
$X_{\sigma,\C}(\C)$
is a \hk manifold in the sense of \cite{HuybrechtsLecture}; see \cite[Lemma 3.1.3]{Bindt} and \cite[Section 2]{FLTZ22}.
Here we set $X_{\sigma,\C}\coloneqq X \otimes_{L,\sigma} \C$.

\begin{rem}
\label{rem:types}
Up to deformation equivalence, there exist the following four known types of complex \hk manifolds.
\begin{enumerate}
    \item{\bf($K3^{[n]}$-type):}
    Let $S$ be a complex algebraic K3 surface, and $n$ a positive integer.
    Then the Hilbert scheme of $n$-points $S^{[n]}$ is a \hk manifold of dimension $2n$.
    A \hk manifold $X$ is called \emph{$K3^{[n]}$-type} if $X$ is deformation equivalent to some $S^{[n]}$. We have $b_2(X) = 23$ if $n \geq 2$ and $b_2(X) = 22$ if $n = 1$. Moreover, $b_{2i+1}(X) =0$.
    \item{\bf($\Kum_{n}$-type):}
    Let $A$ be a complex algebraic abelian surface, %
    and $n\geq 2$ an integer.
    Let $A^{[n+1]}$ be the Hilbert scheme of $n$-points on $A$, and $K_{n}(A)$ the fiber of the summation morphism
    $A^{[n+1]} \rightarrow A$ over $0 \in A$.
    Then $K_{n} (A)$ is a \hk manifold of dimension $2n$.
    A \hk manifold $X$ is called \emph{$\Kum_{n}$-type} (or \emph{generalized Kummer type}) if $X$ is deformation equivalent to some $K_{n}(A)$.
    We have $b_2(X) = 7$ (as $n \geq 2$).
    \item{\bf($\OG_6$-type and $\OG_{10}$-type):}
    There exists a 6-dimensional (resp.~ 10-dimensional) exceptional \hk manifold constructed by O'Grady (\cite{OG6} (resp.~ \cite{OG10})).
    A \hk manifold $X$ is called \emph{$\OG_6$-type} (resp.~ \emph{$\OG_{10}$-type}) if $X$ is deformation equivalent to the O'Grady's example. We have $b_2(X) = 8$ (resp.~ $b_2(X) = 24$).
    Moreover, $b_{2i+1}(X) =0$ for these two deformation types.
\end{enumerate}
\end{rem}

\begin{lem}\label{lem:Deformation Types Stable Geometric Invariance}
	Let $X$ be a hyper-Kähler variety over $L$.
    If $X_{\sigma,\C}$ is of
    \begin{equation}\label{eq:KnownTypes}
    \tag{$\star$}
        K3^{[n]},~\Kum_{n},~\OG_6, \ \text{or } \OG_{10}
    \end{equation}
     types for some embedding $\sigma \colon L \hookrightarrow \C$, then so is $X_{\sigma',\C}$ for any embedding $\sigma' \colon L \hookrightarrow \C$.
\end{lem}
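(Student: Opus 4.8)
The plan is to prove the apparently stronger statement that, for any two embeddings $\sigma,\sigma'\colon L\hookrightarrow\C$, the complex manifolds $X_{\sigma,\C}$ and $X_{\sigma',\C}$ become deformation equivalent after applying a suitable field automorphism of $\C$, and then to observe that membership in one of the four classes \eqref{eq:KnownTypes} is insensitive to such automorphisms.

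First I would reduce to the case where $L$ is finitely generated over $\Q$: being of finite type, $X$ descends to a hyper-Kähler variety $X_0$ over a finitely generated subfield $L_0\subset L$, and each embedding of $L$ restricts to an embedding of $L_0$ inducing the same base change to $\C$, so we may replace $L$ by $L_0$. Let $k\subset L$ be the algebraic closure of $\Q$ in $L$, a number field. After shrinking, $X$ spreads out to a smooth projective morphism $f\colon\cX\to U$ over a smooth, geometrically connected $k$-variety $U$ with function field $L$, whose generic fibre is $X$ and all of whose geometric fibres are hyper-Kähler varieties (the conditions defining a hyper-Kähler variety are open in a smooth proper family in characteristic zero). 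An embedding $\sigma\colon L\hookrightarrow\C$ is then the same as a $\C$-point $u_\sigma$ of $U$ lying over the generic point, with $\cX_{u_\sigma}\cong X_{\sigma,\C}$, and it restricts to an embedding $\tau_\sigma\colon k\hookrightarrow\C$.

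The geometric input is Ehresmann's fibration theorem. If $\tau_\sigma=\tau_{\sigma'}=\tau$, then $u_\sigma$ and $u_{\sigma'}$ lie in the connected complex manifold $(U\otimes_{k,\tau}\C)(\C)$, over which the holomorphic family attached to $f$ is a proper submersion, hence locally trivial; so all its fibres are diffeomorphic and in particular $X_{\sigma,\C}$ and $X_{\sigma',\C}$ are deformation equivalent — so one lies in a given class of \eqref{eq:KnownTypes} exactly when the other does. For arbitrary $\sigma,\sigma'$, pick $g\in\Aut(\C)$ with $g\circ\tau_{\sigma'}=\tau_{\sigma}$; then $g\circ\sigma'$ and $\sigma$ agree on $k$, so the previous case gives that $X_{\sigma,\C}$ is deformation equivalent to $(X_{\sigma',\C})^{g}:=X_{\sigma',\C}\otimes_{\C,g}\C$. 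It thus remains to show: if a hyper-Kähler manifold $Y/\C$ lies in one of the classes \eqref{eq:KnownTypes}, then so does $Y^{g}$ for every $g\in\Aut(\C)$.

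This is the main obstacle. Part of it is formal: $\dim Y^{g}=\dim Y$, and $b_2(Y^{g})=\dim_{\Q_\ell}H^2_{\et}(Y^{g},\Q_\ell)=\dim_{\Q_\ell}H^2_{\et}(Y,\Q_\ell)=b_2(Y)$ by Artin's comparison theorem together with the invariance of $\ell$-adic cohomology under extensions of algebraically closed fields; and, inspecting \Cref{rem:types}, the pair $(\dim,b_2)$ already separates the four classes from one another. So it suffices to see that $Y^{g}$ again lies in \emph{some} class of \eqref{eq:KnownTypes}, for which I would argue as follows. Since projective hyper-Kähler manifolds are dense in each component of the moduli of marked hyper-Kähler manifolds of a fixed type, the deformation equivalence of $Y$ with its standard model ($S^{[n]}$, $K_n(A)$, or an O'Grady example) can be realised through a finite chain of algebraic families of projective hyper-Kähler varieties over connected bases; spreading these out over $\overline{\Q}$ and conjugating by $g$ — an operation that preserves "being such a family" and carries $S^{[n]}$, $K_n(A)$ to $(S^{g})^{[n]}$, $K_n(A^{g})$ and the O'Grady examples to ones of the same type — yields, via one last application of Ehresmann's theorem, that $Y^{g}$ is deformation equivalent to a standard model of the corresponding type. (Equivalently, one may invoke that the moduli spaces of polarised hyper-Kähler varieties of each of the four types are geometrically connected and defined over $\Q$, so that $\Aut(\C)$ permutes their $\C$-points within each type.) Making this "chain of algebraic families" step precise — that is, invoking the connectedness results for moduli of polarised hyper-Kähler varieties of the four known deformation types — is where the real content lies; the rest is bookkeeping.
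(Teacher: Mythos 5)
Your overall strategy is sound and is, in spirit, what proofs of this kind of statement do: descend to a finitely generated field, spread out over a number field $k$, use Ehresmann to handle two embeddings of $L$ restricting to the same embedding of $k$, and then reduce the general case to $\Aut(\C)$-invariance of the classes in \eqref{eq:KnownTypes}. The observation that the pair $(\dim X, b_2(X))$ separates the four classes and is insensitive to $\Aut(\C)$ is correct and is a clean way to conclude once one knows that $Y^g$ lies in \emph{some} class whenever $Y$ does. (The paper itself simply cites \cite[Proposition~2.3]{FLTZ22}, which deals with precisely this invariance, so one cannot compare arguments line by line; your proposal is the natural argument one would expect behind that citation.)

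The step that remains a genuine gap — and you flag it yourself — is the assertion that the deformation equivalence between $Y$ and a standard model can be realised by a finite chain of smooth \emph{projective} families over connected bases, spread out over $\overline{\Q}$. This is not a bookkeeping step: Kähler deformation equivalence of hyper-Kähler manifolds a priori passes through non-algebraic members, and the statement that any two projective hyper-Kähler manifolds of the same deformation type are connected by a chain of projective families is a theorem resting on the Global Torelli theorem and the density of polarised periods (see e.g.\ Soldatenkov's work on the deformation principle, or the connectedness of moduli of polarised hyper-Kähler manifolds of a fixed polarisation type). Equivalently, your parenthetical alternative — geometric connectedness of the moduli of polarised hyper-Kähler varieties of fixed type, defined over a number field — is indeed the standard way to package this, but there one must also be careful to fix the polarisation type, since the moduli of a given deformation type with varying polarisation is not a single connected scheme. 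To turn your sketch into a complete proof you would need to cite one of these connectedness/density results explicitly; without that, the crucial $\Aut(\C)$-invariance of \eqref{eq:KnownTypes} is asserted rather than proved.
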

\begin{proof}
	See \cite[Proposition 2.3]{FLTZ22}.
\end{proof}

\begin{defn}\label{Definition:known type hyper kahler}
    Let $X$ be a hyper-Kähler variety over $L$.
    Let $L' \subset L$ be a subfield which is finitely generated over $\Q$ such that $X$ has a model $X'$ over $L'$.
    We say that $X$ is \textit{of a known deformation type \eqref{eq:KnownTypes}} if there exists an embedding $\sigma \colon L' \hookrightarrow \C$ such that 
    $X'_{\sigma,\C}$ is of a known deformation type as in \eqref{eq:KnownTypes}.
    By \Cref{lem:Deformation Types Stable Geometric Invariance}, this condition is independent of the choices of $L', X'$ and  $\sigma$.
\end{defn}

\begin{prop}
    Let $X$ be a \hk variety over $L$.
    Then there exists a unique quadratic form
    \[
    q \colon H^2_{\et} (X_{\overline{L}}, \widehat{\Z} (1)) \rightarrow \widehat{\Z}
    \]
    that is \emph{primitive} (i.e., if there is another $\widehat{\Z}$-quadratic form $q'$ such that $q = c q'$ with $c \in \widehat{\Z}$, then $c \in \widehat{\Z}^{\times}$) and is a $\Q$-multiple of the quadratic form
    \[
    \alpha \mapsto \int \sqrt{\mathrm{td}_{X_{\overline{L}}}} \alpha^2,
    \]
    such that for any ample line bundle $\mathscr{L}$ on $X$, we have $q (c_1(\mathscr{L})) \in \Z_{>0}$.
    Moreover $q$ is compatible with the action of the absolute Galois group
    $G_L\coloneqq \Gal (\overline{L}/L)$.
    Furthermore, if $L=\C$, $q$ comes from a primitive quadratic form 
    $q \colon {H^2 (X(\C),\Z)} \rightarrow \Z.
    $
    We call $q$ the  \emph{Beauville--Bogomolov--Fujiki (BBF) form} of $X$.
\end{prop}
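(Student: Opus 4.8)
The plan is to build everything out of the quadratic form
\[
q_{\mathrm{td}}\colon H^2_{\et}(X_{\overline{L}},\widehat{\Z}(1))\longrightarrow \widehat{\Z}\otimes_{\Z}\Q,\qquad q_{\mathrm{td}}(\alpha)=\int\sqrt{\mathrm{td}_{X_{\overline{L}}}}\,\alpha^2,
\]
and to exploit that it is defined by purely algebraic operations. Write $2n=\dim X$ (so $n\geq 1$). The class $\sqrt{\mathrm{td}(T_{X_{\overline{L}}})}$ is a polynomial with $\Q$-coefficients in the Chern classes $c_i(T_{X_{\overline{L}}})\in H^{2i}_{\et}(X_{\overline{L}},\widehat{\Z}(i))$ (its degree-one term vanishes); its component of cohomological degree $4n-4$ lies in $H^{4n-4}_{\et}(X_{\overline{L}},\widehat{\Z}(2n-2))\otimes\Q$, is cupped with $\alpha^2\in H^4_{\et}(X_{\overline{L}},\widehat{\Z}(2))$, and the product is sent to $\widehat{\Z}\otimes\Q$ by the trace isomorphism $H^{4n}_{\et}(X_{\overline{L}},\widehat{\Z}(2n))\xrightarrow{\sim}\widehat{\Z}$. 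Since Chern classes, cup products and the trace map are $G_L$-equivariant and compatible with the comparison isomorphisms between cohomology theories, $q_{\mathrm{td}}$ is $G_L$-invariant, and---for a finitely generated subfield $L'\subset L$ with a model $X'$ of $X$ over $L'$ and an embedding $\sigma\colon L'\hookrightarrow\C$, writing $X_\sigma\coloneqq X'\otimes_{L',\sigma}\C$---it corresponds, under the Artin comparison isomorphism, to the extension of scalars to $\widehat{\Z}\otimes\Q$ of the analytic form $\alpha\mapsto\int_{X_\sigma}\sqrt{\mathrm{td}_{X_\sigma}}\,\alpha^2$ on $H^2(X_\sigma(\C),\Q)$. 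I would then define $q$ to be the primitive $\widehat{\Z}$-valued form proportional to $q_{\mathrm{td}}$, with the sign fixed by the condition on ample classes; the real content is that $q_{\mathrm{td}}$ is a nonzero rational multiple of the Beauville--Bogomolov--Fujiki form.

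First I would reduce to the case $L=\C$. Étale cohomology, Chern classes and the trace map are insensitive to extensions of algebraically closed base fields, so (compatibly with Galois actions) $q_{\mathrm{td}}$ for $X/L$ agrees with the one for $X'/L'$; and since $\pi_1^{\et}(X_{\overline{L}})=1$ forces $H_1(X_\sigma(\C),\Z)=0$, hence $H^2(X_\sigma(\C),\Z)$ torsion-free, the Artin comparison gives a $G_L$-compatible identification $H^2_{\et}(X_{\overline{L}},\widehat{\Z}(1))\cong H^2(X_\sigma(\C),\Z)\otimes_{\Z}\widehat{\Z}$ taking $q_{\mathrm{td}}$ to the base change of its analytic counterpart. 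Over $\C$ I would invoke the classical Beauville--Bogomolov--Fujiki theorem (Beauville, Fujiki): there is a primitive integral quadratic form $q_{\mathrm{BBF}}$ on $H^2(X_\sigma(\C),\Z)$, unique up to sign, characterized among $\Q$-valued forms by the property that $\int_{X_\sigma}\alpha^{2n}$ is a fixed $\Q_{>0}$-multiple of $q_{\mathrm{BBF}}(\alpha)^n$, non-degenerate and positive on Kähler classes. Then I would identify $q_{\mathrm{td}}$ with a multiple of it via the generalized Fujiki relation for characteristic classes (Fujiki; see also Huybrechts, Nieper--Wißkirchen, Sawon): for any $\gamma$ in the subring of $H^*(X_\sigma(\C),\Q)$ generated by the Chern classes of $X_\sigma$ one has $\int_{X_\sigma}\gamma\,\alpha^{2n-2k}=C_\gamma\,q_{\mathrm{BBF}}(\alpha)^{n-k}$ for a constant $C_\gamma\in\Q$ with $2k=\deg\gamma$; applied to $\gamma=(\sqrt{\mathrm{td}_{X_\sigma}})_{4n-4}$ this gives $q_{\mathrm{td}}|_{H^2(X_\sigma(\C),\Q)}=c_X\,q_{\mathrm{BBF}}$ with $c_X\in\Q$. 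The crucial point is $c_X\neq 0$ (indeed $c_X>0$): for $n=1$, $\sqrt{\mathrm{td}_{X_\sigma}}=1+[\mathrm{pt}]$ so $q_{\mathrm{td}}$ is the self-intersection form, a nonzero multiple of $q_{\mathrm{BBF}}$; in general the non-vanishing and positivity are part of the Fujiki--Riemann--Roch calculus for hyper-Kähler varieties.

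With this in hand I would set $q\coloneqq q_{\mathrm{BBF}}\otimes_{\Z}\widehat{\Z}=c_X^{-1}q_{\mathrm{td}}$ and verify the assertions. It is $\widehat{\Z}$-valued and a $\Q$-multiple of $q_{\mathrm{td}}$ by construction; it is $G_L$-equivariant since it is a rational multiple of the $G_L$-invariant form $q_{\mathrm{td}}$; it is primitive because the content of a quadratic form on a finitely generated free module is preserved by the flat base change $\Z\to\widehat{\Z}$ while $q_{\mathrm{BBF}}$ is primitive over $\Z$; and $q(c_1(\mathscr{L}))>0$ for $\mathscr{L}$ ample because, after enlarging $L'$ so that $\mathscr{L}$ is defined over $L'$, the class $c_1(\mathscr{L})$ corresponds to the first Chern class of an ample line bundle on $X_\sigma$, on which $q_{\mathrm{BBF}}$ is positive. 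The expression $q=c_X^{-1}q_{\mathrm{td}}$ shows $q$ is independent of the choices of $L'$, $X'$ and $\sigma$. For uniqueness: any primitive $\widehat{\Z}$-valued form $q'$ that is a $\Q$-multiple of $q_{\mathrm{td}}$ satisfies $q'=\lambda q$ with $\lambda\in\Q^{\times}$; writing $\lambda=a/b$ with $a,b$ coprime integers, $bq'=aq$ and comparison of contents gives $a\widehat{\Z}=b\widehat{\Z}$, hence $\lambda\in\Q^{\times}\cap\widehat{\Z}^{\times}=\{\pm 1\}$, and the positivity requirement on ample classes pins down the sign. Finally, if $L=\C$ the comparison isomorphism above exhibits $q$ as the image of the primitive integral form $q_{\mathrm{BBF}}$ on $H^2(X(\C),\Z)$.

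I expect the main obstacle to be the complex-analytic input of the second step, namely the proportionality $q_{\mathrm{td}}=c_X\,q_{\mathrm{BBF}}$ with $c_X\neq 0$: once this and the integral Artin comparison are granted, Galois-equivariance, primitivity, uniqueness, the sign on ample classes, and the descent to $\Z$ over $\C$ are all formal. I would import the generalized Fujiki relation and the non-vanishing of $c_X$ from the literature rather than reprove them.
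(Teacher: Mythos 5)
The paper's ``proof'' of this proposition consists solely of a citation to Bindt's thesis, so there is no internal argument in the paper against which to compare your proposal; what you have written is, in effect, a proof the paper does not supply. Your argument is correct and takes what I believe is the standard route (and very plausibly the route followed in the cited reference).

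Briefly: you correctly observe that $q_{\mathrm{td}}(\alpha)=\int\sqrt{\mathrm{td}_{X_{\overline{L}}}}\,\alpha^2$ is built from Chern classes, cup products and the trace map, hence is $G_L$-equivariant, insensitive to extension of the algebraically closed base field, and matches its Betti analogue under Artin comparison after descending to a finitely generated subfield. Over $\C$ you then invoke the generalized Fujiki relation for polynomials in Chern classes to get $q_{\mathrm{td}}=c_X\,q_{\mathrm{BBF}}$ on $H^2$, and the Fujiki--Riemann--Roch calculus of Nieper--Wi\ss{}kirchen for the nonvanishing and positivity of $c_X$. With $q\coloneqq c_X^{-1}q_{\mathrm{td}}=q_{\mathrm{BBF}}\otimes\widehat{\Z}$, primitivity descends along the flat base change $\Z\to\widehat{\Z}$, $G_L$-equivariance is inherited from $q_{\mathrm{td}}$, positivity on ample classes is the classical positivity of $q_{\mathrm{BBF}}$ on K\"ahler classes, and your content/coprimality argument gives $\Q^{\times}\cap\widehat{\Z}^{\times}=\{\pm1\}$, so uniqueness is pinned down by the sign condition. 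You correctly flag the only genuinely nontrivial external inputs (the generalized Fujiki relation and $c_X>0$) and propose importing them from the literature, which is appropriate given that the paper itself delegates the entire proof to a reference. A very minor point: the Artin comparison $H^2_{\et}(X_{\overline{L}},\widehat{\Z})\simeq H^2(X_\sigma(\C),\Z)\otimes\widehat{\Z}$ does not actually require $H^2(X_\sigma(\C),\Z)$ to be torsion-free (the $\varprojlim$ of the $m$-torsion of $H^3$ vanishes anyway), but since $\pi_1^{\et}=1$ does give torsion-freeness your phrasing is harmless.
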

\begin{proof}
See \cite[Lemma 4.2.1]{Bindt} (and also \cite[Theorem 4.2.4]{Bindt}).
\end{proof}

\subsection{$\ell$-adic monodromy operators}
\label{Subsection:ell adic monodromy operators}

In \Cref{Subsection:ell adic monodromy operators} and \Cref{Subsection:Potentially semistable representations and p adic monodromy operators}, we recall the definition of $\ell$-adic and $p$-adic monodromy operators, respectively.

For $\ell \neq p$, we fix an isomorphism $\Z_\ell(1) \simeq \Z_\ell$.
Let
$t_{\ell} \colon I_K \rightarrow \Z_{\ell} (1) \simeq \Z_\ell$
be the $\ell$-primary tame quotient of $I_K$, i.e.,  the homomorphism defined by $\sigma \mapsto (\sigma (\varpi^{1/\ell^m})/\varpi^{1/\ell^m})_m$
for a uniformizer $\varpi$ of $K$.

Let $X$ be a smooth proper scheme over $K$ of dimension $d$, and $i$ a non-negative integer.
Grothendieck's $\ell$-adic monodromy theorem (see \cite[Expos\'e I, Variante 1.3]{SGA7-I} and also \cite[Appendix]{Serre-Tate68}) implies that
there exists a unique nilpotent endomorphism
\[
N_{\ell, i} \colon H^i_\et(X_{\overline{K}}, \Q_\ell) \to H^i_\et(X_{\overline{K}}, \Q_\ell)
\]
such that for some open subgroup $I \subset I_K$, any $\sigma \in I$ acts on $H^i_\et(X_{\overline{K}}, \Q_\ell)$
as $\exp (t_{\ell} (\sigma) N_{\ell})$.
We call $N_{\ell, i}$ the \textit{($\ell$-adic) monodromy operator}.

We will need the following fact.

\begin{lem}\label{Lemma:ell adic nilpotency index upper bound in general}
    We have $\nu(N_{\ell, i}) \leq i$ for any integer $0 \leq i \leq d$.
\end{lem}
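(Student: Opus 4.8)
The plan is to reduce the bound $\nu(N_{\ell,i})\le i$ to the well-known case $i=1$, using the fact that the monodromy operator is compatible with cup products and that $H^i_\et(X_{\overline K},\Q_\ell)$ is, up to the Künneth formula, a summand of a tensor power of $H^1$ of an auxiliary variety. The cleanest route avoids heavy geometric input: first I would recall that for a (log-)monodromy operator, the cup product map
\[
H^{i}_\et(X_{\overline K},\Q_\ell)\otimes H^{j}_\et(X_{\overline K},\Q_\ell)\to H^{i+j}_\et(X_{\overline K},\Q_\ell)
\]
is a morphism of $I_K$-representations, hence intertwines the nilpotent operators, i.e. $N_{\ell,i+j}(a\cup b)=N_{\ell,i}(a)\cup b+a\cup N_{\ell,j}(b)$. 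By the hard Lefschetz theorem together with the primitive decomposition (all of which are Galois-equivariant), every class in $H^i$ is a sum of cup products of the Lefschetz class $\eta\in H^2$ with primitive classes; but this by itself only relates $\nu(N_{\ell,i})$ to $\nu(N_{\ell,2})$ and $\nu(N_{\ell,i-2})$, which is not quite what we want. So instead I would pass to the standard statement that the monodromy filtration has length at most $i$ in degree $i$.

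Concretely, the key step is the following general fact about mixed Hodge/weight theory in the $\ell$-adic setting, due to Grothendieck and refined by Deligne: after replacing $K$ by a finite extension so that the action of $I_K$ is unipotent, the representation $H^i_\et(X_{\overline K},\Q_\ell)$ underlies a ``weight filtration'' / limit mixed structure whose weights lie in $\{0,1,\dots,2i\}$ and are symmetric about $i$, and $N_{\ell,i}$ is a morphism of weight $-2$ for the monodromy weight filtration $M_\bullet$ centered at $i$. Since the graded pieces $\gr^M_k$ vanish outside $0\le k\le 2i$, the monodromy weight filtration has length $2i$, and a nilpotent operator lowering this filtration by $2$ at each step satisfies $N_{\ell,i}^{i+1}=0$: indeed $N_{\ell,i}^{i+1}$ maps $\gr^M_k$ into $\gr^M_{k-2(i+1)}=0$ for every $k\le 2i$. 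That gives $\nu(N_{\ell,i})\le i$.

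An alternative, more elementary argument I would also consider writing, if one wants to stay within what the paper has cited: reduce to the semistable case via the de Rham/weight spectral sequence (Rapoport--Zink), where $N_{\ell,i}$ is the connecting map of the spectral sequence computing $H^i$ from the cohomology of the components of the special fibre; there the length of the induced monodromy filtration on $E_\infty$ in total degree $i$ is manifestly at most $i$ because the spectral sequence is concentrated in a band of width $i+1$, and $N$ shifts the relevant bigrading by $(2,-2)$ (or $(-1,1)$ depending on normalization), so $N^{i+1}=0$ on $E_1$ hence on $E_\infty$. One then checks the abutment filtration is compatible with $N$, which is part of the Rapoport--Zink formalism.

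The main obstacle is not the idea but choosing a self-contained justification: the cleanest statement ($N$ has weight $-2$ for a monodromy weight filtration of length $2i$) requires invoking the theory of the monodromy weight filtration and its compatibility with the specialization of $\ell$-adic cohomology, which in full generality is a nontrivial input (part of the weight-monodromy package, known unconditionally here since the variety is smooth and one may use alterations to reduce to the semistable case). I expect the write-up to spend most of its effort pinning down the correct reference for ``weights of $H^i$ of a smooth proper variety with unipotent monodromy lie in $[0,2i]$ and $N$ drops the monodromy weight filtration by $2$,'' after which the estimate $N^{i+1}=0$ is a one-line consequence. If one prefers the Rapoport--Zink route, the obstacle shifts to carefully tracking the bidegree shift of $N$ on the weight spectral sequence and the width of the nonzero band, both of which are standard but notation-heavy.
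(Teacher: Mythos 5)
Your second route---reduce to strictly semistable reduction and then read the bound off the Rapoport--Zink weight spectral sequence---is essentially the paper's proof: the paper reduces to the strictly semistable case by de Jong's alteration theorem and then cites SGA~7~I, Cor.~3.4, which is precisely the band-width/bidegree-shift fact you describe. One step you gloss over is worth spelling out: de Jong produces a proper, generically finite $X'\to X_{K'}$ with $X'$ strictly semistable over $\cO_{K'}$ for some finite extension $K'/K$, and since the degree is invertible in $\Q_\ell$, the trace argument realizes $H^i_\et(X_{\overline K},\Q_\ell)$ as a Galois-equivariant direct summand of $H^i_\et(X'_{\overline K},\Q_\ell)$; the bound for $X'$ then gives the bound for $X$. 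You describe this reduction as happening ``via the weight spectral sequence,'' but the reduction mechanism is the alteration, not the spectral sequence.

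Your first route, however, is circular as written. The ``monodromy weight filtration $M_\bullet$ centered at $i$'' is by definition the filtration attached to the nilpotent $N_{\ell,i}$ by Jacobson--Morozov, and for that filtration the statement that $\gr^M_k=0$ for $k\notin[0,2i]$ is \emph{equivalent} to $\nu(N_{\ell,i})\le i$; it cannot be invoked as an input. What is unconditionally available, after passing to semistable reduction, is the abutment weight filtration $W_\bullet$ of the Rapoport--Zink spectral sequence, for which $\gr^W_k H^i=0$ outside $[0,2i]$ and $N_{\ell,i}\,W_k\subset W_{k-2}$ hold for reasons independent of $N_{\ell,i}$ (namely, the $E_1$-page is concentrated in a band of width $i+1$ and $N$ shifts bidegree there). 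That $W_\bullet$ coincides with the monodromy filtration is the weight--monodromy conjecture, which is not needed and not available in general. Replacing $M_\bullet$ by $W_\bullet$ turns your first route into your second one; as stated, only the second route is a proof.
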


\begin{proof}
    Using de Jong's alteration \cite[Theorem 6.5]{deJong96}, we reduce to the case where $X$ has a strictly semistable model over $\mathcal{O}_K$.
    In this case, the assertion follows from \cite[Expos\'e I, Corollaire 3.4]{SGA7-I}.
\end{proof}

\subsection{Potentially semistable representations and $p$-adic monodromy operators}
\label{Subsection:Potentially semistable representations and p adic monodromy operators}

We recall Fontaine's formalism of linear-algebraic data for potentially semistable representations.
Throughout this paper, we will use the following notation.
Let $K_0$ be the fraction field of the ring of Witt vectors $W(k)$.
We may regard $K$ as a finite totally ramified extension of $K_0$.
Let $K_0^{\ur} \subset \overline{K}$ be the maximal unramified extension of $K_0$.
Let $B_{\dR}$, $B_{\cris}$ and $B_{\st}$ be Fontaine's period rings of $K$ {defined in \cite{Fontaine94II}}.

\begin{rem}\label{Remark: some choices for Bst}
As usual, we fix a valuation on $\overline{K}$ and an extended usual $p$-adic logarithm $\log \colon \overline{K}^\times \to \overline{K}$.
Following the constructions in 
\cite{Fontaine94II}, we have the corresponding
$B_{\cris}$-derivation $N \colon B_{\st} \to B_{\st}$, and an embedding $B_{\st} \hookrightarrow B_{\dR}$ over $B_{\cris}$.
\end{rem}

Let $V$ be a $p$-adic $G_K$-representation, that is, a finite-dimensional $\Q_p$-vector space with a continuous action of $G_K$.
(Here $G_K\coloneqq\Gal(\overline{K}/K)$.)
As in \cite{Fontaine94III},
we define
\[
    \Dpst(V) \coloneqq \varinjlim_{L/K}(V \otimes_{\Q_p} B_{\st})^{G_L},
\]
where $L$ runs over all finite extensions of $K$ contained in $\overline{K}$.
Then
$\Dpst(V)$ is a finite-dimensional $K_0^{\ur}$-vector space
with $\dim_{K_0^{\ur}} \Dpst(V) \leq \dim_{\Q_p} V$.
This is equipped with
the Frobenius map
$\varphi \colon \Dpst(V) \to \Dpst(V)$,
the monodromy operator
$N \colon \Dpst(V) \to \Dpst(V)$ (which is a $K_0^{\ur}$-linear map such that $N \varphi = p \varphi N$), and the Hodge filtration
$F^{\bullet}$
(which is a separated and exhaustive decreasing filtration on $\Dpst(V) \otimes_{K_0^{\ur}} \overline{K}$).
We say that $V$ is a \emph{potentially semistable} representation if $\dim_{K_0^{\ur}} \Dpst(V) = \dim_{\Q_p} V$.
In this case, we have a natural isomorphism
\[
\Dpst(V) \otimes_{K_0^{\ur}} B_{\st} \overset{\sim}{\to} V \otimes_{\Q_p} B_{\st}
\]
which is compatible with Frobenius maps and monodromy operators (in the usual sense).
If $V$ is potentially semistable, it is \emph{de Rham}  (by \cite[Th\'eor\`eme in \S 5.6.7]{Fontaine94III})
and we have 
\[
\Dpst(V) \otimes_{K_0^{\ur}} \overline{K} \overset{\sim}{\to} D_{\dR}(V) \otimes_K \overline{K}
\]
that is filtered.
Here $D_{\dR}(V)\coloneqq (V \otimes_{\Q_p} B_{\dR})^{G_K}$ is endowed with the natural filtration from $B_{\dR}$.

Let $X$ be a smooth proper scheme over $K$ of dimension $d$.
Then $H_{\et}^{i}(X_{\overline{K}},\Q_{p})$ is potentially semistable by the validity of the $C_{\st}$-conjecture (see \cite[Theorem 0.2]{Tsuji}) and de Jong's alteration \cite[Theorem 6.5]{deJong96}.
Moreover, the $j$-th successive quotient $F^{j}/F^{j+1}$
of the Hodge filtration $F^\bullet$ of $D_{\pst}(H_{\et}^{i}(X_{\overline{K}},\Q_{p}))$ is isomorphic to $H^{i-j}(X, \Omega^j_X) \otimes_K \overline{K}$.
The monodromy operator $N$ on $D_{\pst}(H_{\et}^{i}(X_{\overline{K}},\Q_{p}))$
will be denoted by $N_{p,i}$ and called the \textit{$p$-adic monodromy operator}.

As in \Cref{Lemma:ell adic nilpotency index upper bound in general}, we have an upper bound of the nilpotency index of $N_{p,i}$.
In fact, in the $p$-adic case, the following slightly more precise statement holds.

\begin{lem}\label{Lemma:p adic nilpotency index upper bound in general}
    Let $0 \leq i \leq d$.
    Let $j_1$ (resp.~ $j_2$) denote the largest (resp.~ smallest) integer $j$
    such that 
    $H^{i-j}(X, \Omega^{j}_X) \neq 0$.
    Then $\nu(N_{p,i}) \leq j_1 - j_2$.
    In particular, we have $\nu(N_{p,i}) \leq i$.
\end{lem}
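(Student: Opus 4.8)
The plan is to reduce to the semistable case and extract the bound from weak admissibility of the associated filtered $(\varphi,N)$-module. Set $V \coloneqq H^i_{\et}(X_{\overline{K}},\Q_p)$ and fix a finite extension $L/K$ inside $\overline{K}$ over which $V$ is semistable (possible since $V$ is potentially semistable). Then $(\Dpst(V),\varphi,N_{p,i})$ is the base change of $(D_{\st}(V|_{G_L}),\varphi,N)$ along the faithfully flat map $L_0 \to K_0^{\ur}$, so $\nu(N_{p,i})$ equals the nilpotency index of $N$ on $D_{\st}(V|_{G_L})$; and since $V$ is de Rham already over $K$, the $j$-th graded piece of the Hodge filtration on $D_{\st}(V|_{G_L}) \otimes_{L_0} L$ is $H^{i-j}(X,\Omega^j_X) \otimes_K L$, so $j_1$ and $j_2$ are unchanged when computed over $L$. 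Hence it suffices to prove the following: if $D$ is a weakly admissible filtered $(\varphi,N)$-module over $L_0$ (e.g.\ $D = D_{\st}(V|_{G_L})$) whose Hodge filtration on $D \otimes_{L_0} L$ satisfies $F^{j_2} = D \otimes_{L_0} L$ and $F^{j_1+1}=0$, then $N^{\,j_1-j_2+1}=0$.

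The heart of the matter is to show that every slope of the isocrystal $(D,\varphi)$ lies in $[j_2,j_1]$. Base changing to $\widehat{L_0^{\ur}}$ (which preserves weak admissibility and the invariants $t_N,t_H$), we may use the Dieudonné--Manin slope decomposition $D=\bigoplus_{\mu} D_{\mu}$; the relation $N\varphi=p\varphi N$ forces $N(D_{\mu})\subseteq D_{\mu-1}$, so $D'\coloneqq\bigoplus_{\mu<j_2}D_{\mu}$ is a $(\varphi,N)$-stable submodule. If $D'\neq 0$ then $t_N(D')<j_2\dim D'$, whereas every jump of the induced Hodge filtration on $D'\otimes L$ is $\geq j_2$, so $t_H(D')\geq j_2\dim D'$, contradicting $t_H(D')\leq t_N(D')$. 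Thus all slopes of $D$ are $\geq j_2$. Applying the same to the dual filtered $(\varphi,N)$-module $D^{*}$ — which is weakly admissible, has monodromy operator of the same nilpotency index, and has Hodge filtration jumps in $[-j_1,-j_2]$ — yields that all slopes of $D$ are $\leq j_1$.

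Once all slopes of $\varphi$ lie in $[j_2,j_1]$ and $N(D_{\mu})\subseteq D_{\mu-1}$, the iterate $N^{k}$ sends $D_{\mu}$ into $D_{\mu-k}$, which is $0$ as soon as $\mu-k<j_2$; since $\mu\leq j_1$ this happens for $k=j_1-j_2+1$, so $N^{\,j_1-j_2+1}=0$ and $\nu(N_{p,i})\leq j_1-j_2$. The final assertion is immediate: $H^{i-j}(X,\Omega^j_X)$ can be nonzero only for $0\leq j\leq i$, so $0\leq j_2\leq j_1\leq i$.

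The main obstacle is the slope estimate in the second paragraph: this is where weak admissibility of $D_{\st}$ of a semistable representation really enters, and it needs care with the Dieudonné--Manin decomposition over $\widehat{L_0^{\ur}}$, with the behaviour of $t_N$ and $t_H$ under that base change, and with the dualization used for the upper slope bound. The remaining steps — the reduction to the semistable case, the compatibility of $N$ with the slope decomposition, and the final counting — are routine.
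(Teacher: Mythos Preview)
Your proof is correct and follows essentially the same route as the paper's: both reduce to a situation where Dieudonné--Manin applies, use that the slopes of the isocrystal lie in $[j_2,j_1]$, and conclude from $N(D_\mu)\subseteq D_{\mu-1}$. The only difference is that the paper obtains the slope bound by citing \cite[Proposition 5.4.2, Remark 4.4.6]{Fontaine94III}, whereas you unpack the weak admissibility argument that underlies that reference.
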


\begin{proof}
We may assume that the residue field $k$ is algebraically closed. In this case, the underlying $F$-isocrystal $D_{\pst}(H^i_{\et}(X_{\overline{K}},\Q_p))$ admits the slope decomposition $\bigoplus_{\alpha \in \Q} D(\alpha)$ by Dieudonné--Manin's classification, where 
$D(\alpha)$ has a single slope $\alpha$.
We have $j_2 \leq \alpha \leq j_1$ for $D(\alpha)\neq 0$, by \cite[Proposition 5.4.2, Remark 4.4.6]{Fontaine94III}.
The $p$-adic monodromy operator satisfies $N_{p,i}\left(D(\alpha)\right) \subset D(\alpha-1)$ since $N_{p, i}\varphi=p \varphi N_{p, i}$, which implies $N^{j_1-j_2+1}_{p, i}=0$.
\end{proof}

\subsection{Representations of orthogonal Lie algebras}\label{Subsection:orthogonal Lie algebra}

Here we recall some facts and fix the notation on representations of orthogonal Lie algebras.
Our main references are \cite{FultonHarris} and \cite[Appendix A]{GKLR}.

Let $F$ be an algebraically closed field of characteristic zero.
Let $V$ be a non-degenerate quadratic space over $F$ of rank $N+2$ with $N>0$.
We consider the orthogonal Lie algebra
$\so(V)$ over $F$ and fix a Cartan subalgebra $\frh \subset \so(V)$.
Let us denote the non-trivial $\frh$-weights of the natural action of $\so(V)$ on 
$V$ by
\[
\{ \pm \epsilon_0, \pm \epsilon_1, \ldots, \pm \epsilon_r \} \subset \frh^*\setminus \{0 \}
\]
where $r = \lfloor \frac{N}{2} \rfloor$.
Then we consider the following positive system of roots $R^+$ and the corresponding set of dominant integral weights $\Lambda^+$:
\begin{itemize}[leftmargin=*]
    \item (Type $B_{r+1}$) If $N =2r+1$, then
    $
    R^+ \coloneqq \{ \epsilon_i + \epsilon_j \}_{i < j} \cup \{ \epsilon_i - \epsilon_j\}_{i < j} \cup \{ \epsilon_i \}_i
    $
    and the set of dominant integral weights is
    \begin{equation}\label{eq:DominantWeightB}
    \Lambda^+ = \Set*{ \mu =\sum_{i=0}^{r} \mu_i \epsilon_i \given \mu_0\geq \mu_1 \geq \cdots \geq \mu_{r} \geq 0, \, \mu_i \in \frac{1}{2} \Z, \, \mu_i-\mu_j \in \Z}.
    \end{equation}
    \item (Type $D_{r+1}$) If $N = 2r$, then $
    R^+ \coloneqq \{ \epsilon_i + \epsilon_j \}_{i < j} \cup \{ \epsilon_i - \epsilon_j\}_{i < j}
    $
    and the set of dominant integral weights is
    \begin{equation}\label{eq:DominantWeightD}
    \Lambda^+ \coloneqq \Set*{\mu =\sum_{i=0}^{r} \mu_i \epsilon_i \given \mu_0 \geq \mu_1 \geq \cdots \geq \mu_{r-1} \geq \vert\mu_r\vert, \, \mu_i \in \frac{1}{2} \Z, \, \mu_i-\mu_j \in \Z}.
    \end{equation}
\end{itemize}
For a dominant integral weight $\mu \in \Lambda^+$,
let $V_{\mu}$ denote the irreducible $\so(V)$-module with highest weight $\mu$.
We will identify $\Lambda^+$ with the set
    $S$
    of
    sequences
    $
    (\mu_0, \mu_1, \dotsc, \mu_r)
    $
    satisfying the conditions as in \eqref{eq:DominantWeightB} or \eqref{eq:DominantWeightD} depending on the parity of $N$.
    Then
    for a sequence
    $(\mu_0, \mu_1, \dotsc, \mu_r) \in S$,
    we have the corresponding irreducible $\so(V)$-module
    \[
    V_{\mu} = V_{(\mu_0, \mu_1, \dotsc, \mu_r)}.
    \]
\begin{rem}\label{Remark:prameter irreducible rep}
    We discuss how the construction
    \[
    (\mu_0, \mu_1, \dotsc, \mu_r) \mapsto V_{(\mu_0, \mu_1, \dotsc, \mu_r)}
    \]
    depends on the Cartan subalgebra $\frh \subset \so(V)$
    and the choices of the numbering and the sign for $\{ \pm \epsilon_i \}_{0 \leq i \leq r}$.
    Let 
    $
    V'_{(\mu_0, \mu_1, \dotsc, \mu_r)}
    $
    be the irreducible $\so(V)$-module corresponding to 
    a sequence
    $(\mu_0, \mu_1, \dotsc, \mu_r) \in S$ with respect to another choice of $\frh'$ and $\{ \pm \epsilon_i' \}_{0 \leq i \leq r}$.
    \begin{itemize}
        \item If $N$ is odd, then
    \[
    V_{(\mu_0, \mu_1, \dotsc, \mu_r)} \simeq V'_{(\mu_0, \mu_1, \dotsc, \mu_r)}
    \]
    for any
    $(\mu_0, \mu_1, \dotsc, \mu_r) \in S$.
    \item If $N$ is even,
    then
    for any
    $(\mu_0, \mu_1, \dotsc, \mu_r) \in S$,
    we have either
    \[
    V_{(\mu_0, \mu_1, \dotsc, \mu_r)} \simeq V'_{(\mu_0, \mu_1, \dotsc, \mu_r)} \quad \text{or} \quad V_{(\mu_0, \mu_1, \dotsc, \mu_r)} \simeq V'_{(\mu_0, \mu_1, \dotsc, -\mu_r)}.
    \]
    \end{itemize}
    To see this, since all Cartan subalgebras of $\so(V)$ are conjugate under inner automorphisms, 
    we may assume that $\frh=\frh'$.
    If $N$ is odd, the result follows from the fact that $\{ \pm \epsilon_i \}_{0 \leq i \leq r}$ and $\{ \pm \epsilon_i' \}_{0 \leq i \leq r}$ are conjugate under the action of the Weyl group.
    If $N$ is even, then $\{ \pm \epsilon_i' \}_{0 \leq i \leq r}$ is conjugate to 
    $\{ \pm \epsilon_0, \ldots, \pm \epsilon_{r-1}, \pm \epsilon_r \}$ or $\{ \pm \epsilon_0, \ldots, \pm \epsilon_{r-1}, \mp \epsilon_r \}$
    under the action of the Weyl group, which will give the same positive system $R^+$.
    These observations now lead to the result.
\end{rem}

\begin{rem}\label{rem:irreducible representations invariant under algebraically closed fields extension}
	Let $F \subset F'$ be an extension of algebraically closed fields.
    The construction
    $V_{(\mu_0,\mu_1,\cdots, \mu_r)}  \mapsto V_{(\mu_0,\mu_1,\cdots, \mu_r)} \otimes_F F'$
    induces a bijection between the set of irreducible representations of $\so(V)$ over $F$ and that of irreducible representations of $\so(V)_{F'}$ over $F'$.
    Indeed, by working with the base changes
    of
    $\frh$ and $\{ \pm \epsilon_i \}_{0 \leq i \leq r}$ to $F'$,
    we can parametrize irreducible representations of $\so(V)_{F'}$ over $F'$ by the same set $S$,
    and
    the irreducible representation of
    $\so(V)_{F'}$ corresponding to $\mu \in S$
    agrees with
    $V_{(\mu_0,\mu_1,\cdots, \mu_r)} \otimes_F F'$.
	\end{rem}

\section{Looijenga--Lunts--Verbitsky Lie algebras of varieties over $p$-adic fields}\label{Section:Looijenga--Lunts--Verbitsky algebras}

In this section, we develop the theory of 
the Looijenga--Lunts--Verbitsky Lie algebra (LLV Lie algebra) for $\ell$-adic and $p$-adic cohomology realizations.

\subsection{Looijenga--Lunts--Verbitsky algebras}\label{subsection:Looijenga--Lunts--Verbitsky algebras}
Let $E$ be a field of characteristic zero.
Let $d \geq 0$ be an integer.
Let $A^{\bullet} = \bigoplus_{0 \leq i \leq 2d} A^i$ be a graded $E$-algebra such that
$A^{\bullet}$ is finite-dimensional as an $E$-vector space and $\dim_E A^{2d} =1$.
We assume that $A^{\bullet}$ is a (graded) \emph{Frobenius algebra} (of degree $2d$),
i.e., the bilinear form
$A^{\bullet} \times A^{\bullet} \to A^{2d} \simeq E$
defined by $(x,y) \mapsto  (x\cdot y)_{2d}$ is non-degenerate.

For any $x \in A^2$,
let $e_x \colon A^{\bullet} \to A^{\bullet +2}$ be the map defined by
$a \mapsto x \cdot a$.
An element $x \in A^2$ satisfies the \emph{Hard Lefschetz (HL) property} if
$
(e_x)^i \colon A^{d-i} \to A^{d+i}
$
is bijective for any integer $0 \leq i \leq d$. On the other hand, we consider the shifted degree map
\[
h \colon A^{\bullet} \to A^{\bullet} \quad \text{such that} \quad h|_{A^{i}} = (i-d)\id_{A^i}
\]
for all $i$. By the Jacobson--Morozov theorem, $x\in A^2$ satisfies the HL property if and only if $(e_x,h)$ extends to an $\fsl_2$-triple, i.e., there is a map $f_x \in \End_{E}(A^{\bullet})$ such that
\[
[e_x, h] = -2e_x, \quad [f_x, h] = 2f_x, \quad [e_x,f_x] = h.
\]
Let $\mathfrak{a}_A \subset A^2$ be the set of all elements satisfying the HL property, which is a Zariski open subset.
We say that $A^{\bullet}$ is \emph{Lefschetz--Frobenius} if $\mathfrak{a}_A \neq 0$.
In this case, we define the \emph{total Lie algebra} of $A^{\bullet}$ as the Lie subalgebra
\[
\frg^{\tot}(A^{\bullet}) \subset \End_{E}(A^{\bullet})
\]
generated by all $\fsl_2$-triples $(e_x, h, f_x)$ for all $x \in \mathfrak{a}_{A}$.

\begin{lem}\label{lem:LLVundercomparison}
    Let $\gamma \colon A^{\bullet} \xrightarrow{\sim} B^{\bullet}$ be an isomorphism of Lefschetz--Frobenius $E$-algebras.
    The isomorphism
    $\End_E(A^{\bullet}) \xrightarrow{\sim} \End_E(B^{\bullet})$
    induced by $\gamma$
    maps $\frg^{\tot}(A^{\bullet})$ isomorphically onto $\frg^{\tot}(B^{\bullet})$.
\end{lem}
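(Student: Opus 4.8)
The plan is to work with the conjugation map
\(\mathrm{Ad}(\gamma)\colon \End_E(A^{\bullet}) \xrightarrow{\sim} \End_E(B^{\bullet})\), \(\phi \mapsto \gamma\circ\phi\circ\gamma^{-1}\), which is automatically an isomorphism of Lie algebras, and to show that it carries the generating set of \(\frg^{\tot}(A^{\bullet})\) onto that of \(\frg^{\tot}(B^{\bullet})\). Since a Lie algebra isomorphism sends the subalgebra generated by a set \(S\) onto the subalgebra generated by its image, this suffices.

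First I would record the elementary compatibilities coming from the fact that \(\gamma\) is a \emph{graded} algebra isomorphism. Because \(\dim_E A^{2d}=1\) and \(\gamma\) is bijective and graded, \(B^{\bullet}\) also has top degree \(2d\), and \(\gamma\) restricts to isomorphisms \(A^i \xrightarrow{\sim} B^i\) for every \(i\). Hence \(\mathrm{Ad}(\gamma)\) sends the shifted-degree operator \(h\) on \(A^{\bullet}\) to the one on \(B^{\bullet}\); and for \(x\in A^2\) it sends the multiplication operator \(e_x\) to \(e_{\gamma(x)}\), since \(\gamma(x\cdot a)=\gamma(x)\cdot\gamma(a)\). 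Because the maps \((e_x)^i\colon A^{d-i}\to A^{d+i}\) and \((e_{\gamma(x)})^i\colon B^{d-i}\to B^{d+i}\) are intertwined by the isomorphisms induced by \(\gamma\), an element \(x\in A^2\) lies in \(\mathfrak{a}_A\) if and only if \(\gamma(x)\in\mathfrak{a}_B\); so \(\gamma\) restricts to a bijection \(\mathfrak{a}_A\xrightarrow{\sim}\mathfrak{a}_B\) (in particular the Lefschetz--Frobenius hypothesis on \(A^{\bullet}\) forces it on \(B^{\bullet}\)).

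Next I would pin down the element \(f_x\) appearing in the \(\fsl_2\)-triple. For fixed \(x\in\mathfrak{a}_A\), the operator \(f_x\in\End_E(A^{\bullet})\) completing \((e_x,h)\) to an \(\fsl_2\)-triple is unique: if \(f\) and \(f'\) both work, then \(n\coloneqq f-f'\) satisfies \([e_x,n]=0\) and \([h,n]=-2n\), so \(n\) is a vector of \(h\)-weight \(-2\) annihilated by \(\mathrm{ad}(e_x)\) in the finite-dimensional \(\fsl_2\)-module \(\End_E(A^{\bullet})\); a highest-weight vector has weight \(\geq 0\), so \(n=0\). Now applying \(\mathrm{Ad}(\gamma)\) to \((e_x,h,f_x)\) yields a triple \((e_{\gamma(x)},h,\mathrm{Ad}(\gamma)f_x)\) in \(\End_E(B^{\bullet})\) with the same bracket relations, so by uniqueness \(\mathrm{Ad}(\gamma)f_x=f_{\gamma(x)}\).

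Finally, \(\frg^{\tot}(A^{\bullet})\) is by definition the Lie subalgebra generated by \(\{e_x, h, f_x : x\in\mathfrak{a}_A\}\), and similarly for \(B^{\bullet}\). The previous two steps show \(\mathrm{Ad}(\gamma)\) maps this generating set bijectively onto \(\{e_y, h, f_y : y\in\mathfrak{a}_B\}\), hence maps \(\frg^{\tot}(A^{\bullet})\) isomorphically onto \(\frg^{\tot}(B^{\bullet})\), as claimed. I do not expect any genuine obstacle here; the only point needing a (short) argument rather than a one-line remark is the uniqueness of \(f_x\), i.e.\ the uniqueness half of Jacobson--Morozov, handled by the weight computation above.
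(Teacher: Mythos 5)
Your proof is correct and amounts to a careful spelling-out of the paper's one-line argument, which simply declares the lemma ``clear from the definition.'' The key observations — that $\mathrm{Ad}(\gamma)$ sends $h$ to $h$, $e_x$ to $e_{\gamma(x)}$, restricts to a bijection $\mathfrak{a}_A \xrightarrow{\sim} \mathfrak{a}_B$, and (via the uniqueness half of Jacobson--Morozov) $f_x$ to $f_{\gamma(x)}$ — are exactly what makes the statement immediate, so this is the same approach with the details filled in.
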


\begin{proof}
    This is clear from the definition.
\end{proof}

\begin{prop}\label{Proposition:scalar extension of gtot}
    Let $E \subset F$ be an inclusion of fields of characteristic zero. 
    Let $A^{\bullet}$ be a Lefschetz--Frobenius $E$-algebra
    and $A^{\bullet}_F\coloneqq A^{\bullet} \otimes_E F$.
    Then
    $
    \frg^{\tot}(A^{\bullet}) \otimes_{E} F = \frg^{\tot}(A_{F}^{\bullet})
    $
    as Lie subalgebras of $\End_{E}(A^{\bullet}) \otimes_E F=\End_{F}(A^{\bullet}_F)$.
\end{prop}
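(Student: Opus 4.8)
The plan is to prove the two inclusions $\frg^{\tot}(A^{\bullet})\otimes_E F \subseteq \frg^{\tot}(A^{\bullet}_F)$ and $\frg^{\tot}(A^{\bullet}_F) \subseteq \frg^{\tot}(A^{\bullet})\otimes_E F$ separately, throughout using the canonical identification $\End_E(A^{\bullet})\otimes_E F = \End_F(A^{\bullet}_F)$. First I would set up how the data entering the definition behaves under base change. Bijectivity of an $E$-linear map between finite-dimensional $E$-vector spaces is detected by the non-vanishing of a determinant, and determinants commute with $-\otimes_E F$; hence $\mathfrak{a}_A\subset A^2$ is exactly the set of $E$-points of a nonempty Zariski-open subset $U$ of the affine space $A^2$ cut out by polynomials over $E$, and $\mathfrak{a}_{A_F}$ is the set of $F$-points of $U_F$. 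Since a field of characteristic zero is infinite, $U$ has $E$-points (so the notation $\frg^{\tot}(A^{\bullet})$ makes sense and $A^{\bullet}_F$ is again Lefschetz--Frobenius), and $\mathfrak{a}_A=U(E)$ is Zariski-dense in $U_F$. Finally, for $x\in\mathfrak{a}_A$ the completion $f_x$ of $(e_x,h)$ to an $\fsl_2$-triple is \emph{unique}: if $f,f'$ are two such completions then $f-f'$ is annihilated by $\ad(e_x)$ and has $\ad(h)$-eigenvalue $-2$, which is impossible in a finite-dimensional representation of the $\fsl_2$-triple $(e_x,h,f)$ unless $f=f'$. This uniqueness also shows that $f_x\otimes_E F$ is the $\fsl_2$-completion of $(e_x,h)$ computed over $F$.

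Granting this, the inclusion $\frg^{\tot}(A^{\bullet})\otimes_E F \subseteq \frg^{\tot}(A^{\bullet}_F)$ is immediate: $\frg^{\tot}(A^{\bullet})$ is the $E$-span of the iterated brackets of the triples $(e_x,h,f_x)$ with $x\in\mathfrak{a}_A$, so $\frg^{\tot}(A^{\bullet})\otimes_E F$ is the $F$-span of the same iterated brackets, and by the previous paragraph each $(e_x,h,f_x)$ with $x\in\mathfrak{a}_A\subseteq\mathfrak{a}_{A_F}$ is one of the generating triples of $\frg^{\tot}(A^{\bullet}_F)$.

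For the reverse inclusion I would first record two consequences of the setup. Since $x\mapsto e_x$ is $E$-linear and $\mathfrak{a}_A$, being a nonempty Zariski-open subset of the affine space $A^2$, is not contained in any linear hyperplane, the set $\{e_x\mid x\in\mathfrak{a}_A\}$ spans $\{e_x\mid x\in A^2\}$ over $E$; hence $e_x\in\frg^{\tot}(A^{\bullet})\otimes_E F$ for every $x\in A^2_F$, and clearly $h\in\frg^{\tot}(A^{\bullet})$. It thus remains to show $f_x\in\frg^{\tot}(A^{\bullet})\otimes_E F$ for every $x\in\mathfrak{a}_{A_F}$. The key point here is that $x\mapsto f_x$ is a morphism of varieties on $U$: the relations $[g,h]=2g$ and $[e_x,g]=h$ form a linear system in $g$ whose coefficients depend regularly (indeed linearly) on $x$ and which has a unique solution over $U$, so by Cramer's rule $g=f_x$ is a regular function of $x$, given by the same formula after base change to $F$. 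This morphism sends the Zariski-dense subset $\mathfrak{a}_A=U(E)$ of $U_F$ into $\frg^{\tot}(A^{\bullet})\subseteq\frg^{\tot}(A^{\bullet})\otimes_E F$, which is a Zariski-closed linear subspace of $\End_F(A^{\bullet}_F)$; therefore it sends all of $\mathfrak{a}_{A_F}$ into $\frg^{\tot}(A^{\bullet})\otimes_E F$. Combining this with the previous sentences, every generating $\fsl_2$-triple of $\frg^{\tot}(A^{\bullet}_F)$ lies in $\frg^{\tot}(A^{\bullet})\otimes_E F$, which together with the first inclusion gives the claim.

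The main obstacle I anticipate is precisely the transfer across the genuinely larger set of Lefschetz classes $\mathfrak{a}_A\subsetneq\mathfrak{a}_{A_F}$: one cannot directly compare the two generating families of $\fsl_2$-triples, and the remedy is the density-plus-regularity argument of the last paragraph (together with the uniqueness of $f_x$). Everything else is routine bookkeeping with the base-change isomorphism $\End_E(A^{\bullet})\otimes_E F=\End_F(A^{\bullet}_F)$.
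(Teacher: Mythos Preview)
Your proposal is correct and follows essentially the same approach as the paper: both argue that $\mathfrak{a}_A$ is Zariski-dense in $\mathfrak{a}_{A_F}$ (since $E$ is infinite), that $\frg^{\tot}(A^{\bullet})\otimes_E F$ is Zariski-closed in $\End_F(A^{\bullet}_F)$, and hence that it contains $e_x$ and $f_x$ for every $x\in\mathfrak{a}_{A_F}$. Your write-up is simply more explicit about the uniqueness of $f_x$ and the regularity of $x\mapsto f_x$, points which the paper's short proof leaves implicit.
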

\begin{proof}
    Since $E$ is an infinite field,
    we see that
    $A^2 \subset A^2_F$ is Zariski dense.
    Thus 
    $\mathfrak{a}_{A} \subset \mathfrak{a}_{A_F}$ is also Zariski dense.
    Since 
    $\frg^{\tot}(A^{\bullet}) \otimes_{E} F$
    is Zariski closed in $\End_{F}(A^{\bullet}_F)$, it follows that $\frg^{\tot}(A^{\bullet}) \otimes_{E} F$
    contains $e_x$ and $f_x$ for all $x \in \mathfrak{a}_{A_F}$. This implies the assertion.
\end{proof}

Let $L$ be a field of characteristic zero.
Let $H^{\bullet}(-)$ be a Weil cohomology theory with coefficients in $E$
for smooth projective varieties $X$ over $L$, such that the hard Lefschetz theorem holds. Then, for any $X$ of dimension $d$, the cohomology algebra $H^{\bullet}(X)$ naturally forms a Lefschetz--Frobenius algebra of degree $2d$.
The \emph{LLV Lie algebra} of $X$ with respect to the cohomology theory $H^{\bullet}(-)$ is the total Lie algebra 
$\frg^{\tot}(H^{\bullet}(X))$.
Let
$\rho \colon \frg^{\tot}(H^{\bullet}(X)) \to \End_{E}(H^{\bullet}(X))$
denote the standard Lie algebra representation.

\begin{ex}\label{Example:Betti LLV}
    Assume that $L=\C$.
    We denote by $\frg_B(X)$ the LLV Lie algebra with respect to the Betti realization
    $H^{\bullet}_B(X)=H^{\bullet}(X(\C), \Q)$.
\end{ex}

\begin{ex}\label{Example:etale LLV}
    The LLV Lie algebra of $X$ with respect to the $\ell$-adic étale cohomology
    {$H^\bullet_\ell(X)=
    H_{\et}^{\bullet}(X_{\overline{L}},\Q_{\ell})$} is denoted by $\frg_\ell(X)$.
    There is a natural continuous $G_L$-action on $\frg_\ell(X)$.
    For an embedding $\overline{L} \hookrightarrow \C$, we have a natural identification of $\Q_{\ell}$-Lie algebras
    \begin{equation}\label{eq:BettiEtaleCompare}
    \frg_B(X  \otimes_L \C ) \otimes_{\Q} \Q_{\ell} \simeq \frg_\ell(X).
    \end{equation}
    by \Cref{Proposition:scalar extension of gtot}.
\end{ex}

Let the notation be as in Section \ref{Subsection:Potentially semistable representations and p adic monodromy operators}.
Let $A^\bullet$ be a Lefschetz--Frobenius algebra over $\Q_p$ with a continuous $G_K$-action that is compatible its algebra structure.
Assume that $A^\bullet$ is potentially semistable.
Since $\frg^{\tot}(A^{\bullet})$ is a $G_K$-subrepresentation of $\End_{\Q_p}(A^{\bullet})$, we obtain 
a Lie subalgebra
\[
\Dpst(\frg^{\tot}(A^{\bullet})) \subset \Dpst(\End_{\Q_p}(A^{\bullet}))=\End_{K_0^{\ur}}(\Dpst(A^{\bullet})).
\]
On the other hand
$\Dpst(A^{\bullet})= \bigoplus_i \Dpst(A^i)$
is naturally a Lefschetz--Frobenius algebra over
$K_0^{\ur}$
and thus we have the total Lie algebra
\[
\frg^{\tot}(\Dpst(A^{\bullet})) \subset \End_{K_0^{\ur}}(\Dpst(A^{\bullet}))
\]
of $\Dpst(A^{\bullet})$.

\begin{prop}\label{Proposition:LLV algebra for potentially semistable representation}
    If $A^\bullet$ is potentially semistable, then we have an identification
    \[
    \Dpst(\frg^{\tot}(A^{\bullet})) = \frg^{\tot}(\Dpst(A^{\bullet}))
    \]
    as Lie subalgebras of $\End_{K_0^{\ur}}(\Dpst(A^{\bullet}))$.
\end{prop}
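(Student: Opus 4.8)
The plan is to deduce this from \Cref{Proposition:scalar extension of gtot} by base-changing everything along the $p$-adic comparison isomorphism and then descending. I will use freely, as in the discussion preceding the statement, that the category of potentially semistable $\Q_p$-representations of $G_K$ is closed under subquotients, direct sums, tensor products and duals, and that $\Dpst$ restricted to it is a faithful exact tensor functor; in particular $\Dpst(\End_{\Q_p}(A^\bullet))=\End_{K_0^{\ur}}(\Dpst(A^\bullet))$, the subrepresentation $\frg^{\tot}(A^\bullet)\subset\End_{\Q_p}(A^\bullet)$ is again potentially semistable so that $\Dpst(\frg^{\tot}(A^\bullet))$ is a genuine Lie subalgebra of $\End_{K_0^{\ur}}(\Dpst(A^\bullet))$, and $\Dpst(A^\bullet)$ is a graded Frobenius $K_0^{\ur}$-algebra of degree $2d$.

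First I would introduce the field $F\coloneqq\Frac(B_{\st})$, which contains both $\Q_p$ and $K_0^{\ur}$ (the latter via $K_0^{\ur}\subset B_{\cris}\subset B_{\st}$), compatibly with $K_0^{\ur}\hookrightarrow B_{\st}$. By functoriality of $\Dpst$ applied to $A^\bullet$ viewed as a $G_K$-equivariant graded algebra, the comparison isomorphism $\Dpst(A^\bullet)\otimes_{K_0^{\ur}}B_{\st}\xrightarrow{\sim}A^\bullet\otimes_{\Q_p}B_{\st}$ is an isomorphism of graded $B_{\st}$-algebras; tensoring with $F$ over $B_{\st}$ yields an isomorphism of graded $F$-algebras
\[
\beta\colon\Dpst(A^\bullet)\otimes_{K_0^{\ur}}F\xrightarrow{\sim}A^\bullet\otimes_{\Q_p}F.
\]
Since $A^\bullet$ is Lefschetz--Frobenius, \Cref{Proposition:scalar extension of gtot} shows $A^\bullet\otimes_{\Q_p}F$, and hence $\Dpst(A^\bullet)\otimes_{K_0^{\ur}}F$, is Lefschetz--Frobenius; as the Hard Lefschetz locus is cut out by non-vanishing of a fixed family of polynomials with coefficients in the infinite field $K_0^{\ur}$, it follows that $\Dpst(A^\bullet)$ itself is Lefschetz--Frobenius over $K_0^{\ur}$ (this also justifies the claim made in the paragraph preceding the statement). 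Then \Cref{Proposition:scalar extension of gtot} gives $\frg^{\tot}(A^\bullet\otimes_{\Q_p}F)=\frg^{\tot}(A^\bullet)\otimes_{\Q_p}F$ and $\frg^{\tot}(\Dpst(A^\bullet)\otimes_{K_0^{\ur}}F)=\frg^{\tot}(\Dpst(A^\bullet))\otimes_{K_0^{\ur}}F$, while \Cref{lem:LLVundercomparison} applied to $\beta$ says that the isomorphism of endomorphism algebras induced by $\beta$ carries $\frg^{\tot}(A^\bullet\otimes_{\Q_p}F)$ onto $\frg^{\tot}(\Dpst(A^\bullet)\otimes_{K_0^{\ur}}F)$. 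Hence $\beta$ carries the subspace $\frg^{\tot}(A^\bullet)\otimes_{\Q_p}F$ onto the subspace $\frg^{\tot}(\Dpst(A^\bullet))\otimes_{K_0^{\ur}}F$.

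Next I would observe that the endomorphism-algebra isomorphism induced by $\beta$ is exactly the base change to $F$ of the comparison isomorphism for the potentially semistable representation $\End_{\Q_p}(A^\bullet)$, and that the comparison isomorphism of the subrepresentation $\frg^{\tot}(A^\bullet)$ is the restriction of the latter; therefore that same isomorphism carries $\Dpst(\frg^{\tot}(A^\bullet))\otimes_{K_0^{\ur}}F$ onto $\frg^{\tot}(A^\bullet)\otimes_{\Q_p}F$. Comparing with the previous step gives
\[
\Dpst(\frg^{\tot}(A^\bullet))\otimes_{K_0^{\ur}}F=\frg^{\tot}(\Dpst(A^\bullet))\otimes_{K_0^{\ur}}F
\]
as $K_0^{\ur}$-subspaces of $\End_{K_0^{\ur}}(\Dpst(A^\bullet))\otimes_{K_0^{\ur}}F$. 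Since a $K_0^{\ur}$-subspace $W$ of a $K_0^{\ur}$-vector space $V_0$ is recovered as $(W\otimes_{K_0^{\ur}}F)\cap V_0$ inside $V_0\otimes_{K_0^{\ur}}F$ (equivalently, submodules descend along the faithfully flat field extension $K_0^{\ur}\to F$), the two Lie subalgebras $\Dpst(\frg^{\tot}(A^\bullet))$ and $\frg^{\tot}(\Dpst(A^\bullet))$ of $\End_{K_0^{\ur}}(\Dpst(A^\bullet))$ coincide.

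The step I expect to require the most care is the bookkeeping in the last two paragraphs: checking that the single comparison isomorphism of $A^\bullet$ simultaneously matches the algebra structures (so that \Cref{lem:LLVundercomparison} applies), the ambient endomorphism algebras, and the two subspaces $\Dpst(\frg^{\tot}(A^\bullet))$ and $\frg^{\tot}(A^\bullet)\otimes_{\Q_p}F$ inside them. This is formal given that $\Dpst$ is a faithful exact tensor functor on potentially semistable representations, but it is where one must be attentive; everything else reduces mechanically to \Cref{Proposition:scalar extension of gtot}. One could alternatively try to argue over $B_{\st}$ itself, using that it is faithfully flat over $K_0^{\ur}$, but that would require extending the total Lie algebra formalism to a non-field base, so passing to $\Frac(B_{\st})$ is cleaner.
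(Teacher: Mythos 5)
Your proof is correct and takes essentially the same approach as the paper: base-change to a large field over which the comparison isomorphism for $\Dpst$ is available, invoke \Cref{Proposition:scalar extension of gtot} and \Cref{lem:LLVundercomparison}, and descend by flatness. The only inessential difference is your choice of auxiliary field $\Frac(B_{\st})$ where the paper uses $B_{\dR}$ (via $B_{\st}\hookrightarrow B_{\dR}$); you also spell out why $\Dpst(A^\bullet)$ is Lefschetz--Frobenius, a point the paper states without comment in the paragraph preceding the proposition.
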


\begin{proof}
    We have the inclusions
    $K_0^{\ur} \subset \overline{K} \subset B_{\dR}$
    of fields.
    It suffices to prove the equality after tensoring with $B_{\dR}$.
    Since $\End_{\Q_p}(A^{\bullet})$ is potentially semistable, so is $\frg^{\tot}(A^{\bullet})$.
    Thus we have the following identifications
    \[
    \Dpst(\frg^{\tot}(A^{\bullet})) \otimes_{K_0^{\ur}} B_{\dR} 
    = \frg^{\tot}(A^{\bullet}) \otimes_{\Q_p} B_{\dR}
    = \frg^{\tot}(A^{\bullet} \otimes_{\Q_p} B_{\dR})
    \]
    where the second equality follows from Proposition \ref{Proposition:scalar extension of gtot}.
    On the other hand,
    we have
    \[
    \frg^{\tot}(\Dpst(A^{\bullet})) \otimes_{K_0^{\ur}} B_{\dR} = \frg^{\tot}( \Dpst(A^{\bullet}) \otimes_{K_0^{\ur}} B_{\dR}) = \frg^{\tot}( A^{\bullet}\otimes_{\Q_p} B_{\dR})
    \]
    by Proposition \ref{Proposition:scalar extension of gtot} again.
    The assertion follows from these equalities. 
\end{proof}

\begin{ex}\label{Example:p-adic LLV}
Let $X$ be a smooth projective variety over $K$.
We
define
\[
\frg_{\pst}(X)\coloneqq \Dpst(\ge_p(X)) = \frg^{\tot}(\Dpst(H_{\et}^{\bullet}(X_{\overline{K}},\Q_{p}))),
\]
which is a Lie algebra over $K_0^{\ur}$ with a Frobenius map, a monodromy operator, and a filtration (on $\frg_{\pst}(X) \otimes_{K_0^{\ur}} \overline{K}$).
We have a natural isomorphism
of Lie algebras over $B_{\dR}$
\begin{equation}\label{eq:deRham comparison}
\frg_{\pst}(X)\otimes_{K_0^{\ur}} B_{\dR} \simeq \ge_p(X) \otimes_{\Q_p} B_{\dR}.
\end{equation}
\end{ex}

\subsection{LLV Lie algebras of hyper-Kähler varieties}\label{Subsection:LLV Lie algebras of hyper-Kähler varieties}
Here we record some important features of the LLV Lie algebras of hyper-Kähler varieties.

\begin{setup}\label{Definition:LLV algebra Betti etale p-adic notation}
Let us consider one of the following realization functors:
\begin{enumerate}[(a)]
    \item Let $X$ be a $2n$-dimensional \hk variety over $\C$.
    We set
    \[
    H^\bullet(X)\coloneqq H^\bullet_B(X)=H^\bullet(X(\C), \Q) \quad \text{and} \quad \ge(X)\coloneqq \ge_B(X)
    \]
    (see Example \ref{Example:Betti LLV}).
    Let $E\coloneqq  \Q$ and $F\coloneqq \C$.
    \item Let $X$ be a $2n$-dimensional \hk variety over a field $L$ of characteristic zero.
    We set
    \[
    H^\bullet(X)\coloneqq H^\bullet_\ell(X)=H^\bullet_{\et}(X_{\overline{L}}, \Q_\ell) \quad \text{and} \quad \ge(X)\coloneqq \ge_\ell(X)
    \]
    (see Example \ref{Example:etale LLV}).
    Let $E \coloneqq  \Q_\ell$ and $F \coloneqq \overline{\Q}_\ell$.
    \item 
    Let $X$ be a $2n$-dimensional \hk variety over $K$.
    We abuse notation slightly and write
    \[
    H^\bullet(X)\coloneqq H^\bullet_{\pst}(X)\coloneqq \Dpst(H_{\et}^{\bullet}(X_{\overline{K}},\Q_{p})) \quad \text{and} \quad \ge(X)\coloneqq \ge_{\pst}(X)
    \]
    (see Example \ref{Example:p-adic LLV}).
    Let $E \coloneqq  K^{\ur}_0$ and $F \coloneqq \overline{K}$.
    Note that, the Beauville--Bogomolov--Fujiki form $q$ on $H^2_{\et} (X_{\overline{K}}, \Q_p)$ induces a form on $H^2(X)$.
    We also write this form as $q$ and call it the Beauville--Bogomolov--Fujiki (BBF) form.
\end{enumerate}
In all cases,
$\ge(X) \subset \End_E(H^\bullet(X))$
is a Lie subalgebra over $E$ containing the degree operator $h$.
\end{setup}

\begin{rem}\label{Remark:Tate twist}
     Strictly speaking,
     the BBF form $q$ should be defined as a quadratic form on 
     $H^2(X)(1)\coloneqq H^2(X) \otimes_E E(1)$
     where $E(1)$ is the Tate twist, though one can choose an isomorphism
     $E(1) \simeq E$
     so that we can regard $q$ as a quadratic form on 
     $H^2(X)$.
     The resulting quadratic form on $H^2(X)$ is independent of the choice of $E(1) \simeq E$, up to scalar multiplication.
\end{rem}

We can describe the LLV Lie algebra $\ge(X)$ using the quadratic space
\[
\widetilde{H}^2 (X) \coloneqq  H^2 (X) \oplus U,
\]
where $U= E v \oplus Ew$ is equipped with the quadratic form $(a, b) \mapsto ab$.
The quadratic space
$\widetilde{H}^2 (X)$
is called the \textit{Mukai completion} of 
$H^2 (X)$.

\begin{rem}\label{Remark:Mukai completion as Frobenius algebra}
    We endow $\widetilde{H}^2 (X)$
    with the structure of a graded algebra
    $A^\bullet$ over $E$
    with grading given by
    \[
    A^0=Ev=E, \quad A^2=H^2 (X), \quad A^4=Ew=E
    \]
    and with multiplication determined by
    $x \cdot y = q(x, y)w$
    for all $x,y \in A^2=H^2 (X)$.
    Then $A^\bullet$ is a Lefschetz--Frobenius $E$-algebra of degree 4.
    By \cite[Theorem 9.1]{VerbitskyThesis},
    the total Lie algebra of 
    $A^\bullet$
    is $\so (\widetilde{H}^2 (X))$.
\end{rem}

\begin{thm}\label{Theorem:structure of LLV}
\label{defn-thm:llv}
In \Cref{Definition:LLV algebra Betti etale p-adic notation} (a), (b) or (c), the following assertions hold.
\begin{enumerate}
    \item
    The LLV Lie algebra $\ge(X)$ over $E$ is a semisimple Lie algebra.
    \item 
    There exists a decomposition 
    $
    \ge(X) = \ge(X)_2 \oplus \ge(X)_0 \oplus \ge(X)_{-2},
    $
    where $\ad h$ acts on $\ge(X)_{i}$ as multiplication by $i$.
    If we set $
    \overline{\ge}(X)\coloneqq [\ge(X)_0, \ge(X)_0],
    $
    then
    \[
    \ge(X)_0 = \overline{\ge}(X) \oplus E h.
    \]
    The subspace $\ge(X)_2$ (resp.~$\ge(X)_{-2}$) is generated by operators $e_x$ (resp.~$f_x$) for $x \in H^2(X)$ satisfying the HL property.
    \item
    The action of $\overline{\ge}(X)$ on
    $H^\bullet(X)$ preserves the grading.
    For $i \geq 0$, let
    \[
    \rho_i \colon \overline{\ge}(X) \rightarrow \End_E(H^i (X)) 
    \]
    be the natural representation.
    Then $\rho_{2}$ induces an isomorphism of Lie algebras over $E$
    \[
    \rho_2 \colon \overline{\ge}(X) \simeq \so (H^2 (X)).
    \]
    \item
    The isomorphism
    $
    \rho_2
    $
    extends uniquely to an isomorphism
    \[
    \psi \colon \ge(X) \simeq \so (\widetilde{H}^2 (X))
    \]
    of Lie algebras over $E$ with the following properties:
    \begin{itemize}
        \item $\psi(h)$ is zero on $H^2 (X)$ and we have $\psi(h)(v)=-2v$ and $\psi(h)(w)=2w$.
        \item For any $x \in H^2 (X)$,
        we have $\psi(e_x)(v)=x$, $\psi(e_x)(w)=0$, and $\psi(e_x)(y)=q(x, y)w$ for every $y \in H^2 (X)$.
    \end{itemize}
\end{enumerate}
\end{thm}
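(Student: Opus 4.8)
The plan is to reduce all three cases to the single known structural fact recorded in Remark~\ref{Remark:Mukai completion as Frobenius algebra}, namely that for the four-dimensional Lefschetz--Frobenius algebra $A^\bullet$ built on the Mukai completion $\widetilde{H}^2(X) = H^2(X) \oplus U$, Verbitsky's theorem (\cite[Theorem 9.1]{VerbitskyThesis}) identifies $\frg^{\tot}(A^\bullet)$ with $\so(\widetilde{H}^2(X))$. First I would treat case~(a), where $H^\bullet(X) = H^\bullet_B(X)$ and the statement is classical: (1) and (3) are the Looijenga--Lunts and Verbitsky results on the Betti LLV algebra, (2) is the $\mathfrak{sl}_2$-weight decomposition of $\frg^{\tot}$ under $\ad h$ (the $e_x$ raise degree by $2$, the $f_x$ lower it, and $\ge(X)_0$ is the degree-preserving part, which contains $h$ and whose derived algebra is $\overline{\ge}(X)$ by semisimplicity), and (4) is the assertion that the abstract isomorphism $\rho_2$ on the degree-$2$ part extends compatibly to the whole algebra after adjoining the operators $e_x, f_x$ — this is exactly the content of matching $\ge(X)$ with $\frg^{\tot}(A^\bullet) = \so(\widetilde{H}^2(X))$ via the Mukai completion, with the explicit formulas for $\psi(h)$ and $\psi(e_x)$ being a direct unwinding of the $A^\bullet$-module structure on $\widetilde{H}^2(X)$.

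Next I would deduce case~(b) from case~(a) by base change. Choose an embedding $\overline{L} \hookrightarrow \C$; then \eqref{eq:BettiEtaleCompare} of Example~\ref{Example:etale LLV}, which rests on Proposition~\ref{Proposition:scalar extension of gtot}, gives $\ge_\ell(X) \simeq \ge_B(X_\C) \otimes_\Q \Q_\ell$, and similarly $H^2_\ell(X) \simeq H^2_B(X_\C) \otimes_\Q \Q_\ell$ as quadratic spaces (compatibly with the BBF forms, up to the Tate-twist scalar of Remark~\ref{Remark:Tate twist}). Semisimplicity, the $\ad h$-weight decomposition, the isomorphism $\overline{\ge}(X) \simeq \so(H^2(X))$, and the extension $\psi$ are all preserved under the field extension $\Q \subset \Q_\ell$ — for the orthogonal Lie algebra this is because $\so(V) \otimes_E F = \so(V \otimes_E F)$ and because forming $\frg^{\tot}$ commutes with scalar extension (Proposition~\ref{Proposition:scalar extension of gtot}). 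So every clause transports verbatim, with $E = \Q_\ell$, $F = \overline\Q_\ell$.

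Finally, case~(c) follows from case~(b) with $\ell = p$ by applying Fontaine's functor $\Dpst$. By Example~\ref{Example:p-adic LLV} we have $\ge_{\pst}(X) = \Dpst(\ge_p(X)) = \frg^{\tot}(\Dpst(H^\bullet_{\et}(X_{\overline{K}},\Q_p)))$, the second equality being Proposition~\ref{Proposition:LLV algebra for potentially semistable representation}. Since $\Dpst$ is an exact tensor functor on potentially semistable representations, it carries the semisimple Lie algebra $\ge_p(X)$ over $\Q_p$ to a semisimple Lie algebra over $K_0^{\ur}$, carries the $\ad h$-weight decomposition (note $h$ is Galois-equivariant, hence defines an element of $\ge_{\pst}(X)$) to the analogous decomposition, carries $\so(H^2_p(X))$ to $\so(\Dpst(H^2_p(X))) = \so(H^2(X))$ since $\Dpst$ commutes with forming orthogonal Lie algebras of a quadratic space, and carries the isomorphism $\psi$ of case~(b) over $\Q_p$ to the desired isomorphism over $K_0^{\ur}$; the explicit formulas for $\psi(h), \psi(e_x)$ persist because $\Dpst$ is $E$-linear and compatible with the algebra structure. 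The one point requiring care throughout is clause~(1): one must know $\ge(X)$ is semisimple, not merely reductive, in order to split off $Eh$ from $\ge(X)_0$ in clause~(2) and to conclude $\overline\ge(X)$ acts on $H^2(X)$ as the full $\so(H^2(X))$ in clause~(3); in case~(a) this is Verbitsky/Looijenga--Lunts, and in cases~(b),(c) it is inherited because semisimplicity of a Lie algebra over a field of characteristic zero is insensitive to extension of the base field (and $\Dpst$ of a semisimple Lie algebra with its bracket is again semisimple, the Killing form being preserved). I expect the main obstacle — really the only non-formal point — to be checking that the \emph{explicit normalizations} in clause~(4) (the signs $\psi(h)(v) = -2v$, $\psi(h)(w) = 2w$ and the formula $\psi(e_x)(y) = q(x,y)w$) are exactly the ones produced by the Mukai-completion description of Remark~\ref{Remark:Mukai completion as Frobenius algebra} in each realization, so that the three cases are genuinely compatible rather than merely abstractly isomorphic; this is a bookkeeping matter but must be done consistently with the Tate-twist conventions of Remark~\ref{Remark:Tate twist}.
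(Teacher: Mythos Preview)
Your proposal is correct and, for cases~(a) and~(b), matches the paper's argument (one small omission: before choosing an embedding $\overline{L}\hookrightarrow\C$ in case~(b), the paper first replaces $L$ by a finitely generated subfield over which $X$ is defined, since $L$ need not itself embed in $\C$).

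For case~(c) your route differs slightly from the paper's. You transport all of (1)--(4) from $\ge_p(X)$ to $\ge_{\pst}(X)$ by applying $\Dpst$ as an exact tensor functor. The paper instead checks (1)--(3) after base-changing both sides to the field $B_{\dR}$ via \eqref{eq:deRham comparison}, and for (4) first proves \emph{uniqueness} of $\psi$ (from the fact that $\ge(X)_{-2}$ is spanned by the $f_x$, so $\psi$ is determined on all of $\ge(X)$ by the stated formulas plus Jacobson--Morozov in $\widetilde{H}^2(X)$), then observes that an isomorphism with the required properties exists over $B_{\dR}$ (coming from the $\ge_p(X)$ case) and must therefore descend to $K_0^{\ur}$. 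The practical difference is that your approach needs $\psi\colon\ge_p(X)\to\so(\widetilde{H}^2_p(X))$ to be $G_K$-equivariant before you can apply $\Dpst$ to it, and this is exactly the Tate-twist bookkeeping you flag at the end: one must equip $v$ and $w$ with the correct twists so that the multiplication $x\cdot y=q(x,y)w$ on the Mukai completion is Galois-compatible. The paper's $B_{\dR}$-descent avoids ever needing $\psi$ to be equivariant, only to exist over a common overfield and to be unique there; this buys a cleaner argument for~(4), while your tensor-functor approach is more conceptual for (1)--(3).
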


\begin{proof}
    For the Betti realization $\ge_B(X)$, the assertions follow from \cite{VerbitskyThesis},
    \cite{Looijenga-Lunts},
    \cite[Subsection 2.1.1]{GKLR}.
    See also \cite[Theorem 2.2]{Floccarigalois}.
    
    We shall prove (1)--(3) for realizations $\ge_\ell(X)$ and $\ge_{\pst}(X)$.
    For $\ge_\ell(X)$, by replacing $L$ by a subfield $L' \subset L$ which is finitely generated over $\Q$ such that $X$ is defined over $L'$, we may assume that there is an embedding $\overline{L} \hookrightarrow \C$.
    Then the assertions follow from those for $\ge_B(X_\C)$ and \eqref{eq:BettiEtaleCompare}.
    For $\ge_{\pst}(X)$,
    since (1)--(3) can be checked after taking the base change to $B_{\dR}$ (which is a field),
    these assertions follow from those for $\ge_p(X)$ and \eqref{eq:deRham comparison}.

    We shall prove (4).
    First, the uniqueness of $\psi$ follows from
    the fact that $\ge(X)_{-2}$ is generated by operators $f_x$ together with the Jacobson–Morozov theorem applying to the graded algebra $\widetilde{H}^2 (X)$ (see \Cref{Remark:Mukai completion as Frobenius algebra}).
    The existence of $\psi$
    for $\ge_\ell(X)$ follows easily from the case of $\ge_B(X)$ by the procedure as above.
    Let us prove the existence of $\psi$
    for $\ge_{\pst}(X)$.
    By the same argument as in the uniqueness part, we observe that if there exists
    an isomorphism as in the statement over $B_{\dR}$, then it automatically descends to an isomorphism over $K^{\ur}_0$.
    Thus the existence of $\psi$ follows from the case 
    of $\ge_p(X)$
    and \eqref{eq:deRham comparison}.
\end{proof}

\begin{defn}\label{Definition:reduced LLV}
    The Lie algebra $\overline{\ge}(X)$ as in Theorem \ref{Theorem:structure of LLV}
    is called the \emph{reduced LLV Lie algebra} of $X$ and is denoted by
    \[
    \overline{\ge}_B(X)
    \quad (\text{resp}.~ \overline{\ge}_\ell(X), \quad\text{resp}.~  \overline{\ge}_{\pst}(X))
    \]
    in the case (a) (resp.~ (b), resp.~ (c)).
\end{defn}

Let us discuss the representation-theoretic property of the action of $\ge(X)_F\coloneqq \ge(X) \otimes_E F$ on
$H^{\bullet}(X)_F\coloneqq H^{\bullet}(X) \otimes_E F$.
{We note that $b_2(X) \geq 3$, and thus we can apply the results of Section \ref{Subsection:orthogonal Lie algebra} to $\fg(X)_{F}$ (and $\overline{\fg}(X)_{F}$).}
We fix a Cartan subalgebra $\frh \subset \fg(X)_{F}$.
Let us denote the non-trivial $\frh$-weights of the natural action of $\frg(X)_F \simeq \so(\widetilde{H}^2 (X)_F)$ on the Mukai completion
$\widetilde{H}^2 (X)_F$ by
\[
\{ \pm \epsilon_0, \pm \epsilon_1, \ldots, \pm \epsilon_r \} \subset \frh^*\setminus \{0 \},
\]
where $r = \lfloor \frac{b_2(X)}{2} \rfloor$.
Then we choose a positive system of roots $R^+$ and the corresponding set of dominant integral weights $\Lambda^+$ of $\frg(X)_F$ as in Section \ref{Subsection:orthogonal Lie algebra}.
For a dominant integral weight $\mu =\sum_{i=0}^{r} \mu_i \epsilon_i \in \Lambda^+$,
let $V_{\mu, F}$ denote the irreducible $\frg(X)_F$-module with highest weight $\mu$.
We will identify $\mu$ with the sequence
$(\mu_0, \mu_1, \dotsc, \mu_r)$
and write
\[
V_{\mu,F}=V_{(\mu_0, \mu_1, \dotsc, \mu_r),F}.
\]
We will often omit the zero components of $\mu$ from the notation.
For example, we will write $(j)=(j, 0, \dotsc, 0)$ for $j \geq 0$.

\begin{defn}[LLV decomposition]\label{def:LLVdecomposition betti etale p-adic}
    The decomposition into irreducible $\frg(X)_F$-modules
    \[
    H^\bullet(X)_F = \bigoplus_{\mu \in \Lambda^+}V^{\oplus m_{\mu}}_{\mu,F}
    \]
    is called the \emph{LLV decomposition}, where $m_{\mu} \geq 0$.
\end{defn}

\begin{rem}\label{Remark:reduced LLV irreducible components}
We will also have to consider
the decomposition
of $H^i(X)_F$
into
irreducible
representations of the reduced LLV Lie algebra $\overline{\ge}(X)_F$.
For this, we will employ the following notation for irreducible $\overline{\ge}(X)_F$-modules.
    We fix a Cartan subalgebra
    $\overline{\frh} \subset \overline{\ge}(X)_F$
    and denote the non-trivial $\overline{\frh}$-weights of the natural action on
    $H^2(X)_F$ by
$
\{ \pm \epsilon'_1, \ldots, \pm \epsilon'_{r} \} \subset \overline{\frh}^*\setminus \{0 \}.
$
Let 
$\overline{\Lambda}^+$
be the set of sequences
$
\lambda=(\lambda_1, \lambda_2, \dotsc, \lambda_{r})
$
satisfying the conditions as in \eqref{eq:DominantWeightB} or \eqref{eq:DominantWeightD} depending on whether $b_2(X)$ is odd or even.
For each sequence
$
\lambda \in \overline{\Lambda}^+,
$
one can associate
an irreducible
$\overline{\ge}(X)_F$-module
\[
\overline{V}_{\lambda, F}=\overline{V}_{(\lambda_1, \lambda_2, \dotsc, \lambda_{r}), F}
\]
of highest weight $\lambda$.
\end{rem}

Let $SH^2 (X)  \subset H^{\bullet} (X)$ be the graded $E$-algebra generated by $H^2(X)$.
We call $SH^2 (X)$ the \emph{Verbitsky component} of $X$.
\begin{thm}
\label{thm:Verbitskycomponent Betti etale p-adic}
Recall that the dimension of $X$ is $2n$.
\begin{enumerate}
    \item The Verbitsky component $SH^2 (X)$ is a $\frg(X)$-submodule of $H^{\bullet} (X)$.
    \item Let $0 \leq i \leq n$. The map induced by the cup product 
    \[
    \Sym^{i}H^2 (X) \to H^{2i} (X)
    \]
    is injective. In other words, in the Verbitstky component, the degree $2i$ part is isomorphic to $\Sym^{i}H^2 (X)$.
\item $SH^2 (X)_F$ is an irreducible $\frg(X)_F$-module of highest weight $(n)$.
\end{enumerate}
\end{thm}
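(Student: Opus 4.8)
The plan is to prove assertion~(1) directly and uniformly in all three realizations of \Cref{Definition:LLV algebra Betti etale p-adic notation}, and to obtain (2) and (3) by reducing to the Betti realization — where they are part of Verbitsky's structure theory — and then transporting them via the comparisons \eqref{eq:BettiEtaleCompare} and \eqref{eq:deRham comparison}, flatness of scalar extension, and the exactness and monoidality of $\Dpst$ on potentially semistable representations.

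For~(1), I would identify $SH^2(X)$ with the $\fg(X)$-submodule of $H^\bullet(X)$ generated by $1\in H^0(X)$. By \Cref{Theorem:structure of LLV}, $\fg(X)_2=\{e_x\mid x\in H^2(X)\}$ is an abelian subalgebra of multiplication operators (the Hard Lefschetz classes being Zariski dense in $H^2(X)$, and $[\fg(X)_2,\fg(X)_2]$ lying in the missing graded piece $\fg(X)_4$); moreover $\fg(X)_{-2}$ shifts degree by $-2$ and hence annihilates $H^0(X)$, the semisimple Lie algebra $\overline{\fg}(X)$ acts trivially on the line $H^0(X)$, and $h$ acts on $H^0(X)$ by the scalar $-2n$. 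From the triangular decomposition $U(\fg(X))=U(\fg(X)_2)\,U(\fg(X)_0)\,U(\fg(X)_{-2})$ provided by the grading in \Cref{Theorem:structure of LLV}(2), one gets $U(\fg(X))\cdot 1=U(\fg(X)_2)\cdot 1$, which is exactly the span of the products $x_1\cdots x_k$ with $x_j\in H^2(X)$, i.e.\ $SH^2(X)$; hence $SH^2(X)$ is $\fg(X)$-stable.

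For~(2) and~(3), over $L=\C$ both are classical: by Verbitsky's theorem on the Verbitsky component (see \cite[Theorem 9.1]{VerbitskyThesis}, \cite{Looijenga-Lunts}, and \cite[Subsection 2.1.1]{GKLR}), $SH^2_B(X)$ is isomorphic as a graded algebra to $\Sym^\bullet H^2_B(X)$ modulo an ideal generated in degrees $>n$, so $\Sym^iH^2_B(X)\to H^{2i}_B(X)$ is injective for $i\le n$; and $SH^2_B(X)\otimes_\Q\C$ is the irreducible $\fg_B(X)\otimes_\Q\C$-module of highest weight $(n)$. For the $\ell$-adic realization (including $\ell=p$), after replacing $L$ by a finitely generated subfield over which $X$ has a model and fixing an embedding $\overline{L}\hookrightarrow\C$, the Artin comparison identifies $H^\bullet_\ell(X)$ with $H^\bullet_B(X_\C)\otimes_\Q\Q_\ell$ as graded algebras, compatibly with \eqref{eq:BettiEtaleCompare}; since forming the subalgebra generated by $H^2$ commutes with the flat base change $\Q\to\Q_\ell$, we get $SH^2_\ell(X)=SH^2_B(X_\C)\otimes_\Q\Q_\ell$ as $\fg_\ell(X)$-modules. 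Injectivity in~(2) is then preserved by this flat base change, while the irreducibility and highest weight in~(3) are preserved because, by \Cref{Theorem:structure of LLV}(4) and \Cref{rem:irreducible representations invariant under algebraically closed fields extension}, the classification of irreducible $\so(\widetilde{H}^2(X))$-modules by highest weight is insensitive to extension of the algebraically closed coefficient field (comparing $\C$ and $\overline{\Q}_\ell$ through $\overline{\Q}$). For the $p$-adic realization, $SH^2_p(X)$ is a $G_K$-stable graded subalgebra of the potentially semistable representation $H^\bullet_p(X)$, hence itself potentially semistable; as $\Dpst$ is exact and symmetric monoidal on potentially semistable representations, it sends $SH^2_p(X)$ to the subalgebra of $H^\bullet_{\pst}(X)$ generated by $H^2_{\pst}(X)$, so $\Dpst(SH^2_p(X))=SH^2_{\pst}(X)$, identifies $\Sym^iH^2_p(X)$ with $\Sym^iH^2_{\pst}(X)$ and preserves the injection in~(2), and — by \eqref{eq:deRham comparison} — identifies $SH^2_{\pst}(X)\otimes_{K_0^{\ur}}B_{\dR}$ with $SH^2_p(X)\otimes_{\Q_p}B_{\dR}$ as modules over $\fg_p(X)\otimes_{\Q_p}B_{\dR}$. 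Since $\widetilde{H}^2_{\pst}(X)\otimes_{K_0^{\ur}}\overline{K}$ is a non-degenerate quadratic space over the algebraically closed field $\overline{K}$ and the irreducible representation of a split orthogonal Lie algebra of a fixed highest weight stays irreducible under any field extension, comparing through $B_{\dR}$ and invoking \Cref{rem:irreducible representations invariant under algebraically closed fields extension} once more yields~(3) for $SH^2_{\pst}(X)\otimes_{K_0^{\ur}}\overline{K}$ from the $\ell=p$ case.

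The step I expect to require the most care is the bookkeeping that forming the Verbitsky component is compatible with the comparison isomorphisms and with $\Dpst$ — so that $SH^2_\ell(X)$ is the scalar extension of $SH^2_B(X_\C)$ and $SH^2_{\pst}(X)=\Dpst(SH^2_p(X))$, as $\fg$-modules — and, relatedly, that ``irreducible of highest weight $(n)$'' does not depend on the coefficient field, which in the $p$-adic case must be routed through $B_{\dR}$ rather than an algebraically closed field directly. The one genuinely geometric input, Verbitsky's description of $SH^2_B(X)$, is imported from the literature.
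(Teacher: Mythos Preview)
Your proposal is correct and follows essentially the same route as the paper: establish (1) directly from the graded structure of $\fg(X)$ in \Cref{Theorem:structure of LLV} (the paper simply cites the argument of \cite[Theorem~2.15]{GKLR} for closure under $h$, $\fg(X)_2$, $\fg(X)_{-2}$, whereas your identification $SH^2(X)=U(\fg(X))\cdot 1$ via the triangular decomposition is a slightly more explicit rendering of the same idea), and deduce (2) and (3) from Verbitsky's Betti-realization results by transporting along \eqref{eq:BettiEtaleCompare} and \eqref{eq:deRham comparison}, using \Cref{Remark:prameter irreducible rep} and \Cref{rem:irreducible representations invariant under algebraically closed fields extension} to control the highest weight under change of algebraically closed coefficient field. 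The compatibility checks you flag at the end (that forming $SH^2$ commutes with Artin comparison and with $\Dpst$, and that irreducibility and highest weight survive passage through $B_{\dR}$) are exactly the points the paper handles, in the same way.
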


\begin{proof}
It is clear that
$SH^2 (X)$
is a
$\overline{\ge}(X)$-submodule of $H^{\bullet} (X)$.
We can check that 
$SH^2 (X)$
is also closed under the action of $h, \ge(X)_2, \ge(X)_{-2}$
by the same argument as in \cite[Theorem 2.15]{GKLR}.
This shows (1).

For $SH^2_B(X)$,
the assertions (2) and (3) are essentially obtained by
Verbitsky
as explained in \cite[Theorem 2.15]{GKLR}.
For $SH^2_\ell(X)$,
as in the proof of \Cref{defn-thm:llv}, we may assume that $\overline{L} \subset \C$.
Then the $\ge_\ell(X)$-module
$SH^2_\ell(X)$
can be identified with the base change of the $\ge_B(X_\C)$-module
$SH^2_B(X_\C)$.
Thus the assertions follow from
the case of $SH^2_B(X_\C)$
together with
\Cref{Remark:prameter irreducible rep}
and
\Cref{rem:irreducible representations invariant under algebraically closed fields extension}.

For $SH^2_{\pst}(X)$,
the assertions can be checked over
$B_{\dR}$ (or its algebraic closure) by the same remarks as above.
By \eqref{eq:deRham comparison},
after tensoring with $B_{\dR}$,
the $\ge_{\pst}(X)$-module
$SH^2_{\pst}(X)$
can be identified with the $\ge_p(X)$-module $SH^2_p(X)$.
Therefore
the results follow from the case of $SH^2_p(X)$.
\end{proof}

\begin{cor}
\label{cor:lowerboundbySym}
Let $N \in \overline{\frg}(X) \subset \End_E(H^{\bullet}(X))$ be a nilpotent operator. Denote $N_{i}$ for the restriction of $N$ on $H^i(X)$. Then we have
$
\nu(N_{2i}) \geq i \nu(N_2)
$
for any $0 \leq i \leq n$.
\end{cor}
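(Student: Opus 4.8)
The plan is to reduce the inequality $\nu(N_{2i}) \geq i\,\nu(N_2)$ to a single elementary computation inside the Verbitsky component $SH^2(X)$. First I would restrict $N$ to $SH^2(X)$: by \Cref{thm:Verbitskycomponent Betti etale p-adic}(1) this is a $\ge(X)$-submodule of $H^{\bullet}(X)$, and since $\overline{\ge}(X)$ preserves the cohomological grading (\Cref{Theorem:structure of LLV}(3)), the operator $N$ preserves each graded piece $SH^2(X)^{2i} \coloneqq SH^2(X) \cap H^{2i}(X)$. Writing $N'_{2i}$ for the resulting restriction, one has $\nu(N_{2i}) \geq \nu(N'_{2i})$, so it is enough to prove $\nu(N'_{2i}) \geq i\,\nu(N_2)$; the case $i=0$ is trivial. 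Here $N_2 = \rho_2(N) \in \so(H^2(X))$ is nilpotent, and I set $m \coloneqq \nu(N_2)$.

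The key structural input I would invoke is that the reduced LLV Lie algebra $\overline{\ge}(X)$ acts on the graded algebra $H^{\bullet}(X)$ --- hence on its subalgebra $SH^2(X)$ --- by \emph{derivations}. In the Betti realization this is part of the classical structure theory (\cite{Looijenga-Lunts}, \cite{VerbitskyThesis}; cf.\ \cite[\S 2.1]{GKLR}), and for the $\ell$-adic and $p$-adic realizations it follows by the comparison isomorphisms \eqref{eq:BettiEtaleCompare} and \eqref{eq:deRham comparison}, which are compatible with cup products and carry $\overline{\ge}(X)$ to its Betti counterpart. Combined with \Cref{thm:Verbitskycomponent Betti etale p-adic}(2) --- for $i \leq n$ the cup product induces an \emph{injection} $\Sym^i H^2(X) \hookrightarrow SH^2(X)^{2i}$ sending $y_1 \cdots y_i$ to $y_1 \cup \cdots \cup y_i$ --- this exhibits $N|_{SH^2(X)^{2i}}$ as the canonical derivation extending $N_2$, and shows that $z^{\cup i} \neq 0$ in $SH^2(X)^{2i}$ for every nonzero $z \in H^2(X)$ (the $i$-th symmetric power of a nonzero vector is nonzero).

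With these in place the remainder is bookkeeping. I would pick $x \in H^2(X)$ with $N_2^m x \neq 0$, so that $x^{\cup i} \neq 0$ in $SH^2(X)^{2i}$. Since $N$ is a derivation restricting to $N_2$ on $H^2(X)$, the Leibniz rule expands $N^k(x^{\cup i})$ as a sum of terms $\binom{k}{k_1,\dotsc,k_i}\,(N_2^{k_1}x) \cup \cdots \cup (N_2^{k_i}x)$ over $k_1 + \cdots + k_i = k$. Taking $k = im$, every multi-index other than $(m,\dotsc,m)$ has some $k_j \geq m+1$ and thus contributes $0$ (as $N_2^{m+1} = 0$), leaving $N^{im}(x^{\cup i}) = \tfrac{(im)!}{(m!)^i}\,(N_2^m x)^{\cup i}$. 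This is nonzero because $\chara E = 0$ makes the multinomial coefficient invertible and $(N_2^m x)^{\cup i} \neq 0$ by the injectivity above. Hence $\nu(N'_{2i}) \geq im = i\,\nu(N_2)$, which completes the argument.

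The one genuinely non-formal step is the derivation property of $\overline{\ge}(X)$; once that is available, everything reduces to the symmetric-algebra identity above. If it is not quoted in exactly the form needed, I would either transport it from the Betti case via the comparisons as indicated, or argue directly that the generators of $\overline{\ge}(X)$ --- which arise as $[e_x, f_y]$ modulo $E h$ --- act on $SH^2(X)$ by derivations, checking this on the degree-$2$ generators. Two minor points worth recording: the hypothesis $i \leq n$ in the statement is precisely the range in which $\Sym^i H^2(X) \xrightarrow{\sim} SH^2(X)^{2i}$, which is what the proof uses; and the assumption $N \in \overline{\ge}(X)$ (not merely $N \in \ge(X)$) is essential, since operators such as $e_x \in \ge(X)_2$ fail to be derivations.
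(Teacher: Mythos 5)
Your argument follows the paper's proof almost exactly: restrict $N$ to the copy of $\Sym^i H^2(X)$ sitting inside $H^{2i}(X)$ as the degree-$2i$ piece of the Verbitsky component, identify its action with $\Sym^i N_2$, and compute $\nu(\Sym^i N_2) = i\,\nu(N_2)$. The paper cites \cite[Lemma 5.6]{GKLR} for the last computation and leaves the identification with $\Sym^i N_2$ implicit, whereas you correctly isolate the derivation property of $\overline{\ge}(X)$ as the ingredient underwriting that identification and carry out the multinomial computation explicitly --- a more careful account of the same argument rather than a different one.
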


\begin{proof}
For any $0 \leq i \leq n$, the restriction of $N_{2i}$ on $\Sym^i H^2(X) \subset H^{2i}(X)$ agrees with the action $\Sym^i N_2$ induced by $N_2$.
On the other hand, we have $\nu (\Sym^i N_2) =i \nu(N_2)$ by a direct computation (see \cite[Lemma 5.6]{GKLR}). Thus the required inequality follows.
\end{proof}

Similarly, we obtain the following result for Galois representations of $X$.
\begin{cor}\label{cor:lower bound of monodromy operators}
    Let $X$ be a \hk variety over $K$ of dimension $2n$.
    Let
$N_{\ell, 2i}$
be
the $\ell$-adic monodromy operator on $H^{2i}_{\et} (X_{\overline{K}}, \Q_{\ell})$
if $\ell \neq p$
and
let
$N_{p, 2i}$
be the $p$-adic monodromy operator on
$\Dpst(H^{2i}_{\et} (X_{\overline{K}}, \Q_{p}))$ if $\ell = p$.
    Then the inequality
$
\nu(N_{\ell, 2i}) \geq i \nu(N_{\ell, 2})
$
holds for all $0 \leq i \leq n$ and all $\ell$.
\end{cor}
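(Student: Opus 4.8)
The plan is to obtain \Cref{cor:lower bound of monodromy operators} from the injectivity of the cup product $\Sym^{i}H^{2}(X)\hookrightarrow H^{2i}(X)$ on the Verbitsky component, i.e., from \Cref{thm:Verbitskycomponent Betti etale p-adic}~(2), by rerunning the proof of \Cref{cor:lowerboundbySym} but with the hypothesis ``$N\in\overline{\frg}(X)$'' replaced by the weaker fact that the monodromy operator is a \emph{derivation} of the graded cohomology algebra. One cannot simply cite \Cref{cor:lowerboundbySym}, because it is not yet known at this stage that the monodromy operators lie in $\overline{\frg}(X)$; that is established only later, in \Cref{Section:Galois representations associated with hk varieties} and \Cref{Section:Sen theory and LLV Lie algebras}.

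First I would verify that the monodromy operators are graded derivations. If $\ell\neq p$, pick $\sigma$ in an open subgroup $I\subset I_{K}$ on which $\bigoplus_{i}\phi_{\ell,i}$ is unipotent and with $t_{\ell}(\sigma)\neq 0$; since cup product is $G_{K}$-equivariant, $\bigoplus_{i}\phi_{\ell,i}(\sigma)$ is a unipotent automorphism of the graded $\Q_{\ell}$-algebra $H^{\bullet}_{\ell}(X)$, hence its normalized logarithm $\bigoplus_{i}N_{\ell,i}=t_{\ell}(\sigma)^{-1}\log\bigl(\bigoplus_{i}\phi_{\ell,i}(\sigma)\bigr)$ is a degree-$0$ derivation. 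If $\ell=p$, then $\Dpst$ is compatible with cup products, so $H^{\bullet}_{\pst}(X)$ is a graded $K_{0}^{\ur}$-algebra (which is precisely what makes \Cref{thm:Verbitskycomponent Betti etale p-adic} applicable in case (c)), and since the monodromy operator on $B_{\st}$ is a $B_{\cris}$-derivation and the comparison $\Dpst(V)\otimes_{K_{0}^{\ur}}B_{\st}\simeq V\otimes_{\Q_{p}}B_{\st}$ is compatible with monodromy operators, the operator $\bigoplus_{i}N_{p,i}$ is a degree-$0$ derivation of $H^{\bullet}_{\pst}(X)$.

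Now write $N=\bigoplus_{i}N_{i}$ for the monodromy operator in the chosen case and $N_{2}$ for its restriction to $H^{2}(X)$. Being a derivation, $N$ preserves the subalgebra $SH^{2}(X)$ generated by $H^{2}(X)$. Fix $0\leq i\leq n$; by \Cref{thm:Verbitskycomponent Betti etale p-adic}~(2) the cup product is an injection $\iota\colon\Sym^{i}H^{2}(X)\hookrightarrow H^{2i}(X)$ with image $SH^{2}(X)\cap H^{2i}(X)$, and by the Leibniz rule $N_{2i}\circ\iota=\iota\circ(\Sym^{i}N_{2})$, where $\Sym^{i}N_{2}$ is the derivation on $\Sym^{i}H^{2}(X)$ induced by $N_{2}$. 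Since $\iota$ is injective, this gives
\[
\nu(N_{2i})\ \geq\ \nu\bigl(\Sym^{i}N_{2}\bigr)\ =\ i\,\nu(N_{2}),
\]
where the equality is \cite[Lemma~5.6]{GKLR}, exactly as in the proof of \Cref{cor:lowerboundbySym}. Applying this with $N=\bigoplus_{i}N_{\ell,i}$ (resp.\ $N=\bigoplus_{i}N_{p,i}$) yields the claim for $\ell\neq p$ (resp.\ $\ell=p$).

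I do not anticipate a serious obstacle: all the real content is in Verbitsky's injectivity statement, which we are free to quote. The only points requiring routine care are the two checks that $\bigoplus_{i}N_{\ell,i}$ and $\bigoplus_{i}N_{p,i}$ are genuine ring derivations — in the $p$-adic case this rests on $N$ being a $B_{\cris}$-derivation of $B_{\st}$ and on $\Dpst$ being a tensor functor on potentially semistable representations — together with the (already arranged) observation that $H^{\bullet}_{\pst}(X)$ is a graded $K_{0}^{\ur}$-algebra to which the structural results of \Cref{Section:Looijenga--Lunts--Verbitsky algebras} apply.
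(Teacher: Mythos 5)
Your argument is correct and follows the same route as the paper: the paper's proof simply asserts that $\Sym^{i}H^{2}(X)\hookrightarrow H^{2i}(X)$ is compatible with the ($\ell$- and $p$-adic) monodromy operators and then invokes the argument of \Cref{cor:lowerboundbySym}, which is exactly what you establish in detail by showing $\bigoplus_{i}N_{\ell,i}$ (resp.\ $\bigoplus_{i}N_{p,i}$) is a graded derivation. Your explicit verification of the derivation property and your remark that one cannot directly cite \Cref{cor:lowerboundbySym} (since $N_{\ell}\in\overline{\frg}(X)$ is only proved later) are exactly the points the paper leaves implicit in the phrase ``follows from the same argument.''
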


\begin{proof}
    Since the injection
    $\Sym^{i}H^2(X) \to H^{2i}(X)$ 
    is compatible with ($\ell$-adic and $p$-adic) monodromy operators,
    this follows from the same argument as in \Cref{cor:lowerboundbySym}.
\end{proof}

For $X$ being a hyper-Kähler variety over $\C$ of one of the four known deformation types \eqref{eq:KnownTypes} as in Remark \ref{rem:types},
Green--Kim--Laza--Robles
in \cite{GKLR}
provide
a concrete description of the LLV decomposition of
$H^{\bullet}_B(X)_\C$
(or more strongly, the LLV decomposition of
$H^{\bullet}_B(X)_\R$).
As a consequence, we have the following result.

\begin{thm}[Green--Kim--Laza--Robles]
\label{thm:LLVdecompositionmu}
We assume that $X$ is in one of four known deformation types \eqref{eq:KnownTypes}.
For any $\mu= \sum_{i=0}^{r} \mu_i \epsilon_i \in \Lambda^+$ such that $V_{\mu, F}$ appears in $H^\bullet (X)_F$, we have
$
	\mu_0 +  \cdots + \mu_{r-1} + \vert \mu_{r} \vert \leq n.
$
In particular, 
$
\mu_0 + \mu_1 + \mu_2 \leq n.
$
\end{thm}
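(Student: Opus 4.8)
The plan is to reduce the statement to the case of the Betti realization, where it is part of the explicit LLV decompositions established by Green--Kim--Laza--Robles in \cite{GKLR}, and then to transport the result through the comparison isomorphisms developed in \Cref{Section:Looijenga--Lunts--Verbitsky algebras}.

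First I would carry out the reduction exactly as in the proof of \Cref{thm:Verbitskycomponent Betti etale p-adic}. In case (b), after replacing $L$ by a subfield finitely generated over $\Q$ over which $X$ is defined and fixing an embedding $\overline{L}\hookrightarrow\C$, the comparison \eqref{eq:BettiEtaleCompare} identifies the $\ge_\ell(X)$-module $H^\bullet_\ell(X)$ with the base change to $\overline{\Q}_\ell$ of the $\ge_B(X')$-module $H^\bullet_B(X')$ for a complex hyper-Kähler variety $X'$, which is again of one of the four types by \Cref{lem:Deformation Types Stable Geometric Invariance}. In case (c), the same device applied to $\ge_p(X)$ together with \eqref{eq:deRham comparison} identifies, after tensoring up to $B_{\dR}$ and then to an algebraically closed extension, the $\ge_{\pst}(X)$-module $H^\bullet_{\pst}(X)$ with the corresponding base change of $H^\bullet_B(X')$. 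In all cases the multiplicities $m_\mu$ of the LLV decomposition are unaffected by enlarging the algebraically closed coefficient field, by \Cref{rem:irreducible representations invariant under algebraically closed fields extension}; and by \Cref{Remark:prameter irreducible rep}, when $b_2(X)$ is even the parameter $\mu=(\mu_0,\dots,\mu_r)$ of an irreducible factor is only well defined up to the sign of its last coordinate $\mu_r$. This is precisely why the conclusion is phrased with $|\mu_r|$, and it shows that the collection of weights occurring in $H^\bullet(X)_F$, recorded up to $\mu_r\mapsto-\mu_r$, coincides with the one occurring in $H^\bullet_B(X')_\C$.

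Next I would invoke \cite{GKLR}: for each of the four deformation types they give an explicit description of the LLV decomposition of $H^\bullet_B(X')$, building on the known structure of the full cohomology of Hilbert schemes of points on K3 surfaces, of generalized Kummer varieties, and of the O'Grady sixfold and tenfold; from this description one reads off that every occurring highest weight $\mu=\sum_{i=0}^r\mu_i\epsilon_i$ satisfies $\mu_0+\cdots+\mu_{r-1}+|\mu_r|\leq n$. (As a consistency check, the value $n$ is attained by the Verbitsky component $SH^2(X')$, whose highest weight is $(n)$ by \Cref{thm:Verbitskycomponent Betti etale p-adic}.) Combined with the previous paragraph, this yields the asserted bound in all three realizations. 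Finally, the ``in particular'' clause is immediate: for each of the four types $b_2(X)\geq 7$, hence $r=\lfloor b_2(X)/2\rfloor\geq 3$, so by the dominance conditions \eqref{eq:DominantWeightB}--\eqref{eq:DominantWeightD} the terms $\mu_3,\dots,\mu_{r-1}$ satisfy $\mu_j\geq\mu_{r-1}\geq|\mu_r|\geq 0$; discarding these nonnegative terms from $\mu_0+\cdots+\mu_{r-1}+|\mu_r|\leq n$ gives $\mu_0+\mu_1+\mu_2\leq n$.

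I do not expect a genuine obstacle in this argument, since the mathematical substance lies entirely in the case-by-case computations of \cite{GKLR}, which I would cite rather than reprove. The one point requiring care is the bookkeeping of the coefficient extensions together with the sign ambiguity in the weight parameter when $b_2(X)$ is even; this is harmless precisely because the inequality only involves $|\mu_r|$.
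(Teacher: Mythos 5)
Your proposal is correct and follows essentially the same route as the paper: reduce to the Betti realization using the comparison isomorphisms (as in the proof of \Cref{thm:Verbitskycomponent Betti etale p-adic}), invoke the explicit LLV decompositions of \cite{GKLR} for the Betti case, and handle the field-extension and sign-ambiguity bookkeeping via \Cref{rem:irreducible representations invariant under algebraically closed fields extension} and \Cref{Remark:prameter irreducible rep}. The derivation of the ``in particular'' from the dominance conditions is a small addition the paper leaves implicit, but it is correct.
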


\begin{proof}
For $H^\bullet_B(X)$, this result is proved in \cite[Theorem 6.1]{GKLR}.
One can deduce the results for $H^\bullet_\ell(X)$ and $H^\bullet_{\pst}(X)$ from the case of $H^\bullet_B(X)$ by the same argument as in \Cref{thm:Verbitskycomponent Betti etale p-adic}.
(Again, we have used \Cref{Remark:prameter irreducible rep}
and
\Cref{rem:irreducible representations invariant under algebraically closed fields extension}.)
\end{proof}

We also include the following result.
Note that the action of $\ge(X)_F$ preserves
the decomposition
\[
H^\bullet(X)_F= H^{+}(X)_F \oplus H^-(X)_F,
\]
where $H^{+}(X)_F$ and $H^-(X)_F$ consist of parts of even degree and odd degree.

\begin{prop}\label{Proposition:reduced LLV irreducible component in general}
    \begin{enumerate}
        \item For every irreducible $\ge(X)_F$-module
    component
    $V_{\mu, F} \subset H^{+}(X)_F$, we have $\lambda_j \in \Z$ for any $j$. If $H^{-}(X)_F \neq 0$, then
    for every $V_{\mu, F} \subset H^{-}(X)_F$, we have $\lambda_j \in \frac{1}{2}\Z \backslash \Z$ for any $j$.
        \item For every irreducible $\overline{\ge}(X)_F$-module
    component
    $\overline{V}_{\lambda, F} \subset H^{2i}(X)$,
    we have $\lambda_j \in \Z$ for any $j$.
    If $H^{2i+1}(X)_F \neq0$, then for every
    $\overline{V}_{\lambda, F} \subset H^{2i+1}(X)$,
    we have $\lambda_j \in \frac{1}{2}\Z \backslash \Z$ for any $j$.
    \end{enumerate}
\end{prop}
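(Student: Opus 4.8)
The plan is to reduce everything to a statement about weights of the standard representation and the natural ``spinor parity'' obstruction for orthogonal Lie algebras, exploiting that the degree operator $h$ lives in $\ge(X)$ and acts with known eigenvalues. For part (1), the key observation is that $\psi(h) \in \so(\widetilde H^2(X)_F)$ is, up to sign, a coweight: by \Cref{Theorem:structure of LLV}(4) it acts as $0$ on $H^2(X)$, as $-2$ on $v$, and as $+2$ on $w$. After choosing the Cartan subalgebra $\frh$ so that $h \in \frh$ (legitimate, since $h$ is semisimple — indeed part of an $\fsl_2$-triple — and all Cartan subalgebras are conjugate, cf.~\Cref{Remark:prameter irreducible rep}), one computes $h$ as an element of $\frh$ in terms of the basis dual to $\{\epsilon_0,\dots,\epsilon_r\}$. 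Concretely, in the standard realization of $\so(\widetilde H^2(X)_F)$ where $U = Ev \oplus Ew$ carries the hyperbolic plane spanned by the $\pm\epsilon_0$-weight vectors, $h$ corresponds to $\pm 2 \epsilon_0^\vee$, so that $\langle \epsilon_0, h\rangle = \pm 2$ while $\langle \epsilon_i, h\rangle = 0$ for $i \geq 1$. Therefore a weight vector of weight $\nu = \sum \nu_i \epsilon_i$ in $H^\bullet(X)_F$ sits in cohomological degree $2n + \langle \nu, h \rangle = 2n \pm 2\nu_0$; in particular it lies in $H^+(X)_F$ iff $\nu_0 \in \Z$ and in $H^-(X)_F$ iff $\nu_0 \in \tfrac12\Z \setminus \Z$. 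Since all weights $\nu$ occurring in a fixed irreducible $V_{\mu,F}$ satisfy $\nu_i - \mu_i \in \Z$ for all $i$ (a standard fact about weights of $\so_N$-representations: all weights of an irreducible module are congruent mod the root lattice, and the root lattice consists of integer vectors), the parity of $\mu_0$ — hence, by the same congruence, of every $\mu_j$ — is determined by whether $V_{\mu,F} \subset H^+(X)_F$ or $\subset H^-(X)_F$. This gives (1).

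For part (2), I would run the same argument one level down, for the reduced LLV Lie algebra $\overline{\ge}(X)_F \simeq \so(H^2(X)_F)$ acting on each graded piece $H^{2i}(X)$ or $H^{2i+1}(X)$. Here the relevant ``grading element'' is more subtle because $h$ acts by the scalar $(2i - 2n)$ or $(2i+1-2n)$ on a whole graded piece and so does not separate weights within $H^i(X)$. Instead, the parity constraint on $\lambda = (\lambda_1,\dots,\lambda_r)$ must come from comparing with part (1): each irreducible $\overline{\ge}(X)_F$-component $\overline V_{\lambda,F} \subset H^i(X)$ is a constituent of the restriction to $\overline{\ge}(X)_F$ of some $V_{\mu,F} \subset H^\bullet(X)_F$ lying in the same-parity summand $H^\pm(X)_F$. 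Under the branching $\so(\widetilde H^2(X)_F) \downarrow \so(H^2(X)_F)$ — which is exactly restriction from type $B_{r+1}/D_{r+1}$ to $B_r/D_r$ — the weights of $\overline V_{\lambda,F}$ are obtained from weights $\nu = \sum_{i=0}^r \nu_i \epsilon_i$ of $V_{\mu,F}$ by forgetting the $\epsilon_0$-coordinate, i.e.~$\lambda$-weights are $(\nu_1,\dots,\nu_r)$. Combined with part (1) (which tells us $\nu_j \in \Z$ for all $j$ when $V_{\mu,F} \subset H^+$, and $\nu_j \in \tfrac12\Z\setminus\Z$ for all $j$ when $V_{\mu,F}\subset H^-$), we get $\lambda_j \in \Z$ for all $j$ if $\overline V_{\lambda,F}\subset H^{2i}(X)$ and $\lambda_j \in \tfrac12\Z \setminus \Z$ for all $j$ if $\overline V_{\lambda,F}\subset H^{2i+1}(X)$, as claimed. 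One must check the compatibility of the two Cartan choices $\frh \subset \ge(X)_F$ and $\overline{\frh}\subset\overline{\ge}(X)_F$, e.g.~by taking $\frh = \overline{\frh}\oplus Eh$ and $\epsilon_i'= \epsilon_i|_{\overline{\frh}}$ for $i\geq 1$; then the restriction of weights is literally coordinate truncation.

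The main obstacle I anticipate is not conceptual but bookkeeping: pinning down precisely which coroot $h$ corresponds to (the sign and the factor of $2$) in the chosen realization, and making sure the positive system $R^+$ chosen in \Cref{Subsection:orthogonal Lie algebra} is compatible with the one implicitly used when we say ``$\overline V_{\lambda,F}$ has highest weight $\lambda$'' — a wrong sign on $\epsilon_r$ in the even case would not affect the parity statement (since $|\lambda_r|$ and $\lambda_j \in \Z$ vs $\tfrac12\Z\setminus\Z$ are sign-insensitive), so in fact the ambiguity in \Cref{Remark:prameter irreducible rep} and \Cref{Remark:reduced LLV irreducible components} is harmless here. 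A secondary subtlety is the branching claim for the even orthogonal case ($D_{r+1}\downarrow D_r$ versus $D_{r+1}\downarrow B_r$ depending on parity of $b_2(X)$), but again only the congruence class of the $\lambda_j$ matters, and that is preserved under any of the standard branching rules because the restriction map on weight lattices is coordinate projection in all cases. So the real content is the computation of $h$ as a coweight plus the elementary fact that all weights of an irreducible $\so_N$-module lie in a single coset of the root lattice; everything else is formal.
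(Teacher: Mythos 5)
Your argument is correct, but it takes a genuinely different route from the paper's. The paper disposes of the statement by citing \cite[Proposition 2.35]{GKLR} for the Betti realization and then transferring to the $\ell$-adic and $p$-adic realizations via the comparison isomorphisms, in the manner of \Cref{thm:Verbitskycomponent Betti etale p-adic}. You instead give a direct, realization-independent argument: choose a Cartan $\frh \subset \frg(X)_F$ containing the (semisimple) shifted degree operator $h$, read off from \Cref{Theorem:structure of LLV}(4) that $\epsilon_0(h) = \pm 2$ while $\epsilon_j(h) = 0$ for $j \geq 1$, so that a weight vector of weight $\nu = \sum \nu_j \epsilon_j$ sits in cohomological degree $2n \pm 2\nu_0$, and conclude by the fact that all weights of an irreducible $\so_N$-module lie in a single coset of $\Z^{r+1}$ (together with the dominance condition $\mu_i - \mu_j \in \Z$, which propagates the parity of $\mu_0$ to all $\mu_j$). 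Part (2) then follows by branching under the compatible Cartan $\frh = Fh \oplus \overline{\frh}$, which is precisely the splitting the paper sets up later, and independently, in \Cref{Lemma:restriction of LLV factors to reduced LLV}. The payoff of your route is that it is self-contained (no appeal to \cite{GKLR} nor to the Betti case) and uniform across the three realizations (a)--(c) of \Cref{Definition:LLV algebra Betti etale p-adic notation}; the paper's is shorter on the page at the cost of an external reference and a transfer step. Two small inaccuracies in your write-up that do not affect the conclusion: for type $D_{r+1}$ the root lattice is the even-sum sublattice of $\Z^{r+1}$, not all of $\Z^{r+1}$ (but containment in $\Z^{r+1}$ is all you use); and the branching $\so(\widetilde{H}^2(X)_F) \downarrow \so(H^2(X)_F)$ in the even-$b_2$ case is $D_{r+1}\downarrow D_r$ rather than $D_{r+1}\downarrow B_r$, though, as you say, only the coordinate-truncation property of weight restriction matters.
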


\begin{proof}
    For $H^\bullet_B(X)$, these two statements are proved in \cite[Proposition 2.35]{GKLR}.
One can deduce them for $H^\bullet_\ell(X)$ and $H^\bullet_{\pst}(X)$ from the case of $H^\bullet_B(X)$, by the same argument as in \Cref{thm:Verbitskycomponent Betti etale p-adic}.
\end{proof}

\begin{cor}\label{cor:InjectiveLLV}
For $1 \leq  i \leq 4n-1$
    such that $H^i(X) \neq 0$,
    the homomorphism
    $\rho_{i} \colon \overline{\frg}(X) \to \End_E(H^{i}(X))$
    is injective.
In particular, this is the case when $i$ is even.
\end{cor}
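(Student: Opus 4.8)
The plan is to realize $\ker\rho_i$ as an ideal of the semisimple Lie algebra $\overline{\frg}(X)$ and then exclude it: in even degrees by producing an explicit faithful subrepresentation coming from the Verbitsky component, and in odd degrees through the half-integrality of the $\overline{\frg}(X)$-weights recorded in \Cref{Proposition:reduced LLV irreducible component in general}.

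First, injectivity of an $E$-linear map is insensitive to the base change $E\subset F$, so it suffices to prove that $(\rho_i)_F\colon \overline{\frg}(X)_F\simeq \so(H^2(X)_F)\to \End_F(H^i(X)_F)$ is injective. By \Cref{Theorem:structure of LLV} the source is semisimple, so $\ker(\rho_i)_F$ is the sum of some of the simple factors of $\so(H^2(X)_F)$; in particular $(\rho_i)_F$ is injective once $H^i(X)_F$ is a faithful $\so(H^2(X)_F)$-module, and this follows automatically once $\so(H^2(X)_F)$ is simple — which, since $b_2(X)\geq 3$, happens precisely when $b_2(X)\neq 4$ — and $H^i(X)_F$ has no nonzero trivial summand.

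\emph{Even degrees.} Write $i=2a$. For $1\leq a\leq n$, \Cref{thm:Verbitskycomponent Betti etale p-adic}(2) gives a $\overline{\frg}(X)$-equivariant injection $\Sym^a H^2(X)\hookrightarrow H^{2a}(X)$, so in particular $H^{2a}(X)\neq 0$. Over $F$ the module $\Sym^a H^2(X)_F$ has highest weight $(a,0,\dots,0)$, so it contains the irreducible summand $\overline{V}_{(a),F}$ of \Cref{Remark:reduced LLV irreducible components}, which is nontrivial for $a\geq 1$; its kernel is an ideal of $\so(H^2(X)_F)$ which vanishes, either because $\so(H^2(X)_F)$ is simple ($b_2(X)\neq 4$), or because $b_2(X)=4$, in which case $\so(H^2(X)_F)\simeq \fsl_2\oplus\fsl_2$ and $\overline{V}_{(a),F}$, having highest weight $(a,0)$ in type $D_2$, is the external tensor product of the $(a+1)$-dimensional irreducibles of the two factors and hence faithful on each for $a\geq 1$. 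So $H^{2a}(X)$ is a faithful $\overline{\frg}(X)$-module and $\rho_{2a}$ is injective for $1\leq a\leq n$. For $n<a\leq 2n-1$, the perfect Poincaré pairing $H^{2a}(X)\otimes H^{4n-2a}(X)\to H^{4n}(X)$ is compatible with the LLV action, and $\overline{\frg}(X)$ acts trivially on the line $H^{4n}(X)$ (being semisimple), so $H^{2a}(X)\simeq H^{4n-2a}(X)^{\vee}$ as $\overline{\frg}(X)$-modules; since $4n-2a=2(2n-a)$ with $1\leq 2n-a\leq n-1$, the previous case shows $H^{4n-2a}(X)$ — hence also its dual $H^{2a}(X)$ — is faithful and nonzero. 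This settles all even $i$ with $2\leq i\leq 4n-2$, for which $H^i(X)\neq 0$ holds automatically, and in particular proves the final assertion of the corollary.

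\emph{Odd degrees and the main obstacle.} Suppose $H^i(X)\neq 0$ with $i$ odd. By \Cref{Proposition:reduced LLV irreducible component in general}(2), every irreducible $\overline{\frg}(X)_F$-summand $\overline{V}_{\lambda,F}$ of $H^i(X)_F$ has $\lambda_j\in\frac12\Z\setminus\Z$ for all $j$, hence $\lambda\neq 0$; so $H^i(X)_F$ has no nonzero trivial summand, and when $b_2(X)\neq 4$ the Lie algebra $\so(H^2(X)_F)$ is simple, every nonzero irreducible is faithful, and $\rho_i$ is injective. The point requiring the most care — the main obstacle — is $b_2(X)=4$ in odd degree: then $\overline{\frg}(X)\simeq \so_4\simeq\fsl_2\oplus\fsl_2$ is not simple, and a single half-spinor summand $\overline{V}_{(1/2,\pm1/2),F}$ is annihilated by one of the two factors, so "no trivial summand" is no longer enough. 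To close this case I would use that, by Poincaré duality, the odd part $H^-(X)$ is self-dual as a module over the full LLV algebra $\ge(X)\simeq\so_6$; since the two half-spinor representations of $\so_6$ are dual to one another they occur with equal multiplicity, and a short branching computation for $\so_6\downarrow\so_4$ then shows that each odd $H^i(X)$ contains both half-spinors $\overline{V}_{(1/2,1/2),F}$ and $\overline{V}_{(1/2,-1/2),F}$, which jointly separate the two simple factors and force faithfulness. For the even degrees, which are what the rest of the paper invokes, the argument above is unconditional.
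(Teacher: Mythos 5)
Your proof is correct, and it is more careful than the paper's own argument. The paper simply observes that $\overline{\frg}(X)\simeq\so(H^2(X))$ is simple, so that injectivity of $\rho_i$ is equivalent to non-triviality of the representation, which it then reads off from \Cref{thm:Verbitskycomponent Betti etale p-adic} and Poincar\'e duality (even $i$) and from \Cref{Proposition:reduced LLV irreducible component in general} (odd $i$). As you rightly point out, this simplicity can fail when $b_2(X)=4$: over $F$ one has $\so(H^2(X))_F\simeq\fsl_2\oplus\fsl_2$, and even over $E$ the Lie algebra need not be simple (it depends on whether the BBF form splits over $E$). Since $b_2=4$ is not excluded by the paper's standing hypothesis $b_2\geq 3$, this is a genuine, if small, gap in the paper's proof, which your argument fills. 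Your even-degree branch is correct as written: $\overline{V}_{(a),F}$ is the external tensor product of the $(a+1)$-dimensional irreducibles of the two $\fsl_2$-factors, faithful for $a\geq 1$, and Poincar\'e duality reduces $n<a\leq 2n-1$ to $1\leq 2n-a\leq n-1$.

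In the odd-degree, $b_2=4$ branch, the route via self-duality of $H^\bullet(X)_F$ over $\frg(X)_F$ is sound, but one intermediate assertion is slightly too strong: it is not in general true that each odd $H^i(X)_F$ contains both minimal half-spinors $\overline{V}_{(\frac{1}{2},\pm\frac{1}{2}),F}$, since a factor $V_{\mu,F}$ with large $\mu_0$ contributes in its extreme degree only a single $\overline{V}_{(\mu_1,\mu_2),F}$ with $\mu_1>\frac{1}{2}$. What the duality argument actually yields, and what suffices, is this: every $\overline{V}_{(\lambda_1,\lambda_2),F}$ occurring in $H^i(X)_F$ is accompanied, with the same multiplicity, by $\overline{V}_{(\lambda_1,-\lambda_2),F}$. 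Indeed, self-duality gives $m_{\mu}=m_{\mu^\vee}$, and the weights of $V_{\mu^\vee,F}$ arise from those of $V_{\mu,F}$ by $\epsilon_r\mapsto-\epsilon_r$, which preserves the $\epsilon_0$-grading and hence the cohomological degree, so the $\so_4$-content of $V_{\mu^\vee,F}$ in $H^i$ is the $\epsilon_r$-flip of that of $V_{\mu,F}$. Since $\lambda_2\neq 0$ by \Cref{Proposition:reduced LLV irreducible component in general}, the two summands together are faithful on $\fsl_2\oplus\fsl_2$. Finally, the self-duality of $H^\bullet(X)_F$ as an $\frg(X)_F$-module, which you quote as a consequence of Poincar\'e duality, deserves a word: the pairing is $\frg(X)$-equivariant only up to the involution sending $e_x\mapsto -e_x$, $f_x\mapsto -f_x$ and fixing $\frg(X)_0$, and this involution is inner, being $\mathrm{Ad}$ of the element of $\SO(\widetilde{H}^2(X))$ that acts as $-1$ on the hyperbolic plane $U$ and $+1$ on $H^2(X)$.
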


\begin{proof}
    Note that $\overline{\frg}(X) \simeq \so(H^2(X))$ is a simple Lie algebra.
    Thus $\rho_{i}$ is injective if and only if the representation is non-trivial.
    If $1 \leq  i \leq 4n-1$ is even, then the latter holds
    by
    \Cref{thm:Verbitskycomponent Betti etale p-adic}
    (for $i \leq 2n$)
    and Poincar\'e duality (for $2n <i$).
    If $i$ is odd and $H^i(X) \neq 0$, then the latter holds by \Cref{Proposition:reduced LLV irreducible component in general}.
\end{proof}

For our purpose, it will be convenient to consider integrated representations
associated with LLV Lie algebras.
More precisely, we consider
the integration
\[
\widetilde{\rho} \colon \Spin(H^2(X)) \to \prod_i\GL_E(H^i(X))
\]
of the representation $\rho \colon \overline{\ge}(X) \to \prod_i\End_E(H^i(X))$.
Recall that 
\[
\GSpin(H^2(X)) = \Spin(H^2(X)) \cdot \Gm
\]
where $\Gm$ is the center of $\GSpin(H^2(X))$.
As discussed in
\cite[Section 6.2]{FFZ}
and
\cite[Section 2.1]{Floccarigalois},
we can extend $\widetilde{\rho}$ to a representation
\begin{equation}\label{Equation:twisted LLV representation}
    R \colon \GSpin(H^2(X)) \to \prod_i\GL_E(H^i(X))
\end{equation}
such that each $\lambda \in \G_m \subset \GSpin(H^2(X))$
acts on $H^i(X)$ as multiplication by $\lambda^i$ for all $i$.
This representation $R$ is called the
\textit{twisted LLV representation}.

\begin{rem}\label{Remark:twsited LLV representation and twisted degree operator}
The Lie algebra representation induced by $R$ is identified with the faithful representation
\begin{equation}\label{Equation:twisted g_0}
    \ge(X)^{\tw}_0 \coloneqq  \overline{\ge}(X) \oplus Eh^{\deg} \hookrightarrow \prod_i\End_E(H^i(X)),
\end{equation}
where $h^{\deg}$ is the map acting on each $H^i(X)$ as multiplication by $i$.
Note that we have
\[
h^{\deg}=h+2n.
\]
On the other hand, the LLV representation
$
\rho \colon \frg_0(X) \to \prod_i \End_E(H^i(X))
$
can be integrated to a representation
$
\GSpin(H^2(X)) \to \prod_i \GL_E(H^i(X))
$
extending 
$\widetilde{\rho}$
such that 
each $\lambda \in \G_m$ acts on $H^i(X)$ as multiplication by $\lambda^{i-2n}$ for all $i$.
\end{rem}

\subsection{Weil operator}
Here we recall Verbitsky's observation in {\cite{Verbitskyaction}}, which reveals that the LLV Lie algebra of a hyper-Kähler variety interestingly incorporates its Hodge structure information.

    Let $V$ be a
    $\Q$-Hodge structure
    (i.e., a direct sum of finitely many pure $\Q$-Hodge structures of 
possibly different weights).
    Let
    $
    V_{\C} = \bigoplus_{s,t \in \Z} V^{s,t}
    $
    be the Deligne splitting.
    The \textit{Weil operator} $W \in \End_\C (V_{\C})$ is the map defined by
    \[
    V^{s,t} \ni x \mapsto (t-s) \sqrt{-1} x \in  V^{s,t}.
    \]

\begin{thm}[Verbitsky]\label{thm:MTinsideLLV}
    Let $X$ be a \hk variety over $\C$.
    Then the Weil operator $W$
    of the full cohomology $H^{\bullet}_B(X)$
    is contained in $\overline{\frg}_B(X) \otimes_\Q \C$.
\end{thm}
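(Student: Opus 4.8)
The statement is Verbitsky's theorem that the Weil operator $W$ of the full cohomology $H^\bullet_B(X)$ lies in $\overline{\frg}_B(X)\otimes_\Q\C$. The strategy is to exhibit $W$ explicitly as an element of the Lie algebra $\so(H^2_B(X))\otimes_\Q\C \simeq \overline{\frg}_B(X)\otimes_\Q\C$ (using the isomorphism $\rho_2$ of \Cref{Theorem:structure of LLV}), and then to check that the operator it induces on all of $H^\bullet_B(X)$ via the representation $\rho$ is precisely the Weil operator. The point is that a hyper-Kähler structure (a choice of $\mathbb{S}^1$- or $\SU(2)$-family of complex structures) gives, for each complex structure in the twistor family, a Kähler class; the associated $\fsl_2$-triples and the Hodge $\ast$-operators generate a copy of $\so$ acting on $H^\bullet$, and the diagonal ``rotation'' element of this algebra — coming from the $\mathbb{S}^1$ rotating the $I,J$ plane, say — acts on $H^{p,q}$ by the scalar encoding $p-q$.

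First I would reduce to the second cohomology. By \Cref{Theorem:structure of LLV}(3), the map $\rho_2\colon \overline{\frg}_B(X)\xrightarrow{\sim}\so(H^2_B(X))$ is an isomorphism, and the representation $\rho$ of $\overline{\frg}_B(X)$ on $H^\bullet_B(X)$ is determined by its restriction to $H^2$. So it suffices to produce an element $w\in\so(H^2_B(X))\otimes\C$ whose image under the induced representation on $H^\bullet_B(X)_\C$ equals $W$. Next I would recall that the Weil operator of the full cohomology is multiplicative: for the product Hodge structure on $H^\bullet_B(X)=\bigoplus_i H^i_B(X)$, the operator $W$ acts as a derivation of the cup product up to the standard sign conventions — more precisely, $W$ restricted to $H^i$ is $(t-s)\sqrt{-1}$ on $H^{s,t}$, and since $H^{p,q}\cdot H^{p',q'}\subset H^{p+p',q+q'}$, the value $t-s$ is additive. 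A derivation of the graded algebra $H^\bullet_B(X)$ that moreover preserves the grading and the BBF pairing on $H^2$ is exactly an element of $\so(H^2_B(X))$ under $\rho_2$ (this is where the Frobenius/Lefschetz structure from \Cref{Remark:Mukai completion as Frobenius algebra} enters, or one argues directly from part (3)). Thus $W$ is the unique extension of $W|_{H^2}$ as a derivation, and it remains only to see $W|_{H^2}\in\so(H^2_B(X))\otimes\C$ and that $W|_{H^2}$ is itself realized by $\overline\frg_B(X)$ — but $W|_{H^2}$ visibly preserves the BBF form (the Weil operator preserves any polarization-type pairing up to sign; on the weight-2 piece it lies in the orthogonal Lie algebra because it is skew for the BBF form, which is $(-1)$-symmetric after the Tate twist bookkeeping in \Cref{Remark:Tate twist}), so $W|_{H^2}\in\so(H^2_B(X))\otimes\C=\overline\frg_B(X)\otimes\C$.

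The substantive input, and the step I expect to be the main obstacle, is justifying that the Weil operator on the \emph{full} cohomology is indeed the derivation extending $W|_{H^2}$ — equivalently, that $W$ lies in the image of the representation $\rho$ of $\overline\frg_B(X)$, not merely that $W|_{H^2}\in\so(H^2)$. This is Verbitsky's actual theorem and the heart of the matter: a priori the full cohomology $H^\bullet_B(X)$ carries much more than the Verbitsky subcomponent $SH^2_B(X)$, and one must know that the Hodge decomposition of every LLV-irreducible summand $V_{\mu,\C}$ is governed by the weight-2 data. The cleanest route is Verbitsky's original argument via the twistor family / hyperkähler rotation: choosing a compatible hyperkähler metric, the three Kähler classes $\omega_I,\omega_J,\omega_K$ and their Lefschetz $\fsl_2$-triples together with the corresponding Hodge $\ast$-operators generate inside $\End_\C(H^\bullet_B(X))$ a Lie algebra $\mathfrak{so}(4,1)$ (or $\mathfrak{so}(5)$ in the compact form), whose action on $H^\bullet$ is the standard LLV action; inside this algebra, the infinitesimal generator of the $\mathbb{S}^1$ acting on the $J,K$-plane of the twistor sphere is a fixed element whose action on $H^{p,q}$ (for the complex structure $I$) is the scalar $\sqrt{-1}(p-q)$, i.e.\ is exactly $W$. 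Since this element lies in the LLV Lie algebra, preserves the grading (it commutes with $\omega_I$-Lefschetz), and hence lies in $\overline\frg_B(X)\otimes\C$, we are done. I would cite \cite{Verbitskyaction} for this computation and only sketch the twistor-rotation picture, since grinding through the $\mathfrak{so}(5)$-structure constants is routine but lengthy.
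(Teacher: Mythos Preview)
The paper's proof is simply a citation to \cite[Theorem 1.4]{Verbitskyaction} and \cite[Proposition 2.24]{GKLR}, and your proposal ultimately takes the same route, correctly identifying Verbitsky's hyperk\"ahler/twistor argument (the $\fsl_2$-triples of $\omega_I,\omega_J,\omega_K$ generating an $\so$ inside the LLV algebra, with the infinitesimal twistor rotation giving $W$) as the substantive content and sketching it. Your intermediate ``derivation'' reduction is a slight detour that does not by itself control the non-Verbitsky components of $H^\bullet_B(X)$, as you yourself note, and there is a harmless sign slip at the end since the paper's $W$ acts by $(t-s)\sqrt{-1}$ on $H^{s,t}$; otherwise the proposal is sound and in fact more informative than the paper's own proof.
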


\begin{proof}
This is observed by Verbitsky in \cite[Theorem 1.4]{Verbitskyaction}; see also \cite[Proposition 2.24]{GKLR}.
\end{proof}

\begin{rem}\label{rem:IsogenyMT} 
Let $\MT(X)$ be the Mumford--Tate group of the $\Q$-Hodge structure $H^{\bullet}_B(X)$.
Recall the twisted LLV representation
$
R \colon \GSpin(H^2_B(X)) \to \prod_i \GL_{\Q}(H^{i}_B(X))
$
from \eqref{Equation:twisted LLV representation}.
It follows from \Cref{thm:MTinsideLLV}
that $\MT(X)$ is contained in the image of $R$; see \cite[Lemma 6.7]{FFZ}.
\end{rem}

\section{Galois representations associated with \hk varieties}\label{Section:Galois representations associated with hk varieties}

Let $L$ be a field of characteristic zero and 
$X$ a \hk variety over $L$.
In this section, we recall the relation between the Galois representation
\[
\phi_X \colon G_L \to \prod_i \GL_{\Q_\ell}(H^i_\ell(X)),
\]
where $H^i_\ell(X) = H^i_\et(X_{\overline{L}}, \Q_\ell)$,
and the twisted LLV representation (see \eqref{Equation:twisted LLV representation})
\[
R \colon \GSpin(H^2_\ell(X)) \to \prod_i\GL_{\Q_\ell}(H^i_\ell(X)).
\]

\subsection{Galois representations for known deformation types}\label{Subsection:Galois representations for known deformation types}

If $\overline{L} \subset \C$,
the Artin comparison theorem provides a natural identification
\begin{equation}\label{eq:Artincomparison}
\prod_i \GL_{\Q_\ell}(H^i_\ell(X)) \simeq  \prod_i \GL_\Q(H^i_B(X_\C)) \times_{\Spec \Q} \Spec \Q_\ell.
\end{equation}
The following theorem is known as the \emph{Mumford--Tate conjecture} of hyper-Kähler varieties, which is confirmed by André (\cite{Andre96b}) in degree $2$ when $b_2(X) \geq 4$.
For the full cohomology, the validity of this conjecture for the four known deformation types is established in a series of works \cite[Theorem 1.1]{FloccariMumfordTate} and \cite[Theorem 1.18]{FFZ}.

\begin{thm}\label{Theorem:Mumford-Tate conjecture}
We assume that $L$ is finitely generated over $\Q$.
We fix an embedding $\overline{L} \hookrightarrow \C$.
Assume that $X$ is in one of four known deformation types \eqref{eq:KnownTypes}.
Let
$G^\circ_\ell(X)$ be the identity component of the Zariski closure of the image of $\phi_X$.
Then we have
\[
G^\circ_\ell(X) = \MT(X_\C) \times_{\Spec \Q} \Spec \Q_\ell
\]
under the identification \eqref{eq:Artincomparison}.
\end{thm}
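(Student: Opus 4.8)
The plan is to reduce the statement to the combination of three facts already available: (i) the Mumford--Tate conjecture in degree $2$ for $X$ with $b_2(X) \geq 4$, due to André (\cite{Andre96b}); (ii) the fact, recorded in \Cref{rem:IsogenyMT}, that the Mumford--Tate group $\MT(X_\C)$ of the \emph{full} Hodge structure $H^\bullet_B(X_\C)$ is contained in the image of the twisted LLV representation $R\colon \GSpin(H^2_B(X_\C)) \to \prod_i \GL_\Q(H^i_B(X_\C))$; and (iii) the parallel statement on the $\ell$-adic side, namely that the image of the Galois representation $\phi_X$ lands (after passing to the identity component) inside the image of the $\ell$-adic twisted LLV representation, which follows from the results of \Cref{Section:Galois representations associated with hk varieties} asserting that $\ell$-adic monodromy and, more generally, the image of $\phi_X$ are controlled by $\fg_\ell(X) \simeq \so(\widetilde H^2_\ell(X))$ together with the Tate twist. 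The key point is that on both sides everything is pushed forward from the \emph{same} group $\GSpin$ acting on $H^2$, so matching the two pictures in degree $2$ forces them to match in all degrees.

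Concretely, I would argue as follows. First, by \Cref{Theorem:Mumford-Tate conjecture}'s hypotheses we may assume $\overline L \subset \C$ and use the Artin comparison \eqref{eq:Artincomparison} to regard $G^\circ_\ell(X)$ and $\MT(X_\C) \times_\Q \Q_\ell$ as subgroups of the same $\prod_i \GL_{\Q_\ell}(H^i_\ell(X))$. Restricting to the degree-$2$ factor, André's theorem gives
\[
G^\circ_{\ell,2}(X) = \MT_2(X_\C) \times_{\Spec \Q} \Spec \Q_\ell
\]
as subgroups of $\GL_{\Q_\ell}(H^2_\ell(X))$, where the subscript $2$ denotes the image in the degree-$2$ factor. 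Now the inclusion $\supseteq$ in the theorem is the nontrivial half: one shows $\MT(X_\C)_{\Q_\ell} \subseteq G^\circ_\ell(X)$. For this, use that $\MT(X_\C) \subseteq \mathrm{im}(R)$, so $\MT(X_\C)$ is the image under $R$ of a subgroup $H \subseteq \GSpin(H^2_B(X_\C))$; since the composite of $R$ with projection to degree $2$ is the standard map $\GSpin(H^2) \to \GO(H^2)$ (spin plus similitude), and its image on $\MT$ equals $\MT_2(X_\C)$, the subgroup $H$ is forced to contain $\Spin(H^2)\cdot\Gm$ up to finite index — indeed $H^\circ = \GSpin(H^2_B(X_\C))^\circ$ because $\GSpin$ is (almost) simple and $\MT_2$ already surjects onto $\GO(H^2)^\circ$ by André. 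Running the identical argument $\ell$-adically with $G^\circ_\ell(X)$ in place of $\MT$ (using \Cref{Section:Galois representations associated with hk varieties} to know $\phi_X$ factors through the $\ell$-adic $R$), one gets that $G^\circ_\ell(X)$ is likewise the image of all of $\GSpin(H^2_\ell(X))^\circ$ under $R_{\Q_\ell}$. Since the two $\GSpin$'s are identified by Artin comparison and $R$ is defined uniformly, the two images coincide. The reverse inclusion $\subseteq$ is the classical fact that the $\ell$-adic algebraic monodromy group is contained in the Mumford--Tate group (Deligne's ``$\ell$-adic Hodge'' bound), which applies directly to the full cohomology $H^\bullet$.

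The main obstacle is matching the \emph{component groups} and the behaviour at the center. The twisted LLV representation $R$ extends the $\Spin$-action by letting $\Gm$ act by $\lambda^i$ in degree $i$; to conclude one must check that $G^\circ_\ell(X)$ really is the full $R$-image of $\GSpin^\circ$ and not a proper subtorus-extension — i.e. that the cyclotomic character is ``seen'' with the right weight in each degree, which is where the precise form of $R$ in \eqref{Equation:twisted LLV representation} and the Tate twist in \Cref{Remark:Tate twist} are used. Once $R$ is known to be injective on $\GSpin^\circ$ (which follows from \Cref{cor:InjectiveLLV}, since the degree-$2$ summand alone already faithfully records $\overline{\fg}(X) \simeq \so(H^2(X))$ and the degree operator pins down the $\Gm$), this reduces to the degree-$2$ comparison, which is exactly André's theorem; then the full statement is automatic. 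I expect the write-up to consist of: (1) invoking André for degree $2$; (2) the lemma from \cite{FFZ} that $\MT$ lies in $\mathrm{im}(R)$, plus its $\ell$-adic analogue; (3) a short group-theoretic argument that both groups are the full $R$-image of $\GSpin^\circ$; (4) citing \cite[Theorem 1.1]{FloccariMumfordTate}, \cite[Theorem 1.18]{FFZ} for the deformation-type-specific input that upgrades ``$G^\circ_\ell(X)$ is controlled by the LLV/Kuga--Satake picture'' from a containment to an equality.
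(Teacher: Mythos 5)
The paper does not prove this theorem at all: it is stated as known, with the sentence preceding it citing André (\cite{Andre96b}) for the degree-$2$ case and then \cite[Theorem~1.1]{FloccariMumfordTate}, \cite[Theorem~1.18]{FFZ} for the full cohomology of the four known deformation types. There is no proof environment, so the "proof" is a pure citation. Your proposal, by contrast, tries to reassemble the argument, and in doing so runs into two genuine problems.

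First, there is a circularity. Your ingredient (iii) — that the image of $\phi_X$ lands inside the image of the $\ell$-adic twisted LLV representation — is precisely \Cref{Corollary:Galois in LLV for known types}, and the paper proves that corollary \emph{using} \Cref{Theorem:Mumford-Tate conjecture}. So you cannot invoke it as an input. Establishing $\phi_X(G_L) \subseteq R(\GSpin(H^2_\ell(X)))$ independently of the Mumford--Tate conjecture is exactly the hard point that \cite{FFZ} and \cite{FloccariMumfordTate} address (via fine control of the motive of $X$, André's theory of motivated cycles, and the Kuga--Satake correspondence on the motivic level); it is not a formal consequence of the LLV structure.

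Second, the step ``$H^\circ = \GSpin(H^2_B(X_\C))^\circ$ because $\MT_2$ already surjects onto $\GO(H^2)^\circ$ by André'' is wrong. André's theorem gives the equality $\MT_2(X_\C)_{\Q_\ell} = G^\circ_{\ell,2}(X)$; it does not assert that $\MT_2$ is the full special orthogonal group. For non-generic $X$ (CM or real multiplication on the transcendental lattice) $\MT_2$ is a proper subgroup, so neither $\MT(X_\C)$ nor $G^\circ_\ell(X)$ is the whole $R$-image of $\GSpin^\circ$. The correct form of the group-theoretic step is: since $R$ restricted to the relevant connected subgroups is an isogeny onto the degree-$2$ image (\cite[Lemma~6.5]{FFZ}), agreement of $\MT_2$ and $G^\circ_{\ell,2}$ implies agreement of the connected groups upstairs. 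That is fine, but it presupposes both groups already live in $R(\GSpin)$ — bringing us back to the circularity above. In the end your sketch also falls back on citing \cite{FloccariMumfordTate} and \cite{FFZ} in item (4), which is what the paper does outright; the intermediate steps you propose do not remove the dependence on those references.
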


Without the assumption that $L$ is finitely generated over $\Q$, we can still prove

\begin{cor}\label{Corollary:Galois in LLV for known types}
Assume that $X$ is in one of four known deformation types \eqref{eq:KnownTypes}.
There exists an open subgroup $H \subset G_L$
such that $\phi_X(H)$ is contained in
the image of the twisted LLV representation
$
R \colon \GSpin(H^2_\ell(X)) \to \prod_i\GL_{\Q_\ell}(H^i_\ell(X)).
$
\end{cor}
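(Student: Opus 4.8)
The plan is to deduce this from the Mumford--Tate conjecture for the full cohomology (\Cref{Theorem:Mumford-Tate conjecture}) together with Verbitsky's theorem that the Weil operator lies in the reduced LLV Lie algebra, recorded here as \Cref{rem:IsogenyMT}, after first reducing to the case in which $L$ is finitely generated over $\Q$. For this reduction, choose a subfield $L' \subset L$ finitely generated over $\Q$ over which $X$ has a model $X'$ --- necessarily again of a known deformation type (\Cref{Definition:known type hyper kahler}) --- and fix compatible algebraic closures $\overline{L'} \subset \overline{L}$. Base change along the extension $\overline{L'}\hookrightarrow\overline{L}$ identifies $H^\bullet_\ell(X)$ with $H^\bullet_\ell(X')$ as graded $\Q_\ell$-algebras with BBF form, and under this identification $\phi_X$ factors as $G_L \to G_{L'}\coloneqq\Gal(\overline{L'}/L')\xrightarrow{\phi_{X'}}\prod_i\GL_{\Q_\ell}(H^i_\ell(X'))$, where the first map is restriction. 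Consequently the twisted LLV representation $R$ attached to $X$ is the same as the one attached to $X'$ --- it is built purely from the graded algebra $H^\bullet_\ell(X)$ and its BBF form --- and $\phi_X(G_L)\subseteq\phi_{X'}(G_{L'})$, so the Zariski closures of these images, hence their identity components, are nested. It therefore suffices to show $G^\circ_\ell(X')\subseteq\Image(R)$; that is, we may assume $L$ finitely generated over $\Q$.

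Now assume $L$ finitely generated over $\Q$, and fix an embedding $\overline{L}\hookrightarrow\C$. By \eqref{eq:BettiEtaleCompare} and \Cref{Theorem:structure of LLV}, the Artin comparison isomorphism identifies the base change to $\Q_\ell$ of the Betti twisted LLV representation $R_B\colon\GSpin(H^2_B(X_\C))\to\prod_i\GL_\Q(H^i_B(X_\C))$ with the $\ell$-adic one $R$: both are obtained from the reduced LLV Lie algebra together with the degree operator --- which correspond to one another under the comparison --- by extending across the central $\Gm$ that acts on $H^i$ by $\lambda^i$. Since forming the image of a homomorphism of algebraic groups commutes with the flat base change $\Q\to\Q_\ell$, \Cref{rem:IsogenyMT} yields $\MT(X_\C)\otimes_\Q\Q_\ell\subseteq\Image(R)$. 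On the other hand \Cref{Theorem:Mumford-Tate conjecture} gives $G^\circ_\ell(X)=\MT(X_\C)\otimes_\Q\Q_\ell$ under the same comparison, whence $G^\circ_\ell(X)\subseteq\Image(R)$. To conclude, put $H\coloneqq\phi_X^{-1}\big(G^\circ_\ell(X)(\Q_\ell)\big)$; as $G^\circ_\ell(X)(\Q_\ell)$ is closed of finite index in the Zariski closure of $\phi_X(G_L)$, the subgroup $H$ is closed of finite index in $G_L$ and therefore open, and $\phi_X(H)\subseteq G^\circ_\ell(X)(\Q_\ell)\subseteq\Image(R)$, as desired.

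The only substantive inputs are \Cref{Theorem:Mumford-Tate conjecture} and \Cref{rem:IsogenyMT}; everything else is formal. The step I expect to require the most care is the pair of compatibility checks for the twisted LLV representation $R$ --- its invariance under extension of the base field (needed to identify $R$ for $X$ with $R$ for $X'$) and its compatibility with the Betti--étale comparison (needed to identify $R\otimes_\Q\Q_\ell$ with $R_B$) --- but both follow directly from the functorial construction of $R$ in \Cref{Section:Looijenga--Lunts--Verbitsky algebras}, so no genuinely new input is needed beyond the Mumford--Tate conjecture and Verbitsky's theorem.
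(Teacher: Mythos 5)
Your proof is correct and takes essentially the same route as the paper: reduce to $L$ finitely generated over $\Q$, then combine the Mumford--Tate conjecture (\Cref{Theorem:Mumford-Tate conjecture}) with Verbitsky's theorem on the Weil operator (\Cref{thm:MTinsideLLV}, via \Cref{rem:IsogenyMT}). You spell out the intermediate compatibilities that the paper leaves implicit, which is sound.
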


\begin{proof}
    By replacing $L$ by a subfield $L' \subset L$ which is finitely generated over $\Q$ such that $X$ is defined over $L'$, we may assume that we are in the situation of \Cref{Theorem:Mumford-Tate conjecture}.
    Then the assertion follows from \Cref{Theorem:Mumford-Tate conjecture} and
    \Cref{thm:MTinsideLLV} (see also \Cref{rem:IsogenyMT}).
\end{proof}

An important consequence is

\begin{cor}\label{Corollary:ell-adic monodromy operators in LLV for known types}
Let $X$ be a \hk variety over $K$ which is in one of four known deformation types \eqref{eq:KnownTypes}
and $\ell \neq p$.
    Then the $\ell$-adic monodromy operator
    \[
    N_\ell = (N_{\ell, i})_i \in \prod_i \End_{\Q_\ell}(H^i_{\ell}(X))
    \]
    is contained in the reduced LLV Lie algebra $\overline{\ge}_{\ell}(X)$.
\end{cor}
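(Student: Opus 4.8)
The plan is to combine Grothendieck's $\ell$-adic monodromy theorem with \Cref{Corollary:Galois in LLV for known types}, and then pass from the algebraic group $\Image(R)$ to its Lie algebra.

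First I would fix the relevant open subgroups. By Grothendieck's monodromy theorem (cf.~\Cref{Subsection:ell adic monodromy operators}) there is an open subgroup $I \subset I_K$ such that $\phi_X(\sigma) = \exp(t_\ell(\sigma) N_\ell)$ for all $\sigma \in I$, where $N_\ell = (N_{\ell,i})_i \in \prod_i \End_{\Q_\ell}(H^i_\ell(X))$. By \Cref{Corollary:Galois in LLV for known types}, after replacing $I$ by $I \cap H$ we may assume $\phi_X(I) \subset \Image(R)$. Now $t_\ell(I)$ is an open, hence infinite, subgroup of $\Z_\ell$, so it is Zariski dense in $\A^1_{\Q_\ell}$; and since $N_\ell$ is nilpotent, $t \mapsto \exp(t N_\ell)$ is a polynomial morphism $\A^1_{\Q_\ell} \to \prod_i \GL_{\Q_\ell}(H^i_\ell(X))$. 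The image of $R$ is a closed algebraic subgroup, so the preimage of $\Image(R)$ under this morphism is a Zariski-closed subset of $\A^1_{\Q_\ell}$ containing the dense subset $t_\ell(I)$; hence it is all of $\A^1_{\Q_\ell}$, i.e.\ $\exp(t N_\ell) \in \Image(R)$ for every $t$.

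Differentiating the one-parameter subgroup $t \mapsto \exp(t N_\ell)$ at $t = 0$ shows $N_\ell \in \Lie(\Image(R))$. In characteristic zero the Lie algebra of the image of a homomorphism of algebraic groups is the image of the differential, so $\Lie(\Image(R))$ equals the image of the Lie algebra representation induced by $R$, which by \Cref{Remark:twsited LLV representation and twisted degree operator} is $\ge(X)^{\tw}_0 = \overline{\ge}_\ell(X) \oplus \Q_\ell\, h^{\deg}$. Thus we may write $N_\ell = n_0 + c\, h^{\deg}$ with $n_0 \in \overline{\ge}_\ell(X)$ and $c \in \Q_\ell$, and it remains to check that $c = 0$. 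For this, consider the top-degree component $H^{4n}_\ell(X)$, which is one-dimensional (here $\dim X = 2n \geq 2$). By \Cref{Theorem:structure of LLV} the reduced LLV Lie algebra acts trivially on $H^{4n}_\ell(X)$ (the trivial $\so(H^2_\ell(X))$-module, Poincaré dual to $H^0$), while $h^{\deg}$ acts on it as multiplication by $4n$. Hence $N_{\ell,4n}$ acts on this line as multiplication by $4n\,c$; since $N_{\ell,4n}$ is nilpotent and $4n \neq 0$, we get $c = 0$, so $N_\ell = n_0 \in \overline{\ge}_\ell(X)$, as claimed.

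There is no serious obstacle here, as \Cref{Corollary:Galois in LLV for known types} does the heavy lifting; the only points requiring care are the passage from the group-level containment $\exp(tN_\ell) \in \Image(R)$ to the Lie-algebra statement (which rests on $\Image(R)$ being Zariski closed and on the characteristic-zero identification of $\Lie(\Image(R))$ with the image of the differential, both standard) and the final bookkeeping that the $h^{\deg}$-component of $N_\ell$ vanishes.
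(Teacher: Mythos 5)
Your argument is correct and follows the paper's approach: both deduce from \Cref{Corollary:Galois in LLV for known types} that $N_\ell$ lies in $\ge_\ell(X)^{\tw}_0 = \overline{\ge}_\ell(X) \oplus \Q_\ell\, h^{\deg}$, and then eliminate the $h^{\deg}$-component using nilpotency. The only variation is in the final bookkeeping: the paper observes that a nilpotent operator has trace zero, while $\overline{\ge}_\ell(X) \simeq \so(H^2_\ell(X))$ is traceless in every representation and $h^{\deg}$ has nonzero trace $\sum_i i\, b_i(X)$ on the full cohomology, forcing the coefficient $c=0$; you instead evaluate the putative $h^{\deg}$-component on the one-dimensional space $H^{4n}_\ell(X)$ where $\overline{\ge}_\ell(X)$ acts trivially. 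Both are one-line consequences of the same facts, so this is not a genuinely different route. Your intermediate passage from the group-level containment $\phi_X(I) \subset \Image(R)$ to $N_\ell \in \Lie(\Image(R))$ (Zariski density of $t_\ell(I)$ in $\A^1$, $\Image(R)$ closed, differentiating the one-parameter subgroup) is exactly the content the paper elides with ``It follows from \Cref{Corollary:Galois in LLV for known types} that $N_\ell \in \overline{\ge}_\ell(X) \oplus \Q_\ell h^{\deg}$,'' and it is good that you spelled it out.
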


\begin{proof}
    It follows from \Cref{Corollary:Galois in LLV for known types} that
    $N_\ell \in \overline{\ge}_\ell(X) \oplus \Q_\ell h^{\deg} \subset \prod_i\End_{\Q_\ell}(H^i_\ell(X))$
    (cf.~ \eqref{Equation:twisted g_0}).
    Since $N_\ell$ is nilpotent, its trace is zero, and thus we have $N_\ell \in \overline{\ge}_\ell(X)$.
\end{proof}

\begin{rem}\label{Remark:p-adic monodromy operator in LLV will be proved later in general}
    \Cref{Corollary:Galois in LLV for known types} (together with the Kuga--Satake construction) also implies that the $p$-adic monodromy operator
$N_p=(N_{p,i})_i \in \prod_i \End_{K^{\ur}_0}(H^i_{\pst}(X))$
is contained in $\overline{\ge}_{\pst}(X)$.
The details will be given in Section \ref{Section:Sen theory and LLV Lie algebras}.
In fact, we will prove this by applying Sen's theory instead of \Cref{Theorem:Mumford-Tate conjecture},
which enables us to prove the same result without assuming that $X$ is of one of the four known deformation types \eqref{eq:KnownTypes}; see Theorem \ref{thm:universalmonodromyp-adic}.
\end{rem}

\subsection{Kuga--Satake constructions}\label{subsection:KugaSatakeconstruction}

We recall the Kuga--Satake construction for \hk varieties.
We assume that $b_2(X)\geq 4$.
Let $\scrL$ be an ample line bundle on $X$.

After replacing $L$ by its finite extension,
there exists
an abelian variety
$\KS(X,\scrL)$
over $L$
of dimension $2^{b_2(X)-2}$
with a $\Z/2\Z$-grading 
\[
\KS(X,\scrL)=\KS^+(X,\scrL) \times \KS^-(X,\scrL)
\]
satisfying the following properties.
Let
$
P^2_\ell (X)(1) \subset H^2_\ell(X)(1)
$
be the primitive part with respect to $\scrL$.
Let 
$\Cl \coloneqq \Cl (P^2_\ell (X)(1))$
be the (abstract) Clifford algebra over $\Q_\ell$.
Then $H^1_\ell(\KS(X,\scrL))$ admits a right action of $\Cl$ compatible with $\Z/2\Z$-gradings
and
the $G_L$-action on $H^1_\ell(\KS(X,\scrL))$, where we equip $\Cl$ with the trivial $G_L$-action.
Moreover, there exist $G_L$-equivariant isomorphisms
    \begin{equation}\label{equation:Kuga-Satake comparison isomorphism}
        \Cl  (P^2_{\ell} (X)(1))   \simeq \End_{\Cl} (H^1_{\ell} (\KS(X,\scrL))),
    \end{equation}
    \begin{equation}\label{equation:even Kuga-Satake comparison isomorphism}
        \Cl^+ (P^2_{\ell} (X)(1))   \simeq \End_{\Cl^+} (H^1_{\ell} (\KS^+(X,\scrL))).
    \end{equation}
Here the left hand sides are equipped with the $G_L$-action induced from that on $P^2_{\ell} (X)(1)$.
There exists an isomorphism
\begin{equation}\label{equation:Kuga-Satake trivialization}
    H^1_{\ell} (\KS(X,\scrL)) \simeq \Cl
\end{equation}
    compatible with $\Z/2\Z$-gradings and right actions of $\Cl$, such that via this isomorphism,
    the isomorphisms
    \eqref{equation:Kuga-Satake comparison isomorphism} and \eqref{equation:even Kuga-Satake comparison isomorphism} are given by left multiplication.
We call $\KS(X,\scrL)$ a \textit{Kuga-Satake abelian variety}.

\begin{rem}
The existence of such an abelian variety was originally observed by Deligne \cite{Deligne72} and André \cite{Andre96b}.
    One can also prove this result by using the period map over $\Q$ given in \cite{Bindt}, as in the case of $K3$ surfaces \cite{MadapusiPera}.
    Here, as in \cite{Charles13, MadapusiPera},
    we use 
    the full Clifford algebra rather than the even part.
\end{rem}

\begin{lem}\label{lem:Kuga-Satale for full cohomology}
\begin{enumerate}
    \item There exists a unique homomorphism
    $
    G_L \to \GSpin(P^2_{\ell} (X)(1))
    $
    which induces the usual actions of $G_L$ on $H^1_{\ell}(\KS(X,\scrL))$
    via \eqref{equation:Kuga-Satake comparison isomorphism} and on $P^2_{\ell} (X)(1)$.
\item 
There exist an isomorphism
\begin{equation}\label{eq:fullKugaSatakeAsCliffordAlgebra}
H^1_{\ell} (\KS(X,\scrL)^{\times 2}) \simeq \Cl(H^2_{\ell} (X)(1))
\end{equation}
of $\Q_\ell$-vector spaces
and a homomorphism
\[
\phi_{\KS} \colon G_L \to \GSpin(H^2_{\ell} (X)(1))
\]
which induces the usual actions of $G_L$ on 
$H^1_{\ell} (\KS(X,\scrL)^{\times 2})$
via \eqref{eq:fullKugaSatakeAsCliffordAlgebra}, and on $H^2_{\ell} (X)(1)$.
\end{enumerate}
\end{lem}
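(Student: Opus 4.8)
The plan is to extract both homomorphisms directly from the structure of $\KS(X,\scrL)$ recorded above, exploiting that the $G_L$-action on $H^1_\ell(\KS(X,\scrL))$ commutes with the right $\Cl$-action. Write $\Cl = \Cl(P^2_\ell(X)(1))$. For (1): since the $G_L$-action on $H^1_\ell(\KS(X,\scrL))$ commutes with the right $\Cl$-action and preserves the $\Z/2\Z$-grading, the trivialization \eqref{equation:Kuga-Satake trivialization} realizes each $g\in G_L$ as left multiplication by a unique element $\rho(g)\in(\Cl^+)^\times$, and this defines a homomorphism $\rho\colon G_L\to(\Cl^+)^\times$. Under \eqref{equation:Kuga-Satake comparison isomorphism} --- which identifies $\Cl$ with $\End_{\Cl}(H^1_\ell(\KS(X,\scrL)))$ by left multiplication --- the transported $G_L$-action on $\Cl$ is conjugation by $\rho(g)$; $G_L$-equivariance of \eqref{equation:Kuga-Satake comparison isomorphism} forces this to agree with the functorial Galois action on $\Cl$, so restricting to the canonical subspace $P^2_\ell(X)(1)\subset\Cl$ shows that $\rho(g)$ normalizes $P^2_\ell(X)(1)$ and acts there by the usual Galois action. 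Hence $\rho$ takes values in $\GSpin(P^2_\ell(X)(1))$ and has the two asserted properties. Uniqueness is automatic: $\GSpin(P^2_\ell(X)(1))\subset\Cl^\times$ acts faithfully on $\Cl\cong H^1_\ell(\KS(X,\scrL))$ by left multiplication, so any homomorphism inducing the prescribed action on $H^1_\ell(\KS(X,\scrL))$ must equal $\rho$.

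For (2), I would reduce to (1) using the $q$-orthogonal decomposition $H^2_\ell(X)(1)=P^2_\ell(X)(1)\perp\Q_\ell\cdot c_1(\scrL)$, where $c_1(\scrL)$ is $G_L$-invariant (as $\scrL$ is defined over $L$) and anisotropic (since $q(c_1(\scrL))>0$). This yields a graded-algebra isomorphism $\Cl(H^2_\ell(X)(1))\cong\Cl\,\widehat{\otimes}\,\Cl(\Q_\ell\cdot c_1(\scrL))$; since $\Cl(\Q_\ell\cdot c_1(\scrL))$ is two-dimensional over $\Q_\ell$ with basis $\{1,c_1(\scrL)\}$, this exhibits $\Cl(H^2_\ell(X)(1))$ as a free left $\Cl$-module of rank two. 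Combining this with \eqref{equation:Kuga-Satake trivialization} and $\dim\KS(X,\scrL)=2^{b_2(X)-2}$ (so $\dim_{\Q_\ell}H^1_\ell(\KS(X,\scrL)^{\times 2})=2^{b_2(X)}=\dim_{\Q_\ell}\Cl(H^2_\ell(X)(1))$) produces the $\Q_\ell$-linear isomorphism \eqref{eq:fullKugaSatakeAsCliffordAlgebra}, compatible with the left $\Cl$-module structures on both sides. Next I would check that inside $\Cl(H^2_\ell(X)(1))$ every even element of $\Cl$ commutes with $c_1(\scrL)$, so that the inclusion $\Cl^+\hookrightarrow\Cl^+(H^2_\ell(X)(1))$ restricts to an embedding $\GSpin(P^2_\ell(X)(1))\hookrightarrow\GSpin(H^2_\ell(X)(1))$ whose conjugation action is the previous action on $P^2_\ell(X)(1)$ and is trivial on $c_1(\scrL)$. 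Setting $\phi_{\KS}$ to be the composite of $\rho$ with this embedding, the left-multiplication action of $\phi_{\KS}(g)$ on $\Cl(H^2_\ell(X)(1))\cong\Cl\oplus\Cl$ is left multiplication by $\rho(g)$ on each summand, which under \eqref{eq:fullKugaSatakeAsCliffordAlgebra} is exactly the $G_L$-action on $H^1_\ell(\KS(X,\scrL)^{\times 2})$, while the conjugation action on $H^2_\ell(X)(1)$ is the usual Galois action by invariance of $c_1(\scrL)$; this gives all of (2).

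The main point requiring care is bookkeeping rather than anything conceptual: one must track the Koszul signs in the graded tensor product $\Cl\,\widehat{\otimes}\,\Cl(\Q_\ell\cdot c_1(\scrL))$ when verifying both that $\rho(g)$, viewed in $\Cl^+(H^2_\ell(X)(1))$, still normalizes the enlarged quadratic space and that left multiplication by $\Cl^+$ acts ``diagonally'' on the two copies of $\Cl$. These signs all turn out to be trivial because $\rho(g)$ is even, so once the identifications are in place, (2) follows formally from (1). Alternatively one may simply invoke the standard Kuga--Satake formalism (cf.\ the works of Deligne, André, Charles, and Madapusi Pera cited above), in which \eqref{eq:fullKugaSatakeAsCliffordAlgebra} and $\phi_{\KS}$ are produced in precisely this manner.
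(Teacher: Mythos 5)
Your proposal is correct and follows essentially the same route as the paper's (very terse) proof: the paper dismisses (1) as immediate, and for (2) uses the same orthogonal decomposition $H^2_\ell(X)(1)=P^2_\ell(X)(1)\oplus\Q_\ell\cdot c_1(\scrL)$, the resulting splitting $\Cl(H^2_\ell(X)(1))=\Cl(P^2_\ell(X)(1))\oplus\Cl(P^2_\ell(X)(1))\cdot c_1(\scrL)$, the induced inclusion $\GSpin(P^2_\ell(X)(1))\hookrightarrow\GSpin(H^2_\ell(X)(1))$, and the trivialization \eqref{equation:Kuga-Satake trivialization}, with a pointer to Andr\'e's Variant 4.1.3. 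What you do differently is mostly in presentation: you phrase the Clifford decomposition as a graded tensor product with $\Cl(\Q_\ell\cdot c_1(\scrL))$ and flag the Koszul-sign bookkeeping explicitly (correctly noting it is harmless because $\rho(g)$ is even), and you spell out in detail the argument behind (1) --- that equivariance of \eqref{equation:Kuga-Satake comparison isomorphism} forces conjugation by $\rho(g)$ to agree with the functorial Galois action on $\Cl(P^2_\ell(X)(1))$, hence to preserve the degree-one subspace, so $\rho(g)$ lands in $\GSpin$ --- together with the uniqueness argument via faithfulness of left multiplication. These are exactly the steps the paper elides, so your write-up is a faithful expansion of the intended proof rather than a new approach.
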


\begin{proof}
    (1) follows immediately.
    For (2), using the orthogonal decomposition
    $
    H^2_{\ell} (X)(1) = P^2_{\ell} (X)(1) \oplus \Q_\ell \cdot c_1(\scrL),
    $
    we obtain
    \[
    \Cl(H^2_{\ell} (X)(1)) = \Cl(P^2_{\ell} (X)(1)) \oplus \Cl(P^2_{\ell} (X)(1)) \cdot c_1(\scrL).
    \]
    In particular we have an inclusion
    $\GSpin(P^2_{\ell} (X)(1)) \hookrightarrow \GSpin(H^2_{\ell} (X)(1))$.
    Moreover,
    the isomorphism \eqref{equation:Kuga-Satake trivialization} induces \eqref{eq:fullKugaSatakeAsCliffordAlgebra}
    such that
    the composition
    \[
    G_L\to \GSpin(P^2_{\ell} (X)(1)) \hookrightarrow \GSpin(H^2_{\ell} (X)(1))
    \]
    induces the usual action of $G_L$ on $H^1_{\ell} (\KS(X,\scrL)^{\times 2})$.
    (See also \cite[Variant 4.1.3]{Andre96b}.)
\end{proof}

We consider the homomorphism $\phi_{\KS}$
constructed in \Cref{lem:Kuga-Satale for full cohomology}.
Here,
as in \Cref{Remark:Tate twist}, we identify $\GSpin(H^2_{\ell}(X)(1))$ with $\GSpin(H^2_{\ell}(X))$
by choosing an isomorphism $\Q_\ell(1) \simeq \Q_\ell$
(but we will distinguish the actions of $G_L$ on $H^2_{\ell}(X)(1)$ and $H^2_{\ell}(X)$).

\begin{prop}\label{Proposition:LLV and Kuga-Satake}
    Assume that the homomorphism $\phi_X \colon G_L \to \prod_i \GL_{\Q_\ell}(H^i_\ell(X))$ factors through $R(\GSpin(H^2_\ell(X))) \subset \prod_i \GL_{\Q_\ell}(H^i_\ell(X))$.
    Then, the composition
\[
G_L \xrightarrow{\phi_{\KS}} \GSpin(H^2_{\ell}(X)) 
\xrightarrow{R}  \prod_i\GL_{\Q_\ell}(H^i_\ell(X))
\]
agrees with the homomorphism $\phi_X$ after restricting to an open subgroup of $G_L$.
\end{prop}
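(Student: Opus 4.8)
The plan is to compare two homomorphisms $G_L \to \prod_i\GL_{\Q_\ell}(H^i_\ell(X))$ — namely $\phi_X$ and $R\circ\phi_{\KS}$ — by reducing everything to degree $2$, where both are controlled by the orthogonal/spin representation, and then using the faithfulness and rigidity of the twisted LLV representation $R$ to propagate agreement to all degrees. First I would record what happens in degree $2$: the homomorphism $\phi_{\KS}$ was built in \Cref{lem:Kuga-Satale for full cohomology} precisely so that its composition with the projection $\GSpin(H^2_\ell(X)(1)) \twoheadrightarrow \SO(H^2_\ell(X)(1))$ recovers the Galois action on $H^2_\ell(X)(1)$; after the identification $\Q_\ell(1)\simeq\Q_\ell$ of \Cref{Remark:Tate twist}, and bookkeeping the cyclotomic twist via the $\Gm$-factor of $\GSpin$, this says that $R\circ\phi_{\KS}$ and $\phi_X$ induce the same action on $H^2_\ell(X)$ up to the scalar $\Gm$-part. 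On the other hand, by hypothesis $\phi_X$ factors through $R(\GSpin(H^2_\ell(X)))$, so we may write $\phi_X = R\circ\psi$ for some homomorphism $\psi\colon G_L \to \GSpin(H^2_\ell(X))$ — using that $R$, restricted to $\Spin(H^2_\ell(X))\cdot\Gm$, is a closed immersion onto its image (its Lie-algebra version $\ge(X)^{\tw}_0 \hookrightarrow \prod_i\End_E(H^i(X))$ from \eqref{Equation:twisted g_0} is faithful, and $\GSpin$ acts with finite kernel on $H^2$ so the kernel of $R$ is finite, hence we get $\psi$ on an open subgroup after possibly passing to a connected cover).

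The heart of the argument is then to show $\psi$ and $\phi_{\KS}$ agree on an open subgroup of $G_L$. Composing both with $\GSpin(H^2_\ell(X)) \to \SO(H^2_\ell(X))$ gives two homomorphisms inducing the \emph{same} action on $H^2_\ell(X)$ (both equal the Galois action, up to the twist normalization fixed above), so $\psi$ and $\phi_{\KS}$ differ by a homomorphism $G_L \to \ker(\GSpin\to\SO) = \Gm$. That is, there is a character $\chi\colon G_L \to \Q_\ell^\times$ with $\psi = \chi\cdot\phi_{\KS}$. Applying $R$, the two sides differ on $H^i_\ell(X)$ by $\chi^{?}$ according to the $\Gm$-weight in that degree; but on $H^2_\ell(X)$ they already agree with $\phi_X$, and the $\Gm$-weight there is nonzero (indeed $R$ sends $\lambda\in\Gm$ to multiplication by $\lambda^i$ on $H^i$ in the normalization of \eqref{Equation:twisted LLV representation}, so the weight on $H^2$ is $2\neq 0$), hence $\chi$ is trivial (or has finite order — killed by passing to an open subgroup). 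Therefore $\psi$ and $\phi_{\KS}$ coincide on an open subgroup, and applying $R$ yields $\phi_X = R\circ\phi_{\KS}$ on that open subgroup, as desired.

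The main obstacle I anticipate is the careful bookkeeping of Tate twists and the $\Gm$-normalization: \Cref{lem:Kuga-Satale for full cohomology} produces $\phi_{\KS}$ landing in $\GSpin(H^2_\ell(X)(1))$ and controls the action on the \emph{twisted} module $H^2_\ell(X)(1)$, whereas $R$ in \eqref{Equation:twisted LLV representation} is set up on $\GSpin(H^2_\ell(X))$ with its own prescribed $\Gm$-weights on each $H^i_\ell(X)$. One must check that the identification $\GSpin(H^2_\ell(X)(1))\simeq\GSpin(H^2_\ell(X))$ of \Cref{Remark:Tate twist}, composed with $R$, produces on $H^1_\ell(\KS(X,\scrL)^{\times 2})$ exactly the Galois action built into the Kuga–Satake package — equivalently, that the cyclotomic character gets absorbed correctly into the $\Gm$-factor and does not leave a residual twist on the higher $H^i$. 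Once this compatibility of normalizations is pinned down, the rest is the soft argument above: two $\GSpin$-valued representations with the same composite to $\SO$ differ by a $\Gm$-valued character, which is forced to be finite order by looking at a degree where the $\Gm$-weight is nonzero and the two already agree.
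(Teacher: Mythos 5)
Your proposal identifies the two key ingredients — reduce to degree $2$, and use the fact that the projection of $R(\GSpin(H^2_\ell(X)))$ to $\GL(H^2_\ell(X))$ is an isogeny with finite kernel (which is exactly \cite[Lemma 6.5]{FFZ}) — and these are the same ingredients the paper uses. However, your organization of the argument has a real gap, and the paper's route avoids it.

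The problematic step is the lifting: from the hypothesis that $\phi_X$ factors through the subgroup $R(\GSpin(H^2_\ell(X)))\subset\prod_i\GL_{\Q_\ell}(H^i_\ell(X))$, you claim ``so we may write $\phi_X = R\circ\psi$ for some homomorphism $\psi\colon G_L\to\GSpin(H^2_\ell(X))$''. This does \emph{not} follow. The map $R$ is an isogeny with a nontrivial finite central kernel (it is not a closed immersion — your parenthetical appeal to faithfulness of the Lie-algebra representation $\ge(X)^{\tw}_0$ only gives finiteness of $\ker R$, not triviality). Lifting a continuous homomorphism $G_L\to R(\GSpin)(\Q_\ell)$ through such an isogeny is obstructed both by a cohomology class in $H^2(G_L,\ker R)$ and by the possible failure of surjectivity of $\GSpin(\Q_\ell)\to R(\GSpin)(\Q_\ell)$; ``passing to a connected cover'' does not address either. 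Once this lift is unavailable, the rest of your argument (comparing $\psi$ and $\phi_{\KS}$ via a $\Gm$-valued character $\chi$, then forcing $\chi^2=1$ from the weight on $H^2$) has nothing to run on.

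The fix is to avoid lifting altogether, which is what the paper does: both $\phi_X$ and $R\circ\phi_{\KS}$ already take values in $R(\GSpin(H^2_\ell(X)))$, and the projection from there to $\GL(H^2_\ell(X))$ has finite kernel, so two such maps agreeing on $H^2_\ell(X)$ agree on an open subgroup of $G_L$. All that remains is the Tate-twist bookkeeping, which you correctly flag as the crux but do not carry out. The paper's computation is short and you should pin it down: $R_2$ (the degree-$2$ component of $R$) is not the standard $\GSpin\to\SO(H^2_\ell(X))$ but its twist by the spinor norm $\GSpin\to\Gm$; by construction of $\KS(X,\scrL)$ the composite of $\phi_{\KS}$ with the spinor norm is the inverse cyclotomic character; hence $R_2\circ\phi_{\KS}$ is the Galois action on $H^2_\ell(X)(1)$ twisted by the inverse cyclotomic character, which is precisely the Galois action on $H^2_\ell(X)$. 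Your phrase ``the same action $\ldots$ up to the scalar $\Gm$-part'' undersells this: after the bookkeeping the two actions on $H^2_\ell(X)$ agree on the nose, which is what makes the finite-kernel argument conclude immediately.
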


\begin{proof}
By \cite[Lemma 6.5]{FFZ},
    the projection
    $\prod_i \GL_{\Q_\ell}(H^i_\ell(X)) \to \GL_{\Q_\ell}(H^2_\ell(X))$
    induces an isogeny with finite kernel from
    $R(\GSpin(H^2_{\ell}(X)))$
    onto its image.
    Therefore, it suffices to show that
    the action of $G_L$ on $H^2_\ell(X)$ induced by the composition
    \begin{equation}\label{equation:Kuga-Satake Galois action versus LLV}
G_L\xrightarrow{\phi_{\KS}} \GSpin(H^2_{\ell}(X)) \xrightarrow{R_2} \GL_{\Q_\ell}(H^2_\ell(X))
\end{equation}
is equal to the usual Galois action,
where $R_2$ is the composition of $R$ with the projection.
We remark that 
$R_2$ is not the same as the usual homomorphism
\[
\GSpin(H^2_{\ell}(X)) \to \SO(H^2_\ell(X)) \subset \GL_{\Q_\ell}(H^2_\ell(X)),
\]
but rather its twist by the spinor norm $\GSpin(H^2_{\ell}(X)) \to \G_m$.
By construction of 
$\KS(X,\scrL)$,
the composition
\[
G_L\xrightarrow{\phi_{\KS}} \GSpin(H^2_{\ell}(X)) \to \G_m
\]
agrees with the inverse of the cyclotomic character.
Thus, the action of $G_L$ induced by \eqref{equation:Kuga-Satake Galois action versus LLV} is the twist of the usual action on $H^2_{\ell}(X)(1)$ by the inverse of the cyclotomic character, that is $H^2_{\ell}(X)$.
\end{proof}

\begin{cor}\label{Corollary:split Galois equivariant embedding}
    We set $W\coloneqq H^1_{\ell} (\KS(X,\scrL)^{\times 2})$, viewed as $\GSpin(H^2_{\ell}(X))$-module by the identification \eqref{eq:fullKugaSatakeAsCliffordAlgebra}. 
    Let
    \[
\widetilde{\iota} \colon  H^\bullet_\ell(X) \hookrightarrow \bigoplus_{j \in J} W^{\otimes m_j} \otimes W^{\vee, \otimes m'_j}
\]
be a $\GSpin(H^2_{\ell}(X))$-equivariant embedding \footnote{Since $\GSpin(H^2_{\ell}(X))$ is reductive and its action on
$W$
is faithful, such an embedding always exists.}
for a finite set $J$ and a family of integers $\{m_j, m'_j\}_{j \in J}$.
Assume that $\phi_X$ factors through $R(\GSpin(H^2_\ell(X)))$.
Then there exists an open subgroup $H \subset G_L$
such that $\widetilde{\iota}$
is $H$-equivariant, and admits an $H$-equivariant splitting.
\end{cor}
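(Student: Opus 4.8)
The plan is to prove both assertions simultaneously by transporting the tautological $\GSpin(H^2_\ell(X))$-equivariance of $\widetilde{\iota}$ through $\phi_{\KS}$, and then invoking \Cref{Proposition:LLV and Kuga-Satake} to identify the resulting group action with the genuine Galois action on $H^\bullet_\ell(X)$. First I would set $T \coloneqq \bigoplus_{j \in J} W^{\otimes m_j} \otimes W^{\vee,\otimes m'_j}$, regarded as a $\GSpin(H^2_\ell(X))$-representation via the module structure on $W$ coming from \eqref{eq:fullKugaSatakeAsCliffordAlgebra}. By \Cref{lem:Kuga-Satale for full cohomology}\,(2), the usual $G_L$-action on $W = H^1_\ell(\KS(X,\scrL)^{\times 2})$ is $\phi_{\KS}$ followed by this module structure, after identifying $\GSpin(H^2_\ell(X)(1))$ with $\GSpin(H^2_\ell(X))$ as in \Cref{Remark:Tate twist}; consequently the usual $G_L$-action on $T$ is $\phi_{\KS}$ followed by the $\GSpin(H^2_\ell(X))$-representation on $T$.

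Next, since $\phi_X$ factors through $R(\GSpin(H^2_\ell(X)))$ by hypothesis, \Cref{Proposition:LLV and Kuga-Satake} provides an open subgroup $H \subset G_L$ on which $\phi_X|_H = (R \circ \phi_{\KS})|_H$; that is, the $H$-action on $H^\bullet_\ell(X)$ is $\phi_{\KS}|_H$ followed by the $\GSpin(H^2_\ell(X))$-representation $R$. Since $\widetilde{\iota}$ is $\GSpin(H^2_\ell(X))$-equivariant and both $H$-actions are obtained by composing the respective $\GSpin(H^2_\ell(X))$-representations with $\phi_{\KS}$, the map $\widetilde{\iota}$ is $H$-equivariant: for $g \in H$ and $v \in H^\bullet_\ell(X)$ one has $\widetilde{\iota}(\phi_X(g)v) = \widetilde{\iota}(R(\phi_{\KS}(g))v) = \phi_{\KS}(g)\cdot\widetilde{\iota}(v)$, where the last dot denotes the $\GSpin(H^2_\ell(X))$-action on $T$, which is the $G_L$-action on $T$ by the previous paragraph.

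Finally, because $\GSpin(H^2_\ell(X))$ is reductive over the characteristic-zero field $\Q_\ell$ and all spaces in sight are finite-dimensional, the $\GSpin(H^2_\ell(X))$-stable subspace $\widetilde{\iota}(H^\bullet_\ell(X)) \subset T$ admits a $\GSpin(H^2_\ell(X))$-stable complement; the corresponding projection yields a $\GSpin(H^2_\ell(X))$-equivariant retraction $\widetilde{p} \colon T \to H^\bullet_\ell(X)$ with $\widetilde{p} \circ \widetilde{\iota} = \id$, and restricting its equivariance through $\phi_{\KS}$ shows $\widetilde{p}$ is $H$-equivariant. The only genuine input is \Cref{Proposition:LLV and Kuga-Satake} (and, behind it, the Kuga--Satake comparison of \Cref{lem:Kuga-Satale for full cohomology}), together with the bookkeeping needed to match the Tate twist $H^2_\ell(X)(1)$ appearing in the Kuga--Satake construction with the untwisted $H^2_\ell(X)$ underlying the $\GSpin$-module $W$; I do not expect a real obstacle beyond this, since once both $H$-actions are seen to factor through $\phi_{\KS}$, reductivity of $\GSpin(H^2_\ell(X))$ makes the $H$-equivariance and the $H$-equivariant splitting formal.
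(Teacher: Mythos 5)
Your proposal is correct and takes essentially the same route as the paper's proof, which simply cites \Cref{Proposition:LLV and Kuga-Satake} for the equivariance of $\widetilde{\iota}$ over an open subgroup and semisimplicity of $\GSpin(H^2_\ell(X))$-representations for the equivariant splitting; your write-up supplies the intermediate bookkeeping (that the Galois action on $T$ also factors through $\phi_{\KS}$, and the Tate-twist identification) that the paper leaves implicit.
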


\begin{proof}
This follows from Proposition \ref{Proposition:LLV and Kuga-Satake}.
For the second statement, we have used that
every finite-dimensional representation of $\GSpin(H^2_{\ell}(X))$ is semisimple.
\end{proof}

In the rest of this section, we assume that $L=K$ and $\ell=p$.
We shall recall a lemma concerning the compatibility between the 
$p$-adic monodromy operators and the Kuga--Satake construction.
This result will be used in the proof of \Cref{thm:universalmonodromyp-adic}.

For $W\coloneqq H^1_{p} (\KS(X,\scrL)^{\times 2}) \simeq \Cl(H^2_{p}(X)(1))$,
the natural embedding
\begin{equation}\label{Equation:Kuga-Satake embedding full cohomology}
    H^2_{p}(X)(1) \hookrightarrow \End_{\Q_p}(W)
\end{equation}
defined by left multiplication by elements in $H^2_{p}(X)(1)$ is
$\GSpin(H^2_{p}(X)(1))$-equivariant, and hence
$G_K$-equivariant.
We identify the Lie algebra of $\GSpin(H^2_{p}(X)(1))$ with the orthogonal Lie algebra
$\so(H^2_{p}(X)(1)) \oplus \Q_ph^{\deg}$
as in \eqref{Equation:twisted g_0}, and 
let
\[
    \rho^{\KS} \colon  \so(H^2_{p}(X)(1)) \oplus \Q_ph^{\deg} \hookrightarrow \End_{\Q_p}(W)
\]
be the induced Lie algebra representation, where $h^{\deg}$ acts as the identity
on
$W$.
Since $\rho^{\KS}$ is $G_K$-equivariant,
we then have the induced homomorphism
\[
\rho^{\KS} \colon \so(H^2_{\pst}(X)(1)) \oplus K^{\ur}_0h^{\deg} \hookrightarrow \End_{K^{\ur}_0}(\Dpst(W)),
\]
which we denote by the same symbol.

\begin{lem}\label{Lemma:nilpotent operator on Kuga-Satake}
    Let $N_{\KS} \in \End_{K^{\ur}_0}(\Dpst(W))$
    be the monodromy operator on
$\Dpst(W)$.
Then
$
N_{\KS} = \rho^{\KS} (N_{p, 2})
$
where $N_{p,2} \in \so(H^2_{\pst}(X)(1))$ is the monodromy operator on $H^2_{\pst}(X)(1)$.
\end{lem}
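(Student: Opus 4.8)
The plan is to transport the monodromy operator $N_{p,2}$ through the Kuga--Satake embedding \eqref{Equation:Kuga-Satake embedding full cohomology} using the functoriality of $\Dpst$, and to pin down the remaining ambiguity by exploiting the Clifford-algebra bimodule structure on $W$. Recall that $W\cong\Cl(H^2_p(X)(1))$ as $\Q_p$-vector spaces, with $G_K$ acting through $\phi_{\KS}$ by left multiplication. First I would observe that \eqref{Equation:Kuga-Satake embedding full cohomology} extends to a $G_K$-equivariant embedding of $\Q_p$-algebras $\iota\colon\Cl(H^2_p(X)(1))\hookrightarrow\End_{\Q_p}(W)$, again by left multiplication, where $G_K$ now acts on the source by conjugation through $\phi_{\KS}$; this conjugation action is the functorial (tensor-type) construction built out of $H^2_p(X)(1)$, and $\iota$ restricts on $\so(H^2_p(X)(1))$ to the representation $\rho^{\KS}$. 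Dually, $W$ carries a $G_K$-equivariant \emph{right} $\Cl(H^2_p(X)(1))$-module structure on which $G_K$ acts \emph{trivially} (extending the right $\Cl(P^2_p(X)(1))$-action of \eqref{equation:Kuga-Satake trivialization} by right multiplication by the Galois-fixed class $c_1(\scrL)$), making $W$ the regular bimodule over $\Cl(H^2_p(X)(1))$.

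Since $W$ (the $H^1$ of an abelian variety) and $H^2_p(X)(1)$ are potentially semistable, applying $\Dpst$ to this data gives: an algebra embedding $\iota\colon\Cl(H^2_{\pst}(X)(1))\hookrightarrow\End_{K^{\ur}_0}(\Dpst(W))$; a right $\Cl(H^2_p(X)(1))\otimes_{\Q_p}K^{\ur}_0$-module structure on $\Dpst(W)$; and compatibility with monodromy operators. The monodromy operator on $\End_{K^{\ur}_0}(\Dpst(W))$ is $\phi\mapsto[N_{\KS},\phi]$ (the internal-hom rule), while on $\Cl(H^2_{\pst}(X)(1))$ it is the unique derivation extending $N_{p,2}$, which equals $\ad(N_{p,2})$ under the standard inclusion $\so(-)\hookrightarrow\Cl(-)$. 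Hence $[N_{\KS},\iota(c)]=\iota([N_{p,2},c])=[\iota(N_{p,2}),\iota(c)]$ for every $c$, so the operator $E\coloneqq N_{\KS}-\rho^{\KS}(N_{p,2})$ commutes with $\iota(\Cl(H^2_{\pst}(X)(1)))$. On the other hand the right $\Cl$-action commutes with $N_{\KS}$ (functoriality of $\Dpst$ applied to each $G_K$-equivariant right multiplication) and with $\rho^{\KS}(N_{p,2})$ (left and right multiplications commute), hence with $E$.

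To conclude I would base change along $K^{\ur}_0\hookrightarrow B_{\dR}$. The comparison isomorphism $\Dpst(W)\otimes_{K^{\ur}_0}B_{\dR}\cong W\otimes_{\Q_p}B_{\dR}$ identifies the left and right actions above with the two regular actions of $\Cl(H^2_p(X)(1))\otimes_{\Q_p}B_{\dR}$ on itself, so $E\otimes_{K^{\ur}_0}B_{\dR}$ lies in their joint centralizer, i.e.\ it is multiplication by a central element of the semisimple algebra $\Cl(H^2_p(X)(1))\otimes_{\Q_p}B_{\dR}$. But $E$ is nilpotent: $N_{\KS}$ is nilpotent because it is a $p$-adic monodromy operator (from $N\varphi=p\varphi N$), $\rho^{\KS}(N_{p,2})$ is nilpotent because $N_{p,2}$ is a nilpotent element of $\so(H^2_{\pst}(X)(1))$, and the two commute by the previous paragraph. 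A nilpotent central element of a semisimple algebra is zero, so $E\otimes_{K^{\ur}_0}B_{\dR}=0$, whence $E=0$ and $N_{\KS}=\rho^{\KS}(N_{p,2})$.

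I expect the main obstacle to be the bookkeeping in the first paragraph: identifying precisely which Galois actions the two Clifford-module structures on $W$ carry, and checking that $\Dpst$ converts the left (conjugation) action into the derivation $\ad(N_{p,2})$ while leaving the right (trivial) action as a plain bimodule structure over $\Cl(H^2_p(X)(1))\otimes_{\Q_p}K^{\ur}_0$. It is exactly this asymmetry that makes $\Dpst$ record $N_{p,2}$ on one side and nothing on the other; once it is set up, the monodromy transport and the centralizer/nilpotency endgame are routine.
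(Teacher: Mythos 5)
Your proposal is correct, and it takes a genuinely different route from the paper's own proof. The paper argues Tannakianly: it fixes a set of tensors $\{s_\alpha\}\subset W^\otimes$ cutting out $\GSpin(H^2_p(X)(1))\subset\GL(W)$, notes that each $s_\alpha$ is Galois-invariant and hence gives a monodromy-annihilated element of $\Dpst(W)^\otimes$, concludes $N_{\KS}\in\so(H^2_{\pst}(X)(1))\oplus K_0^{\ur}h^{\deg}$, kills the $h^{\deg}$-component by nilpotency, and then uses only the linear embedding $H^2_{\pst}(X)(1)\hookrightarrow\End(\Dpst(W))$ (compatible with $N$) to identify the $\so$-part with $N_{p,2}$. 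You replace the tensor argument with a bimodule/double-centralizer argument: the full algebra embedding $\Cl(H^2_{\pst}(X)(1))\hookrightarrow\End(\Dpst(W))$ by left multiplication is compatible with monodromy (since $\Dpst$ is a tensor functor and the derivation extension of $N_{p,2}$ on $\Cl$ equals $\ad(N_{p,2})$), the right Clifford action kills the remaining freedom, and the identification $\Dpst(W)\otimes B_{\dR}\cong\Cl(\ldots)\otimes B_{\dR}$ makes $E=N_{\KS}-\rho^{\KS}(N_{p,2})$ a nilpotent central element of a semisimple algebra, hence zero. Each route has its advantages: the paper's avoids the right-module bookkeeping and the base change to $B_{\dR}$, while yours avoids invoking the Tannakian fact that $\GSpin$ is the stabilizer of a finite set of tensors and that $\Lie\GSpin$ is their annihilator (replacing this with the more hands-on centralizer computation). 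Both ultimately exploit nilpotency of $N_{\KS}$ to pin down the answer.

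One small imprecision worth flagging: you describe the right $\Cl(H^2_p(X)(1))$-module structure on $W$ as ``extending the right $\Cl(P^2_p(X)(1))$-action by right multiplication by $c_1(\scrL)$,'' but under the identification $W\cong\Cl(P^2)\oplus\Cl(P^2)\cdot c_1(\scrL)$, the right $\Cl(P^2)$-action inherited from the two Kuga--Satake factors differs from genuine right multiplication in $\Cl(H^2)$ by a sign on the odd part, because $c_1(\scrL)$ anticommutes with $P^2$. This does not affect your argument: all you need is \emph{some} $G_K$-equivariant right $\Cl(H^2_p(X)(1))$-action (with trivial Galois action on the source) that commutes with the left action, and genuine right multiplication on $W\cong\Cl(H^2_p(X)(1))$ manifestly works, since it commutes with left multiplication by $\phi_{\KS}(g)$. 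It is cleanest simply to use that regular right action directly rather than trying to assemble it from the $\Cl(P^2)$-action on each factor.
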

\begin{proof}
Let $\{ s_{\alpha}\}_{\alpha}\subset W^{\otimes}$ be a set of tensors defining $\GSpin(H^2_p(X)(1)) \subset \GL_{\Q_p}(W)$.
Here $W^{\otimes}$ is the direct sum of all the $\Q_p$-vector spaces which can be constructed from $W$ by taking duals and tensor products.
Then
$\so(H^2_{p}(X)(1)) \oplus \Q_ph^{\deg} \subset \End_{\Q_p}(W)$
agrees with the subspace consisting of all endomorphisms that annihilate $\{ s_{\alpha}\}_\alpha$.

Since each $s_{\alpha}$ is $G_K$-invariant, it induces a tensor
$s_{\alpha, \pst} \in \Dpst(W)^\otimes$ which is annihilated by the monodromy operator.
Since
$\so(H^2_{\pst}(X)(1)) \oplus K^{\ur}_0h^{\deg} \subset \End_{K^{\ur}_0}(\Dpst(W))$
agrees with the subspace consisting of all endomorphisms that annihilate $\{ s_{\alpha, \pst}\}_\alpha$, we have
$
N_{\KS} \in \so(H^2_{\pst}(X)(1)) \oplus K^{\ur}_0h^{\deg}.
$
Since $N_{\KS}$ is nilpotent,
it follows that $N_{\KS} \in \so(H^2_{\pst}(X)(1))$.
The homomorphism
\[
H^2_{\pst}(X)(1) \to \End_{K^{\ur}_0}(\Dpst(W))
\]
induced by \eqref{Equation:Kuga-Satake embedding full cohomology} is compatible with monodromy operators, which implies that
$N_{\KS} =N_{p,2}$ in $\so(H^2_{\pst}(X)(1))$.
\end{proof}

\section{Reduction types}
\label{section: Reduction Types of Kuga--Satake}

In this section, we will establish an arithmetic analogue of \cite{SS20}, which is essential in the proof of the main theorems.
In particular, we will show that there is a geometric description for the reduction types of the second cohomology of a hyper-Kähler variety over $K$.

\subsection{Reduction types of Kuga--Satake abelian varieties}\label{subsec:Reduction of Kuga-Satake} 

Let $X$ be a hyper-Kähler variety over $K$
and $\scrL$ an ample line bundle on $X$.
We assume that $b_{2} (X) \geq 4$.
By extending $K$ if necessary, we assume that the Kuga--Satake abelian variety $\KS(X,\scrL)$ and the additional structures on it as in Section \ref{subsection:KugaSatakeconstruction} are defined over $K$.
In particular, we have
the $\Z/2\Z$-grading
$\KS(X,\scrL)=\KS^+(X,\scrL) \times \KS^-(X,\scrL)$.

We set
\[
A\coloneqq \KS^+(X,\scrL) \quad \textup{(resp.~ } A\coloneqq \KS(X,\scrL))
\]
if $b_2 (X)$ is even (resp.~ odd).
Let $\ell$ be a prime.
If $\ell \neq p$, then let
$P^2(X)(1)$ be the primitive part of $H^2_{\et}(X_{\overline{K}}, \overline{\Q}_\ell)(1)$
with respect to $\mathscr{L}$
and
$H^1(A)\coloneqq H^1_{\et}(A_{\overline{K}}, \overline{\Q}_\ell)$.
If $\ell=p$, then
let
$P^2(X)(1)$ be the primitive part of 
$\Dpst(H_{\et}^{2}(X_{\overline{K}},\Q_{p})(1)) \otimes_{K^{\ur}_0} \overline{K}$
with respect to $\mathscr{L}$
and 
$H^1(A)\coloneqq \Dpst(H^1_{\et}(A_{\overline{K}}, \Q_p)) \otimes_{K^{\ur}_0} \overline{K}$.
Here we work over the algebraically closed fields
$F=\overline{\Q}_\ell$
or $F=\overline{K}$, respectively.

Let 
$M_\bullet(X)$ be the monodromy filtration on 
$P^2(X)(1)$
associated with the monodromy operator $N_2$ on $P^2(X)(1)$
as in \cite[Proposition 1.6.1]{WeilII}.
Similarly let
$M_\bullet(A)$
be the monodromy filtration on 
$H^1(A)$.
For an integer $i \in \Z$, let
\begin{align*}
    r_{i}(X) &\coloneqq  \dim_F \mathrm{gr}^M_i(P^2(X)(1)) = \dim_F M_{i}(X)/M_{i-1}(X), \\
    r_{i}(A) &\coloneqq  \dim_F \mathrm{gr}^M_i(H^1(A)) = \dim_F M_{i}(A)/M_{i-1}(A).
\end{align*}

\begin{rem}\label{Remark:range and symmetry}
By \Cref{Lemma:ell adic nilpotency index upper bound in general} and \Cref{Lemma:p adic nilpotency index upper bound in general},
$r_{i}(X)=0$ if $\vert i \vert > 2$ and $r_{i}(A)=0$ if $\vert i \vert > 1$.
By definition of monodromy filtrations, $r_{i}(X)=r_{-i}(X)$ and $r_{i}(A)=r_{-i}(A)$.
\end{rem}

\begin{thm}
\label{thm:l-adic monodromy}
In the above setting, assume that the abelian variety $A$ over $K$ has semi-abelian reduction.
\begin{enumerate}
    \item 
    If $\nu (N_{2})=0$,
    then we have
    \[
    r_2(X)=r_{-2}(X)=r_1(X)=r_{-1}(X)=0 \quad \text{ and } 
    \quad
    r_0(X)=b_2(X)-1.
    \]
    Also, we have $r_1(A)=r_{-1}(A)=0$ and $r_{0}(A) = b_1(A)$, and $A$ admits good reduction.
    \item 
    If $\nu (N_2) =1$, 
    then $b_2(X) \geq 5$, and we have 
    \[
    r_2(X)= r_{-2}(X)=0, \quad r_1(X) = r_{-1}(X)=2, \quad \text{ and } \quad  r_0(X)= b_2(X)-5.
    \]
     Moreover we have $\dim_{F} (\Im N_{2}) =2$.
    Also, we have $r_0(A)=b_1(A)/2,$ and
    $ r_{1}(A) = r_{-1}(A)=b_1(A)/4$, and the torus rank of a special fiber of a N\'{e}ron model of $A$ is a half of $\dim A$.
    \item
    If $\nu (N_{2}) =2$, 
    then we have
    \[
    r_2(X)= r_{-2}(X)=1, \quad r_1(X) = r_{-1}(X)=0, \quad \text{ and } \quad  r_0(X)= b_2(X)-3.
    \]
    Moreover we have $\dim_{F} (\Im N_{2}) =2$ and $\dim_{F} (\Im N^2_{2}) =1$.
    Also, we have 
    $ r_{1}(A) = r_{-1}(A)=b_1(A)/2$
    and
    $r_0(A)=0$, and $A$ admits totally toric reduction. 
\end{enumerate}
\end{thm}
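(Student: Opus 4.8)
The plan is to transfer the entire statement to the Kuga--Satake abelian variety $A$, where the reduction behaviour is governed by a single monodromy operator, and then to exploit that this operator squares to zero. I would argue uniformly in $\ell$ (including $\ell=p$): for $\ell\neq p$ one uses the $\ell$-adic monodromy operators, and for $\ell=p$ one applies $\Dpst$, which is an exact tensor functor compatible with monodromy operators; all the linear algebra below then takes place over the algebraically closed field $F$ ($=\overline{\Q}_\ell$ or $\overline{K}$). Recall $A=\KS^+(X,\scrL)$ if $b_2(X)$ is even and $A=\KS(X,\scrL)$ if $b_2(X)$ is odd.

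First I would record that $N_2$ is infinitesimally orthogonal. The BBF form $q$ on $H^2(X)(1)$ is $G_K$-equivariant, hence (for $\ell=p$, after applying $\Dpst$) compatible with monodromy operators, so the monodromy operator on $H^2(X)(1)$ lies in $\so(H^2(X)(1),q)$; it annihilates $c_1(\scrL)$, which is $G_K$-invariant, so it preserves $P^2(X)(1)=c_1(\scrL)^{\perp}$ and restricts there to $N_2\in\so(P^2(X)(1),q)$, with $\nu(N_2)\le 2$ by \Cref{Lemma:ell adic nilpotency index upper bound in general} and \Cref{Lemma:p adic nilpotency index upper bound in general}. Next, the Kuga--Satake comparison isomorphisms \eqref{equation:Kuga-Satake comparison isomorphism}, \eqref{equation:even Kuga-Satake comparison isomorphism} and the trivialization \eqref{equation:Kuga-Satake trivialization} identify $H^1(A)$ with the left regular module over $\Cl^+(P^2(X)(1))$ (when $b_2(X)$ is even) or $\Cl(P^2(X)(1))$ (when $b_2(X)$ is odd), with the Galois action factoring through $\GSpin(P^2(X)(1))$; so, exactly as in \Cref{Lemma:nilpotent operator on Kuga-Satake}, the monodromy operator $N_{\KS}$ on $H^1(A)$ is left Clifford multiplication by $N_2\in\so(P^2(X)(1))\subset\Cl^+(P^2(X)(1))$.

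The crucial point is that $N_{\KS}^2=0$, since $H^1$ of an abelian variety has monodromy nilpotency index $\le 1$ (\Cref{Lemma:ell adic nilpotency index upper bound in general}, \Cref{Lemma:p adic nilpotency index upper bound in general}). Because the left regular representation of a Clifford algebra is faithful, this forces $N_2\cdot N_2=0$ in $\Cl^+(P^2(X)(1))$. A short Clifford-algebra computation then pins $N_2$ down: writing $N_2$ as the bivector $\omega\in\Lambda^2 P^2(X)(1)$, the relation $\omega^2=0$ forces $\omega\wedge\omega=0$, so $\omega=x\wedge y$ is decomposable (equivalently $\dim_F\Im N_2\le 2$), and then $\omega^2=q(x,y)^2-q(x,x)q(y,y)$, i.e.\ minus the discriminant of $q$ restricted to $\langle x,y\rangle$, so this discriminant vanishes. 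Hence: if $\nu(N_2)=0$ then $N_2=0$; if $\nu(N_2)=1$ then $q|_{\langle x,y\rangle}\equiv 0$ and $N_2$ has Jordan type $2^2\,1^{b_2-5}$ (in particular $b_2(X)\ge 5$ and $\dim_F\Im N_2=2$); if $\nu(N_2)=2$ then $q|_{\langle x,y\rangle}$ has rank $1$ and $N_2$ has Jordan type $3^1\,1^{b_2-4}$ (in particular $\dim_F\Im N_2=2$ and $\dim_F\Im N_2^2=1$). Computing the graded dimensions of the monodromy filtration for these Jordan types yields all the asserted values of $r_i(X)$.

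Finally, for the statements about $A$ I would invoke the standard structure theory of abelian varieties with semi-abelian reduction: the torus rank $t$ of the special fibre of the N\'eron model equals $\dim_F\Im N_{\KS}$, the monodromy filtration on $H^1(A)$ has graded dimensions $(t,\,b_1(A)-2t,\,t)$ in degrees $(1,0,-1)$, and $N_{\KS}=0$ is equivalent to good reduction (N\'eron--Ogg--Shafarevich, resp.\ its $p$-adic counterpart). Using the identification of $H^1(A)$ with a Clifford algebra on which $N_2$ acts by left multiplication, and choosing a Witt basis of $P^2(X)(1)$ adapted to the vectors $x,y$ above, a direct count gives $\dim_F\Im N_{\KS}=0$, $\tfrac14 b_1(A)$, $\tfrac12 b_1(A)$ in the cases $\nu(N_2)=0,1,2$; this says precisely that $A$ has good reduction, torus rank $\tfrac12\dim A$, and totally toric reduction, respectively, together with the stated values of $r_i(A)$. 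The point needing care -- there is no essential obstacle -- is the uniform bookkeeping: tracking the Clifford-algebra dimension counts (including the factors of $2$), the passage among $\KS$, $\KS^+$ and the various comparison isomorphisms, and the parallel $\ell$-adic and $p$-adic ($\Dpst$) formalisms, and checking that the even and odd cases of $b_2(X)$ give the same normalized formulas.
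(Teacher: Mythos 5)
Your proposal is correct and takes a genuinely different route from the paper, and I think it is in fact the cleaner argument. Where the paper passes to the Morita equivalence $\Cl^{+}(P^2(X)(1))\simeq \End_F(H)$, runs a representation $\fsl_2\to\so(P^2(X)(1))$, and laboriously compares the $\varphi$-eigenvalue multisets on the two sides of \eqref{eq:Morita equivalence with monodromy} (which produces an extra spurious case (d) that must then be ruled out by a separate counting contradiction), you short-circuit the whole analysis with one structural observation: $N_{\KS}^2=0$ because $H^1$ of an abelian variety has monodromy nilpotency index $\le 1$, and since $N_{\KS}$ is \emph{left Clifford multiplication} by $N_2\in\so(P^2(X)(1))\subset\Cl^+(P^2(X)(1))$ and the left regular representation of $\Cl^+$ (resp.\ $\Cl$) on $H^1(A)$ is faithful, you get $N_2^2=0$ in the Clifford algebra. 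The identity $\omega^2=\langle\omega,\omega\rangle+\omega\wedge\omega$ for a bivector then forces $\omega\wedge\omega=0$ (decomposability, hence $\dim_F\Im N_2\le 2$, which immediately kills the paper's case (d)) and vanishing of the Gram determinant of $q$ on $\langle x,y\rangle$; together with the given value of $\nu(N_2)$ this pins down the Jordan type of $N_2$ to $1^{b_2-1}$, $2^2 1^{b_2-5}$, or $3^1 1^{b_2-4}$ and all the $r_i(X)$ fall out at once. This is a real simplification, and it unifies the even/odd $b_2$ cases better than the eigenvalue bookkeeping does. The one place you wave your hands — ``a direct count gives $\dim_F\Im N_{\KS}=0,\,\tfrac14 b_1(A),\,\tfrac12 b_1(A)$'' — does need a short explicit argument (a hyperbolic decomposition adapted to $x,y$, or the spin-module description $\Lambda^\bullet W$ with $x,y$ in a maximal isotropic $W$), and you should state clearly how one knows $N_{\KS}$ really is left multiplication by $N_2$ in the $\ell\neq p$ case (the same tensor-tracking argument as in the paper's Lemma on $\Dpst(W)$, replacing $\Dpst$-compatibility with the fact that $N_{\KS}=\tfrac{1}{t_\ell(\sigma)}\log\phi_{\KS}(\sigma)$ lies in $\mathrm{Lie}\,\GSpin$ and is nilpotent). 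You should also justify, if briefly, that $\nu(N_2)=1$ corresponds to the totally isotropic plane and $\nu(N_2)=2$ to the rank-one plane (a two-line computation of $N_2^2$ acting on $V$ via $N_2(z)=q(x,z)y-q(y,z)x$). With those details filled in, this is a complete and somewhat more conceptual proof than the one in the paper.
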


\begin{proof}
To simplify the notation, we write $b\coloneqq b_2(X)$.
Let
$B$ denote the
even Clifford algebra
$\Cl^+(P^2(X)(1))$
if $b$ is even
and denote the
full Clifford algebra
$\Cl(P^2(X)(1))$
if $b$ is odd.
Then
there exist a right action of
$B$
on $H^1(A)$
which is compatible with the monodromy operator,
and an isomorphism
\[
\Cl^+(P^2(X)(1)) \simeq \End_{B} (H^1(A))
\quad (\text{resp.~ } \Cl(P^2(X)(1)) \simeq \End_{B} (H^1(A)))
\]
which is compatible with monodromy operators.
Here the monodromy operator on the left (resp.~ right) hand side is induced from that on $P^2(X)(1)$ (resp.~ $H^1(A)$).

Since the even (resp. full) Clifford algebra
$B$
is split as a central simple algebra over $F$,
there exists
a simple $B$-submodule
$H \subset H^1(A)$
which is preserved by the monodromy operator, and we have
an isomorphism
$
H^1(A) \simeq H^{\oplus m}
$
which is compatible with $B$-actions and monodromy operators.
It follows that
we have an isomorphism
\begin{equation}\label{eq:Morita equivalence with monodromy}
   \Cl^+(P^2(X)(1)) \simeq \End_{F} (H)
\quad (\text{resp.~ } \Cl(P^2(X)(1)) \simeq \End_{F} (H))
\end{equation}
which is compatible with monodromy operators.
We note that
$m=\dim_F(H)=2^{(b-2)/2}$ (resp.~ $m=\dim_F(H)=2^{(b-1)/2}$).

Let us compute the monodromy filtrations on both sides of \eqref{eq:Morita equivalence with monodromy}.
Let $du \colon \fsl_2 \to \so(P^2(X)(1))$ be a representation
with $du\begin{psmallmatrix}
0 & 0 \\
1 & 0 \\
\end{psmallmatrix}=N_2$ given by the Jacobson--Morozov theorem.
We consider the semisimple element
$\varphi\coloneqq \begin{psmallmatrix}
1 & 0 \\
0 & -1 \\
\end{psmallmatrix}$
and let $V_j \subset P^2(X)(1)$ be the subspace consisting of elements $v$ such that $\varphi(v)=jv$.
Then we have $M_i(X) = \bigoplus_{j \leq i} V_j$; see \cite[(1.6.8)]{WeilII}.
In particular we have $\gr^M_iP^2(X)(1) \simeq V_i$ and
$r_i(X)=\dim_F V_i$.
Let
\[
\{-2 \leq \beta_{1}\leq \cdots\leq \beta_{b-1} \leq 2\}
\]
be
the (ordered) multiset of eigenvalues of $\varphi$ acting on $P^2(X)(1)$ so that for any integer $-2 \leq i \leq 2$, the number of occurrences of $i$ in this multiset is equal to $r_i(X)$.
Let 
$\fsl_2 \to \End_F(\Cl^+(P^2(X)(1)))$
(resp.~ $\fsl_2 \to \End_F(\Cl(P^2(X)(1)))$)
be the homomorphism induced from 
$du \colon \fsl_2 \to \so(P^2(X)(1))$.
We see that the multiset of eigenvalues of $\varphi$ acting on $\Cl^{+}(P^2(X)(1))$ (resp.~ $\Cl(P^2(X)(1))$) is given by
\begin{equation}\label{equation:complete set of eigenvalues}
\begin{aligned}
\{ 0 \} \cup \{ \beta_{j_1}+ \cdots +\beta_{j_{2k}}\}_{k > 0, \,
1 \leq j_1< j_2 < \cdots < j_{2k} \leq b-1} \\
(\textup{resp.~ } \{ 0 \} \cup \{ \beta_{j_1} + \cdots +\beta_{j_{k}}\}_{k > 0, \, 1 \leq j_1 < j_2 < \cdots < j_{k} \leq b-1})
\end{aligned}
\end{equation}
via the identification
\[
\Cl^+(P^2(X)(1)) = \bigoplus_{k \geq 0}\bigwedge^{2k} P^2(X)(1) \quad (\text{resp.~ } \Cl(P^2(X)(1) = \bigoplus_{k \geq 0} \bigwedge^k P^2(X)(1)).
\]
By the same reasoning as above, we see that the dimension of the $i$-th successive quotient
\[
\mathrm{gr}^M_i(\Cl^+(P^2(X)(1))) \quad (\textup{resp.~ } \mathrm{gr}^M_i(\Cl(P^2(X)(1))))
\]
of the monodromy filtration of $\Cl^+(P^2(X)(1))$
(resp.~ $\Cl(P^2(X)(1))$) is equal to the number of occurrences of $i$ in the multiset \eqref{equation:complete set of eigenvalues}.

Using the same description of monodromy filtrations in terms of the Jacobson--Morozov theorem, one can show (see \cite[Proposition 1.6.9]{WeilII}) that
\[
\mathrm{gr}^M_i(\End_{F} (H)) \simeq \bigoplus_{i'+i''=i} \mathrm{gr}^M_{i'}(H) \otimes \mathrm{gr}^M_{i''}(H^\vee) \simeq \bigoplus_{i'+i''=i} \mathrm{gr}^M_{i'}(H) \otimes \mathrm{gr}^M_{-i''}(H)^\vee.
\]
If follows that
\[
\dim_F \mathrm{gr}^M_i(\End_{F} (H)) = \frac{1}{m^2}\sum_{i'+i''=i} r_{i'}(A) r_{i''}(A).
\]
In particular, we have $\mathrm{gr}^M_i(\End_{F} (H))=0$ if
$\vert i \vert > 2$.

By comparing these results via isomorphisms
\[
\mathrm{gr}^M_i(\Cl^+(P^2(X)(1))) \simeq \mathrm{gr}^M_i(\End_{F} (H)) \quad (\textup{resp.~ } \mathrm{gr}^M_i(\Cl(P^2(X)(1))) \simeq \mathrm{gr}^M_i(\End_{F} (H)))
\]
induced from \eqref{eq:Morita equivalence with monodromy},
we see that one of the following conditions holds:
\begin{enumerate}[(a)]
    \item
    $r_{2}(X) = r_{ -2}(X) = r_{1}(X)= r_{-1}(X)=0$ and $r_{ 0}(X) = b-1$.
    \item 
    $r_{2}(X)= r_{-2}(X) =0$, $r_{1}(X) = r_{-1}(X)=2$, and $r_{ 0}(X)= b-5$.
    \item 
    $r_{2}(X)= r_{-2}(X) =1$, $r_{1}(X) = r_{-1}(X)=0$, and $r_{ 0}(X)= b-3$.
    \item 
   $r_{2}(X)= r_{-2}(X) =0$, $r_{1}(X) = r_{-1}(X)=1$, and $r_{ 0}(X)= b-3$.
\end{enumerate}

In the case (a),
we have $\nu(N_{2}) =0$.
Since every eigenvalue in \eqref{equation:complete set of eigenvalues} is $0$, we have that $r_{1}(A) = r_{-1}(A) =0$. This means that $A$ admits good reduction
by \cite[Theorem 1]{Serre-Tate68} and \cite[Part II, Theorem 4.7]{Coleman-Iovita}.

Next, we treat the case (b).
Note that (b) does not occur when $b =4$.
In the following, we may assume that $b \geq 5$.
In this case, we have $\nu (N_{2}) = 1$ and $\dim_{F} (\Im N_{2}) =2$.
Since we have
\[
\dim_F \gr_2^M(\End_F(H))=r_1(A)^2/m^2
\]
and
\[
\dim_F \gr_2^M(\Cl^+(P^2(X)(1))) =2^{b-6} \quad (\text{resp.~ } \dim_F \gr_2^M(\Cl(P^2(X)(1)))=2^{b-5} ),
\]
it follows that
$
r_{1}(A) = 2^{(b-4)}
$
(resp.~ $r_{1}(A) = 2^{(b-3)}$), and thus $r_1(A)=b_1(A)/4$.
By \cite[Expos\'e IX]{SGA7-I} (especially \cite[Expos\'e IX, Proposition 3.5]{SGA7-I}) and \cite[Part II, Proposition 4.5]{Coleman-Iovita},
the torus rank of the special fiber of a N\'{e}ron model of $A$ is half of $\dim A$.

In the case (c),
we see that $\nu (N_{2}) = 2$, $\dim_{F} (\Im N_{2}) =2$, and
$\dim_{F} (\Im N^2_{2}) =1$.
Moreover, since eigenvalues in \eqref{equation:complete set of eigenvalues} 
(counting without multiplicity) are $\{-2,0,2\}$,
we have
\[
\dim_F \mathrm{gr}^M_1(\End_{F} (H))=0 \quad \text{and} \quad \dim_F \mathrm{gr}^M_2(\End_{F} (H)) \neq 0.
\]
It follows that $r_0(A)=0$.
Thus
$A$ admits totally toric reduction
(i.e., the torus rank of the special fiber of the Néron model of $A$ is $\dim A$) again by \cite[Expos\'e IX]{SGA7-I} and \cite[Part II, Proposition 4.5]{Coleman-Iovita}.

In the case (d),
we have that
$
\mathrm{gr}^M_2(\End_{F} (H))=0$
since eigenvalues in \eqref{equation:complete set of eigenvalues} are contained $\{-1,0,1\}$.
This implies that $r_{1}(A) =r_{-1}(A) =0$.
But then we have $\mathrm{gr}^M_1(\End_{F} (H)) = 0$, which contradicts that $1$ occurs at least once in \eqref{equation:complete set of eigenvalues} (where we use that $r_0(X)=b-3 > 0$).
Therefore, this case does not occur.
Since all cases are covered in (a)-(c), it finishes the proof.
\end{proof}

\begin{cor}
\label{cor:l-independence of nilpotency index}
For a hyper-Kähler variety $X$ over $K$ with $b_2(X) \geq 4$ and any $\ell \neq p$, we have
$
\nu(N_{\ell,2}) = \nu (N_{p,2}),
$
where $N_{\ell,2}$ (resp.~$N_{p,2}$) is the monodromy operator on 
$H^2_{\ell} (X)$ (resp.~$H^2_{\pst} (X)$).
\end{cor}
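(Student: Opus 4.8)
The plan is to reduce this to \Cref{thm:l-adic monodromy}. That theorem computes, for every prime $\ell$ (including $\ell = p$), the nilpotency index $\nu(N_{\ell,2})$ in terms of the reduction behaviour of the Kuga--Satake abelian variety attached to $X$; since this reduction behaviour is a property of the abelian variety alone and is insensitive to the choice of prime, the claimed $\ell$-independence of $\nu(N_{\ell,2})$ will follow formally.

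In more detail, fix an ample line bundle $\scrL$ on $X$ and set $A \coloneqq \KS^+(X,\scrL)$ if $b_2(X)$ is even and $A \coloneqq \KS(X,\scrL)$ if $b_2(X)$ is odd. First I would pass to a finite extension of $K$ so that, as in \Cref{subsec:Reduction of Kuga-Satake}, $A$ together with its additional structures is defined over $K$ and, by Grothendieck's semistable reduction theorem for abelian varieties, $A$ has semi-abelian reduction; this puts us in the situation of \Cref{thm:l-adic monodromy}. Enlarging $K$ in this way does not affect $\nu(N_{\ell,2})$ for any prime $\ell$: for $\ell \neq p$ the monodromy operator changes at most by a nonzero scalar upon restricting to an open subgroup of $I_K$, and for $\ell = p$ both $\Dpst$ and its monodromy operator are unchanged under finite base change, the relevant colimit ranging over all finite extensions of the base inside $\overline{K}$. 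By \Cref{Lemma:ell adic nilpotency index upper bound in general} and \Cref{Lemma:p adic nilpotency index upper bound in general} we have $\nu(N_{\ell,2}) \in \{0,1,2\}$, so exactly one of the three cases of \Cref{thm:l-adic monodromy} occurs. Writing $t$ for the torus rank of the special fibre of the Néron model of $A$, \Cref{thm:l-adic monodromy} shows that $t = 0$, $t = \tfrac{1}{2}\dim A$, or $t = \dim A$ according as $\nu(N_{\ell,2}) = 0$, $1$, or $2$. Since $b_2(X) \geq 4$ forces $\dim A > 0$, these three values of $t$ are pairwise distinct; hence $\nu(N_{\ell,2})$ is recovered from $t$ as the unique element of $\{0,1,2\}$ to which $t$ corresponds under this trichotomy, and this recipe does not involve $\ell$.

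It then remains to note that $t$ is an invariant of the abelian variety $A/K$, computed by the same recipe regardless of the prime used to detect it: for $\ell \neq p$ it is read off from the inertia action on $H^1_\et(A_{\overline{K}},\Q_\ell)$ via the Néron--Ogg--Shafarevich criterion \cite[Theorem 1]{Serre-Tate68} and \cite[Expos\'e IX]{SGA7-I}, and for $\ell = p$ the same torus rank is recovered from $\Dpst(H^1_\et(A_{\overline{K}},\Q_p))$ together with its monodromy operator by \cite[Part II, Proposition 4.5]{Coleman-Iovita} --- precisely the inputs already used in the proof of \Cref{thm:l-adic monodromy}. Combining this with the previous paragraph shows that $\nu(N_{\ell,2})$ does not depend on $\ell$, and in particular $\nu(N_{\ell,2}) = \nu(N_{p,2})$. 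I do not expect a genuine obstacle here: all the substantive work is contained in \Cref{thm:l-adic monodromy}, and the only step needing a little care is checking that enlarging $K$ to obtain semi-abelian reduction of $A$ leaves the monodromy nilpotency indices on the cohomology of $X$ unchanged.
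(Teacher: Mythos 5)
Your proof is correct and takes essentially the same route as the paper: both reduce to \Cref{thm:l-adic monodromy} after extending $K$ so that the Kuga--Satake abelian variety has semi-abelian reduction, and then read off $\nu(N_{\ell,2})$ from the reduction type of the Néron model, which is prime-independent. The paper's proof is terse and leaves most of these checks implicit; you have spelled out the details (invariance of $\nu(N_{\ell,2})$ under base extension, the trichotomy in terms of torus rank, and $\dim A>0$) correctly.
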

\begin{proof}
By extending $K$ if necessary, we may assume that the assumption of \Cref{thm:l-adic monodromy} is satisfied.
Then the result follows from \Cref{thm:l-adic monodromy} since the conditions on the reduction the Néron model are independent of $\ell$ and $p$.
\end{proof}

\subsection{Reduction types of hyper-Kähler varieties}\label{Subsection:Reduction types of hyper-Kähler varieties}

Let $X$ be a \hk variety over $K$. 
Let $N_{\ell,2}$ (resp.~ $N_{p,2}$) be the monodromy operator on 
$H^2_{\ell} (X)$ (resp.~ $H^2_{\pst} (X)$).

\begin{defn}\label{defn:Reduction Type at l}
For a prime number $\ell$ (including $\ell =p$), we say that $X$ has
\begin{itemize}
    \item Type I reduction if $\nu(N_{\ell,2}) =0$,
    \item Type II reduction if $\nu(N_{\ell,2}) =1$, or
    \item Type III reduction if $\nu(N_{\ell,2}) =2$.
\end{itemize}
By \Cref{cor:l-independence of nilpotency index} and \Cref{lem:nilpotencyH2 of b2=3} below, this definition does not depend on $\ell$.
\end{defn}

\begin{lem}\label{lem:nilpotencyH2 of b2=3}
 If $b_2(X) = 3$, we have $N_{\ell,2}=0$ for any $\ell$ (including $\ell =p$).
 \end{lem}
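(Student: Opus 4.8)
The plan is to exploit the existence of an ample line bundle, which already kills $N_{\ell,2}$: although the orthogonal Lie algebra $\so(H^2(X))\cong\so_3$ \emph{does} contain nonzero nilpotent elements, the ample class forces the monodromy operator to lie inside a \emph{binary} orthogonal Lie algebra $\so_2$, which contains no nonzero nilpotent. Here $H^2(X)$ and the BBF form $q$ are taken in the sense of \Cref{Definition:LLV algebra Betti etale p-adic notation}: $H^2(X)=H^2_\ell(X)$ over $\Q_\ell$ if $\ell\neq p$, and $H^2(X)=H^2_{\pst}(X)=\Dpst(H^2_\et(X_{\overline K},\Q_p))$ over $K_0^{\ur}$ if $\ell=p$.

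\emph{Step 1: $N:=N_{\ell,2}\in\so(q)$.} For $\ell\neq p$, the BBF pairing is a $G_K$-equivariant form $q\colon H^2_\ell(X)(1)^{\otimes 2}\to\Q_\ell$ with trivial $G_K$-action on the target, and $I_K$ acts trivially on $\Q_\ell(1)$, so $I_K$ acts on $H^2_\ell(X)(1)$ through the same unipotent operators $\exp(t_\ell(\sigma)N)$; hence each $\exp(t_\ell(\sigma)N)$ preserves $q$, i.e. $N\in\so(q)$ (the orthogonal Lie algebra is unchanged by the Tate twist). For $\ell=p$, applying the exact tensor functor $\Dpst$, compatible with monodromy operators, to the $G_K$-equivariant pairing $q\colon H^2_\et(X_{\overline K},\Q_p)(1)^{\otimes 2}\to\Q_p$ yields a pairing of $(\varphi,N)$-modules valued in $\Dpst(\Q_p)=K_0$, whose monodromy vanishes; since $\Dpst(\Q_p(1))$ is of rank one with $N=0$, this shows that $N_{p,2}$ is a derivation of the BBF form on $H^2_{\pst}(X)$, i.e. $N_{p,2}\in\so(q)$.

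\emph{Step 2: splitting off the ample class.} Let $\scrL$ be an ample line bundle on $X$ and let $v_0$ be the class of $c_1(\scrL)$ in $H^2(X)$ (for $\ell=p$, the image of $c_1(\scrL)$ under $\Dpst$, transported to $H^2_{\pst}(X)$). Then $v_0$ is fixed by the $G_K$-action (resp. by $\varphi$ and $N$ on $\Dpst$), so $N(v_0)=0$; moreover $q(v_0)\in\Z_{>0}$, hence $v_0$ is anisotropic. Therefore $H^2(X)=Fv_0\oplus v_0^{\perp}$ with $q|_{v_0^{\perp}}$ non-degenerate and $\dim v_0^{\perp}=b_2(X)-1=2$. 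Since $N\in\so(q)$ and $N(v_0)=0$, for $w\in v_0^{\perp}$ we have $q(Nw,v_0)=-q(w,Nv_0)=0$, so $N(v_0^{\perp})\subseteq v_0^{\perp}$.

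\emph{Step 3: conclusion.} The restriction $N|_{v_0^{\perp}}$ is a nilpotent element of $\so(v_0^{\perp})\cong\so_2$. But $\so_2$ of a non-degenerate binary quadratic space is a one-dimensional torus — any $A$ with $q(Ax,y)+q(x,Ay)=0$ on such a space is, in a basis, of the form $A=G^{-1}\bigl(\begin{smallmatrix}0&b\\-b&0\end{smallmatrix}\bigr)$ with $G$ the (invertible symmetric) Gram matrix, so $\det A\neq0$ unless $A=0$ — hence it contains no nonzero nilpotent element, and $N|_{v_0^{\perp}}=0$. Together with $N(v_0)=0$ this gives $N=0$, i.e. $N_{\ell,2}=0$ for every $\ell$ (including $\ell=p$).

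There is no serious obstacle; the argument is essentially the observation in Step 2–3. The only point requiring a little care is the $\ell=p$ half of Step 1, where one passes through $\Dpst$ and must check both that $c_1(\scrL)$ produces an $N$-fixed class in $H^2_{\pst}(X)$ and that the BBF pairing descends to an $N$-derivation there; both are formal consequences of $\Dpst$ being an exact tensor functor compatible with Frobenius and monodromy operators and of $\Q_p(1)$ being crystalline with trivial monodromy.
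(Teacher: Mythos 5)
Your proof is correct and follows essentially the same route as the paper's: both split off the ample class $c_1(\scrL)$ (yours $v_0$, the paper's $\Q_\ell\cdot c_1(\scrL)$, with orthogonal complement the primitive part $P^2_\ell(X)(1)$), note that $N_{\ell,2}$ preserves this decomposition and lies in $\so$ of the rank-$2$ complement, and then observe that $\so_2$ has no nonzero nilpotent (the paper phrases this as $\so(V)_{\overline{\Q}_\ell}\simeq\gl_1$). Your write-up simply spells out the details the paper leaves implicit, including the $\Dpst$ bookkeeping for the $p$-adic case.
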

 \begin{proof}
For simplicity, we treat the $\ell \neq p$ case; the $p$-adic case follows by the same argument.
Let $\mathscr{L}$ be an ample line bundle on $X$.
Consider the orthogonal decomposition
$H^2_{\ell}(X)(1) = \Q_\ell \cdot c_1(\scrL) \oplus V$
where $V\coloneqq P^2_{\ell}(X)(1)$.
It suffices to show that $N_{\ell, 2} \vert_V =0$.
Since $\so(V)_{\overline{\Q}_\ell} \simeq \gl_1$ and $N_{\ell, 2} \vert_V \in \so(V)$ is nilpotent, it follows that $N_{\ell, 2} \vert_V=0$ as desired.
\end{proof}

The following fact allows us to describe the monodromy operators on a hyper-Kähler variety in a normalized basis.

\begin{prop}
\label{prop:conjugacyclassofN}
Let $X$ be a hyper-Kähler variety over $K$ and $\ell$ a prime number (including $\ell =p$).
Let $r \coloneqq\lfloor \frac{b_2(X)}{2} \rfloor$.
We put
\[
H^2(X) \coloneqq 
\begin{cases}
H^2_\ell(X) \otimes_{\Q_\ell} \overline{\Q}_{\ell}
& \textup{if }\ell \neq p, \\
H^2_{\pst} (X) \otimes_{K_{0}^{\mathrm{ur}}} \overline{K} & \textup{if }\ell = p.
\end{cases}
\]
The following statements hold.
\begin{enumerate}
    \item 
If $X$ has Type II reduction, then $b_2(X) \geq 5$, and there exists a basis 
\[
\{
e_1, \ldots, e_r, e_1', \ldots, e_r'
\}
\quad \mathrm{(resp.~ }\{e_1, \ldots, e_r, e_1', \ldots, e_r', e_{r+1}\})
\]
of $H^2(X)$
such that the matrix of $q$ is given by 
\begin{equation}
\label{eqn:qmatrix}
\begin{pmatrix}
    0 & \mathrm{id}_{r \times r}\\
    \mathrm{id}_{r \times r} & 0
\end{pmatrix}
\quad \textup{(resp.}
\begin{pmatrix}
    0 & \mathrm{id}_{r \times r} & 0 \\
    \mathrm{id}_{r \times r} & 0 &0\\
    0 & 0 & 1
\end{pmatrix}
),
\end{equation}
and 
\[
N_{\ell,2} ( \sum_{i=1}^{r\ (\textup{resp.~ $r+1$})} a_i e_i + \sum_{i=1}^{r} a_i' e_i')
= -a_2 e_1' + a_1e_2'
\]
if $b_2 (X)$ is even (resp.~ odd).
    \item
If $X$ has Type III reduction, 
then $b_2(X) \geq 4$, and there exists a basis 
\[
\{
e_1, \ldots, e_r, e_1', \ldots, e_r'
\}
\quad \mathrm{(resp.~ }\{e_1, \ldots, e_r, e_1', \ldots, e_r', e_{r+1}\})
\]
of $H^2(X)$
such that the matrix of $q$ is given by (\ref{eqn:qmatrix}) and
\[
N_{\ell,2} ( \sum_{i=1}^{r\ (\textup{resp.~ $r+1$})} a_i e_i + \sum_{i=1}^{r} a_i' e_i')
= a_1 e_2 - (a_2 +a_2') e_{1}' + a_1 e_2',
\]
if $b_2 (X)$ is even (resp.~ odd).
\end{enumerate}
\end{prop}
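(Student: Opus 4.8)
The plan is to reduce \Cref{prop:conjugacyclassofN} to a normal-form problem for a nilpotent endomorphism of the orthogonal space $(H^2(X),q)$ over the algebraically closed field $F$, whose conjugacy class has already been pinned down by \Cref{thm:l-adic monodromy}. The first step is to observe that $N \coloneqq N_{\ell,2}$, viewed on $H^2(X)$, lies in $\so(H^2(X),q)$. Indeed, the BBF form is Galois-equivariant on $H^2_\ell(X)(1)$ (resp.\ monodromy-compatible on $\Dpst(H^2_{\et}(X_{\overline{K}},\Q_p)(1))$), and the monodromy operator acts as $0$ on the Tate-twist factor, since the cyclotomic character is unramified for $\ell \neq p$ and $N$ is trivial on $\Dpst(\Q_p(1))$; hence the identification $H^2(X)\simeq H^2(X)(1)$ carries $N$ to the monodromy operator on $H^2(X)(1)$, and differentiating $q(gx,gy)=q(x,y)$ along the open subgroup of inertia where $g=\exp(t_\ell(g)N)$ (resp.\ directly from the compatibility of $N$ with $q$ on $\Dpst$) yields $q(Nx,y)+q(x,Ny)=0$. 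After replacing $K$ by a finite extension --- which alters neither the reduction type nor the conjugacy class of $N$ --- we may assume the hypotheses of \Cref{thm:l-adic monodromy} hold; in particular the bounds $b_2(X)\geq 5$ (resp.\ $b_2(X)\geq 4$) are part of that theorem.

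The second step determines the Jordan type of $N$. Since $c_1(\scrL)$ is fixed by $N$ and $H^2(X)(1)=F\cdot c_1(\scrL)\oplus P^2(X)(1)$ is an orthogonal decomposition, the invariants computed for $P^2(X)(1)$ in \Cref{thm:l-adic monodromy} carry over to $H^2(X)$: in the Type II case $N^2=0$ and $\rank N = \dim_F \Im N_2 = 2$, which forces Jordan type $[2,2,1^{\,b_2(X)-4}]$; in the Type III case $N^3=0\neq N^2$, $\rank N = 2$ and $\rank N^2 = 1$, which forces Jordan type $[3,1^{\,b_2(X)-3}]$. (One can also phrase this purely in terms of cyclic vectors and avoid partitions.)

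The third step produces the basis. Using the Jacobson--Morozov theorem, extend $N$ to an $\fsl_2$-triple and decompose $H^2(X)$ into $N$-strings (irreducible $\fsl_2$-submodules). In the Type II case there are exactly two strings of length $2$, say $S_1,S_2$; because the $2$-dimensional irreducible $\fsl_2$-module carries only an alternating invariant pairing, each $S_i$ is totally $q$-isotropic, and non-degeneracy of $q$ forces $S_1,S_2$ to be $q$-dual to each other, while $N$ vanishes on the orthogonal complement $S_1^\perp\cap S_2^\perp$. In the Type III case there is a unique string $S$ of length $3$; being the only one of its $\fsl_2$-isotype, it is $q$-non-degenerate and orthogonal to its complementary $\fsl_2$-submodule, and $N$ vanishes on that complement. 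In both cases $N$ acts trivially on a non-degenerate quadratic subspace of dimension $b_2(X)-4$ (resp.\ $b_2(X)-3$); choosing a $q$-hyperbolic basis of it, together with one anisotropic vector when $b_2(X)$ is odd, and rescaling the string vectors to normalize $q$, yields the displayed matrix \eqref{eqn:qmatrix} and the displayed formula for $N_{\ell,2}$. The odd-$b_2(X)$ case differs only by the extra basis vector $e_{r+1}$ spanning the anisotropic line.

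All the geometric content sits in \Cref{thm:l-adic monodromy}, which fixes the conjugacy class of $N_{\ell,2}$; the remainder is linear algebra. The one genuinely delicate point is the \emph{simultaneous} normalization of $N$ and $q$ into exactly the displayed shapes --- in particular obtaining the signs and the feature that in Type III the middle vector of the length-$3$ string equals $e_2+e_2'$ (so that $e_2-e_2'$ lands in the $N$-fixed part and the $q$-matrix takes the clean form \eqref{eqn:qmatrix}) --- but this is a routine, if slightly tedious, computation once the $\fsl_2$-decomposition is in hand.
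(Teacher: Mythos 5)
Your proof is correct and is essentially the route the paper takes: the paper's own proof simply invokes \Cref{thm:l-adic monodromy} to pin down the conjugacy class of $N_{\ell,2}$ and then defers to \cite[Lemma 5.10]{GKLR} for the normal-form construction, which is exactly the Jacobson--Morozov/\!$\fsl_2$-string argument you spell out. One small imprecision: you claim the bound $b_2(X)\geq 4$ in the Type~III case ``is part of'' \Cref{thm:l-adic monodromy}, but that theorem \emph{assumes} $b_2(X)\geq 4$ (and no finite extension of $K$ can create that); what you actually need here is \Cref{lem:nilpotencyH2 of b2=3}, i.e.\ the observation you already record that $N_{\ell,2}|_{P^2(X)(1)}$ lies in $\so(P^2(X)(1))$, which when $b_2(X)=3$ is abelian of rank one and so has no nonzero nilpotents.
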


\begin{proof}
By \Cref{lem:nilpotencyH2 of b2=3}, we have $b_2(X) \geq 4$ in both cases.
If $X$ has Type II reduction, then $b_2(X) \geq 5$ by \Cref{thm:l-adic monodromy}.
For the existence of the required basis, the same proof as in \cite[Lemma 5.10]{GKLR} works by using \Cref{thm:l-adic monodromy}.
\end{proof}

\section{Sen theory and Looijenga--Lunts--Verbitsky Lie algebras}\label{Section:Sen theory and LLV Lie algebras}
For a $p$-adic Galois representation,
the Sen operator plays a role analogous to that of the Weil operator in Hodge theory.
In \Cref{Subsection:Sen operator}, we will review some facts on Sen's theory and prove that the Sen operator belongs to the LLV Lie algebra for \hk varieties.
Using this result, we prove in \Cref{Section: p-adic monodromy operators of hyper kahler varieties} that the $p$-adic monodromy operators are contained in the LLV Lie algebra.

\subsection{Sen operator}
\label{Subsection:Sen operator}

Let $C$ be the completion of $\overline{K}$.
Let 
$V$
be a $p$-adic $G_K$-representation over $\Q_p$.
We assume that $V$ is Hodge--Tate (for simplicity), so that we have a natural $G_K$-equivariant decomposition
\[
V_C = \bigoplus_{i \in \Z} D^i_{\mathrm{HT}}(V)\otimes_{K} C(-i),
\]
where
we set
$D^i_{\mathrm{HT}}(V)\coloneqq (V \otimes_{\Q_p} C(i))^{G_K}$
and $C(i)=C \otimes_{\Q_p} \Q_p(i)$ denotes the Tate twist.
The \textit{Sen operator}
$\Theta \colon V_C \to V_C$
is the $C$-linear homomorphism such that
$\Theta(x)=-ix$ for all $x \in D^i_{\mathrm{HT}}(V)\otimes_{K} C(-i)$.

\begin{thm}[Sen]\label{thm:Sen}
	Let
	$
	\phi \colon G_K \to \GL_{\Q_p}(V)
	$
    be a Hodge--Tate representation.
    Assume that the residue field $k$ is algebraically closed.
    \begin{enumerate}
        \item The Lie algebra $\ge$ of the $p$-adic Lie group  $\phi(G_K)$ is the minimal subalgebra of $\End_{\Q_p}(V)$ such that $\frg_{C}$ contains $\Theta$.
        \item There exists an open subgroup of $G_K$ that acts on $V \cap \ker \Theta$ trivially.
    \end{enumerate}
\end{thm}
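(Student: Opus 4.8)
\emph{Proof proposal.} The plan is to invoke part (1) from Sen's theory of Galois Lie algebras of Hodge--Tate modules, and to \emph{deduce} part (2) from it. For (1), I would first record two observations. First, the Hodge--Tate decomposition $V_C=\bigoplus_i D^i_{\mathrm{HT}}(V)\otimes_K C(-i)$ is $G_K$-equivariant with each summand $G_K$-stable, and $\Theta$ acts on the $i$-th summand by the scalar $-i\in\Q$; hence $\Theta$ is a $G_K$-equivariant $C$-linear self-map of $V_C$, i.e. $\Theta\in\bigl(\End_{\Q_p}(V)\otimes_{\Q_p}C\bigr)^{G_K}$. Second, since $C$ is flat over $\Q_p$, one has $(\mathfrak{h}_1\otimes_{\Q_p}C)\cap(\mathfrak{h}_2\otimes_{\Q_p}C)=(\mathfrak{h}_1\cap\mathfrak{h}_2)\otimes_{\Q_p}C$ for $\Q_p$-subspaces $\mathfrak{h}_1,\mathfrak{h}_2\subset\End_{\Q_p}(V)$; so the family of $\Q_p$-Lie-subalgebras $\mathfrak{h}\subset\End_{\Q_p}(V)$ with $\Theta\in\mathfrak{h}\otimes_{\Q_p}C$ is stable under intersection and a unique minimal member exists. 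With this language, (1) asserts that this minimal member is $\ge=\Lie(\phi(G_K))$. (The hypothesis that $k$ is algebraically closed is essential here: an unramified representation has $\Theta=0$ but can have infinite image; when $k$ is algebraically closed, $G_K=I_K$ admits no such quotient.)

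For (1) itself I would use Sen's decompletion theory. Put $K_\infty=K(\mu_{p^\infty})$, $\Gamma=\Gal(K_\infty/K)$ (open in $\Z_p^\times$ via $\chi$) and $H_K=\Gal(\overline{K}/K_\infty)$; form the Sen module $D_{\Sen}(V)=(V\otimes_{\Q_p}\widehat{K_\infty})^{H_K}$, a $\widehat{K_\infty}$-vector space of dimension $\dim_{\Q_p}V$ with semilinear $\Gamma$-action, and pass to its $\Gamma$-locally finite, $K_\infty$-rational subspace $D_{\Sen}^{\mathrm{fin}}(V)$, on which the operator $\Theta_{\Sen}=\tfrac{\log\gamma}{\log\chi(\gamma)}$ (for $\gamma\in\Gamma$ near $1$) is $K_\infty$-linear; one checks that the canonical isomorphism $D_{\Sen}^{\mathrm{fin}}(V)\otimes_{K_\infty}C\simeq V\otimes_{\Q_p}C$ carries $\Theta_{\Sen}$ to $\Theta$. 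The inclusion $\Theta\in\ge_C$ then comes from the fact that, on a small open subgroup of $G_K$, the action $\phi$ is---modulo the contribution of $H_K$---governed by $\Gamma$, hence by $\Theta_{\Sen}$, together with $G_K$-stability of $\ge_C$; the reverse inclusion, that every Lie subalgebra $\mathfrak{h}$ with $\Theta\in\mathfrak{h}\otimes_{\Q_p}C$ already contains $\ge$, is the substantial half, resting on the Tate--Sen conditions (almost-\'etale descent along $H_K$ and near-triviality of the continuous cohomology of twisted $\Gamma$-modules). I expect this reverse inclusion to be the main obstacle, and I would invoke Sen's theorem rather than reprove it.

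Granting (1), part (2) follows by applying it to a subrepresentation. Set $V_0:=V\cap\ker\Theta$, the intersection taken inside $V_C$. Since $\ker\Theta=D^0_{\mathrm{HT}}(V)\otimes_K C$ is a $G_K$-stable $C$-subspace of $V_C$ and $V$ is $G_K$-stable, $V_0$ is a $G_K$-stable $\Q_p$-subspace of $V$; being a subrepresentation of the Hodge--Tate representation $V$ it is itself Hodge--Tate, its $C$-span $V_0\otimes_{\Q_p}C$ lies in $\ker\Theta$, and so by functoriality of the Sen operator $\Theta_{V_0}=\Theta|_{V_0\otimes_{\Q_p}C}=0$. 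Applying (1) to $V_0$, the Lie algebra of the image of $\phi_0\colon G_K\to\GL_{\Q_p}(V_0)$ is the minimal $\Q_p$-subalgebra of $\End_{\Q_p}(V_0)$ whose base change to $C$ contains $\Theta_{V_0}=0$, namely the zero subalgebra. Hence $\phi_0(G_K)$ is a $0$-dimensional---therefore discrete---$p$-adic Lie group, and being a continuous image of the compact group $G_K$ it is finite; thus $\ker\phi_0$ is an open subgroup of $G_K$ acting trivially on $V\cap\ker\Theta$, which is exactly (2). (Alternatively: $\Theta_{V_0}=0$ says $V_0$ is $C$-admissible, hence potentially unramified, hence---as $k$ is algebraically closed---trivial on an open subgroup.)
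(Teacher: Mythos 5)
Your proposal is correct and takes essentially the same route as the paper: both parts are ultimately attributed to Sen's 1973 paper (the paper cites Theorem~1 there for~(1) and Corollary~1 for~(2)), and the paper's proof of~(2) is exactly your deduction---apply~(1) to the Hodge--Tate subrepresentation $V\cap\ker\Theta$, whose Sen operator vanishes, so its image has trivial Lie algebra, hence is finite, hence has open kernel. Your sketch of the decompletion/Tate--Sen machinery behind~(1) is accurate but, as you acknowledge, is invoked rather than reproved, which matches the paper.
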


\begin{proof}
    See \cite[Theorem 1]{Sen1973a} for (1).
    The assertion (2) follows by applying the first one to the Hodge--Tate representation $V \cap \ker \Theta$; see \cite[Corollary 1]{Sen1973a}.
\end{proof}

For a smooth proper variety $X$ over $K$,
there is
a $G_K$-equivariant decomposition
\[
H^i_p(X)_C \simeq \bigoplus_{s+t=i} H^t(X, \Omega^s_X) \otimes_K C(-s).
\]
In this paper,
we choose a decomposition induced by the comparison isomorphism given in
\cite[Theorem A1]{Tsuji02}.
We call it the \textit{Hodge--Tate decomposition}.
Via this decomposition,
the $(-s)$-eigenspace of the Sen operator $\Theta$ on $H^i_p(X)_C$ is $H^{t}(X, \Omega^s_{X})\otimes_K C(-s)$.

Recall the twisted LLV representation
$R \colon \GSpin(H^2_p(X)) \to \prod_i\GL_{\Q_p}(H^i_p(X))$
from \eqref{Equation:twisted LLV representation}
and the induced Lie subalgebra
\[
\ge_p(X)^{\tw}_0\coloneqq \overline{\ge}_p(X) \oplus \Q_ph^{\deg} \subset \prod_i\End_{\Q_p}(H^i_p(X))
\]
from \Cref{Remark:twsited LLV representation and twisted degree operator}.
Similar to the relation between the Weil operator and the LLV Lie algebra (\Cref{thm:MTinsideLLV}), we observe the following relation between the Sen operator and the LLV Lie algebra.
In fact, we deduce this result from \Cref{thm:MTinsideLLV}.

\begin{thm}\label{Theoremp:Sen is in g_0}
Let $X$ be a hyper-K\"ahler variety over $K$.
\begin{enumerate}
    \item The Sen operator
$\Theta \in \prod_i\End_{C}(H^i_p(X)_C)$
of
$H^\bullet_p(X)$ is contained in $\ge_p(X)^{\tw}_0 \otimes_{\Q_p} C$.
    \item There exists an open subgroup $I \subset I_K$ whose image
    under
    $G_K \to \prod_i\GL_{\Q_p}(H^i_p(X))$
    is contained in $R(\GSpin(H^2_p(X)))$.
\end{enumerate}
\end{thm}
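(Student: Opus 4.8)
The plan is to derive both assertions from the Hodge-theoretic input (\Cref{thm:MTinsideLLV}) by a specialization argument, reducing the $p$-adic situation to the Betti one. First I would reduce to the case where the residue field $k$ is algebraically closed by passing to $K^{\ur}$; this does not change $\Theta$ nor the LLV Lie algebra, and it allows us to invoke \Cref{thm:Sen}. Next, I would observe that $X$, the ample line bundle, and all the relevant structure can be spread out over a subring finitely generated over $\Q$, so that there is an embedding $\overline{K} \hookrightarrow \C$ and a complex model $X_\C$ together with a comparison identification $H^\bullet_p(X)_C \simeq H^\bullet_B(X_\C) \otimes_\Q C$ compatible with the algebra structures; under this identification the $p$-adic and Betti LLV Lie algebras match (by \Cref{Proposition:scalar extension of gtot}, as in the proof of \Cref{Theorem:structure of LLV}), and hence $\ge_p(X)^{\tw}_0 \otimes_{\Q_p} C$ is identified with $\ge_B(X_\C)^{\tw}_0 \otimes_\Q C$.

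For part (1), the key point is to compare the Sen operator with (a shift of) the Weil operator. The $(-s)$-eigenspace of $\Theta$ on $H^i_p(X)_C$ is $H^t(X,\Omega^s_X)\otimes_K C(-s)$, which corresponds under the Hodge--Tate decomposition to the Hodge piece of bidegree $(s,t)$. On the other hand, the Weil operator $W$ on $H^i_B(X_\C)_\C$ acts on the $(s,t)$-piece of the Deligne splitting by $(t-s)\sqrt{-1}$. Since a pure Hodge structure of weight $i$ has $s+t=i$, we have $t-s = i - 2s$, so on $H^i_p(X)_C$ one gets the relation $\Theta = -\tfrac12\big(\tfrac{W}{\sqrt{-1}} - h^{\deg}\big)$ after matching the splittings; in any case $\Theta$ is a $C$-linear combination of $W$ (transported via the comparison, divided by $\sqrt{-1}$) and the twisted degree operator $h^{\deg}$. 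By \Cref{thm:MTinsideLLV}, $W \in \overline{\ge}_B(X_\C) \otimes_\Q \C$, and $h^{\deg} \in \ge_B(X_\C)^{\tw}_0$ by definition, so $\Theta \in \ge_B(X_\C)^{\tw}_0 \otimes_\Q C = \ge_p(X)^{\tw}_0 \otimes_{\Q_p} C$, which is part (1).

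For part (2), I would apply \Cref{thm:Sen}(1): the Lie algebra $\ge$ of the image $\phi(G_K) \subset \prod_i\GL_{\Q_p}(H^i_p(X))$ is the smallest $\Q_p$-subalgebra of $\prod_i\End_{\Q_p}(H^i_p(X))$ whose base change to $C$ contains $\Theta$. By part (1), $\ge_p(X)^{\tw}_0$ is such a subalgebra, so $\ge \subset \ge_p(X)^{\tw}_0$, which is precisely the Lie algebra of $R(\GSpin(H^2_p(X)))$ by \Cref{Remark:twsited LLV representation and twisted degree operator}. Passing to an open subgroup $I \subset I_K$ (so as to land in the identity component) then forces its image under $\phi$ into $R(\GSpin(H^2_p(X)))$, since a $p$-adic Lie subgroup whose Lie algebra lies inside that of a given algebraic subgroup has an open subgroup contained in that algebraic subgroup. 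The main obstacle I anticipate is bookkeeping in the comparison step: one must check carefully that the Hodge--Tate decomposition chosen from \cite{Tsuji02} is genuinely compatible with the cup product (hence with the full algebra structure on $H^\bullet$, not just degree by degree), so that the identification $\ge_p(X)^{\tw}_0 \otimes C \simeq \ge_B(X_\C)^{\tw}_0 \otimes C$ respects the total Lie algebras; once this compatibility is in hand, the Weil-operator comparison and the application of Sen's theorem are formal.
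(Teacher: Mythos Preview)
Your proposal is correct and follows essentially the same route as the paper: compare $\Theta$ with the Weil operator via Hodge cohomology, invoke \Cref{thm:MTinsideLLV}, then apply Sen's theorem for part (2). One technical wrinkle: spreading out yields a model over a finitely generated $K' \subset K$ and an embedding $K' \hookrightarrow \C$, but this does \emph{not} give an embedding $\overline{K} \hookrightarrow \C$ (which can fail if the residue field has large cardinality); the paper instead picks an auxiliary field $L$ receiving maps from both $C$ and $\C$ over $K'$, and constructs the graded-algebra isomorphism $\gamma$ over $L$ directly from the Hodge--Tate and Hodge decompositions through $\bigoplus_{s,t} H^t(X,\Omega^s_X)$---precisely the multiplicativity check (via \cite[Theorem~A1]{Tsuji02}) that you flag as the main obstacle. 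Your displayed relation also has a sign slip (the paper gets $\Theta = \tfrac{1}{2\sqrt{-1}}W - \tfrac{1}{2}h^{\deg}$), but as you note, only the fact that $\Theta$ is a $C$-linear combination of $W$ and $h^{\deg}$ matters.
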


\begin{proof}
(1)
Let $K' \subset K$ be a subfield which is finitely generated over $\Q$ such that
$X$ has a model $X'$ over $K'$.
We regard $K'$ as a subfield of $\C$ and 
let $Y\coloneqq X'_\C$.
We further choose a field
$L$ which sits in the following commutative diagram:
\[
\begin{tikzcd}[column sep = small]
    K'\ar[d] \ar[rr] & & \C \ar[d] \\
    K \ar[r] & C \ar[r] & L
\end{tikzcd}
\]
Let $W$ be the Weil operator on $H_{B}^\bullet(Y)_\C$.
We claim that there exists an isomorphism of graded algebras over $L$
\[
\gamma \colon
H^{\bullet}_{p}(X)_C \otimes_{C} L \simeq
H^{\bullet}_{B}(Y)_\C \otimes_\C L
\]
such that under this isomorphism, we have
\begin{equation}\label{equation:Sen operator and Weil operator}
    \Theta = \frac{1}{2\sqrt{-1}} W - \frac{1}{2}h^{\deg}.
\end{equation}
This claim, together with \Cref{thm:MTinsideLLV} and \Cref{lem:LLVundercomparison}, implies that
$\Theta \in \ge_p(X)^{\tw}_0 \otimes_{\Q_p} C$.

We shall prove the claim.
We fix an isomorphism $\Q_p(1) \simeq \Q_p$ of $\Q_p$-vector spaces.
Then, by \cite[Theorem A1, (A1.2)]{Tsuji02},
the Hodge--Tate decomposition induces isomorphisms of graded algebras over $L$:
\[
\bigoplus_i H^{i}_{p}(X)_C \otimes_{C} L \simeq \bigoplus_{i}\bigoplus_{s+t=i} H^t(X_L, \Omega^s_{X_L}) \otimes_{\Q_p} \Q_p(-s)
\simeq \bigoplus_{i}\bigoplus_{s+t=i} H^t(X_L, \Omega^s_{X_L}),
\]
where the algebra structure on the target is induced by the cup product on the Hodge cohomology.
On the other hand, by the Hodge decomposition,
we have the following isomorphisms of graded algebras over $L$:
\[
\bigoplus_{i}\bigoplus_{s+t=i} H^t(X_L, \Omega^s_{X_L})
\simeq
\bigoplus_{i}\bigoplus_{s+t=i} H^t(Y, \Omega^s_{Y}) \otimes_\C L
\simeq 
\bigoplus_{i} H^{i}_{B}(Y)_\C \otimes_\C L.
\]
By composing these isomorphisms, we obtain
$\gamma \colon
H^{\bullet}_{p}(X)_C \otimes_{C} L \simeq
H^{\bullet}_{B}(Y)_\C \otimes_\C L$.
With this construction, it is easy to check that the equality \eqref{equation:Sen operator and Weil operator} holds.

(2) This can be deduced from (1) and \Cref{thm:Sen} by the same argument as in \cite[Lemma 6.7]{FFZ}.
\end{proof}

\subsection{$p$-adic monodromy of \hk varieties}\label{Section: p-adic monodromy operators of hyper kahler varieties}

We are ready to show that
the monodromy operator
$N_p=(N_{p,i})_i \in \prod_i \End_{K^{\ur}_0}(H^i_{\pst}(X))$
is contained in the reduced LLV Lie algebra.

\begin{thm}	
\label{thm:universalmonodromyp-adic}
Let $X$ be a \hk variety over $K$.
\begin{enumerate}
    \item If $b_2(X) \geq 4$, then the monodromy operator
    $N_p \in \prod_i \End_{K^{\ur}_0}(H^i_{\pst}(X))$
of $H^\bullet_{\pst}(X)$ is contained in $\overline{\ge}_{\pst}(X)$.
    \item If $b_2(X) = 3$, then the monodromy operator
    $N_{p, 2i}$ of $H^{2i}_{\pst}(X)$ is zero for any integer $i$.
\end{enumerate}
\end{thm}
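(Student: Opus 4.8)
The plan is to reduce both parts to the single assertion that the $p$-adic monodromy operator $N_p$ lies in the reduced LLV Lie algebra $\overline{\ge}_{\pst}(X)$, and to extract this from the Sen-theoretic input \Cref{Theoremp:Sen is in g_0}. First I would normalize the base field. Neither the $K_0^{\ur}$-vector space $\Dpst(H^i_{\et}(X_{\overline K},\Q_p))$ with its Frobenius and Hodge filtration nor $N_p$ changes when $K$ is replaced by a finite extension or by a complete discretely valued extension with algebraically closed residue field; so I may assume $k$ is algebraically closed, hence $G_K=I_K$. Then \Cref{Theoremp:Sen is in g_0}(2) gives an open subgroup of $G_K$ whose $\phi_X$-image lies in $R(\GSpin(H^2_p(X)))$, and after one more finite extension of $K$ I may assume that $\phi_X\colon G_K\to\prod_i\GL_{\Q_p}(H^i_p(X))$ itself factors through $R(\GSpin(H^2_p(X)))$, whose Lie algebra is $\ge_p(X)^{\tw}_0=\overline{\ge}_p(X)\oplus\Q_ph^{\deg}$ (see \Cref{Remark:twsited LLV representation and twisted degree operator}).

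The core step is to show $N_p\in\Dpst(\ge_p(X)^{\tw}_0)=\overline{\ge}_{\pst}(X)\oplus K_0^{\ur}h^{\deg}$. For this I would use the tensor-invariance technique that already appears in the proof of \Cref{Lemma:nilpotent operator on Kuga-Satake}, which works for all $b_2(X)\geq 3$: since $G\coloneqq R(\GSpin(H^2_p(X)))$ is reductive, it is the common stabilizer in $\GL_{\Q_p}(H^\bullet_p(X))$ of finitely many tensors $s_\alpha\in(H^\bullet_p(X))^{\otimes}$; as $\phi_X(G_K)\subset G$, each $s_\alpha$ is $G_K$-invariant, so it induces a tensor $s_{\alpha,\pst}\in(H^\bullet_{\pst}(X))^{\otimes}$ killed by the monodromy operator; since that monodromy operator is the Leibniz action of $N_p$, and since exactness and monoidality of $\Dpst$ identify $\Dpst(\Lie(G))$ with the operators annihilating all $s_{\alpha,\pst}$, we conclude $N_p\in\Dpst(\Lie(G))$. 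Now write $N_p=N'+c\,h^{\deg}$ with $N'\in\overline{\ge}_{\pst}(X)$ and $c\in K_0^{\ur}$; nilpotency of $N_p$ forces $\tr(N_{p,i})=0$ for all $i$, while $\tr(N'|_{H^i_{\pst}(X)})=0$ because $\overline{\ge}_{\pst}(X)$ is semisimple, so taking $i=2$ (where $H^2_{\pst}(X)\neq 0$) gives $c=0$ and hence $N_p\in\overline{\ge}_{\pst}(X)$. This is exactly part (1). For $b_2(X)\geq 4$ there is an alternative, more precise route that the Kuga--Satake machinery of the paper makes available: apply $\Dpst$ to the split $\GSpin(H^2_p(X))$-equivariant embedding of $H^\bullet_p(X)$ into a sum of tensor constructions of $H^1_p(\KS(X,\scrL)^{\times 2})$ furnished by \Cref{Corollary:split Galois equivariant embedding} (with $\ell=p$), and combine it with \Cref{Lemma:nilpotent operator on Kuga-Satake} to see directly that $N_p$ is the image of $N_{p,2}\in\so(H^2_{\pst}(X))\cong\overline{\ge}_{\pst}(X)$.

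Part (2) then follows: the argument above still gives $N_p\in\overline{\ge}_{\pst}(X)$, but now $N_{p,2}=0$ by \Cref{lem:nilpotencyH2 of b2=3}, and since the representation $\rho_2$ of $\overline{\ge}_{\pst}(X)$ on $H^2_{\pst}(X)$ is injective (\Cref{cor:InjectiveLLV}) and $\rho_2(N_p)=N_{p,2}=0$, we get $N_p=0$, so in particular $N_{p,2i}=0$ for every $i$. I do not expect a genuine obstacle: the non-formal ingredients are \Cref{Theoremp:Sen is in g_0} and (for $b_2(X)\geq 4$) \Cref{Lemma:nilpotent operator on Kuga-Satake}, both already proved. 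The main care goes into the $\Dpst$-bookkeeping --- that a $G_K$-invariant vector yields an $N$-fixed vector, and that $\Dpst$ is compatible with forming tensor constructions, duals, and the Kuga--Satake embedding together with its splitting --- and into the standard verification that $N_p$ is unaffected by the field extensions used in the reduction.
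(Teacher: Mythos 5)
Your proof is correct, and the core ingredients are the same as the paper's --- the Sen-theoretic input \Cref{Theoremp:Sen is in g_0}, and a tensor-invariance argument for reductive groups under $\Dpst$ --- but the way you package them differs from the paper in a meaningful way. The paper runs the tensor argument on the Kuga--Satake datum $W=H^1_p(\KS(X,\scrL)^{\times 2})$ (which is why it needs $b_2(X)\geq 4$ for the main case), and then transports the resulting operator $N_{\KS}\in\so(H^2_{\pst}(X))$ to all of $H^\bullet_{\pst}(X)$ via the split $\GSpin$-equivariant embedding $\widetilde\iota$; for $b_2(X)=3$, where no Kuga--Satake variety is available, the paper falls back to the faithful $\SO(H^2_p(X))$-representation $H^2_p(X)$ and therefore only controls the monodromy on the even part $N_p^+$, which is exactly why statement (2) is phrased for $N_{p,2i}$ rather than $N_p$. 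Your primary route instead applies the tensor argument directly to $H^\bullet_p(X)$ as a faithful representation of $G=R(\GSpin(H^2_p(X)))$, which is licensed by \Cref{Theoremp:Sen is in g_0}(2) uniformly for all $b_2(X)\geq 3$, and then kills the $h^{\deg}$-component by the trace/nilpotency argument (the same trick the paper uses in \Cref{Corollary:ell-adic monodromy operators in LLV for known types} on the $\ell$-adic side). Running the tensor argument on $H^\bullet_p(X)$ directly has two advantages: it treats both cases at once without invoking the Kuga--Satake abelian variety, and in the $b_2(X)=3$ case it actually yields the stronger conclusion $N_p=0$ on all of $H^\bullet_{\pst}(X)$ (not just in even degrees), since $N_p\in\overline\ge_{\pst}(X)$ and $\rho_2$ is an isomorphism. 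What the paper's route buys instead is the explicit Kuga--Satake comparison $\widetilde\iota_{\pst}$, which it reuses immediately afterwards to prove Griffiths transversality in \Cref{cor:TransverseMonodromy}, so the detour through $\KS(X,\scrL)$ is not wasted work in the paper's architecture even if it is avoidable for \Cref{thm:universalmonodromyp-adic} itself.

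One small point worth spelling out when you write this up: the decomposition $\ge_p(X)^{\tw}_0=\overline\ge_p(X)\oplus\Q_p h^{\deg}$ is $G_K$-stable (both $\overline\ge_p(X)=[\ge_p(X)_0,\ge_p(X)_0]$ and $h^{\deg}$ are intrinsic to the graded Frobenius-algebra structure of $H^\bullet_p(X)$), so additivity and exactness of $\Dpst$ on potentially semistable representations do give $\Dpst(\ge_p(X)^{\tw}_0)=\overline\ge_{\pst}(X)\oplus K_0^{\ur}h^{\deg}$ as you assert; similarly one should note that $\Lie(G)$ is $G_K$-stable because $\phi_X(G_K)\subset G$ normalizes $G$. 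These are exactly the kind of $\Dpst$-bookkeeping checks you flag at the end, and they do go through.
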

\begin{proof}
Without loss of generality, 
we may assume that the residue field $k$ is algebraically closed (cf.~ \cite[\S 5.1.5]{Fontaine94III}).
Assume that $b_2(X) \geq 4$.
We choose an ample line bundle $\scrL$ on $X$.
Then, after replacing $K$ by its finite extension, we can consider the Kuga--Satake abelian variety $\KS(X,\scrL)$ over $K$ and use the computations given in \Cref{subsection:KugaSatakeconstruction}.
We keep the notations there.

As in \Cref{Corollary:split Galois equivariant embedding},
for $W\coloneqq H^1_{p} (\KS(X,\scrL)^{\times 2})$,
there exists a $\GSpin(H^2_{p}(X))$-equivariant embedding
\[
\widetilde{\iota} \colon  H^\bullet_p(X) \hookrightarrow \bigoplus_{j \in J}W^{\otimes m_j} \otimes W^{\vee,\otimes m_j'}
\]
for a finite set $J$ and some integers $\{m_j, m'_j\}_{j \in J}$.
By
\Cref{Theoremp:Sen is in g_0}
and
\Cref{Corollary:split Galois equivariant embedding},
after replacing $K$ by its finite extension, the embedding $\widetilde{\iota}$ becomes $G_K$-equivariant.
Then we can apply $\Dpst$ to obtain
an embedding
\begin{equation}\label{eqn:pst Kuga Satake embedding for total cohomology}
\widetilde{\iota}_{\pst} \colon H^{\bullet}_{\pst} (X) \hookrightarrow \bigoplus_{j \in J}\Dpst(W)^{\otimes m_j} \otimes \Dpst(W)^{\vee,\otimes m'_j}.
\end{equation}
By construction, this is compatible with the action of
\[
\overline{\ge}_{\pst}(X)\simeq \so(H^2_{\pst}(X)) \simeq \so(H^2_{\pst}(X)(1)).
\]
Let $N_{\KS}$ be the monodromy operator of $\Dpst(W)$.
By \Cref{Lemma:nilpotent operator on Kuga-Satake}, we can view $N_{\KS} \in \overline{\ge}_{\pst}(X)$
(which agrees with the monodromy operator $N_{p,2}$ of $H^2_{\pst}(X)$).
Since 
$\widetilde{\iota}_{\pst}$
is also compatible with monodromy operators,
we see that $N_{\KS} \in \overline{\ge}_{\pst}(X)$ acts on $H^{\bullet}_{\pst} (X)$ as the monodromy operator $N_p$.
In particular we have $N_p \in \overline{\ge}_{\pst}(X)$, which proves (1).

One can argue for the even part as follows; this also applies to the case where $b_2(X)=3$.
Consider the integration
\[
\widetilde{\rho} \colon \Spin(H^2_p(X)) \to \prod_i\GL_{\Q_p}(H^i_p(X))
\]
of $\rho \colon \overline{\ge}_p(X) \to \prod_i\End_{\Q_p}(H^i_p(X))$.
The restriction of this representation to the even part $\bigoplus_i H^{2i}_p(X)$ factors through $\SO(H^2_p(X))$.
Therefore one can use the faithful $\SO(H^2_p(X))$-representation
$H^2_p(X)$ instead of $W$ to carry out the same argument as above to conclude that the monodromy operator
    $N^+_p \in \prod_i \End_{K^{\ur}_0}(H^{2i}_{\pst}(X))$
    of the even part
    is contained in
    (the image of)
    $\overline{\ge}_{\pst}(X)$.
In particular $N^+_p=0$ if $b_2(X)=3$ since in this case we have $N_{p, 2}=0$ by \Cref{lem:nilpotencyH2 of b2=3}.
This proves (2).
\end{proof}

We conclude this section with an application of our results.
For a potentially semistable
$G_K$-representation
$V$,
we say that $D_{\pst}(V)$
satisfies \emph{Griffiths transversality}
if $N(F^{\bullet}) \subset F^{\bullet-1}$
for its monodromy operator $N$ and Hodge filtration $F^\bullet$ (see \cite[Definition 1.3.1]{Paugam04a} for example).
A typical example of $D_{\pst}(V)$ which satisfies Griffiths transversality is 
$D_{\pst}(H^1_\et(A_{\overline{K}}, \Q_p))$
for an abelian variety $A$ over $K$ (in this case the transversality follows since $F^0=D_{\pst}(H^1_\et(A_{\overline{K}}, \Q_p))$ and $F^2=0$).
This, together with the results obtained above, allows us to prove the following.

\begin{cor}\label{cor:TransverseMonodromy}
Let $X$ be a hyper-Kähler variety over $K$ with $b_2(X) \geq 4$.
Then $H^i_{\pst}(X)=D_{\pst}(H^i_\et(X_{\overline{K}}, \Q_p))$ 
satisfies Griffiths transversality for all $i$.
\end{cor}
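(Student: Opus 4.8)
The plan is to deduce the statement from the Kuga--Satake embedding produced in the proof of \Cref{thm:universalmonodromyp-adic}, combined with the fact that Griffiths transversality is automatic for the first cohomology of an abelian variety and is stable under the linear-algebra operations that intervene. After replacing $K$ by a finite extension (which affects neither $\Dpst(-)$ nor the relevant Hodge filtration on $\Dpst(-)\otimes_{K_0^{\ur}}\overline{K}$), recall from that proof the $G_K$-equivariant embedding
\[
\widetilde{\iota}\colon H^\bullet_p(X)\hookrightarrow M\coloneqq \bigoplus_{j\in J} W^{\otimes m_j}\otimes W^{\vee,\otimes m'_j},\qquad W\coloneqq H^1_p(\KS(X,\scrL)^{\times 2}).
\]
Both $H^\bullet_p(X)$ and $M$ are de Rham, so $\Dpst$ is exact on the associated short exact sequence and the induced morphism of filtered vector spaces (after $-\otimes_{K_0^{\ur}}\overline{K}$) is strict. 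Hence the embedding $\widetilde{\iota}_{\pst}\colon H^\bullet_{\pst}(X)\hookrightarrow \Dpst(M)=\bigoplus_{j\in J}\Dpst(W)^{\otimes m_j}\otimes\Dpst(W)^{\vee,\otimes m'_j}$ identifies the Hodge filtration on $H^\bullet_{\pst}(X)\otimes_{K_0^{\ur}}\overline{K}$ with its intersection against the Hodge filtration of $\Dpst(M)\otimes_{K_0^{\ur}}\overline{K}$, and (by \Cref{thm:universalmonodromyp-adic}, cf.\ \Cref{Lemma:nilpotent operator on Kuga-Satake}) it intertwines the monodromy operators.

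Next, $\Dpst(W)$ satisfies Griffiths transversality for a trivial reason: $W$ is a direct sum of copies of $H^1_\et$ of an abelian variety over (a finite extension of) $K$, so its Hodge--Tate weights lie in $\{0,1\}$; that is, the Hodge filtration of $\Dpst(W)\otimes_{K_0^{\ur}}\overline{K}$ equals the whole space in degrees $\leq 0$ and vanishes in degrees $\geq 2$, whence $N(F^j)\subset F^{j-1}$ for every $j$. It is then elementary that Griffiths transversality passes to finite direct sums, to tensor products (via $N_{D_1\otimes D_2}=N_{D_1}\otimes\id+\id\otimes N_{D_2}$ together with $F^n(D_1\otimes D_2)=\sum_{a+b=n}F^aD_1\otimes F^bD_2$), to duals (via $N_{D^\vee}=-N_D^\vee$ together with the defining relation between $F^\bullet D^\vee$ and $F^\bullet D$), and finally descends to a filtered $(\varphi,N)$-submodule $D'$ carrying the induced filtration and the restricted monodromy, since $N(D'\cap F^j)\subset N(D')\cap N(F^j)\subset D'\cap F^{j-1}$. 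Applying these stability statements to $\Dpst(M)$ and then to the sub-object $H^\bullet_{\pst}(X)$, and reading off the degree-$i$ summand, we conclude that $H^i_{\pst}(X)$ satisfies Griffiths transversality for all $i$.

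The only input that is not purely formal is the strictness of $\widetilde{\iota}_{\pst}$ with respect to the Hodge filtrations, i.e.\ that the Hodge filtration on the de Rham representation $H^i_\et(X_{\overline{K}},\Q_p)$ coincides with the one it inherits from the ambient tensor construction; this is the standard filtered-exactness of $D_{\dR}(-)$ on the category of de Rham representations. Once \Cref{thm:universalmonodromyp-adic} is in hand, the rest is bookkeeping.
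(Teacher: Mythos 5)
Your proof is correct and follows essentially the same route as the paper's: both invoke the Kuga--Satake embedding $\widetilde{\iota}_{\pst}$ constructed in the proof of \Cref{thm:universalmonodromyp-adic}, use strictness of filtered morphisms between potentially semistable representations (the paper cites Fontaine's Th\'eor\`eme 5.3.5 where you appeal to filtered exactness of $D_{\dR}$ on de Rham representations — the same fact), and conclude from the trivially-verified Griffiths transversality for $\Dpst$ of $H^1$ of an abelian variety. The paper compresses into one sentence (``the assertion follows from the Griffiths transversality of $H^1_{\pst}(\KS(X,\scrL)^{\times 2})$'') what you spell out explicitly: that Griffiths transversality is preserved under direct sums, tensor products, duals, and strict $N$-stable subobjects. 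Writing out these stability properties, and the compatibility of $\widetilde{\iota}_{\pst}$ with the monodromy operators via \Cref{Lemma:nilpotent operator on Kuga-Satake}, is a useful clarification of exactly what the paper's terse conclusion is relying on, but it does not constitute a different argument.
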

\begin{proof}
We may assume $k$ is algebraically closed.
Replacing $K$ by its finite extension,
we can take an embedding
\[
\widetilde{\iota}_{pst} \colon \bigoplus_i H^i_{\pst}(X) \hookrightarrow \bigoplus_{j \in J} H^1_{\pst} (\KS(X,\scrL)^{\times 2})^{\otimes m_j} \otimes H^1_{\pst} (\KS(X,\scrL)^{\times 2})^{\vee,\otimes m'_j}
\]
as in \eqref{eqn:pst Kuga Satake embedding for total cohomology}.
We recall that
this filtered embedding is strict; see \cite[Th\'eor\`eme 5.3.5]{Fontaine94III}.
Therefore, the assertion follows from the Griffiths transversality of $H^1_{\pst} (\KS(X,\scrL)^{\times 2})$.
\end{proof}

\section{Arithmetic analogue of Nagai's conjecture}\label{sec:ArithmeticNagaiConj}

In this section, we will prove our main results on the $\ell$-adic and $p$-adic analogues of Nagai's conjecture (\Cref{ell adic Nagai Conjecture} and \Cref{p adic Nagai Conjecture}).

 \subsection{A criterion of Green--Kim--Laza--Robles}\label{Subsection:Green--Kim--Laza--Robles criterion}

In \cite[Theorem 5.2]{GKLR}, it is shown that if $X$ is a complex hyper-Kähler variety of dimension $2n$ with $b_2(X) \geq 5$ such that the highest weight of any irreducible factor $V_{\mu, \C}$ in the LLV decomposition (\Cref{def:LLVdecomposition betti etale p-adic}) appearing in the even part $H^+_B(X)_\C=\bigoplus_i H^{2i}_B(X)_\C$
satisfies
the inequality 
\[
\mu_0 + \mu_1 + \mu_2 \leq n,
\]
then the Nagai conjecture holds for Type II degenerations of $X$.
In the following, we establish an analogue of their result in the arithmetic setting.

Let $X$ be a \hk variety over $K$ and $\ell$ a prime number (including $\ell=p$).
Following \Cref{Definition:LLV algebra Betti etale p-adic notation},
we write
\[
H^\bullet(X) \coloneqq  H^\bullet_\ell(X) \quad \text{and} \quad H^\bullet(X)_F\coloneqq  H^\bullet_\ell(X) \otimes_{\Q_\ell} \overline{\Q}_\ell \quad \text{if} \quad \ell \neq p
\]
and 
\[
H^\bullet(X) \coloneqq  H^\bullet_{\pst}(X) \quad \text{and} \quad H^\bullet(X)_F\coloneqq  H^\bullet_{\pst}(X)\otimes_{K_{0}^{\mathrm{ur}}} \overline{K} \quad \text{if} \quad \ell = p.
\]
Refer to \Cref{def:LLVdecomposition betti etale p-adic} for the LLV decomposition of $X$, i.e., the decomposition into irreducible $\ge(X)_F$-modules:
\[
H^\bullet(X)_F = \bigoplus_{\mu \in \Lambda^+} V_{\mu, F}^{\oplus m_{\mu}}.
\]
(Here, we have fixed a Cartan subalgebra $\frh \subset \ge(X)_F$ and a positive system of roots as in \Cref{Subsection:LLV Lie algebras of hyper-Kähler varieties}.
    These choices do not affect the conclusions below by \Cref{Remark:prameter irreducible rep}.)

Let $N_{\ell, 2} \in \overline{\ge}(X) \simeq \so(H^2(X))$ be the monodromy operator on $H^2(X)$.
Let
\[
N'_{\ell, i} \colon H^i(X)_F \to H^i(X)_F
\]
be the image of
$N_{\ell, 2}$ under the projection
$\rho_i \colon \overline{\ge}(X)_F \to \End_F(H^i(X))$.
See \Cref{Subsection:LLV Lie algebras of hyper-Kähler varieties} for the notation used here.

\begin{thm}\label{thm:arithmeticGreenKimetc}
    Let $X$ be a hyper-Kähler variety of dimension $2n$ over $K$ and $\ell$ a prime number.
    We assume that $X$ has Type II reduction.
    Then the following conditions are equivalent.
    \begin{enumerate}
        \item We have $\nu(N'_{\ell, 2i}) =i$ for all $0 \leq i \leq n$.
        \item For all irreducible factors $V_{\mu, F} \neq 0$ in the LLV decomposition which are contained in the even part
        $
        H^+(X)_F=\bigoplus_i H^{2i}(X)_F,
        $
        the following inequality holds:
        \[
	\mu_0 + \mu_1 + \mu_2 \leq n.
\]
    \end{enumerate}
\end{thm}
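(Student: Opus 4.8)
The plan is to run, over the coefficient field $F$ ($=\overline{\Q}_\ell$ or $=\overline{K}$), the representation-theoretic argument that Green--Kim--Laza--Robles use to prove \cite[Theorem 5.2]{GKLR}: compute $\nu(N'_{\ell,2i})$ entirely inside the reduced LLV Lie algebra $\overline{\ge}(X)_F \simeq \so(H^2(X)_F)$ by $\fsl_2$-theory, and then compare with the LLV decomposition. To begin, $N_{\ell,2}$, regarded as an element of $\overline{\ge}(X) \simeq \so(H^2(X))$, is nilpotent with $\nu(N_{\ell,2}) = 1$ by the Type~II hypothesis, so \Cref{cor:lowerboundbySym} applied to $N = N_{\ell,2}$ gives $\nu(N'_{\ell,2i}) = \nu(\rho_{2i}(N_{\ell,2})) \geq i$ for all $0 \leq i \leq n$, unconditionally. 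Hence condition~(1) is equivalent to the reverse inequalities $\nu(N'_{\ell,2i}) \leq i$ for $0 \leq i \leq n$.

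For the upper bound I would use the explicit shape of $N_{\ell,2}$. By \Cref{prop:conjugacyclassofN}, Type~II reduction forces $b_2(X) \geq 5$ and pins down the conjugacy class of $N_{\ell,2}$ in $\so(H^2(X)_F)$ (rank $2$, square zero). Completing $N_{\ell,2}$ to an $\fsl_2$-triple $(N_{\ell,2}, \mathsf h, N^+)$ by Jacobson--Morozov and reading off the normal form, the semisimple element $\mathsf h$ acts on $H^2(X)_F$ with eigenvalues $1,1,-1,-1$ and $0$ (multiplicity $b_2(X)-4$); equivalently, in coordinates $\{\pm\epsilon'_j\}_{1\le j\le r}$ adapted to the triple, $\mathsf h$ is the coweight $\epsilon'_1{}^{\vee} + \epsilon'_2{}^{\vee}$, which is dominant. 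By $\fsl_2$-representation theory, on an irreducible $\overline{\ge}(X)_F$-module $\overline{V}_{\lambda,F}$ the nilpotency index of $N_{\ell,2}$ equals the top $\mathsf h$-eigenvalue, which (as $\mathsf h$ is dominant) is $\langle\lambda,\mathsf h\rangle = \lambda_1+\lambda_2$. Decomposing $H^{2i}(X)_F$ into irreducibles therefore yields
\[
\nu(N'_{\ell,2i}) = \max\bigl\{\lambda_1 + \lambda_2 \ \big|\ \overline{V}_{\lambda,F}\ \text{a constituent of}\ H^{2i}(X)_F\bigr\},
\]
so condition~(1) becomes: every reduced-LLV constituent $\overline{V}_{\lambda,F}\subset H^{2i}(X)_F$ with $0\le i\le n$ satisfies $\lambda_1+\lambda_2\le i$.

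It then remains to pass between this degreewise statement and the LLV decomposition $H^{\bullet}(X)_F = \bigoplus_\mu V_{\mu,F}^{\oplus m_\mu}$. Under $\ge(X)_F \simeq \so(\widetilde{H}^2(X)_F)$ the grading operator is $h = -2\epsilon_0^{\vee}$, with $\epsilon_0$ the weight direction of the hyperbolic summand $U$, so a weight vector of $V_{\mu,F}$ in cohomological degree $2i$ has $\epsilon_0$-coefficient $n-i$, its remaining coordinates being the $\so(H^2)$-weight. For $(2)\Rightarrow(1)$: given $\overline{V}_{\lambda,F}\subset H^{2i}(X)_F$ with $i\le n$, sitting inside some $V_{\mu,F}\subset H^{+}(X)_F$, its highest-weight vector has $\so(\widetilde{H}^2)$-weight $\eta = (n-i)\epsilon_0 + \sum_{j=1}^r\lambda_j\epsilon_j$; pairing with the dominant coweight $\xi = \epsilon_0^{\vee}+\epsilon_1^{\vee}+\epsilon_2^{\vee}$ gives $(n-i)+\lambda_1+\lambda_2 = \langle\eta,\xi\rangle \le \langle\mu,\xi\rangle = \mu_0+\mu_1+\mu_2 \le n$, hence $\lambda_1+\lambda_2\le i$. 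For $(1)\Rightarrow(2)$: given $V_{\mu,F}\subset H^{+}(X)_F$, its highest-weight vector lies in degree $2(n-\mu_0)$ with $0\le n-\mu_0\le n$ and has $\so(H^2)$-weight $(\mu_1,\dots,\mu_r)$, so some constituent $\overline{V}_{\lambda,F}$ of $H^{2(n-\mu_0)}(X)_F$ has $(\mu_1,\dots,\mu_r)$ among its weights, forcing $\lambda_1+\lambda_2\ge\mu_1+\mu_2$; then (1) gives $\mu_1+\mu_2\le\nu(N'_{\ell,2(n-\mu_0)}) = n-\mu_0$, i.e.\ $\mu_0+\mu_1+\mu_2\le n$.

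The step I expect to cost the most effort is the careful bookkeeping of the three weight/coweight lattices involved — those of $\so(\widetilde{H}^2(X))$, of $\so(H^2(X))$, and of the cohomological grading — together with the precise identification of $\mathsf h$, which is what makes the index equal $\lambda_1+\lambda_2$ rather than something else. Concretely, one must check that the coweights $\epsilon'_1{}^{\vee}+\epsilon'_2{}^{\vee}$ and $\epsilon_0^{\vee}+\epsilon_1^{\vee}+\epsilon_2^{\vee}$ are genuinely dominant (which uses $b_2(X)\ge 5$, supplied by Type~II reduction via \Cref{prop:conjugacyclassofN}), and that the sign ambiguity in the parametrization of irreducible orthogonal representations (\Cref{Remark:prameter irreducible rep}, \Cref{rem:irreducible representations invariant under algebraically closed fields extension}) affects only the last coordinate, so changes neither $\lambda_1+\lambda_2$ nor $\mu_0+\mu_1+\mu_2$. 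Since all the structural inputs — \Cref{Theorem:structure of LLV}, \Cref{thm:Verbitskycomponent Betti etale p-adic}, and the normal form \Cref{prop:conjugacyclassofN} — hold verbatim for both $H^{\bullet}_\ell(X)$ and $H^{\bullet}_{\pst}(X)$, the whole argument is uniform in $\ell$, including $\ell = p$.
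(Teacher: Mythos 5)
Your proof is correct and follows essentially the same route as the paper's: you prove the degreewise reformulation via the identity $\nu(N_{\ell,2}|_{\overline V_{\lambda,F}}) = \lambda_1 + \lambda_2$ (the paper's \Cref{Lemma:nilpotency index for irreducible factors}, which you rederive intrinsically by completing $N_{\ell,2}$ to an $\fsl_2$-triple and identifying the dominant coweight $\mathsf h = \epsilon_1'^\vee + \epsilon_2'^\vee$), and then pass to the LLV decomposition via the dominant-coweight pairing and the location of the $V_{\mu,F}$-highest weight vector in degree $2(n-\mu_0)$, which is exactly the content of the paper's \Cref{Lemma:restriction of LLV factors to reduced LLV}. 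The only cosmetic difference is that the paper packages these two steps as separate lemmas and cites \cite{GKLR} for the coordinate computation behind \Cref{Lemma:nilpotency index for irreducible factors}, while you inline the same facts and phrase the computation through $\fsl_2$-theory directly.
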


With \Cref{prop:conjugacyclassofN}, the same argument as given in \cite{GKLR} implies the theorem.
Here, we will provide a slightly different proof using the branching rule of successive restrictions of representations of orthogonal Lie algebras (cf.~ \cite[Section 25.3]{FultonHarris} and \cite[Appendix B.2]{GKLR}).

\begin{rem}
    If $X$ has Type II reduction, then $b_2(X) \geq 5$ by \Cref{prop:conjugacyclassofN}.
    In the rest of this subsection, we will always assume that $b_2(X) \geq 5$.
\end{rem}

Recall that $h \in \ge(X)$ is the shifted degree map which acts on $H^i(X)_F$ multiplication by $i-2n$.
In the following lemma, we will use the notation of \Cref{Remark:reduced LLV irreducible components}.

\begin{lem}\label{Lemma:restriction of LLV factors to reduced LLV}
    We assume that $b_2(X) \geq 5$.
    Let $r = \lfloor \frac{b_2(X)}{2} \rfloor$.
    Let $V_{\mu, F}$ be the irreducible $\ge(X)_F$-module corresponding to a dominant integral weight $\mu=(\mu_0, \mu_1, \dotsc, \mu_r) \in \Lambda^+$.
    \begin{enumerate}
        \item
        Let $V_{\mu, F}(-2\mu_0)$ be the $(-2\mu_0)$-eigenspace of $h \in \ge(X)$.
    If $b_2(X)$ is odd, then the $\overline{\ge}(X)_F$-module $V_{\mu, F}(-2\mu_0)$ contains the irreducible $\overline{\ge}(X)_F$-module
    $\overline{V}_{(\mu_1, \mu_2, \dotsc, \mu_r), F}$
    with highest weight $(\mu_1, \mu_2, \dotsc, \mu_r) \in \overline{\Lambda}^+$.
    If $b_2(X)$ is even, then $V_{\mu, F}(-2\mu_0)$ contains at least one of $\overline{V}_{(\mu_1, \mu_2, \dotsc, \mu_r), F}$
    or
    $\overline{V}_{(\mu_1, \mu_2, \dotsc, -\mu_r), F}$.
        \item 
        Assume that $V_{\mu, F}$ is contained in $H^\bullet(X)_F$.
        For an irreducible $\overline{\ge}(X)_F$-submodule $\overline{V}_{\lambda, F} \subset V_{\mu, F}$ (with $\lambda=(\lambda_1, \dotsc, \lambda_r) \in \overline{\Lambda}^+$) which is contained in $H^i(X)_F$ for some $0 \leq i \leq 2n$,
        we have the following inequality:
        \[
        \mu_0 + \mu_1 + \mu_2  \geq \lambda_1 + \lambda_2 + n - i/2.
        \]
    \end{enumerate}
\end{lem}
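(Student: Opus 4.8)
The plan is to realize both the submodule in (1) and the inequality in (2) by comparing highest weight vectors for $\ge(X)_F \simeq \so(\widetilde{H}^2(X)_F)$ with those for the subalgebra $\overline{\ge}(X)_F \simeq \so(H^2(X)_F)$ (the part of $\ge(X)_F$ fixing $v$ and $w$), after pinning down compatible Cartan data. Concretely, I would fix a Cartan subalgebra $\overline{\frh} \subset \overline{\ge}(X)_F$ and take $\frh := \overline{\frh} \oplus Fh$, which is a Cartan subalgebra of $\ge(X)_F$ with $h$ spanning the Cartan of the $\so(U)$-summand; in the normalization of \Cref{Subsection:LLV Lie algebras of hyper-Kähler varieties} (under which the statement is phrased) one has $\epsilon_0(h) = -2$, while $\epsilon_i(h) = 0$ and $\epsilon_i|_{\overline{\frh}} = \epsilon_i'$ for $i \geq 1$, with $\epsilon_i'$ as in \Cref{Remark:reduced LLV irreducible components}. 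I would then choose the positive system of $\overline{\ge}(X)_F$ so that its positive roots are exactly the positive roots of $\ge(X)_F$ with vanishing $\epsilon_0$-coordinate; writing $\mathfrak{n}^+$ (resp.\ $\overline{\mathfrak{n}}^+$) for the sum of positive root spaces of $\ge(X)_F$ (resp.\ $\overline{\ge}(X)_F$), this makes $\overline{\mathfrak{n}}^+ \subseteq \mathfrak{n}^+$.

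For (1), take a $\ge(X)_F$-highest weight vector $v_\mu \in V_{\mu,F}$ of $\frh$-weight $\mu = \sum_{i=0}^r \mu_i \epsilon_i$. Since it is annihilated by $\mathfrak{n}^+$, hence by $\overline{\mathfrak{n}}^+$, it is an $\overline{\ge}(X)_F$-highest weight vector of $\overline{\frh}$-weight $(\mu_1,\dots,\mu_r)$, which is a dominant integral weight of $\overline{\ge}(X)_F$ because $\mu \in \Lambda^+$. Hence $v_\mu$ generates a copy of the irreducible $\overline{\ge}(X)_F$-module of that highest weight, which by \Cref{Remark:prameter irreducible rep} equals $\overline{V}_{(\mu_1,\dots,\mu_r),F}$ when $b_2(X)$ is odd and equals one of $\overline{V}_{(\mu_1,\dots,\pm\mu_r),F}$ when $b_2(X)$ is even. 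As $v_\mu$ has $h$-eigenvalue $\mu(h) = \mu_0\epsilon_0(h) = -2\mu_0$ and $\overline{\ge}(X)_F$ commutes with $h$, this copy lies inside $V_{\mu,F}(-2\mu_0)$, which is assertion (1).

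For (2), let $v_\lambda$ be an $\overline{\ge}(X)_F$-highest weight vector of the given submodule $\overline{V}_{\lambda,F} \subseteq V_{\mu,F}$. Because $\overline{V}_{\lambda,F} \subseteq H^i(X)_F$, the operator $h$ acts on $v_\lambda$ by $i - 2n$; together with the $\overline{\frh}$-weight $\lambda = (\lambda_1,\dots,\lambda_r)$ and $\epsilon_0(h) = -2$, this gives the $\frh$-weight of $v_\lambda$ as $w = (n - i/2, \lambda_1, \dots, \lambda_r)$, and $w$ is therefore a weight of $V_{\mu,F}$. I would then invoke the standard fact that the set of weights of $V_{\mu,F}$ is contained in the convex hull of the Weyl orbit $W\mu$, combined with the observation that the function ``sum of the three largest coordinates'' on $F^{r+1}$ is convex (a maximum of linear forms) and takes a value $\leq \mu_0 + \mu_1 + \mu_2$ on every element of $W\mu$, each of which is a signed permutation of the non-increasing tuple $(\mu_0,\dots,\mu_r)$. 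It follows that this function is $\leq \mu_0 + \mu_1 + \mu_2$ on $w$, and a fortiori $w_0 + w_1 + w_2 = (n - i/2) + \lambda_1 + \lambda_2 \leq \mu_0 + \mu_1 + \mu_2$. (Here $b_2(X) \geq 5$ guarantees $r \geq 2$ and $\mu_2 \geq 0$, and $0 \leq i \leq 2n$ guarantees $n - i/2 \geq 0$.) This is the asserted inequality.

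The only step carrying genuine content is the majorization estimate in (2) — that ``sum of the three largest coordinates'' is bounded by $\mu_0 + \mu_1 + \mu_2$ over all weights of $V_{\mu,F}$ — and this is where I expect to need care, although it follows quickly from convexity together with the fact that signing and permuting coordinates cannot increase such a sum. Everything else is bookkeeping: the compatible choices of Cartan subalgebras and positive systems, the normalization making $\epsilon_0(h) = -2$ (which matches the $-2\mu_0$ in the statement, and which one can check against the Verbitsky component $V_{(n),F}$, whose degree-$0$ line is annihilated by every $f_x$ but by no $e_x$, forcing its $\ge(X)_F$-highest weight vector into degree $0$), and the sign ambiguity of the last weight coordinate for even $b_2(X)$, which affects neither $\mu_0 + \mu_1 + \mu_2$ nor $\lambda_1 + \lambda_2$.
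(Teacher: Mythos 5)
Your proof is correct and follows essentially the same strategy as the paper — compatible Cartan data with $\frh = \overline{\frh} \oplus Fh$ and $\epsilon_0(h)=-2$, identifying the $\ge(X)_F$-highest weight vector $v_\mu$ as an $\overline{\ge}(X)_F$-highest weight vector inside $V_{\mu,F}(-2\mu_0)$ for (1), and reading off the $\frh$-weight $(n-i/2)\epsilon_0 + \lambda_1\epsilon_1 + \cdots + \lambda_r\epsilon_r$ of a $\overline{\ge}(X)_F$-highest weight vector of $\overline{V}_{\lambda,F}$ for (2). The paper states the final dominance inequality in (2) without detail, and your convex-hull-of-Weyl-orbit argument is a valid way to justify it (one could equivalently observe that the linear functional $\nu \mapsto \nu_0+\nu_1+\nu_2$ is non-negative on every positive root of $\so(\widetilde{H}^2(X)_F)$, hence can only decrease when passing from $\mu$ to any weight of $V_{\mu,F}$).
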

\begin{proof}
    This is well-known and follows easily from the branching rule.
    We provide a direct proof for the convenience of the reader.
    We may assume that Cartan subalgebras $\frh \subset \ge(X)_F$ and $\overline{\frh} \subset \overline{\ge}(X)_F$ are given as follows.
    (Note that  $\mu_0, \mu_1, \mu_2, \lambda_1, \lambda_2$ are non-negative since $b_2(X) \geq 5$, and will not change if we change Cartan subalgebras and positive systems of roots by Remark \ref{Remark:prameter irreducible rep}.)
    Recall
    $
    \psi_F \colon \ge(X)_F \simeq \so (\widetilde{H}^2 (X)_F)
    $
    from \Cref{Theorem:structure of LLV}.
    Here $\widetilde{H}^2 (X)_F = H^2(X)_F \oplus Fv \oplus Fw$ is the Mukai completion.
    If $b_2(X)$ is odd (resp.~ even), we can choose a basis 
\[
\{v_0, \ldots, v_r, w_0, \ldots, w_r, v_{r+1}\}
\quad \mathrm{(resp.~ } \{
v_0, \ldots, v_r, w_0, \ldots, w_r
\})
\]
of $\widetilde{H}^2 (X)_F$
such that $v_0=v$, $w_0=w$ and other vectors $v_i, w_i$ are contained in $H^2(X)_F$,
and the associated matrix of the quadratic form is given as in \eqref{eqn:qmatrix}.
By the standard process of constructing Cartan subalgebras in terms of this basis (see \cite[Lecture 18]{FultonHarris} or \cite[Section 21 (j)]{Milne17} for example), 
we can choose a Cartan subalgebra $\overline{\frh} \subset \overline{\ge}(X)_F$ such that
\begin{itemize}
    \item the elements
$v_1, \ldots, v_r, w_1, \ldots, w_r$ are weight vectors of the non-trivial $\overline{\frh}$-weights
$\epsilon'_1, \ldots, \epsilon'_r, -\epsilon'_1, \ldots, -\epsilon'_r$ of $H^2(X)_F$, respectively, and
    \item $\frh\coloneqq Fh \oplus \overline{\frh}$ is a Cartan subalgebra of $\ge(X)_F$ and we can write the non-trivial $\frh$-weights
$\{\pm \epsilon_m\}_{0 \leq m \leq r}$
of $\widetilde{H}^2(X)_F$ such that under the identification $\frh^*=(Fh)^* \oplus \overline{\frh}^*$, we have $\pm \epsilon_0 \in (Fh)^*$ with $\epsilon_0(h)=-2$ and $\epsilon_m = \epsilon'_m \in \overline{\frh}^*$ for all $1 \leq m \leq r$.
\end{itemize}
We then fix positive systems of roots as in \Cref{Subsection:orthogonal Lie algebra}.

The highest weight vector $v_\mu \in V_{\mu, F}$ is contained in the $(-2\mu_0)$-eigenspace $V_{\mu, F}(-2\mu_0)$ of $h \in \ge(X)$ and
the $\overline{\ge}(X)_F$-submodule of $V_{\mu, F}(-2\mu_0)$ generated by $v_\mu$ is an irreducible $\overline{\ge}(X)_F$-module with highest weight $(\mu_1, \dotsc, \mu_r) \in \overline{\Lambda}^+$.
This proves (1).

Now we prove (2).
The shifted degree operator $h$ acts on 
$\overline{V}_{\lambda, F}$ as multiplication by $i-2n$, since $\overline{V}_{\lambda, F} \subset H^i(X)_F$.
Let $\overline{v}_\lambda \in \overline{V}_{\lambda, F}$ be the highest weight vector.
Then $\overline{v}_\lambda$, viewed as an element of $V_{\mu, F}$, is a weight vector for the $\frh$-weight
\[
  (n-i/2) \epsilon_0 + \lambda_1\epsilon_1 + \lambda_2\epsilon_2 + \cdots +\lambda_r\epsilon_r.
\]
Since $\mu= 
\mu_0\epsilon_0 + \mu_1\epsilon_1 + \mu_2\epsilon_2 + \cdots +\mu_r\epsilon_r$ is the highest  weight, we obtain the inequality
$\mu_0 + \mu_1 + \mu_2  \geq \lambda_1 + \lambda_2 + n - i/2$.
\end{proof}

\begin{lem}\label{Lemma:nilpotency index for irreducible factors}
    We assume that $X$ has Type II reduction.
    Let $\lambda = (\lambda_1, \lambda_2, \dotsc, \lambda_r) \in \overline{\Lambda}^+$ be a dominant integral weight with $\lambda_i \in \Z$ for all $1 \leq i \leq r$ and let
    $\rho \colon \overline{\ge}(X)_F \to \End_F(\overline{V}_{\lambda, F})$
    be the corresponding irreducible $\overline{\ge}(X)_F$-module with highest weight $\lambda$.
    Then we have $\nu(\rho(N_{\ell, 2}))= \lambda_1 + \lambda_2$.
\end{lem}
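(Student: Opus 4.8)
The plan is to reduce the computation of $\nu(\rho(N_{\ell,2}))$ to $\fsl_2$-representation theory. Since $X$ has Type II reduction, $N_{\ell,2}$ is a nonzero nilpotent element of $\overline{\ge}(X)_F \simeq \so(H^2(X)_F)$ with $N_{\ell,2}^2 = 0$, and by \Cref{thm:l-adic monodromy} (equivalently, from the explicit matrix form in \Cref{prop:conjugacyclassofN}) its image is $2$-dimensional; thus it has Jordan type $[2,2,1^{b_2(X)-4}]$ on the standard representation $H^2(X)_F$. In particular $b_2(X) \geq 5$, so $r = \lfloor b_2(X)/2\rfloor \geq 2$ and the coordinates $\lambda_1 \geq \lambda_2$ of $\lambda \in \overline{\Lambda}^+$ are both non-negative.

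Next I would produce a Jacobson--Morozov triple $(N_{\ell,2}, h', f')$ in $\overline{\ge}(X)_F$: reading off the explicit basis and the matrix of $q$ from \Cref{prop:conjugacyclassofN}, the ``$q$-transpose'' $f'$ of $N_{\ell,2}$ yields such a triple (after an irrelevant sign change on $N_{\ell,2}$), and its semisimple member $h'\coloneqq[N_{\ell,2},f']$ lies in the Cartan subalgebra $\overline{\frh}$ adapted to that basis. A short matrix computation shows that $h'$ acts on $H^2(X)_F$ with eigenvalues $1,1,0,\dots,0,-1,-1$; equivalently, in the coordinates of \Cref{Subsection:orthogonal Lie algebra}, $h'$ is the coweight with $\langle\epsilon_i, h'\rangle = 1$ for $i \in \{1,2\}$ and $\langle\epsilon_i, h'\rangle = 0$ for $i > 2$, and this $h'$ is dominant, so $\langle\alpha, h'\rangle \geq 0$ for every positive root $\alpha$ of $\overline{\ge}(X)_F$. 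I would also note here, using \Cref{Remark:prameter irreducible rep} together with its analogue for $\overline{\ge}(X)_F$ recorded in \Cref{Remark:reduced LLV irreducible components}, that working with this particular Cartan subalgebra and positive system does not change the quantity $\lambda_1 + \lambda_2$.

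Now restrict $\overline{V}_{\lambda, F}$ to the $\fsl_2$ spanned by $(N_{\ell,2}, h', f')$. A nilpotent operator that is the nil-positive (or nil-negative) member of an $\fsl_2$-triple, acting on a finite-dimensional module, has nilpotency index equal to the largest eigenvalue of the accompanying semisimple element on that module; hence
\[
\nu(\rho(N_{\ell,2})) = \max \Set*{ \langle \mu, h' \rangle \given \mu \text{ a weight of } \overline{V}_{\lambda, F}}.
\]
Since $h'$ is dominant and $\lambda$ is the highest weight of the irreducible module $\overline{V}_{\lambda, F}$, every weight $\mu$ of $\overline{V}_{\lambda, F}$ satisfies $\langle \lambda - \mu, h' \rangle \geq 0$, so the maximum is attained at $\mu = \lambda$ and equals $\langle \lambda, h' \rangle = \lambda_1 + \lambda_2$. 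This gives $\nu(\rho(N_{\ell,2})) = \lambda_1 + \lambda_2$.

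The step I expect to cost the most attention is the bookkeeping of conventions in the second paragraph: one must place the explicit $h'$ obtained from \Cref{prop:conjugacyclassofN} into the same Cartan subalgebra and dominant Weyl chamber used to define the labels $\lambda = (\lambda_1,\dots,\lambda_r)$, and keep track of signs so that the resulting characteristic coweight really is $\epsilon_1^{\vee} + \epsilon_2^{\vee}$ (and hence dominant) rather than its negative, with the type-$D$ ambiguity $\lambda_r \leftrightarrow -\lambda_r$ being harmless since it never affects $\lambda_1 + \lambda_2$. Verifying that $(N_{\ell,2}, h', f')$ is an $\fsl_2$-triple and computing the eigenvalue list of $h'$ are short matrix calculations, and the $\fsl_2$ nilpotency-index statement is classical.
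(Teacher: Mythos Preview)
Your proposal is correct and follows essentially the same approach as the paper, which simply invokes the normalized basis from \Cref{prop:conjugacyclassofN} and defers to \cite[Lemma 5.8]{GKLR}; the argument there is precisely the Jacobson--Morozov / dominant-coweight computation you describe. Your write-up makes explicit the one point the paper leaves to the reference: that the semisimple member $h'$ of the $\fsl_2$-triple can be taken in the dominant chamber with $\langle \epsilon'_i, h' \rangle = \delta_{i \leq 2}$, whence the maximal $h'$-eigenvalue on $\overline{V}_{\lambda,F}$ is $\langle \lambda, h' \rangle = \lambda_1 + \lambda_2$.
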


\begin{proof}
As $X$ has Type II reduction,
    by using the normalized basis given in \Cref{prop:conjugacyclassofN} (1), we can prove this lemma by the same argument as in \cite[Lemma 5.8]{GKLR}.
\end{proof}

\begin{proof}[Proof of \Cref{thm:arithmeticGreenKimetc}]

$(1) \Rightarrow (2)$: 
We argue by contradiction and assume that there is some
irreducible $\ge(X)_F$-submodule
$V_{\mu, F} \subset H^+(X)_F$ such that
$\mu_0 + \mu_1 + \mu_2 >n$.
Since the $(-2\mu_0)$-eigenspace $V_{\mu, F}(-2\mu_0)$ of $h$ is contained in $H^{2n-2\mu_0}(X)_F$,
it follows from \Cref{Lemma:restriction of LLV factors to reduced LLV}
that 
$H^{2n-2\mu_0}(X)_F$
contains the irreducible $\overline{\ge}(X)_F$-module
$\overline{V}_{\lambda, F}$
where $\lambda=(\mu_1, \dotsc, \mu_r)$ if $b_2(X)$ is odd, and 
$\lambda=(\mu_1, \dotsc, \mu_r)$
or $\lambda=(\mu_1, \dotsc, -\mu_r)$ if $b_2(X)$ is even.
We note that $\mu_i \in \Z$ by \Cref{Proposition:reduced LLV irreducible component in general}.
It then follows from
\Cref{Lemma:nilpotency index for irreducible factors} that
\[
n-\mu_0=\nu(N'_{\ell, 2n-2\mu_0}) \geq \nu(N'_{\ell, 2n-2\mu_0}\vert_{\overline{V}_{\lambda, F}})=\mu_1 + \mu_2 > n-\mu_0,
\]
which is a contradiction.

$(2) \Rightarrow (1)$: 
We fix an integer $0 \leq i \leq n$.
By \Cref{cor:lowerboundbySym}, we have
$\nu(N'_{\ell, 2i}) \geq i$.
In order to show that $\nu(N'_{\ell, 2i}) =i$, it suffices to show 
the inequality
$
\nu(N'_{\ell, 2i}\vert_{\overline{V}_{\lambda, F}}) \leq i
$
for every irreducible $\overline{\ge}(X)_F$-module component
$\overline{V}_{\lambda, F} \subset H^{2i}(X)_F$.
Let us choose an irreducible $\ge(X)_F$-module component
$V_{\mu, F} \subset H^{+}(X)_F$ which contains $\overline{V}_{\lambda, F}$.
Then, by our assumption and \Cref{Lemma:restriction of LLV factors to reduced LLV},
we have 
\[
n \geq \mu_0 + \mu_1 + \mu_2 \geq \lambda_1 + \lambda_2 + n -i.
\]
This, together with \Cref{Lemma:nilpotency index for irreducible factors} (and \Cref{Proposition:reduced LLV irreducible component in general}), implies 
$
\nu(N'_{\ell, 2i}\vert_{\overline{V}_{\lambda, F}}) = \lambda_1 + \lambda_2 \leq i
$
as required.
\end{proof}

\subsection{Arithmetic analogue of Nagai's conjecture}\label{subsection:Arithmetic analogue of Nagai's conjecture}

Now we can prove our main results.
Let $X$ be a $2n$-dimensional \hk variety over $K$.
Let
$N_{\ell, i}$
be
the $\ell$-adic monodromy operator on $H^i_\ell(X)=H^{i}_{\et} (X_{\overline{K}}, \Q_{\ell})$
if $\ell \neq p$
and
let
$N_{p, i}$
be the $p$-adic monodromy operator on
$H^i_{\pst}(X)=\Dpst(H^{i}_{\et} (X_{\overline{K}}, \Q_{p}))$ if $\ell = p$.
\subsubsection*{Proof of \Cref{Main Theorem for known hk}}
The proof will be divided into three cases based on the reduction type.  We start with the Type III case.
\begin{thm}[Type III]
\label{thm:nagai for type III}
    Assume that $X$ has Type III reduction.
    Then we have
$
\nu (N_{\ell,2i}) =2i
$
for all $0 \leq i \leq n$ and for all $\ell$.
\end{thm}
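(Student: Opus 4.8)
The plan is to sandwich the nilpotency index $\nu(N_{\ell,2i})$ between two bounds that are already available, and to observe that in the Type III case these bounds coincide.

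First I would invoke the lower bound furnished by the Verbitsky component. By \Cref{cor:lower bound of monodromy operators} we have, for every prime $\ell$ (including $\ell=p$) and every $0 \leq i \leq n$, the inequality $\nu(N_{\ell,2i}) \geq i\,\nu(N_{\ell,2})$. This rests on \Cref{thm:Verbitskycomponent Betti etale p-adic}: the cup product gives a monodromy-equivariant injection $\Sym^{i}H^2(X) \hookrightarrow H^{2i}(X)$ on which the monodromy operator restricts to $\Sym^{i}N_{\ell,2}$, together with the elementary identity $\nu(\Sym^{i}N) = i\,\nu(N)$ for a nilpotent operator $N$. Since $X$ has Type III reduction, $\nu(N_{\ell,2}) = 2$ by \Cref{defn:Reduction Type at l}, so this yields $\nu(N_{\ell,2i}) \geq 2i$.

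Next I would apply the general upper bound valid for any smooth proper variety over $K$: by \Cref{Lemma:ell adic nilpotency index upper bound in general} one has $\nu(N_{\ell,j}) \leq j$ for $\ell \neq p$, and by \Cref{Lemma:p adic nilpotency index upper bound in general} the same bound holds for $\ell = p$. Taking $j = 2i$ (which satisfies $0 \leq 2i \leq 2n = \dim X$) gives $\nu(N_{\ell,2i}) \leq 2i$. Combining the two inequalities produces $\nu(N_{\ell,2i}) = 2i$ for all $0 \leq i \leq n$ and all $\ell$, which is exactly the assertion.

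I do not expect any genuine obstacle here: the Type III case is precisely the one in which the trivial upper bound $2i$ already meets the Verbitsky lower bound $i\,\nu(N_{\ell,2})$, so no information about the LLV Lie algebra or the LLV decomposition is needed. (The real difficulties of the arithmetic Nagai conjecture --- showing that the monodromy operators lie in the reduced LLV Lie algebra, and, for Type II reduction, controlling the LLV decomposition via the Green--Kim--Laza--Robles criterion --- only enter in the Type I and Type II cases.)
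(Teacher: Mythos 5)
Your proposal is correct and uses exactly the same ingredients as the paper's proof: the paper's one-line argument cites precisely \Cref{Lemma:ell adic nilpotency index upper bound in general}, \Cref{Lemma:p adic nilpotency index upper bound in general}, and \Cref{cor:lower bound of monodromy operators}, which is the sandwich you describe. You have simply spelled out the details.
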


\begin{proof}
    This follows from 
    \Cref{Lemma:ell adic nilpotency index upper bound in general}, \Cref{Lemma:p adic nilpotency index upper bound in general}, and \Cref{cor:lower bound of monodromy operators}.
\end{proof}

For the Type I case, we provide a slightly expanded version.

\begin{thm}[Type I]
\label{thm:nagai for type I}
Let $1 \leq i \leq 4n-1$ be an integer such that $b_i(X) \neq 0$.
If $b_2(X) = 3$, we further assume that $i$ is even.
\begin{enumerate}
    \item Then $N_{p, 2}=0$ if and only if $N_{p, i}=0$.
    \item Let $\ell \neq p$ be a prime number. Assume that $X$ is in one of four known deformation types \eqref{eq:KnownTypes}.
    Then $N_{\ell, 2}=0$ if and only if $N_{\ell, i}=0$.
\end{enumerate}
\end{thm}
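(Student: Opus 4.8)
The plan is to reduce both equivalences to a single structural observation: across all degrees the monodromy operators assemble into one element of the reduced LLV Lie algebra, and in the relevant range each graded component of that Lie algebra injects into $\End$ of the corresponding cohomology group. Recall from \Cref{Theorem:structure of LLV} that $\overline{\ge}(X) \subset \prod_j \End_E(H^j(X))$ acts preserving the grading, giving for each $j$ a component map $\rho_j \colon \overline{\ge}(X) \to \End_E(H^j(X))$, and that $\rho_2$ is an isomorphism onto $\so(H^2(X))$, hence injective; moreover, by \Cref{cor:InjectiveLLV}, $\rho_j$ is injective whenever $1 \le j \le 4n-1$ and $H^j(X) \ne 0$.

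For part (1) I would split on the value of $b_2(X)$. If $b_2(X) \ge 4$, then \Cref{thm:universalmonodromyp-adic}(1) shows that $N_p = (N_{p,j})_j$ lies in $\overline{\ge}_{\pst}(X)$; call this element $N$. Then $N_{p,2} = \rho_2(N)$ and $N_{p,i} = \rho_i(N)$. Since $b_i(X) \ne 0$ gives $H^i_{\pst}(X) \ne 0$ and $1 \le i \le 4n-1$, both $\rho_2$ and $\rho_i$ are injective, whence $N_{p,2} = 0 \iff N = 0 \iff N_{p,i} = 0$. If $b_2(X) = 3$, then by hypothesis $i$ is even, and \Cref{thm:universalmonodromyp-adic}(2) gives $N_{p,2} = 0$ and $N_{p,i} = 0$ directly, so the equivalence is trivial.

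Part (2) runs identically, with the $\ell$-adic input replacing the $p$-adic one. Since $X$ is of one of the four known deformation types, one reads off from \Cref{rem:types} that $b_2(X) \in \{7, 8, 22, 23, 24\}$, so $b_2(X) \ge 4$ and the $b_2(X) = 3$ clause never applies. Then \Cref{Corollary:ell-adic monodromy operators in LLV for known types} places $N_\ell = (N_{\ell,j})_j$ in $\overline{\ge}_\ell(X)$, and applying $\rho_2$ (an isomorphism) and $\rho_i$ (injective by \Cref{cor:InjectiveLLV}) exactly as above yields $N_{\ell,2} = 0 \iff N_{\ell,i} = 0$.

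The substantive difficulty does not lie in this deduction but in the inputs already in hand: that the $p$-adic monodromy operators in all degrees belong to $\overline{\ge}_{\pst}(X)$ (\Cref{thm:universalmonodromyp-adic}, proved via Sen's theory and the Kuga--Satake embedding) and the analogous $\ell$-adic statement for the known types (\Cref{Corollary:ell-adic monodromy operators in LLV for known types}, via the Mumford--Tate conjecture). Granting those, the only point needing attention is the degenerate case $b_2(X) = 3$: there the LLV Lie algebra does not detect odd-degree cohomology, the odd-degree monodromy is not controlled by it, and this is exactly the reason the statement restricts to even $i$ in that case; no new obstacle appears.
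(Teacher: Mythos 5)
Your proposal is correct and follows essentially the same route as the paper: the paper's proof simply states $N_{p,i}=\rho_i(N_{p,2})$ (implicitly identifying $\overline{\ge}_{\pst}(X)$ with $\so(H^2_{\pst}(X))$ via $\rho_2$, invoking \Cref{thm:universalmonodromyp-adic} for both the $b_2\ge 4$ and $b_2=3$ cases at once) and then concludes from the injectivity of $\rho_i$ in \Cref{cor:InjectiveLLV}, with \Cref{Corollary:ell-adic monodromy operators in LLV for known types} replacing \Cref{thm:universalmonodromyp-adic} for part (2). Your write-up is a more explicit version of the same argument, spelling out the $b_2(X)=3$ case separately and observing that part (2) never meets that case since the known types all have $b_2\ge 7$.
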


\begin{proof}
    We have $N_{p, i}=\rho_{i}(N_{p, 2})$ by \Cref{thm:universalmonodromyp-adic}.
    The assertion (1) follows from this equality, together with the injectivity of $\rho_i$ (see \Cref{cor:InjectiveLLV}).
    The assertion (2) follows from the same argument using \Cref{Corollary:ell-adic monodromy operators in LLV for known types}.
\end{proof}

Finally, we treat the Type II case.
\begin{thm}[Type II]
\label{thm:nagai for type II and known case}
Assume that $X$ has Type II reduction.
We suppose that $X$ is in one of the four known deformation types \eqref{eq:KnownTypes}.
    Then we have
    $\nu (N_{\ell,2i}) = i$ for all $0 \leq i \leq n$ and for all prime numbers $\ell$.
\end{thm}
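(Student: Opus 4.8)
The plan is to combine the Green--Kim--Laza--Robles type criterion established in \Cref{thm:arithmeticGreenKimetc} with the explicit constraint on the LLV decomposition available for the four known deformation types. First I would note that, since $X$ has Type II reduction, \Cref{prop:conjugacyclassofN} forces $b_2(X) \geq 5$, so in particular $b_2(X) \geq 4$ and we are in the range where \Cref{thm:universalmonodromyp-adic} and \Cref{thm:arithmeticGreenKimetc} both apply.

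The second step is to identify the genuine monodromy operators with the operators $N'_{\ell,2i}$ appearing in \Cref{Subsection:Green--Kim--Laza--Robles criterion}. For $\ell = p$, \Cref{thm:universalmonodromyp-adic} gives $N_p = (N_{p,i})_i \in \overline{\ge}_{\pst}(X)$; for $\ell \neq p$, \Cref{Corollary:ell-adic monodromy operators in LLV for known types} gives $N_\ell = (N_{\ell,i})_i \in \overline{\ge}_\ell(X)$ (this is where the known-deformation-type hypothesis, via the Mumford--Tate conjecture, is needed on the $\ell$-adic side). Since $\rho_2 \colon \overline{\ge}(X) \xrightarrow{\sim} \so(H^2(X))$ is an isomorphism by \Cref{Theorem:structure of LLV}(3), the element of $\overline{\ge}(X)$ whose image under the full representation $\rho$ realizes all the $N_{\ell,i}$ is precisely the monodromy operator $N_{\ell,2}$ on $H^2(X)$. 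Hence $N_{\ell,2i} = \rho_{2i}(N_{\ell,2}) = N'_{\ell,2i}$ for all $i$, after base change to $F$, which does not change nilpotency indices.

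The third step is to verify condition (2) of \Cref{thm:arithmeticGreenKimetc}. Because $X$ is of one of the four known deformation types \eqref{eq:KnownTypes}, \Cref{thm:LLVdecompositionmu} yields $\mu_0 + \mu_1 + \mu_2 \leq n$ for every irreducible $\ge(X)_F$-factor $V_{\mu,F}$ occurring in $H^\bullet(X)_F$; a fortiori this holds for the factors contained in the even part $H^+(X)_F$. Thus condition (2) is satisfied, and \Cref{thm:arithmeticGreenKimetc} gives $\nu(N'_{\ell,2i}) = i$ for all $0 \leq i \leq n$. Combining with the identification $N_{\ell,2i} = N'_{\ell,2i}$ from the second step, we conclude $\nu(N_{\ell,2i}) = i$ for all $0 \leq i \leq n$ and all primes $\ell$ (including $\ell = p$).

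The only point requiring care is the bookkeeping in the second step: one must be sure that the operator denoted $N_{\ell,2}$ in the hypothesis of \Cref{thm:arithmeticGreenKimetc} — the monodromy operator on $H^2$, regarded inside $\overline{\ge}(X) \simeq \so(H^2(X))$ — is literally the element whose image under $\rho$ recovers the whole tuple $(N_{\ell,2i})_i$, which is exactly what membership of $N_\ell$ in $\overline{\ge}(X)$ together with \Cref{Theorem:structure of LLV}(3) provides. The substantive inputs, namely that the monodromy operators lie in the LLV Lie algebra and the bound on the LLV decomposition for the known types, are already in hand, so no new obstacle arises.
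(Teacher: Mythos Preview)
Your proposal is correct and follows essentially the same approach as the paper's own proof: identify $N_{\ell,2i}$ with $N'_{\ell,2i}=\rho_{2i}(N_{\ell,2})$ via \Cref{Corollary:ell-adic monodromy operators in LLV for known types} and \Cref{thm:universalmonodromyp-adic}, then apply \Cref{thm:arithmeticGreenKimetc} using the bound from \Cref{thm:LLVdecompositionmu}. Your additional remarks (the $b_2\geq 5$ observation and the justification via \Cref{Theorem:structure of LLV}(3)) are helpful elaborations but do not change the route.
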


\begin{proof}
   By \Cref{Corollary:ell-adic monodromy operators in LLV for known types} and  \Cref{thm:universalmonodromyp-adic}, the monodromy operator $N_{\ell, 2i}$ agrees with $N'_{\ell, 2i}=\rho_{2i}(N_{\ell, 2})$. Thus, the result now follows from \Cref{thm:LLVdecompositionmu} and \Cref{thm:arithmeticGreenKimetc}.
\end{proof}

Now, \Cref{Main Theorem for known hk} can be concluded from \Cref{thm:nagai for type III}, \Cref{thm:nagai for type I}, and \Cref{thm:nagai for type II and known case}. Furthermore, in the $\ell=p$ case, even if $X$ is not known to be in one of the four deformation types \eqref{eq:KnownTypes}, we still have the following consequence.

\begin{thm}
\label{thm:nagai for type II and general case}
Assume that $X$ has Type II reduction.
Then the equality $\nu (N_{p,2i}) = i$ holds for all $0 \leq i \leq n$ if and only if the inequality $\mu_0 + \mu_1 + \mu_2 \leq n$ holds for every irreducible factor $V_{(\mu_0, \mu_1, \dotsc, \mu_r), \overline{K}} \neq 0$ in the LLV decomposition of $H^+_{\pst}(X)_{\overline{K}}$.
\end{thm}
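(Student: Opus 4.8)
The plan is to derive the statement directly by combining \Cref{thm:universalmonodromyp-adic} with the arithmetic Green--Kim--Laza--Robles criterion \Cref{thm:arithmeticGreenKimetc}, both specialized to $\ell = p$ (so that $F = \overline{K}$ in the notation of \Cref{Definition:LLV algebra Betti etale p-adic notation}(c)).

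First I would note that Type II reduction forces $b_2(X) \geq 5$ by \Cref{prop:conjugacyclassofN}, so in particular $b_2(X) \geq 4$ and \Cref{thm:universalmonodromyp-adic}(1) applies: the $p$-adic monodromy operator $N_p = (N_{p,i})_i$ lies in the reduced LLV Lie algebra $\overline{\ge}_{\pst}(X) \simeq \so(H^2_{\pst}(X))$. The crucial consequence I would extract is that $N_p$ is completely determined by its degree-$2$ component: since $N_p \in \overline{\ge}_{\pst}(X)$ and $\rho_2 \colon \overline{\ge}_{\pst}(X) \simeq \so(H^2_{\pst}(X))$ is an isomorphism (\Cref{Theorem:structure of LLV}(3)), the degree-$2i$ component of $N_p$ is $N_{p,2i} = \rho_{2i}(N_{p,2})$ for every $i$, which is exactly the operator $N'_{p,2i}$ appearing in \Cref{thm:arithmeticGreenKimetc}. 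Hence $\nu(N_{p,2i}) = \nu(N'_{p,2i})$ for all $0 \leq i \leq n$.

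Next I would invoke \Cref{thm:arithmeticGreenKimetc} with $\ell = p$: since $X$ has Type II reduction, the equality $\nu(N'_{p,2i}) = i$ for all $0 \leq i \leq n$ is equivalent to the inequality $\mu_0 + \mu_1 + \mu_2 \leq n$ holding for every nonzero irreducible factor $V_{(\mu_0, \dotsc, \mu_r), \overline{K}}$ occurring in the LLV decomposition of the even part $H^+_{\pst}(X)_{\overline{K}}$. Combined with the identification $\nu(N_{p,2i}) = \nu(N'_{p,2i})$ from the previous step, this is precisely the claimed equivalence, and the proof is complete.

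Once the two cited inputs are granted, the argument is purely formal, so I do not expect a real obstacle here; the genuine content lies entirely in \Cref{thm:universalmonodromyp-adic} (proved via Sen's theory and the Kuga--Satake construction) and in \Cref{thm:arithmeticGreenKimetc} (proved via \Cref{prop:conjugacyclassofN} and the branching rule for orthogonal Lie algebras). The one point that must not be skipped is that $N_p$ lies in $\overline{\ge}_{\pst}(X)$: without it the operators $N_{p,2i}$ in different degrees would not visibly be governed by the single orthogonal-Lie-algebra element $N_{p,2}$, and \Cref{thm:arithmeticGreenKimetc}, whose hypothesis~(1) is a statement about $N'_{p,2i} = \rho_{2i}(N_{p,2})$ rather than about $N_{p,2i}$ itself, could not be applied.
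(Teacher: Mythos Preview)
Your proposal is correct and follows exactly the paper's own argument: invoke \Cref{thm:universalmonodromyp-adic} to identify $N_{p,2i}$ with $N'_{p,2i}=\rho_{2i}(N_{p,2})$, then apply \Cref{thm:arithmeticGreenKimetc} at $\ell=p$. Your additional remarks (that Type~II forces $b_2(X)\geq 5$, and that the identification $N_{p,2i}=N'_{p,2i}$ is the essential bridge) are accurate elaborations of what the paper compresses into two sentences.
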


\begin{proof}
    By \Cref{thm:universalmonodromyp-adic}, the $p$-adic monodromy operator $N_{p, 2i}$ agrees with $N'_{p, 2i}=\rho_{2i}(N_{p, 2})$.
    So this is a consequence of \Cref{thm:arithmeticGreenKimetc}.
\end{proof}

\subsection{Nilpotency indices in odd degrees}\label{subsec:Nilpotency indices in odd degrees}

In this part, we want to make further remarks on the nilpotency indices of the monodromy operators on cohomology in odd degrees and their relation to those on the second cohomology.
In the four known deformation types \eqref{eq:KnownTypes},  $\Kum_n(n \geq2)$ is the sole example that has non-trivial odd cohomology groups (see \Cref{rem:types}).

In \cite[Proposition 3.15]{Soldatenkov20a} and \cite[Proposition 3.12]{HuybrechtsMauri22}, it is shown that, for a Type III projective one-parameter degeneration $\cX/\Delta$ of a \hk manifold $\cX_0$ of dimension $2n$ with $b_3(\cX_0) \neq 0$, we have $\nu(N_{2i+1}) = 2i-1$ for the monodromy operator $N_{2i+1}$ on  $H^{2i+1}_B(\cX_0)$ for any $1 \leq i \leq n-1$.

Let $X$ be a hyper-Kähler variety over $K$ of dimension $2n$.
Similarly, we obtain the following description of monodromy nilpotency indices for
$H^{2i+1}_{\pst}(X)$:

\begin{thm}\label{prop:type III odd cohomolgoy}
Suppose that $b_2(X) \geq 4$.
\begin{enumerate}
    \item We have $\nu(N_{p,2i+1}) \leq 2i-1$
for all $1 \leq i \leq n-1$.
    \item If $X$ has Type III reduction and $b_3(X) \neq 0$, then $\nu(N_{p,2i+1}) = 2i-1$
for all $1 \leq i \leq n-1$.
\end{enumerate}
\end{thm}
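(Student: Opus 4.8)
The plan is to treat the two parts separately: part~(1) will follow from the Hodge-theoretic bound on $p$-adic nilpotency indices, and part~(2) will be reduced to a weight computation inside the LLV Lie algebra. For~(1), I would apply \Cref{Lemma:p adic nilpotency index upper bound in general}, which gives $\nu(N_{p,2i+1})\leq j_1-j_2$, where $j_1$ (resp.\ $j_2$) is the largest (resp.\ smallest) $j$ with $H^{2i+1-j}(X,\Omega^{j}_X)\neq 0$; if $H^{2i+1}_{\et}(X_{\overline K},\Q_p)=0$ the statement is trivial since $i\geq 1$. A hyper-Kähler variety has no global holomorphic forms of odd degree, so $H^0(X,\Omega^{j}_X)=0$ and, dually, $H^{j}(X,\mathcal O_X)=0$ for $j$ odd; hence $1\leq j_2\leq j_1\leq 2i$, giving $j_1-j_2\leq 2i-1$.

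For~(2) the upper bound is part~(1), so it remains to prove $\nu(N_{p,2i+1})\geq 2i-1$. By \Cref{thm:universalmonodromyp-adic} we have $N_{p,2i+1}=\rho_{2i+1}(N_{p,2})$ with $N_{p,2}\in\overline{\ge}_{\pst}(X)\cong\so(H^2_{\pst}(X))$ lying in the Type~III nilpotent class of \Cref{prop:conjugacyclassofN}(2). The first step is the observation that for such a nilpotent $N$ and every irreducible $\so(H^2_{\pst}(X))$-module $\overline V_{\lambda}$ one has $\nu(\rho_{\lambda}(N))\geq 2\lambda_1$ (in fact equality holds): from the explicit description, $N$ restricts to the regular nilpotent of $\so(W_3)\cong\fsl_2$ on a non-degenerate $3$-plane $W_3\subset H^2_{\pst}(X)$ and vanishes on $W_3^{\perp}$, so its $\fsl_2$-triple $(N,H_0,N^{+})$ lies in $\so(W_3)$; choosing, as we may by \Cref{Remark:prameter irreducible rep}, a Cartan subalgebra of $\so(H^2_{\pst}(X))$ containing $H_0$, the weight-$\lambda$ vector of $\overline V_{\lambda}$ is an $H_0$-eigenvector of eigenvalue $2\lambda_1$, while $H_0$ is the semisimple element of an $\fsl_2$-triple and hence its largest eigenvalue on $\overline V_{\lambda}$ equals $\nu(\rho_{\lambda}(N))$. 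So it suffices to produce, for each $1\leq i\leq n-1$, an irreducible $\so(H^2_{\pst}(X))$-summand $\overline V_{\lambda'}\subset H^{2i+1}_{\pst}(X)_{\overline K}$ with $\lambda'_1\geq i-\tfrac12$.

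To produce such a summand I would use $b_3(X)\neq 0$. Then $H^3_{\pst}(X)_{\overline K}\neq 0$, so there is an irreducible $\ge_{\pst}(X)_{\overline K}\cong\so(\widetilde H^2)$-submodule $V_{\mu}$ meeting $H^3$. The $h$-eigenvalues on $V_{\mu}$ are $-2\mu_0,-2\mu_0+2,\dots,2\mu_0$, so $V_{\mu}$ occupies the degrees $2n-2\mu_0,\dots,2n+2\mu_0$; since $V_{\mu}\subset H^{-}_{\pst}(X)_{\overline K}$ its minimal degree $2n-2\mu_0$ is odd, is at most $3$, and is excluded from being $1$ because $H^1(X)=0$, so $\mu_0=n-\tfrac32$ and $V_{\mu}\cap H^3$ is the bottom graded piece, which by \Cref{Lemma:restriction of LLV factors to reduced LLV}(1) contains $\overline V_{(\mu_1,\dots,\mu_r)}$. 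Applying~(1) in degree~$3$ together with the first step gives $2\mu_1\leq\nu(N_{p,3})\leq 1$, and since $b_2(X)\geq 4$ forces $\mu_1>0$ with $\mu_1\in\tfrac12\Z\setminus\Z$, we obtain $\mu_1=\tfrac12$. Finally, with $\alpha_1=\epsilon_0-\epsilon_1$ the simple root satisfying $\langle\mu,\alpha_1^{\vee}\rangle=\mu_0-\mu_1=n-2\geq i-1$, the element $\mu-(i-1)\alpha_1$ is a weight of $V_{\mu}$; its $\epsilon_0$-coordinate $n-i-\tfrac12$ places its weight spaces in degree $2i+1$, while its $\epsilon_1$-coordinate is $i-\tfrac12$, so $V_{\mu}\cap H^{2i+1}$ contains an $\so(H^2_{\pst}(X))$-summand $\overline V_{\lambda'}$ with $\lambda'_1\geq i-\tfrac12$, as needed.

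I expect the main obstacle to lie in the last paragraph: namely, identifying $\mu_0=n-\tfrac32$ and $\mu_1=\tfrac12$ for the submodule $V_{\mu}$ meeting $H^3$, and verifying that the $\alpha_1$-string from the highest weight of $V_{\mu}$ reaches degree $2i+1$ — which happens exactly for $i\leq n-1$, and is the reason for the upper bound on $i$ in the statement. By contrast, the Type~III formula $\nu(\rho_{\lambda}(N))=2\lambda_1$ and the vanishing of odd-degree holomorphic forms used in~(1) are routine once the setup is in place.
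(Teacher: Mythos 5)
Your proof is correct and follows essentially the same approach as the paper. Part (1) coincides with the paper's argument (the bound from \Cref{Lemma:p adic nilpotency index upper bound in general} together with the vanishing of $H^{2i+1}(X,\cO_X)$ and $H^0(X,\Omega^{2i+1}_X)$), and part (2) carries out explicitly, via the Jacobson--Morozov $\fsl_2$-triple for the Type~III nilpotent of \Cref{prop:conjugacyclassofN} and the $\alpha_1$-string emanating from the $H^3$ constituent, the LLV-weight computation that the paper delegates to Soldatenkov's Proposition~3.15.
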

\begin{proof}
    Since 
    $H^{2i+1}(X,\cO_{X})=H^0(X,\Omega^{2i+1}_{X})=0$ (cf.~ \cite[Proposition 3]{Beauville83}),
    (1) follows from \Cref{Lemma:p adic nilpotency index upper bound in general}.
    Note that we have the normalized basis for $H^2_{\pst}(X)\otimes_{K^{\ur}_0} \overline{K}$ as in \Cref{prop:conjugacyclassofN}.
    Then the claim (2) follows from the same computation as in \cite[Proposition 3.15]{Soldatenkov20a}.
\end{proof}

\begin{cor}\label{cor:nilpotency of H^3 p-adic}
    Suppose that $b_2(X) \geq 4$ and $b_3(X) \neq 0$.
    We have
    $\nu(N_{p, 3})=0$ if $X$ is of type I, and $\nu(N_{p, 3})=1$ if $X$ is of type II or III.
\end{cor}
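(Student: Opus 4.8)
The plan is to reduce everything to the identity $N_{p,3} = \rho_3(N_{p,2})$, which is available because $b_2(X) \geq 4$, together with the Hodge-theoretic upper bound of \Cref{Lemma:p adic nilpotency index upper bound in general} and the injectivity of $\rho_3$. First I would record that, since $b_2(X) \geq 4$, \Cref{thm:universalmonodromyp-adic}~(1) shows $N_p \in \overline{\ge}_{\pst}(X)$, hence $N_{p,3} = \rho_3(N_{p,2})$, where $\rho_3 \colon \overline{\ge}_{\pst}(X) \to \End_{K^{\ur}_0}(H^3_{\pst}(X))$ is the natural representation (this is exactly the identity used in the proof of \Cref{thm:nagai for type I}). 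For Type I this already concludes the argument: $\nu(N_{p,2}) = 0$ means $N_{p,2} = 0$, hence $N_{p,3} = 0$ and $\nu(N_{p,3}) = 0$; equivalently, this is \Cref{thm:nagai for type I}~(1) applied with $i = 3$, which is legitimate since $b_3(X) \neq 0$.

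Next I would prove the uniform upper bound $\nu(N_{p,3}) \leq 1$, valid for every reduction type. By Beauville's vanishing theorem (\cite[Proposition 3]{Beauville83}) one has $H^3(X, \cO_X) = H^0(X, \Omega^3_X) = 0$, while $b_3(X) \neq 0$ forces $H^2(X, \Omega^1_X)$ and $H^1(X, \Omega^2_X)$ to be nonzero. In the notation of \Cref{Lemma:p adic nilpotency index upper bound in general} this means $j_1 = 2$ and $j_2 = 1$, so $\nu(N_{p,3}) \leq j_1 - j_2 = 1$.

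It remains to treat the Type II and Type III cases, where I claim $N_{p,3} \neq 0$, hence $\nu(N_{p,3}) = 1$ by the previous paragraph. Indeed $N_{p,2} \neq 0$ by the definition of the reduction type (\Cref{defn:Reduction Type at l}), and since $H^3_{\pst}(X) \neq 0$ (equivalently $b_3(X) \neq 0$), the homomorphism $\rho_3$ is injective by \Cref{cor:InjectiveLLV}. Therefore $N_{p,3} = \rho_3(N_{p,2}) \neq 0$. For Type III one can alternatively argue directly from \Cref{prop:type III odd cohomolgoy}~(2) with $i = 1$, noting that $b_3(X) \neq 0$ forces $\dim X \geq 4$, i.e. $n \geq 2$, so that $i = 1$ lies in the admissible range $1 \leq i \leq n-1$.

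I do not expect a genuine obstacle: the content is entirely a bookkeeping of already-established inputs. The one point deserving care is that the key structural fact $N_{p,2} \in \overline{\ge}_{\pst}(X)$ is \Cref{thm:universalmonodromyp-adic}, proved via Sen's theory and hence unconditional (in particular not requiring $X$ to be of one of the four known deformation types), and that the injectivity of $\rho_3$ requires only $b_3(X) \neq 0$, which is part of the hypothesis.
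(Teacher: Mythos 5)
Your proposal is correct and takes essentially the same route as the paper: the paper's proof is a one-line combination of \Cref{prop:type III odd cohomolgoy} (which gives $\nu(N_{p,3})\leq 1$ via the Beauville vanishing / Hodge-type bound you spell out) and \Cref{thm:nagai for type I} (which gives $N_{p,2}=0\Leftrightarrow N_{p,3}=0$, i.e.\ the identity $N_{p,3}=\rho_3(N_{p,2})$ plus injectivity of $\rho_3$ that you unpack). You merely expand the cited results into their proofs; the logic and inputs are identical.
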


\begin{proof}
    This follows by combining \Cref{prop:type III odd cohomolgoy} and \Cref{thm:nagai for type I}.
\end{proof}

We can prove the $\ell$-adic analogue of \Cref{cor:nilpotency of H^3 p-adic} for generalized Kummer types.

\begin{prop}\label{prop:nilpotency of H^3 of generalized Kummer}
Assume that $X$ is of generalized Kummer type
and $\ell \neq p$. 
    If $X$ is of type I, then $\nu(N_{\ell, 3})=0$.
    If $X$ is of type II or type III, then $\nu(N_{\ell, 3})=1$.
\end{prop}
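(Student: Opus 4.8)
The plan is to reduce the statement to its $p$-adic counterpart, \Cref{cor:nilpotency of H^3 p-adic}, which is already available, by observing that the relevant linear-algebraic data does not depend on the chosen cohomological realization. Throughout, recall that for $X$ of generalized Kummer type we have $b_2(X)=7$ (so that $\overline{\ge}_\ell(X)\otimes_{\Q_\ell}\overline{\Q}_\ell\simeq\so(H^2_\ell(X)\otimes_{\Q_\ell}\overline{\Q}_\ell)$ is simple of type $B_3$) and $b_3(X)\neq 0$. By \Cref{Corollary:ell-adic monodromy operators in LLV for known types} we have $N_{\ell,3}=\rho_3(N_{\ell,2})$, where $N_{\ell,2}\in\overline{\ge}_\ell(X)\simeq\so(H^2_\ell(X))$ and $\rho_3\colon\overline{\ge}_\ell(X)\to\End(H^3_\ell(X))$ is the degree-$3$ LLV representation. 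If $X$ has Type I reduction this already gives the claim, since then $N_{\ell,2}=0$, hence $N_{\ell,3}=0$ and $\nu(N_{\ell,3})=0$ (this is also \Cref{thm:nagai for type I}(2)). So assume from now on that $X$ has Type II or Type III reduction.

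First I would argue that the pair consisting of the $\so(H^2(X))$-module $H^3(X)$ and the conjugacy class of the nilpotent $N_2\in\so(H^2(X))$ is independent of whether one uses the $\ell$-adic or the $p$-adic realization. For the module: exactly as in the proofs of \Cref{thm:Verbitskycomponent Betti etale p-adic} and \Cref{Proposition:reduced LLV irreducible component in general} — via the Artin comparison when $\ell\neq p$, and via the de~Rham and Hodge--Tate identifications from \Cref{Section:Sen theory and LLV Lie algebras} when $\ell=p$ — after extension of scalars the $\overline{\ge}_\ell(X)$-module $H^3_\ell(X)$ and the $\overline{\ge}_{\pst}(X)$-module $H^3_{\pst}(X)$ both become identified, compatibly with the isomorphisms onto $\so(H^2(X))\simeq\so_7$, with the base change of the Betti cohomology $H^3_B$ of a complex model of $X$; in particular they are isomorphic as $\so_7$-modules. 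For the nilpotent: by \Cref{prop:conjugacyclassofN}, in a suitable basis $N_2$ is given by one explicit formula for Type II and another for Type III, and since $N_{\ell,2}$ and $N_{p,2}$ have the same reduction type by \Cref{cor:l-independence of nilpotency index}, they lie in the same nilpotent orbit of $\so_7$. Combining these two facts, $\rho_3(N_{\ell,2})$ is conjugate to $\rho_3(N_{p,2})=N_{p,3}$ (the latter equality by \Cref{thm:universalmonodromyp-adic}), so $\nu(N_{\ell,3})=\nu(N_{p,3})$, which equals $1$ by \Cref{cor:nilpotency of H^3 p-adic}.

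The main obstacle is the realization-independence of the $\so(H^2)$-module structure on $H^3(X)$: the transport-of-structure mechanism itself is routine in this paper, but one must carry along the degree-$3$ graded piece together with its $\so(H^2)$-action, rather than only the Verbitsky component. If one prefers to bypass the reduction to the $p$-adic case, one can instead identify $H^3(X)$ directly: every irreducible $\overline{\ge}(X)_F$-constituent $\overline{V}_{(\lambda_1,\lambda_2,\lambda_3),F}$ of $H^3(X)_F$ has $\lambda_i\in\tfrac{1}{2}\Z\setminus\Z$ by \Cref{Proposition:reduced LLV irreducible component in general}, and combining \Cref{Lemma:restriction of LLV factors to reduced LLV}(2) with the bound $\mu_0+\mu_1+\mu_2\le n$ of \Cref{thm:LLVdecompositionmu} forces $\lambda_1+\lambda_2\le\tfrac{3}{2}$, hence $\lambda=(\tfrac{1}{2},\tfrac{1}{2},\tfrac{1}{2})$; thus $H^3(X)_F$ is a sum of copies of the $8$-dimensional spin representation $\Delta$ of $\so_7$. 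A short computation with the Jacobson--Morozov $\fsl_2$-triple of $N_2$ — which embeds into a subalgebra $\so_3\oplus\so_4\subset\so_7$, under which $\Delta$ restricts as $\Delta_{\so_3}\otimes\Delta_{\so_4}$ — then shows that $N_2$ acts on $\Delta$ with largest Jordan block of size $2$ in both the Type II and Type III cases, so that $\nu(\rho_3(N_{\ell,2}))\le 1$; the reverse inequality holds since $N_{\ell,2}\neq 0$ and $\rho_3$ is injective by \Cref{cor:InjectiveLLV}, whence $\nu(N_{\ell,3})=1$.
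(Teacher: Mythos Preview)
Your proposal is correct. Both routes you outline are valid, and both differ from the paper's argument.

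The paper proceeds as follows: after passing to a finite extension so that \Cref{Proposition:LLV and Kuga-Satake} applies, one has a single nilpotent $N\in\overline{\ge}_\ell(X)$ acting simultaneously on $H^3_\ell(X)$ and on $W=H^1_\ell(\KS(X,\scrL)^{\times 2})$ as the monodromy operator. Citing Soldatenkov, the paper identifies $H^3_\ell(X)_{\overline{\Q}_\ell}$ with the spin representation $\overline{V}_{(\frac12,\frac12,\frac12),\overline{\Q}_\ell}$; since $W_{\overline{\Q}_\ell}$ is also a direct sum of this same spin representation, $\nu(N|_{H^3_\ell})=\nu(N|_W)$, and the latter is read off from the reduction type of the Kuga--Satake abelian variety via \Cref{thm:l-adic monodromy}.

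Your first route---transporting the problem to the $p$-adic side via realization-independence of the $\overline{\ge}(X)$-module $H^3(X)$ and of the nilpotent orbit of $N_2$ (by \Cref{prop:conjugacyclassofN} and \Cref{cor:l-independence of nilpotency index}), then invoking \Cref{cor:nilpotency of H^3 p-adic}---is a clean transfer principle that the paper does not use; it avoids touching Kuga--Satake in the $\ell$-adic argument, at the cost of relying on the $p$-adic Hodge-theoretic input behind \Cref{prop:type III odd cohomolgoy}. Your second route is closer in spirit to the paper's: you also identify $H^3$ as a sum of spin representations (deriving this from \Cref{Proposition:reduced LLV irreducible component in general}, \Cref{Lemma:restriction of LLV factors to reduced LLV}, and \Cref{thm:LLVdecompositionmu} rather than citing Soldatenkov), but you then compute $\nu(N_2|_{\Delta})$ by branching through $\so_3\oplus\so_4\subset\so_7$ instead of invoking the Kuga--Satake comparison. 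This is more self-contained---it bypasses both the abelian-variety geometry and the external reference---whereas the paper's method has the virtue of reusing the infrastructure of \Cref{thm:l-adic monodromy} already established.
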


\begin{proof}
    After replacing $K$ by a finite extension, we may assume that we are in the situation of \Cref{Proposition:LLV and Kuga-Satake}.
    So there exists a nilpotent operator
    $N \in \overline{\ge}_\ell(X)$
    which acts on both $H^3_\ell(X)$ and $W\coloneqq H^1_\ell(\KS(X, \scrL)^{\times 2})$ as monodromy operators.
    It follows from \cite[Lemma 3.11]{Soldatenkov20a}
    that 
    $H^3_\ell(X)_{\overline{\Q}_\ell}$
    is a direct sum of the spin representation
    $\overline{V}_{(\frac{1}{2}, \frac{1}{2}, \frac{1}{2}), \overline{\Q}_\ell}$ as a $\rfrg_{\ell}(X)$-module.
    (In fact, we know that $H^3_\ell(X)_{\overline{\Q}_\ell}=\overline{V}_{(\frac{1}{2}, \frac{1}{2}, \frac{1}{2}), \overline{\Q}_\ell}$ since $b_3(X) =8$.)
    Thus the desired description follows from 
    \Cref{thm:l-adic monodromy}
    and
    the fact that
    $W_{\overline{\Q}_\ell}$
    is a direct sum of $\overline{V}_{(\frac{1}{2}, \frac{1}{2}, \frac{1}{2}), \overline{\Q}_\ell}$
    as an $\so(H^2_\ell(X))_{\overline{\Q}_\ell}$-module.
\end{proof}

\subsection*{Acknowledgements}
The authors would like to thank Salvatore Floccari, Lie Fu, Zhiyuan Li, Fuetaro Yobuko for helpful discussions and invaluable comments on a preliminary version of this paper.
K.~Ito is supported by JSPS KAKENHI Grant Numbers 24K16887 and 24H00015.
T.~Ito and T.~Koshikawa are supported by JSPS KAKENHI Grant Number 23K20786. 
T.~Takamatsu is supported by JSPS KAKENHI Grant Numbers 22KJ1780 and 25K17228.
H.~Zou is supported by the Deutsche Forschungsgemeinschaft (DFG, German Research Foundation) – Project-ID 491392403 – TRR 358.

\printbibliography
\end{document}